\providecommand{\U}[1]{\protect\rule{.1in}{.1in}}
\theoremstyle{definition}
\newtheorem{theo}{Theorem}[section]
\newenvironment{theorem}[1][]
{\begin{theo}[#1]\begin{leftbar}}
{\end{leftbar}\end{theo}}
\newtheorem{lem}[theo]{Lemma}
\newenvironment{lemma}[1][]
{\begin{lem}[#1]\begin{leftbar}}
{\end{leftbar}\end{lem}}
\newtheorem{prop}[theo]{Proposition}
\newenvironment{proposition}[1][]
{\begin{prop}[#1]\begin{leftbar}}
{\end{leftbar}\end{prop}}
\newtheorem{defi}[theo]{Definition}
\newenvironment{definition}[1][]
{\begin{defi}[#1]\begin{leftbar}}
{\end{leftbar}\end{defi}}
\newtheorem{remk}[theo]{Remark}
\newenvironment{remark}[1][]
{\begin{remk}[#1]\begin{leftbar}}
{\end{leftbar}\end{remk}}
\newtheorem{coro}[theo]{Corollary}
\newenvironment{corollary}[1][]
{\begin{coro}[#1]\begin{leftbar}}
{\end{leftbar}\end{coro}}
\newtheorem{conv}[theo]{Convention}
\newenvironment{convention}[1][]
{\begin{conv}[#1]\begin{leftbar}}
{\end{leftbar}\end{conv}}
\newtheorem{quest}[theo]{Question}
\newenvironment{question}[1][]
{\begin{quest}[#1]\begin{leftbar}}
{\end{leftbar}\end{quest}}
\newtheorem{warn}[theo]{Warning}
\newtheorem{conj}[theo]{Conjecture}
\newtheorem{exam}[theo]{Example}
\newenvironment{example}[1][]
{\begin{exam}[#1]\begin{leftbar}}
{\end{leftbar}\end{exam}}
\newenvironment{statement}{\begin{quote}}{\end{quote}}
\newenvironment{fineprint}{\begin{small}}{\end{small}}
\let\sumnonlimits\sum
\let\prodnonlimits\prod
\let\cupnonlimits\bigcup
\let\capnonlimits\bigcap
\renewcommand{\sum}{\sumnonlimits\limits}
\renewcommand{\prod}{\prodnonlimits\limits}
\renewcommand{\bigcup}{\cupnonlimits\limits}
\renewcommand{\bigcap}{\capnonlimits\limits}
\newcommand{\timesu}{\mathbin{\underline{\times}}}
\renewcommand{\leq}{\leqslant}
\renewcommand{\geq}{\geqslant}
\newtheoremstyle{plainsl}
{8pt plus 2pt minus 4pt}
{8pt plus 2pt minus 4pt}
{\slshape}
{0pt}
{\bfseries}
{.}
{5pt plus 1pt minus 1pt}
{}
\theoremstyle{plainsl}
\begin{document}

\title{The Bhargava greedoid as a Gaussian elimination greedoid}
\author{Darij Grinberg\thanks{Drexel University, Korman Center, Room 263, 15 S 33rd
Street, Philadelphia PA, 19104, USA}}
\date{27 April 2024}
\maketitle

\begin{abstract}
\textbf{Abstract.} Inspired by Manjul Bhargava's theory of generalized
factorials, Fedor Petrov and the author have defined the \textit{Bhargava
greedoid} -- a greedoid (a matroid-like set system on a finite set) assigned
to any \textquotedblleft ultra triple\textquotedblright\ (a somewhat extended
variant of a finite ultrametric space). Here we show that the Bhargava
greedoid of a finite ultra triple is always a \textit{Gaussian elimination
greedoid} over any sufficiently large (e.g., infinite) field; this is a
greedoid analogue of a representable matroid. We find necessary and sufficient
conditions on the size of the field to ensure this.

\end{abstract}
\tableofcontents

\section*{***}

The notion of a \emph{greedoid} was coined in 1981 by Korte and Lov\'{a}sz,
and has since seen significant developments (\cite{KoLoSc91}, \cite{BjoZie92}%
). It is a type of set system (i.e., a set of subsets of a given ground set)
that is required to satisfy some axioms weaker than the matroid axioms -- so
that, in particular, the independent sets of a matroid form a greedoid.

In \cite{GriPet19}, Grinberg and Petrov have constructed a greedoid stemming
from Bhargava's theory of generalized factorials \cite[\S 2]{Bharga97}, albeit
in a setting significantly more general than Bhargava's. Roughly speaking, the
sets that belong to this greedoid are subsets of maximum perimeter (among all
subsets of their size) of a finite ultrametric space (which, in Bhargava's
work, was a Dedekind ring with a metric coming from a valuation).

More precisely, the setup is more general than that of an ultrametric space:
We consider a finite set $E$, a \emph{distance function} $d$ that assigns a
\textquotedblleft distance\textquotedblright\ $d\left(  e,f\right)  $ to any
pair $\left(  e,f\right)  $ of distinct elements of $E$, and a \emph{weight
function} $w$ that assigns a \textquotedblleft weight\textquotedblright%
\ $w\left(  e\right)  $ to each $e\in E$. The distances and weights are
required to belong to a totally ordered abelian group $\mathbb{V}$ (for
example, $\mathbb{R}$). The distances are required to satisfy the symmetry
axiom $d\left(  e,f\right)  =d\left(  f,e\right)  $ and the \textquotedblleft
ultrametric triangle inequality\textquotedblright\ $d\left(  a,b\right)
\leq\max\left\{  d\left(  a,c\right)  ,d\left(  b,c\right)  \right\}  $. In
this setting, any subset $S$ of $E$ has a well-defined \emph{perimeter},
obtained by summing the weights and the pairwise distances of all its
elements. The subsets $S$ of $E$ that have maximum perimeter (among all
$\left\vert S\right\vert $-element subsets of $E$) then form a greedoid, which
has been called the \emph{Bhargava greedoid} of $\left(  E,w,d\right)  $ in
\cite{GriPet19}. This greedoid is furthermore a strong greedoid \cite[Theorem
6.1]{GriPet19}, which implies in particular that for any given $k\leq
\left\vert E\right\vert $, the $k$-element subsets of $E$ that have maximum
perimeter are the bases of a matroid.

In the present paper, we prove that the Bhargava greedoid\ of $\left(
E,w,d\right)  $ is a \emph{Gaussian elimination greedoid} over any
sufficiently large (e.g., infinite) field. Roughly speaking, a Gaussian
elimination greedoid is a greedoid analogue of a representable
matroid\footnote{In particular, this entails that all the matroids mentioned
in the preceding paragraph are representable.}. We quantify the
\textquotedblleft sufficiently large\textquotedblright\ by providing a
sufficient condition for the size of the field. When all weights $w\left(
e\right)  $ are equal, we show that this condition is also necessary.

We note that the Bhargava greedoid can be seen to arise from an optimization
problem in phylogenetics: Given a finite set $E$ of organisms and an integer
$k\in\mathbb{N}$, we want to choose a $k$-element subset of $E$ that maximizes
some kind of biodiversity. Depending on the definition of biodiversity used,
the properties of the maximizing subsets can differ. It appears natural to
define biodiversity in terms of distances on the evolutionary tree (which is a
finite ultrametric space), and such a definition has been considered by
Moulton, Semple and Steel in \cite{MoSeSt06}, leading to the result that the
maximum-biodiversity sets form a strong greedoid. The Bhargava greedoid is an
analogue of their greedoid using a slightly different definition of
biodiversity\footnote{To be specific: We view the organisms as the leaves of
an evolutionary tree $\mathcal{T}$ that obeys a molecular clock assumption
(i.e., all its leaves have the same distance from the root). Then, the set $E$
of these organisms is equipped with a distance function (measuring distances
along the edges of the tree), which satisfies the \textquotedblleft
ultrametric triangle inequality\textquotedblright. We define the weight
function $w$ by setting $w\left(  e\right)  =0$ for all $e\in E$. Now, the
\emph{phylogenetic diversity} of a subset $S\subseteq E$ is defined to be the
sum of the edge lengths of the minimal subtree of $\mathcal{T}$ that connects
all leaves in $S$. This phylogenetic diversity is the measure of biodiversity
used in \cite{MoSeSt06}. Meanwhile, our notion of perimeter can also be seen
as a measure of biodiversity -- perhaps even a better one for sustainability
questions, as it rewards subsets that are roughly balanced across different
clades. To give a trivial example, a zoo optimized for phylogenetic diversity
might have dozens of mammals and only one bird, while this would unlikely be
considered optimal in terms of perimeter.
\par
The molecular clock assumption can actually be dropped, at the expense of
changing the weight function to account for different distances from the
root.}. The present paper potentially breaks this analogy by showing that the
Bhargava greedoid is a Gaussian elimination greedoid, whereas this is unknown
for the greedoid of Moulton, Semple and Steel. Whether the latter is a
Gaussian elimination greedoid as well remains to be understood\footnote{This
question might have algorithmic significance. At least for polymatroids,
representability can make the difference between a problem being NP-hard and
in P, as shown by Lov\'{a}sz in \cite{Lovasz80} for polymatroid matching.}, as
does the question of interpolating between the two notions of biodiversity.

\bigskip

This paper is self-contained (up to some elementary linear algebra), and in
particular can be read independently of \cite{GriPet19}.

The 12-page extended abstract \cite{GriPet20} summarizes the highlights of
both \cite{GriPet19} and this paper; it is thus a convenient starting point
for a reader interested in the subject.

\subsubsection*{Acknowledgments}

I thank Fedor Petrov for his (major) part in the preceding project
\cite{GriPet19} that led to this one, and an anonymous referee for helpful
comments and corrections.

This work has been started during a Leibniz fellowship at the Mathematisches
Forschungsinstitut Oberwolfach, and completed at the Institut Mittag-Leffler
in Djursholm; I thank both institutes for their hospitality.

\begin{fineprint}
This material is based upon work supported by the Swedish Research Council
under grant no. 2016-06596 while the author was in residence at Institut
Mittag-Leffler in Djursholm, Sweden during Spring 2020.
\end{fineprint}

\section{\label{sect.geg}Gaussian elimination greedoids}

\subsection{The definition}

\begin{convention}
Here and in the following, $\mathbb{N}$ denotes the set $\left\{
0,1,2,\ldots\right\}  $.
\end{convention}

\begin{convention}
If $E$ is any set, then $2^{E}$ will denote the powerset of $E$ (that is, the
set of all subsets of $E$).
\end{convention}

\begin{convention}
Let $\mathbb{K}$ be any field, and let $n\in\mathbb{N}$. Then, $\mathbb{K}%
^{n}$ shall denote the $\mathbb{K}$-vector space of all column vectors of size
$n$ over $\mathbb{K}$.
\end{convention}

We recall the definition of a Gaussian elimination greedoid:

\begin{definition}
\label{def.geg}Let $E$ be a finite set.

Let $m\in\mathbb{N}$ be such that $m\geq\left\vert E\right\vert $. Let
$\mathbb{K}$ be a field. For each $k\in\left\{  0,1,\ldots,m\right\}  $, let
$\pi_{k}:\mathbb{K}^{m}\rightarrow\mathbb{K}^{k}$ be the projection map that
removes all but the first $k$ coordinates of a column vector. (That is,
$\pi_{k}\left(
\begin{array}
[c]{c}%
a_{1}\\
a_{2}\\
\vdots\\
a_{m}%
\end{array}
\right)  =\left(
\begin{array}
[c]{c}%
a_{1}\\
a_{2}\\
\vdots\\
a_{k}%
\end{array}
\right)  $ for each $\left(
\begin{array}
[c]{c}%
a_{1}\\
a_{2}\\
\vdots\\
a_{m}%
\end{array}
\right)  \in\mathbb{K}^{m}$.)

For each $e\in E$, let $v_{e}\in\mathbb{K}^{m}$ be a column vector. The family
$\left(  v_{e}\right)  _{e\in E}$ will be called a \textit{vector family} over
$\mathbb{K}$.

Let $\mathcal{G}$ be the subset%
\[
\left\{  F\subseteq E\ \mid\ \text{the family }\left(  \pi_{\left\vert
F\right\vert }\left(  v_{e}\right)  \right)  _{e\in F}\in\left(
\mathbb{K}^{\left\vert F\right\vert }\right)  ^{F}\text{ is linearly
independent}\right\}
\]
of $2^{E}$. Then, $\mathcal{G}$ is called the \textit{Gaussian elimination
greedoid} of the vector family $\left(  v_{e}\right)  _{e\in E}$. It is
furthermore called a \textit{Gaussian elimination greedoid on ground set }$E$.
\end{definition}

\begin{example}
\label{exa.geg.matrix2}Let $\mathbb{K}=\mathbb{Q}$ and $E=\left\{
1,2,3,4,5\right\}  $ and $m=6$. Let $v_{1},v_{2},v_{3},v_{4},v_{5}%
\in\mathbb{K}^{6}$ be the columns of the $6\times5$-matrix%
\[
\left(
\begin{array}
[c]{ccccc}%
0 & 1 & 1 & 0 & 1\\
1 & 1 & 0 & 0 & 0\\
0 & 2 & 1 & 0 & 1\\
1 & 0 & 1 & 0 & 0\\
0 & 0 & 0 & 0 & 0\\
1 & 2 & 0 & 2 & 1
\end{array}
\right)  .
\]
Then, the Gaussian elimination greedoid of the vector family $\left(
v_{e}\right)  _{e\in E}=\left(  v_{1},v_{2},v_{3},v_{4},v_{5}\right)  $ is the
set%
\begin{align*}
&  \left\{  \varnothing,\left\{  2\right\}  ,\left\{  3\right\}  ,\left\{
5\right\}  ,\left\{  1,2\right\}  ,\left\{  1,3\right\}  ,\left\{
1,5\right\}  ,\left\{  2,3\right\}  ,\left\{  2,5\right\}  ,\right. \\
&  \ \ \ \ \ \ \ \ \ \ \left.  \left\{  1,2,3\right\}  ,\left\{
1,2,5\right\}  ,\left\{  1,2,3,5\right\}  \right\}  .
\end{align*}
For example, the $3$-element set $\left\{  1,2,5\right\}  $ belongs to this
greedoid because the family $\left(  \pi_{3}\left(  v_{e}\right)  \right)
_{e\in\left\{  1,2,5\right\}  }\in\left(  \mathbb{K}^{3}\right)  ^{\left\{
1,2,5\right\}  }$ is linearly independent (indeed, this family consists of the
vectors $\left(
\begin{array}
[c]{c}%
0\\
1\\
0
\end{array}
\right)  $, $\left(
\begin{array}
[c]{c}%
1\\
1\\
2
\end{array}
\right)  $ and $\left(
\begin{array}
[c]{c}%
1\\
0\\
1
\end{array}
\right)  $).
\end{example}

Our definition of a Gaussian elimination greedoid follows \cite[\S 1.3]%
{Knapp18}, except that we are using vector families instead of matrices (but
this is equivalent, since any matrix can be identified with the vector family
consisting of its columns) and we are talking about linear independence rather
than non-singularity of matrices (but this is again equivalent, since a square
matrix is non-singular if and only if its columns are linearly independent).
The same definition is given in \cite[\S IV.2.3]{KoLoSc91}.

\subsection{Context}

In the rest of Section \ref{sect.geg}, we shall briefly connect Definition
\ref{def.geg} with known concepts in the theory of greedoids. This is not
necessary for the rest of our work, so the impatient reader can well skip to
Section \ref{sect.Vultra}.

As the name suggests, Gaussian elimination greedoids are instances of
greedoids -- a class of set systems (i.e., sets of sets) characterized by some
simple axioms. We refer to Definition \ref{def.sg} below for the definition of
a greedoid, and to \cite{KoLoSc91} for the properties of such. A subclass of
greedoids that has particular interest to us are the \textit{strong
greedoids}; see, e.g., Section \ref{sect.plucker} below or \cite[\S 6.1]%
{GriPet19} or \cite[\S 2]{BrySha99} for their definition.\footnote{They also
appear in \cite[\S IX.4]{KoLoSc91} under the name of \textquotedblleft Gauss
greedoids\textquotedblright, but they are defined differently. (The
equivalence between the two definitions is proved in \cite[\S 2]{BrySha99}.)}
The following theorem is implicit in \cite[\S IX.4]{KoLoSc91}\footnote{A
partial proof of Theorem \ref{thm.geg.strong} also appears in \cite[\S 1.3]%
{Knapp18}. (Namely, two paragraphs above \cite[Example 1.3.15]{Knapp18}, it is
shown that $\mathcal{G}$ is a greedoid.)}:

\begin{theorem}
\label{thm.geg.strong}The Gaussian elimination greedoid $\mathcal{G}$ in
Definition \ref{def.geg} is a strong greedoid.
\end{theorem}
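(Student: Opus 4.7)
The plan is to verify the strong greedoid axioms directly from the vector-family description. The central reformulation is that a set $F\subseteq E$ with $|F|=k$ lies in $\mathcal{G}$ if and only if $\{\pi_k(v_e):e\in F\}$ is a \emph{basis} of $\mathbb{K}^k$ (since $k$ linearly independent vectors in a $k$-dimensional space must span). Containment of the empty set is then immediate, and the rest of the proof hinges on this ``independence $=$ basis'' rephrasing.

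For accessibility, given $F\in\mathcal{G}$ with $|F|=k\geq 1$, I would project via the coordinate-dropping map $p\colon\mathbb{K}^k\to\mathbb{K}^{k-1}$: the images $\pi_{k-1}(v_e)=p(\pi_k(v_e))$ are $k$ vectors spanning $\mathbb{K}^{k-1}$ (because $p$ is surjective), so they admit a unique (up to scalar) linear dependence; removing any $e$ in its support leaves a basis of $\mathbb{K}^{k-1}$, i.e., $F\setminus\{e\}\in\mathcal{G}$.

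The main task is the strong exchange property: given $A,B\in\mathcal{G}$ with $|A|=k+1$ and $|B|=k$, one must produce $x\in A\setminus B$ with \emph{both} $A\setminus\{x\}\in\mathcal{G}$ \emph{and} $B\cup\{x\}\in\mathcal{G}$. Repeating the projection argument for $A$ and $p\colon\mathbb{K}^{k+1}\to\mathbb{K}^k$ yields a unique (up to scalar) dependence $(c_e)_{e\in A}$ of the $\pi_k(v_e)$, whose support $I$ is \emph{exactly} the set of $x\in A$ for which $A\setminus\{x\}\in\mathcal{G}$. So I need an $x\in I$ satisfying $\pi_{k+1}(v_x)\notin V:=\operatorname{span}\{\pi_{k+1}(v_e):e\in B\}$, for this is equivalent to $B\cup\{x\}\in\mathcal{G}$ (and $x\notin B$ will be automatic, since $x\in B$ would force $\pi_{k+1}(v_x)\in V$).

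Here lies the main obstacle. I would lift the dependence to the vector $u:=\sum_{e\in A}c_e\,\pi_{k+1}(v_e)\in\mathbb{K}^{k+1}$, which is nonzero (since $\{\pi_{k+1}(v_e):e\in A\}$ is a basis) and satisfies $p(u)=0$, so $u\in\ker p=\mathbb{K}\cdot e_{k+1}$. If every $x\in I$ had $\pi_{k+1}(v_x)\in V$, then $u$ would lie in $V$; but $V$ has dimension $k$ and $p(V)=\mathbb{K}^k$ (because $B\in\mathcal{G}$), so rank--nullity forces $V\cap\ker p=0$, contradicting $0\neq u\in V\cap\ker p$. Hence a witness $x\in I$ with $\pi_{k+1}(v_x)\notin V$ exists, completing the strong exchange step and thus the proof.
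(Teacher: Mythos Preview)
Your proof is correct and takes a genuinely different route from the paper's. The paper proves axiom \textbf{(iv)} via a Pl\"ucker-type determinantal identity (Proposition~\ref{prop.desnanot.XY}): writing $X$ for the matrix of the $\pi_n(v_e)$ with $e$ in the smaller set and $Y$ for the matrix of the $\pi_n(v_e)$ with $e$ in the larger set, the identity
\[
\det\left(X_{\sim n,\bullet}\right)\det Y \;=\; \sum_{q=1}^{n}(-1)^{n+q}\det\left(X\mid Y_{\bullet,q}\right)\det\left(Y_{\sim n,\sim q}\right)
\]
has nonzero left-hand side, so some summand on the right is nonzero, yielding simultaneously $\det(X\mid Y_{\bullet,q})\neq 0$ and $\det(Y_{\sim n,\sim q})\neq 0$; the column index $q$ then points to the required exchange element. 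Axiom \textbf{(ii)} is handled similarly via Laplace expansion along the last row.

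Your argument instead works entirely with subspaces and rank--nullity: you identify the set $I$ of valid ``removals'' from the larger set as the support of the unique dependence among the projections, lift that dependence to a nonzero vector $u\in\ker p$, and observe that $V\cap\ker p=0$ forces some $\pi_{k+1}(v_x)$ with $x\in I$ to lie outside $V$. This is more elementary in that it avoids the Pl\"ucker machinery entirely and makes the geometric reason for strong exchange transparent (the one-dimensional kernel of $p$ cannot be absorbed by the $k$-dimensional space $V$). The paper's determinantal approach, on the other hand, is more explicit and formulaic, which can be an advantage when one wants to track the exchange element computationally or to generalize to settings governed by polynomial identities. Both approaches are standard linear algebra; yours is arguably the cleaner of the two for this particular statement.
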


See Section \ref{sect.plucker} below for a proof of this theorem.

Matroids are a class of set systems more famous than greedoids; see
\cite{Oxley11} for their definition. We will not concern ourselves with
matroids much in this note, but let us remark one connection to Gaussian
elimination greedoids:\footnote{See \cite[\S 1.1]{Oxley11} for the definition
of a representable matroid.}

\begin{proposition}
\label{prop.geg.matroid-rep}Let $\mathcal{G}$ be a Gaussian elimination
greedoid on a ground set $E$. Let $k\in\mathbb{N}$. Let $\mathcal{G}_{k}$ be
the set of all $k$-element sets in $\mathcal{G}$. Then, $\mathcal{G}_{k}$ is
either empty or is the collection of bases of a representable matroid on the
ground set $E$.
\end{proposition}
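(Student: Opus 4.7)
The plan is to reduce the statement to a purely linear-algebraic observation about the truncated vector family $\left(  \pi_{k}\left(  v_{e}\right)  \right)  _{e\in E}$. Let $\left(  v_{e}\right)  _{e\in E}$ be the vector family over $\mathbb{K}$ producing the Gaussian elimination greedoid $\mathcal{G}$, with $v_{e}\in\mathbb{K}^{m}$ and $m\geq\left\vert E\right\vert$. For each $e\in E$, set $w_{e}:=\pi_{k}\left(  v_{e}\right)  \in\mathbb{K}^{k}$. Unpacking Definition \ref{def.geg}, a $k$-element subset $F\subseteq E$ belongs to $\mathcal{G}_{k}$ if and only if the family $\left(  \pi_{k}\left(  v_{e}\right)  \right)  _{e\in F}=\left(  w_{e}\right)  _{e\in F}$ is linearly independent in $\mathbb{K}^{k}$.

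Next I would invoke the standard construction of a representable matroid from a vector family: the family $\left(  w_{e}\right)  _{e\in E}\in\left(  \mathbb{K}^{k}\right)  ^{E}$ defines a matroid $M$ on the ground set $E$ whose independent sets are precisely the subsets $I\subseteq E$ for which $\left(  w_{e}\right)  _{e\in I}$ is linearly independent, and whose bases are the maximal such subsets. By construction $M$ is representable over $\mathbb{K}$. Since each $w_{e}$ lives in $\mathbb{K}^{k}$, every independent set of $M$ has at most $k$ elements, so the rank of $M$ is at most $k$.

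Now assume $\mathcal{G}_{k}$ is nonempty, and pick some $F\in\mathcal{G}_{k}$. By the observation of the first paragraph, $F$ is an independent $k$-element set of $M$, so the rank of $M$ equals $k$. The bases of $M$ are therefore exactly its $k$-element independent subsets, which by the same observation are exactly the elements of $\mathcal{G}_{k}$. Hence $\mathcal{G}_{k}$ is the set of bases of the representable matroid $M$ on $E$, as claimed. I do not foresee any real obstacle: the only subtlety is the trivial case $\mathcal{G}_{k}=\varnothing$ (which the statement already allows), and the rank-matching step that hinges on the fact that the codomain of $\pi_{k}$ has dimension exactly $k$.
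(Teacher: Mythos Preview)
Your proof is correct and follows essentially the same approach as the paper: both construct the representable matroid from the truncated vector family $\left(\pi_{k}(v_{e})\right)_{e\in E}$, observe that its rank is at most $k$, and use nonemptiness of $\mathcal{G}_{k}$ to pin the rank at exactly $k$ so that the bases coincide with $\mathcal{G}_{k}$. The only cosmetic difference is that the paper routes this through a bijection $\phi:\{1,\ldots,\left\vert E\right\vert\}\to E$ in order to match Oxley's matrix-based definition of a vector matroid, whereas you define the matroid directly on $E$ from the vector family; one small technicality you glossed over is that $\pi_{k}$ is only defined for $k\leq m$, but this is harmless since $k>m\geq\left\vert E\right\vert$ forces $\mathcal{G}_{k}=\varnothing$.
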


See Section \ref{sect.matroid-rep} below for a proof of this proposition.

Proposition \ref{prop.geg.matroid-rep} justifies thinking of Gaussian
elimination greedoids as a greedoid analogue of representable matroids.

\section{\label{sect.Vultra}$\mathbb{V}$-ultra triples}

\begin{definition}
Let $E$ be a set. Then, $E\timesu E$ shall denote the subset $\left\{  \left(
e,f\right)  \in E\times E\ \mid\ e\neq f\right\}  $ of $E\times E$.
\end{definition}

\begin{convention}
\label{conv.V}Fix a totally ordered abelian group $\left(  \mathbb{V}%
,+,0,\leq\right)  $ (with ground set $\mathbb{V}$, group operation $+$, zero
$0$ and smaller-or-equal relation $\leq$). The total order on $\mathbb{V}$ is
required to be translation-invariant (i.e., if $a,b,c\in\mathbb{V}$ satisfy
$a\leq b$, then $a+c\leq b+c$).

We shall refer to the ordered abelian group $\left(  \mathbb{V},+,0,\leq
\right)  $ simply as $\mathbb{V}$. We will use the standard additive notations
for the abelian group $\mathbb{V}$; in particular, we will use the $\sum$ sign
for finite sums inside the group $\mathbb{V}$. We will furthermore use the
standard order-theoretical notations for the totally ordered set $\mathbb{V}$;
in particular, we will use the symbol $\geq$ for the reverse relation of
$\leq$ (that is, $a\geq b$ means $b\leq a$), and we will use the symbols $<$
and $>$ for the strict versions of the relations $\leq$ and $\geq$. We will
denote the largest element of a nonempty subset $S$ of $\mathbb{V}$ (with
respect to the relation $\leq$) by $\max S$. Likewise, $\min S$ will stand for
the smallest element of $S$.

We will keep this group $\mathbb{V}$ fixed throughout this paper.
\end{convention}

For almost all examples we are aware of, it suffices to set $\mathbb{V}$ to be
the abelian group $\mathbb{R}$, or even the smaller abelian group $\mathbb{Z}%
$. Nevertheless, we shall work in full generality, as it serves to separate
objects that would otherwise easily be confused.

\begin{definition}
\label{def.Vultra}A $\mathbb{V}$\textit{-ultra triple} shall mean a triple
$\left(  E,w,d\right)  $ consisting of:

\begin{itemize}
\item a set $E$, called the \textit{ground set} of this $\mathbb{V}$-ultra triple;

\item a map $w:E\rightarrow\mathbb{V}$, called the \textit{weight function} of
this $\mathbb{V}$-ultra triple;

\item a map $d:E\timesu E\rightarrow\mathbb{V}$, called the \textit{distance
function} of this $\mathbb{V}$-ultra triple, and required to satisfy the
following axioms:

\begin{itemize}
\item \textbf{Symmetry:} We have $d\left(  a,b\right)  =d\left(  b,a\right)  $
for any two distinct elements $a$ and $b$ of $E$.

\item \textbf{Ultrametric triangle inequality:} We have $d\left(  a,b\right)
\leq\max\left\{  d\left(  a,c\right)  ,d\left(  b,c\right)  \right\}  $ for
any three distinct elements $a$, $b$ and $c$ of $E$.
\end{itemize}
\end{itemize}

If $\left(  E,w,d\right)  $ is a $\mathbb{V}$-ultra triple and $e\in E$, then
the value $w\left(  e\right)  \in\mathbb{V}$ is called the \textit{weight} of
$e$.

If $\left(  E,w,d\right)  $ is a $\mathbb{V}$-ultra triple and $e$ and $f$ are
two distinct elements of $E$, then the value $d\left(  e,f\right)
\in\mathbb{V}$ is called the \textit{distance} between $e$ and $f$.
\end{definition}

\begin{example}
\label{exa.Vultra.a=bmodm}For this example, let $\mathbb{V}=\mathbb{Z}$, and
let $E$ be a subset of $\mathbb{Z}$. Let $m$ be any integer. Define a map
$w:E\rightarrow\mathbb{V}$ arbitrarily. Define a map $d:E\timesu
E\rightarrow\mathbb{V}$ by%
\[
d\left(  a,b\right)  =%
\begin{cases}
1, & \text{if }a\not \equiv b\operatorname{mod}m;\\
0, & \text{if }a\equiv b\operatorname{mod}m
\end{cases}
\ \ \ \ \ \ \ \ \ \ \text{for all }\left(  a,b\right)  \in E\timesu E.
\]
It is easy to see that $\left(  E,w,d\right)  $ is a $\mathbb{V}$-ultra triple.
\end{example}

\begin{example}
\label{exa.Vultra.bhar}For this example, let $\mathbb{V}=\mathbb{Z}$ again,
and let $E$ be a subset of $\mathbb{Z}$. Fix a prime number $p$. For each
nonzero integer $k$, let $v_{p}\left(  k\right)  $ denote the largest
$i\in\mathbb{N}$ such that $p^{i}\mid k$. (For instance, $v_{3}\left(
45\right)  =2$.)

Define a map $w:E\rightarrow\mathbb{V}$ arbitrarily. Define a map
$d:E\timesu
E\rightarrow\mathbb{V}$ by%
\[
d\left(  a,b\right)  =-v_{p}\left(  a-b\right)  \ \ \ \ \ \ \ \ \ \ \text{for
all }\left(  a,b\right)  \in E\timesu E.
\]
It is easy to see that $\left(  E,w,d\right)  $ is a $\mathbb{V}$-ultra triple.

More generally, we can replace $\mathbb{Z}$ by any integral domain, and
$v_{p}$ by any valuation on this integral domain, and obtain a $\mathbb{V}%
$-ultra triple, where $\mathbb{V}$ is the target of our valuation.
\end{example}

The notion of a $\mathbb{V}$-ultra triple generalizes the notion of an ultra
triple as defined in \cite{GriPet19}. More precisely, if $\mathbb{V}$ is the
additive group $\left(  \mathbb{R},+,0,\leq\right)  $ (with the usual addition
and the usual total order on $\mathbb{R}$), then a $\mathbb{V}$-ultra triple
is the same as what is called an \textquotedblleft ultra
triple\textquotedblright\ in \cite{GriPet19}. Several properties and examples
of ultra triples can be found in \cite{GriPet19}.

It is straightforward to adapt all the definitions and results stated in
\cite{GriPet19} for ultra triples to the more general setting of $\mathbb{V}%
$-ultra triples\footnote{There is one stupid exception: The definition of $R$
in \cite[Remark 8.13]{GriPet19} requires $\mathbb{V}\neq0$. But \cite[Remark
8.13]{GriPet19} is just a tangent without concrete use.}. Let us specifically
extend two definitions from \cite{GriPet19} to $\mathbb{V}$-ultra triples: the
definition of a perimeter (\cite[\S 3.1]{GriPet19}) and the definition of the
Bhargava greedoid (\cite[\S 6.2]{GriPet19}):

\begin{definition}
Let $\left(  E,w,d\right)  $ be a $\mathbb{V}$-ultra triple. Let $A$ be a
finite subset of $E$. Then, the \textit{perimeter} of $A$ (with respect to
$\left(  E,w,d\right)  $) is defined to be
\[
\sum_{a\in A}w\left(  a\right)  +\sum_{\substack{\left\{  a,b\right\}
\subseteq A;\\a\neq b}}d\left(  a,b\right)  \in\mathbb{V}.
\]
(Here, the second sum ranges over all \textbf{unordered} pairs $\left\{
a,b\right\}  $ of distinct elements of $A$.)

The perimeter of $A$ is denoted by $\operatorname*{PER}\left(  A\right)  $.
\end{definition}

For example, if $A=\left\{  p,q,r\right\}  $ is a $3$-element set, then%
\[
\operatorname*{PER}\left(  A\right)  =w\left(  p\right)  +w\left(  q\right)
+w\left(  r\right)  +d\left(  p,q\right)  +d\left(  p,r\right)  +d\left(
q,r\right)  .
\]

\begin{definition}
Let $S$ be any set, and let $k\in\mathbb{N}$. A $k$\textit{-subset} of $S$
means a $k$-element subset of $S$ (that is, a subset of $S$ having size $k$).
\end{definition}

\begin{definition}
\label{def.bhar-greed}Let $\left(  E,w,d\right)  $ be a $\mathbb{V}$-ultra
triple such that $E$ is finite. The \textit{Bhargava greedoid} of $\left(
E,w,d\right)  $ is defined to be the subset%
\begin{align*}
&  \left\{  A\subseteq E\ \mid\ A\text{ has maximum perimeter among all
}\left\vert A\right\vert \text{-subsets of }E\right\} \\
&  =\left\{  A\subseteq E\ \mid\ \operatorname*{PER}\left(  A\right)
\geq\operatorname*{PER}\left(  B\right)  \text{ for all }B\subseteq E\text{
satisfying }\left\vert B\right\vert =\left\vert A\right\vert \right\}
\end{align*}
of $2^{E}$.
\end{definition}

Some examples of Bhargava greedoids can be found in \cite[\S 6.2]{GriPet19}.
Here are two more:

\Needspace{6cm}

\begin{example}
\label{exa.bhargava.1}For this example, let $\mathbb{V}=\mathbb{Z}$ and
$E=\left\{  0,1,2,3,4\right\}  $. Define a map $w:E\rightarrow\mathbb{V}$ by
setting $w\left(  e\right)  =\max\left\{  e,1\right\}  $ for each $e\in E$.
(Thus, $w\left(  0\right)  =1$ and $w\left(  e\right)  =e$ for all $e>0$.)
Define a map $d:E\timesu E\rightarrow\mathbb{V}$ by setting
\[
d\left(  e,f\right)  =\min\left\{  3,\max\left\{  4-e,4-f\right\}  \right\}
\ \ \ \ \ \ \ \ \ \ \text{for all }\left(  e,f\right)  \in E\timesu E.
\]
Here is a table of values of $d$:%
\[%
\begin{tabular}
[c]{c||ccccc}%
$d$ & $0$ & $1$ & $2$ & $3$ & $4$\\\hline\hline
$0$ &  & $3$ & $3$ & $3$ & $3$\\
$1$ & $3$ &  & $3$ & $3$ & $3$\\
$2$ & $3$ & $3$ &  & $2$ & $2$\\
$3$ & $3$ & $3$ & $2$ &  & $1$\\
$4$ & $3$ & $3$ & $2$ & $1$ &
\end{tabular}
\ \ \ \ \ \ .
\]
It is easy to see that $\left(  E,w,d\right)  $ is a $\mathbb{V}$-ultra
triple. Let $\mathcal{F}$ be its Bhargava greedoid. Thus, $\mathcal{F}$
consists of the subsets $A$ of $E$ that have maximum perimeter among all
$\left\vert A\right\vert $-subsets of $E$. What are these subsets?

\begin{itemize}
\item Clearly, $\varnothing$ is the only $\left\vert \varnothing\right\vert
$-subset of $E$, and thus has maximum perimeter among all $\left\vert
\varnothing\right\vert $-subsets of $E$. Hence, $\varnothing\in\mathcal{F}$.

\item The perimeter of a $1$-subset $\left\{  e\right\}  $ of $E$ is just the
weight $w\left(  e\right)  $. Thus, the $1$-subsets of $E$ having maximum
perimeter among all $1$-subsets of $E$ are precisely the subsets $\left\{
e\right\}  $ where $e\in E$ has maximum weight. In our example, there is only
one $e\in E$ having maximum weight, namely $4$. Thus, the only $1$-subset of
$E$ having maximum perimeter among all $1$-subsets of $E$ is $\left\{
4\right\}  $. In other words, the only $1$-element set in $\mathcal{F}$ is
$\left\{  4\right\}  $.

\item What about $2$-element sets in $\mathcal{F}$ ? The perimeter
$\operatorname*{PER}\left\{  e,f\right\}  $ of a $2$-subset $\left\{
e,f\right\}  $ of $E$ is $w\left(  e\right)  +w\left(  f\right)  +d\left(
e,f\right)  $. Thus,%
\[
\operatorname*{PER}\left\{  0,4\right\}  =w\left(  0\right)  +w\left(
4\right)  +d\left(  0,4\right)  =1+4+3=8
\]
and similarly $\operatorname*{PER}\left\{  1,4\right\}  =8$ and
$\operatorname*{PER}\left\{  2,4\right\}  =8$ and $\operatorname*{PER}\left\{
3,4\right\}  =8$ and $\operatorname*{PER}\left\{  0,3\right\}  =7$ and
$\operatorname*{PER}\left\{  1,3\right\}  =7$ and $\operatorname*{PER}\left\{
2,3\right\}  =7$ and $\operatorname*{PER}\left\{  0,2\right\}  =6$ and
$\operatorname*{PER}\left\{  1,2\right\}  =6$ and $\operatorname*{PER}\left\{
0,1\right\}  =5$. Thus, the $2$-subsets of $E$ having maximum perimeter among
all $2$-subsets of $E$ are $\left\{  0,4\right\}  $ and $\left\{  1,4\right\}
$ and $\left\{  2,4\right\}  $ and $\left\{  3,4\right\}  $. So these four
sets are the $2$-element sets in $\mathcal{F}$.

\item Similarly, the $3$-element sets in $\mathcal{F}$ are $\left\{
0,1,4\right\}  $, $\left\{  0,3,4\right\}  $, $\left\{  1,3,4\right\}  $,
$\left\{  0,2,4\right\}  $ and $\left\{  1,2,4\right\}  $. They have perimeter
$15$, while all other $3$-subsets of $E$ have perimeter $14$ or $13$.

\item Similarly, the $4$-element sets in $\mathcal{F}$ are $\left\{
0,1,2,4\right\}  $ and $\left\{  0,1,3,4\right\}  $.

\item Clearly, $E$ is the only $\left\vert E\right\vert $-subset of $E$, and
thus has maximum perimeter among all $\left\vert E\right\vert $-subsets of
$E$. Hence, $E\in\mathcal{F}$.
\end{itemize}

Thus, the Bhargava greedoid of $\left(  E,w,d\right)  $ is%
\begin{align*}
\mathcal{F}  &  =\left\{  \varnothing,\left\{  4\right\}  ,\left\{
0,4\right\}  ,\left\{  1,4\right\}  ,\left\{  2,4\right\}  ,\left\{
3,4\right\}  ,\right. \\
&  \ \ \ \ \ \ \ \ \ \ \left.  \left\{  0,1,4\right\}  ,\left\{
0,3,4\right\}  ,\left\{  1,3,4\right\}  ,\left\{  0,2,4\right\}  ,\left\{
1,2,4\right\}  ,\left\{  0,1,2,4\right\}  ,\left\{  0,1,3,4\right\}
,E\right\}  .
\end{align*}

\end{example}

\begin{example}
\label{exa.bhargava.0}For this example, let $\mathbb{V}=\mathbb{Z}$ and
$E=\left\{  1,2,3\right\}  $. Define a map $w:E\rightarrow\mathbb{V}$ by
setting $w\left(  e\right)  =e$ for each $e\in E$. Define a map $d:E\timesu
E\rightarrow\mathbb{V}$ by setting $d\left(  e,f\right)  =1$ for each $\left(
e,f\right)  \in E\timesu E$. It is easy to see that $\left(  E,w,d\right)  $
is a $\mathbb{V}$-ultra triple. Let $\mathcal{F}$ be the Bhargava greedoid of
$\left(  E,w,d\right)  $. What is $\mathcal{F}$ ?

The same kind of reasoning as in Example \ref{exa.bhargava.1} (but simpler due
to the fact that all values of $d$ are the same) shows that%
\[
\mathcal{F}=\left\{  \varnothing,\left\{  3\right\}  ,\left\{  2,3\right\}
,\left\{  1,2,3\right\}  \right\}  .
\]

\end{example}

One thing we observed in both of these examples is the following simple fact:

\begin{remark}
\label{rmk.bhargava.EinF}Let $\left(  E,w,d\right)  $ be a $\mathbb{V}$-ultra
triple such that $E$ is finite. Let $\mathcal{F}$ be the Bhargava greedoid of
$\left(  E,w,d\right)  $. Then, $E\in\mathcal{F}$.
\end{remark}

\begin{proof}
[Proof of Remark \ref{rmk.bhargava.EinF}.]The set $E$ obviously has maximum
perimeter among all $\left\vert E\right\vert $-subsets of $E$ (since $E$ is
the only $\left\vert E\right\vert $-subset of $E$).

But $\mathcal{F}$ is the Bhargava greedoid of $\left(  E,w,d\right)  $. In
other words,
\[
\mathcal{F}=\left\{  A\subseteq E\ \mid\ A\text{ has maximum perimeter among
all }\left\vert A\right\vert \text{-subsets of }E\right\}
\]
(by Definition \ref{def.bhar-greed}). Hence, $E\in\mathcal{F}$ (since $E$ is a
subset of $E$ that has maximum perimeter among all $\left\vert E\right\vert
$-subsets of $E$). This proves Remark \ref{rmk.bhargava.EinF}.
\end{proof}

\section{\label{sect.thm-bh-geg}The main theorem}

In \cite[Theorem 6.1]{GriPet19}, it was proved that the Bhargava greedoid of
an ultra triple with finite ground set is a strong greedoid\footnote{See
\cite[\S 6.1]{GriPet19} for the definition of strong greedoids.}. More
generally, this holds for any $\mathbb{V}$-ultra triple with finite ground set
(and the same argument can be used to prove this). However, we shall prove a
stronger statement:

\begin{theorem}
\label{thm.bh-geg}Let $\left(  E,w,d\right)  $ be a $\mathbb{V}$-ultra triple
such that $E$ is finite. Let $\mathcal{F}$ be the Bhargava greedoid of
$\left(  E,w,d\right)  $. Let $\mathbb{K}$ be a field of size $\left\vert
\mathbb{K}\right\vert \geq\left\vert E\right\vert $. Then, $\mathcal{F}$ is
the Gaussian elimination greedoid of a vector family over $\mathbb{K}$.
\end{theorem}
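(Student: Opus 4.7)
The plan is to prove this by strong induction on $|E|$, exploiting the hierarchical (tree) structure of an ultrametric space. If $|E| \leq 1$, then $\mathcal{F} = 2^E$ and any generic vector family over $\mathbb{K}$ realizes it. For the induction step, suppose $|E| \geq 2$, and set $D := \max\{d(e,f) : (e,f) \in E \timesu E\}$. The ultrametric triangle inequality implies that the relation ``$e = f$ or $d(e,f) < D$'' is an equivalence on $E$, giving a partition $E = E_1 \sqcup \cdots \sqcup E_r$ with $r \geq 2$ in which every cross-cluster distance equals $D$.

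The key observation I would rely on is the additive perimeter decomposition: writing $k_i := |F \cap E_i|$ for $F \subseteq E$,
\[
\PER(F) \;=\; \sum_{i=1}^{r} \PER(F \cap E_i) \;+\; D \sum_{1 \leq i < j \leq r} k_i k_j.
\]
Since the two summands depend on disjoint data --- the first on the internal structure within each cluster, the second only on the profile $(k_1,\ldots,k_r)$ --- I would deduce that $F \in \mathcal{F}$ if and only if (a) for each $i$, $F \cap E_i$ has maximum perimeter among $k_i$-subsets of $E_i$, and (b) the profile $(k_1,\ldots,k_r)$ maximizes $\sum_i P_i(k_i) + D \sum_{i<j} k_i k_j$ over all profiles summing to $|F|$, where $P_i(j)$ denotes the maximum $\PER$-value over $j$-subsets of $E_i$.

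By the inductive hypothesis (valid since $|E_i| < |E|$ because $r \geq 2$), each sub-ultra triple $(E_i, w|_{E_i}, d|_{E_i \timesu E_i})$ admits a vector-family realization $(v_e^{(i)})_{e \in E_i}$. I would combine these into a single family $(v_e)_{e \in E}$ in $\mathbb{K}^m$ (for some $m \geq |E|$) using a block construction: each $v_e$ with $e \in E_i$ combines its inductive internal vector with extra ``distribution'' coordinates depending on scalar parameters $\alpha_1, \ldots, \alpha_r \in \mathbb{K}$ (one per cluster) that record cluster membership. The interleaving of blocks across rows will be governed by a globally Bhargava-greedy ordering of $E$ derived from the optimality of profiles.

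The hard part will be the final step of choosing the parameters $\alpha_1, \ldots, \alpha_r$ (and any auxiliary scalars) so that, for every $k$, a $k$-subset $F \subseteq E$ produces an invertible $k \times k$ minor of the combined matrix exactly when $F$ satisfies both (a) and (b). Condition (a) is inherited from the inductive realizations, but condition (b) must be forced by the distribution rows: each potential ``bad event'' (a max-perimeter subset whose minor becomes singular, or a non-max-perimeter subset whose minor becomes invertible) corresponds to the vanishing of a nontrivial polynomial in the $\alpha_i$. A Schwartz--Zippel-style union-bound argument will then show that a valid simultaneous choice of parameters exists as soon as $|\mathbb{K}|$ exceeds the relevant polynomial degrees; a careful accounting of these degrees should yield the sufficient bound $|\mathbb{K}| \geq |E|$ stated in the theorem.
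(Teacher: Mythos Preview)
Your recursive decomposition via the maximum distance $D$ and the additive perimeter formula are both correct and match the paper's setup (Proposition~\ref{prop.cliques.divide}). But the paper's route from there is quite different: rather than build the vector family directly by induction, it first shows (Theorem~\ref{thm.approx-p}, proved by the same induction on $|E|$) that $(E,w,d)$ is isomorphic to a \emph{valadic} ultra triple, i.e., one whose ground set sits inside the group algebra $\mathbb{L}=\mathbb{K}[\mathbb{V}]$ with $d(a,b)=-\operatorname{ord}(a-b)$. For valadic triples the paper then writes down an explicit vector family using the polynomials $f_j(X)=\prod_{i<j}(X-c_i)$ for a greedy ordering $(c_1,\ldots,c_m)$ of $E$; the resulting $k\times k$ minors are Vandermonde-type determinants whose order in $\mathbb{L}$ is computed exactly as $\sum_{j=1}^{k}\rho_j-\PER(U)$ (Theorem~\ref{thm.poly-ut-greed}). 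Thus a minor reduces to a unit modulo the maximal ideal precisely when $U$ has maximum perimeter. The only free choice in the entire construction is, at each level of the recursion, a set of $m$ pairwise distinct scalars (one per cluster), which is why $|\mathbb{K}|\geq\operatorname{mcs}(E,w,d)\leq|E|$ already suffices.

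Your Schwartz--Zippel step, by contrast, has a real gap. A genericity argument can only avoid finitely many \emph{vanishing} conditions; it cannot force a polynomial to vanish. For a non-max-perimeter subset $F$ you need its minor to be zero at the chosen $(\alpha_1,\ldots,\alpha_r)$, and for generic $\alpha$ this will fail unless that minor is identically zero as a polynomial --- which would have to be engineered into the block construction itself, something you have not specified. Even granting that, the number of max-perimeter subsets whose minors must be simultaneously nonzero can be exponential in $|E|$ (for instance, when $w$ and $d$ are both constant every subset of $E$ is max-perimeter), so a Schwartz--Zippel union bound on the product of those minors gives an exponential lower bound on $|\mathbb{K}|$, not the linear bound you claim. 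Getting down to $|\mathbb{K}|\geq|E|$ genuinely requires an explicit construction with the kind of closed-form minor computation the paper carries out.
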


We will spend the next few sections working towards a proof of this theorem.
First, however, let us extend it somewhat by strengthening the $\left\vert
\mathbb{K}\right\vert \geq\left\vert E\right\vert $ bound.

\section{\label{sect.clique-stronger}Cliques and stronger bounds}

For the rest of Section \ref{sect.clique-stronger}, we fix a $\mathbb{V}%
$-ultra triple $\left(  E,w,d\right)  $.

Let us define a certain kind of subsets of $E$, which we call \textit{cliques}.

\begin{definition}
\label{def.alpha-clique}Let $\alpha\in\mathbb{V}$. An $\alpha$\textit{-clique}
of $\left(  E,w,d\right)  $ will mean a subset $F$ of $E$ such that any two
distinct elements $a,b\in F$ satisfy $d\left(  a,b\right)  =\alpha$.
\end{definition}

\begin{definition}
\label{def.clique}A \textit{clique} of $\left(  E,w,d\right)  $ will mean a
subset of $E$ that is an $\alpha$-clique for some $\alpha\in\mathbb{V}$.
\end{definition}

Thus, any $1$-element subset of $E$ is a clique (and an $\alpha$-clique for
every $\alpha\in\mathbb{V}$). The same holds for the empty subset. Any
$2$-element subset $\left\{  a,b\right\}  $ of $E$ is a clique and, in fact, a
$d\left(  a,b\right)  $-clique.

Note that the notion of a clique (and of an $\alpha$-clique) depends only on
$E$ and $d$, not on $w$.

\begin{example}
For this example, let $m$, $\mathbb{V}$, $E$, $w$ and $d$ be as in Example
\ref{exa.Vultra.a=bmodm}. Then:

\begin{enumerate}
\item[\textbf{(a)}] The $0$-cliques of $E$ are the subsets of $E$ whose
elements are all mutually congruent modulo $m$.

\item[\textbf{(b)}] The $1$-cliques of $E$ are the subsets of $E$ that have no
two distinct elements congruent to each other modulo $m$. Thus, any $1$-clique
has size $\leq m$ if $m$ is positive.

\item[\textbf{(c)}] If $\alpha\in\mathbb{V}$ is distinct from $0$ and $1$,
then the $\alpha$-cliques of $E$ are the subsets of $E$ having size $\leq1$.
\end{enumerate}
\end{example}

Using the notion of cliques, we can assign a number $\operatorname*{mcs}%
\left(  E,w,d\right)  $ to our $\mathbb{V}$-ultra triple $\left(
E,w,d\right)  $:

\begin{definition}
\label{def.mcs}Let $\operatorname*{mcs}\left(  E,w,d\right)  $ denote the
maximum size of a clique of $\left(  E,w,d\right)  $. (This is well-defined
whenever $E$ is finite, and sometimes even otherwise.)
\end{definition}

Clearly, $\operatorname*{mcs}\left(  E,w,d\right)  \leq\left\vert E\right\vert
$, since any clique of $\left(  E,w,d\right)  $ is a subset of $E$.

\begin{example}
\label{exa.bhargava.mcs.1}Let $\mathbb{V}$, $E$, $w$ and $d$ be as in Example
\ref{exa.bhargava.1}. Then, $\left\{  0,1,2\right\}  $ is a $3$-clique of
$\left(  E,w,d\right)  $ and has size $3$; no larger cliques exist in $\left(
E,w,d\right)  $. Thus, $\operatorname*{mcs}\left(  E,w,d\right)  =3$.
\end{example}

\begin{example}
For this example, let $m$, $\mathbb{V}$, $E$, $w$ and $d$ be as in Example
\ref{exa.Vultra.a=bmodm}. Then:

\begin{enumerate}
\item[\textbf{(a)}] If $m=2$ and $E=\left\{  1,2,3,4,5,6\right\}  $, then
$\operatorname*{mcs}\left(  E,w,d\right)  =3$, due to the $0$-clique $\left\{
1,3,5\right\}  $ having maximum size among all cliques.

\item[\textbf{(b)}] If $m=3$ and $E=\left\{  1,2,3,4,5,6\right\}  $, then
$\operatorname*{mcs}\left(  E,w,d\right)  =3$, due to the $1$-clique $\left\{
1,2,3\right\}  $ having maximum size among all cliques.
\end{enumerate}
\end{example}

We can now state a stronger version of Theorem \ref{thm.bh-geg}:

\begin{theorem}
\label{thm.bh-geg-mcs}Let $\left(  E,w,d\right)  $ be a $\mathbb{V}$-ultra
triple such that $E$ is finite. Let $\mathcal{F}$ be the Bhargava greedoid of
$\left(  E,w,d\right)  $. Let $\mathbb{K}$ be a field of size $\left\vert
\mathbb{K}\right\vert \geq\operatorname*{mcs}\left(  E,w,d\right)  $. Then,
$\mathcal{F}$ is the Gaussian elimination greedoid of a vector family over
$\mathbb{K}$.
\end{theorem}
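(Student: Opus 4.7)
I would construct an explicit vector family $(v_e)_{e \in E}$ in $\mathbb{K}^m$ (with $m = |E|$) whose Gaussian elimination greedoid equals $\mathcal{F}$, using the hypothesis $|\mathbb{K}| \geq \operatorname{mcs}(E,w,d)$ at exactly one place. First I would set up the dendrogram $\mathcal{T}$ of $(E,d)$: its leaves are the elements of $E$, its internal nodes are the equivalence classes of the relation ``$d(\cdot,\cdot) \leq \alpha$'' for the various thresholds $\alpha \in \mathbb{V}$, and each internal node $u$ carries a height $h(u) \in \mathbb{V}$ equal to the common distance between any two leaves whose join in $\mathcal{T}$ is $u$. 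The number of children of $u$ equals the maximum size of an $h(u)$-clique among the leaves below $u$; maximizing over $u$ therefore yields $\operatorname{mcs}(E,w,d)$. Hence one can choose, at each internal node $u$ with $k_u$ children, pairwise distinct elements $\lambda_1(u), \ldots, \lambda_{k_u}(u) \in \mathbb{K}$---this is the single step that consumes the hypothesis on $|\mathbb{K}|$.

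Next, I would order $E$ arbitrarily as $e_1, \ldots, e_m$ and, for each $e \in E$, read off the path $u_0 \to u_1 \to \cdots \to u_{\ell(e)} = e$ from the root down to $e$, recording the index $c_i(e)$ for which $u_{i+1}$ is the $c_i(e)$-th child of $u_i$. Then I would define $v_e \in \mathbb{K}^m$ whose coordinates are built from the scalars $\lambda_{c_i(e)}(u_i)$, normalized by factors encoding $w(e)$ and the heights $h(u_i)$, following a Vandermonde-style template. The shape of $v_e$ is engineered so that for any $k$-subset $F \subseteq E$, the $k \times k$ matrix $M_F = (\pi_k(v_e))_{e \in F}$ reduces, via row operations that track the subtree of $\mathcal{T}$ spanned by $F$, to block-triangular form whose diagonal blocks are Vandermonde matrices in the $\lambda$'s encountered at the relevant internal nodes.

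The main lemma would then compute $\det M_F$ as a product, over internal nodes $u$ of $\mathcal{T}$, of a Vandermonde factor $\prod_{i<j}(\lambda_{c_i}(u) - \lambda_{c_j}(u))$ ranging over the children of $u$ touched by $F$, multiplied by a scalar depending only on $w$ and $h$. Since the $\lambda_i(u)$ are pairwise distinct, this determinant is nonzero precisely when, at every internal node $u$, the elements of $F$ lying below $u$ sit in pairwise different child-subtrees. A parallel decomposition of $\operatorname{PER}(F)$ as a sum over nodes of $\mathcal{T}$ shows that this ``spread'' condition is equivalent to $F$ maximizing the perimeter among $k$-subsets of $E$; combining both observations identifies $\{F : \det M_F \neq 0\}$ with $\mathcal{F}$, which is the claim.

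The main obstacle is this final step: proving simultaneously (a) that the determinant factorizes as advertised and (b) that the perimeter-maximizing $k$-subsets are exactly the tree-spread $k$-subsets. Part (b) rests on an exchange argument driven by the ultrametric triangle inequality (essentially already present in \cite{GriPet19}), while (a) is where the bound $|\mathbb{K}| \geq \operatorname{mcs}(E,w,d)$ is felt in its sharpest form, because a non-degenerate Vandermonde in $k_u$ scalars requires $k_u$ distinct elements of $\mathbb{K}$. An appealing variant of this plan is an induction on the depth of $\mathcal{T}$, where the inductive step contracts the deepest internal node and invokes the hypothesis only for its $k_u$-many children---this refinement makes it transparent why $\operatorname{mcs}$, and not $|E|$, is the correct threshold.
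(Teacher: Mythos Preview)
Your construction of the dendrogram $\mathcal{T}$ and the choice of distinct scalars $\lambda_i(u)$ at each internal node is essentially the paper's valadic-representation step (Theorem~\ref{thm.approx-p}): both use the hypothesis $|\mathbb{K}| \geq \operatorname{mcs}(E,w,d)$ precisely to pick enough distinct field elements at every branching. The remainder of your plan, however, contains a genuine gap.

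Your main lemma asserts that $\det M_F \neq 0$ if and only if, at every internal node $u$, the elements of $F$ below $u$ lie in pairwise different child-subtrees (``tree-spread''), and that this is equivalent to $F$ being perimeter-maximizing among $|F|$-subsets. The second equivalence is false. Take $E = \{1,2,3,4\}$ with constant $w$, distances $d(1,2) = d(3,4) = 1$ and all other distances equal to $2$. The dendrogram has a root with two children, each a cherry, so any tree-spread subset has size at most $2$. But the full set $E$ lies in $\mathcal{F}$ (Remark~\ref{rmk.bhargava.EinF}), and so does every $3$-subset (all four have perimeter $3w+5$); none of these is tree-spread. Conversely, with non-constant $w$ only the maximum-weight singletons belong to $\mathcal{F}$, whereas every singleton is vacuously tree-spread; a ``scalar depending only on $w$ and $h$'' multiplying an empty Vandermonde product cannot distinguish them. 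So neither implication holds, and no vector family whose determinant obeys your factorization can realize $\mathcal{F}$.

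The paper sidesteps this by not attempting a direct factorization over $\mathbb{K}$. It first uses your tree labels to embed $E$ into the group algebra $\mathbb{L} = \mathbb{K}[\mathbb{V}]$ (Theorem~\ref{thm.approx-p}), then builds a vector family over $\mathbb{L}_+$ via a greedy ordering $c_1,\ldots,c_m$ and the polynomials $f_j(X)=\prod_{i<j}(X-c_i)$ (Theorem~\ref{thm.poly-ut-greed}). The resulting determinant is, up to a monomial factor, the single Vandermonde product $\prod_{i<j}(u_i-u_j)$ in $\mathbb{L}$, which is \emph{always} nonzero; what detects $F \in \mathcal{F}$ is that its \emph{order} equals $\sum_{j\leq k}\rho_j - \operatorname{PER}(F)$, so it vanishes exactly when $\operatorname{PER}(F)$ is maximal. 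Projecting via $\pi(x)=[t_0]x$ then turns ``order $=0$'' into ``image $\neq 0$ in $\mathbb{K}$''. The perimeter enters additively into this order, which is why maximizing it---rather than any combinatorial spread condition on $\mathcal{T}$---governs the nonvanishing.
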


Theorem \ref{thm.bh-geg-mcs} is stronger than Theorem \ref{thm.bh-geg} because
$\left\vert E\right\vert \geq\operatorname*{mcs}\left(  E,w,d\right)  $.

We shall prove Theorem \ref{thm.bh-geg-mcs} in Section \ref{sect.main-proof}.

\section{The converse direction}

Before that, let us explore the question whether the bound $\left\vert
\mathbb{K}\right\vert \geq\operatorname*{mcs}\left(  E,w,d\right)  $ can be
improved. In an important particular case -- namely, when the map $w$ is
constant\footnote{A map $f:X\rightarrow Y$ between two sets $X$ and $Y$ is
said to be \emph{constant} if all values of $f$ are equal (i.e., if every
$x_{1},x_{2}\in X$ satisfy $f\left(  x_{1}\right)  =f\left(  x_{2}\right)  $).
In particular, if $\left\vert X\right\vert \leq1$, then $f:X\rightarrow Y$ is
automatically constant.} --, it cannot, as the following theorem shows:

\begin{theorem}
\label{thm.converse1}Let $\left(  E,w,d\right)  $ be a $\mathbb{V}$-ultra
triple such that $E$ is finite. Assume that the map $w$ is constant. Let
$\mathcal{F}$ be the Bhargava greedoid of $\left(  E,w,d\right)  $. Let
$\mathbb{K}$ be a field such that $\mathcal{F}$ is the Gaussian elimination
greedoid of a vector family over $\mathbb{K}$. Then, $\left\vert
\mathbb{K}\right\vert \geq\operatorname*{mcs}\left(  E,w,d\right)  $.
\end{theorem}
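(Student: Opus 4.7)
The plan is to force the field $\mathbb{K}$ to contain enough elements by exhibiting $k = \operatorname{mcs}(E, w, d)$ distinct ``slopes'' inside it. I would start by fixing a clique $C = \{c_1, \ldots, c_k\}$ of $(E, w, d)$ of maximum size $k$ (so $C$ is an $\alpha$-clique for some $\alpha \in \mathbb{V}$), and a vector family $(v_e)_{e \in E}$ in $\mathbb{K}^m$ (with $m \geq |E|$) whose Gaussian elimination greedoid is $\mathcal{F}$. The heart of the argument is what I will call Lemma~A: there exists a subset $F \subseteq E \setminus C$ such that $F \cup \{c_i\} \in \mathcal{F}$ for every $i$, and $F \cup \{c_i, c_j\} \in \mathcal{F}$ for every pair $i \neq j$.

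Granting Lemma~A, the conclusion follows by linear algebra. Set $f = |F|$, and let $W \subseteq \mathbb{K}^{f+2}$ and $W' \subseteq \mathbb{K}^{f+1}$ denote the spans of $\pi_{f+2}(v_e)$ and $\pi_{f+1}(v_e)$, respectively, for $e$ ranging over $F$; both have dimension $f$ by Lemma~A. The projection $p : \mathbb{K}^{f+2} \to \mathbb{K}^{f+1}$ that drops the last coordinate sends $W$ onto $W'$, and since these spaces are equidimensional this restriction $p|_W : W \to W'$ is a linear isomorphism. Hence $p$ descends to a surjective linear map $\bar{p} : \mathbb{K}^{f+2}/W \to \mathbb{K}^{f+1}/W'$ from a $2$-dimensional quotient to a $1$-dimensional one, with $1$-dimensional kernel $\ell \subseteq \mathbb{K}^{f+2}/W$. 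Write $\bar{v}_{c_i}$ for the class of $\pi_{f+2}(v_{c_i})$ in $\mathbb{K}^{f+2}/W \cong \mathbb{K}^2$. Then $F \cup \{c_i\} \in \mathcal{F}$ forces $\pi_{f+1}(v_{c_i}) \notin W'$, hence $\bar{p}(\bar{v}_{c_i}) \neq 0$, hence $\bar{v}_{c_i} \notin \ell$; and $F \cup \{c_i, c_j\} \in \mathcal{F}$ forces $\bar{v}_{c_i}$ and $\bar{v}_{c_j}$ to be linearly independent in $\mathbb{K}^2$. Thus the $k$ vectors $\bar{v}_{c_1}, \ldots, \bar{v}_{c_k}$ determine $k$ pairwise distinct lines through the origin in $\mathbb{K}^2$, none equal to $\ell$. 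Since the set of all such lines has cardinality $|\mathbb{K}| + 1$, removing $\ell$ leaves only $|\mathbb{K}|$ options, whence $k \leq |\mathbb{K}|$, which is the desired conclusion.

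The hard part is Lemma~A, and it is here that the hypothesis that $w$ is constant becomes essential. The proof uses the hierarchical structure of the ultrametric $(E, d)$: since $d$ satisfies the ultrametric triangle inequality, $E$ can be realized as the leaf set of a rooted tree $\mathcal{T}$ whose internal nodes carry heights (the common pairwise distance between descendant leaves in different children). The maximal clique $C$ corresponds to the choice of one leaf from each child-subtree of some internal node $v$ of $\mathcal{T}$, and $F$ is to be assembled from leaves in the sibling subtrees along the path from $v$ up to the root of $\mathcal{T}$. The identity
\[
\sum_{\substack{\{a, b\} \subseteq S \\ a \neq b}} d(a, b) \;=\; \sum_{u} \bigl( h(u) - h(p(u)) \bigr) \binom{|S \cap V(u)|}{2},
\]
summed over internal nodes $u$ of $\mathcal{T}$ (where $h(u)$ is the height of $u$, $p(u)$ its parent, $V(u)$ the set of leaves below $u$, and $h(p(\mathrm{root})) := 0$), reduces the verification to a convexity observation: since every non-root coefficient $h(u) - h(p(u))$ is negative, $\PER(S)$ (with $w$ constant) is maximized precisely when $|S \cap V(u)|$ is distributed as evenly as possible among the children of each internal node $u$. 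I would construct $F$ inductively, proceeding level by level up from $v$ to the root of $\mathcal{T}$, so that both $F \cup \{c_i\}$ and $F \cup \{c_i, c_j\}$ realize such balanced distributions at every level and thus achieve maximum perimeter for their respective sizes.
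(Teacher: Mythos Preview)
Your linear-algebra deduction of $|\mathbb{K}| \geq k$ from Lemma~A is correct and is in fact cleaner than the paper's. Where the paper expands an $(r+2)\times(r+2)$ determinant by Laplace along its last row and needs an extra non-membership condition --- namely $(N \cup \{i,j\}) \setminus \{p\} \notin \mathcal{F}$ for every $p \in N$ --- to kill the cross-terms, your quotient-space argument sidesteps that entirely by identifying the forbidden line $\ell = \ker\bar p$ directly and counting points of $\mathbb{P}^1(\mathbb{K}) \setminus \{\ell\}$. So conditions (i) and (ii) alone, as in your Lemma~A, already suffice for the endgame.

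The gap is in your sketch of Lemma~A. The convexity claim --- that $\PER(S)$ is maximized precisely when $|S \cap V(u)|$ is balanced among the children of each internal node $u$ --- is false once sibling subtrees carry different heights, because the coefficients $h(u)-h(p(u))$ attached to siblings need not agree, so convexity of $x \mapsto \binom{x}{2}$ does not force an even split. Concretely: take a root of height $10$ with two children $A$ (height $9$, leaves $1,2$) and $B$ (height $1$, leaves $3,4,5$), and $w$ constant. The maximum clique is $C=\{3,4,5\}$, so $v=B$. For a $3$-subset $S$, the root-level splits $(|S\cap V(A)|,|S\cap V(B)|)=(1,2)$ and $(2,1)$ are equally ``balanced'', but they give perimeters $21$ and $29$ respectively; hence $\{1,3,4\}\notin\mathcal{F}$ while $\{1,2,3\}\in\mathcal{F}$. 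Your heuristic therefore does not select between $F=\{1\}$ and $F=\{1,2\}$, yet only the latter satisfies Lemma~A. The inductive construction you outline would need a genuinely different optimality criterion at each level.

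The paper proves the analogue of your Lemma~A by a different and more robust route that avoids the tree altogether. It takes the closed ball $B=B_\beta(c)\supseteq C$ (in tree language this is $V(v)$), chooses a set $S\in\mathcal{F}$ of \emph{smallest size} subject to $|S\cap B|\geq 2$, and uses the greedoid deletion axiom to show that in fact $|S\cap B|=2$. Setting $N=S\setminus B$, an exchange lemma --- resting only on the two facts that every point outside $B$ is equidistant from all points of $B$, and that pairwise distances inside $B$ are $\leq\beta$ with equality on $C$ --- lets one swap the two elements of $S\cap B$ for any one or two elements of $C$ without decreasing the perimeter. This yields exactly your conditions (i) and (ii) with $F=N$. (The minimality of $S$ also gives the extra non-membership condition the paper's determinant computation requires, but with your projective-line argument you would not need it.)
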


We shall prove Theorem \ref{thm.converse1} in Section \ref{sect.converse}.

When the map $w$ in a $\mathbb{V}$-ultra triple $\left(  E,w,d\right)  $ is
constant, Theorems \ref{thm.bh-geg-mcs} and \ref{thm.converse1} combined yield
an exact characterization of those fields $\mathbb{K}$ for which the Bhargava
greedoid of $\left(  E,w,d\right)  $ can be represented as the Gaussian
elimination greedoid of a vector family over $\mathbb{K}$: Namely, those
fields are precisely the fields $\mathbb{K}$ of size $\left\vert
\mathbb{K}\right\vert \geq\operatorname*{mcs}\left(  E,w,d\right)  $. When $w$
is not constant, Theorem \ref{thm.bh-geg-mcs} gives a sufficient condition; we
don't know a necessary condition. Here are two examples:

\begin{example}
Let $\mathbb{V}$, $E$, $w$, $d$ and $\mathcal{F}$ be as in Example
\ref{exa.bhargava.1}. Then, $\operatorname*{mcs}\left(  E,w,d\right)  =3$ (as
we saw in Example \ref{exa.bhargava.mcs.1}). Hence, Theorem
\ref{thm.bh-geg-mcs} shows that $\mathcal{F}$ can be represented as the
Gaussian elimination greedoid of a vector family over any field $\mathbb{K}$
of size $\left\vert \mathbb{K}\right\vert \geq3$. This bound on $\left\vert
\mathbb{K}\right\vert $ is optimal, since the Bhargava greedoid $\mathcal{F}$
is not the Gaussian elimination greedoid of any vector family over the
$2$-element field $\mathbb{F}_{2}$. (But this does not follow from Theorem
\ref{thm.converse1}, because $w$ is not constant.)
\end{example}

\begin{example}
Let $\mathbb{V}$, $E$, $w$, $d$ and $\mathcal{F}$ be as in Example
\ref{exa.bhargava.0}. Then, $\operatorname*{mcs}\left(  E,w,d\right)  =3$,
since $E$ itself is a clique. Hence, Theorem \ref{thm.bh-geg-mcs} shows that
$\mathcal{F}$ can be represented as the Gaussian elimination greedoid of a
vector family over any field $\mathbb{K}$ of size $\left\vert \mathbb{K}%
\right\vert \geq3$. However, this bound on $\left\vert \mathbb{K}\right\vert $
is not optimal. Indeed, the Bhargava greedoid $\mathcal{F}$ is the Gaussian
elimination greedoid of the vector family $\left(  v_{e}\right)  _{e\in
E}=\left(  v_{1},v_{2},v_{3}\right)  $ over the field $\mathbb{F}_{2}$, where
$v_{1}=\left(
\begin{array}
[c]{c}%
0\\
0\\
1
\end{array}
\right)  $, $v_{2}=\left(
\begin{array}
[c]{c}%
0\\
1\\
1
\end{array}
\right)  $ and $v_{3}=\left(
\begin{array}
[c]{c}%
1\\
1\\
1
\end{array}
\right)  $.
\end{example}

\begin{question}
Let $\left(  E,w,d\right)  $ be a $\mathbb{V}$-ultra triple such that $E$ is
finite. How to characterize the fields $\mathbb{K}$ for which the Bhargava
greedoid of $\left(  E,w,d\right)  $ is the Gaussian elimination greedoid of a
vector family over $\mathbb{K}$ ? Is there a constant $c\left(  E,w,d\right)
$ such that these fields are precisely the fields of size $\geq c\left(
E,w,d\right)  $ ?
\end{question}

\begin{remark}
Let $E$, $w$, $d$ and $\mathcal{F}$ be as in Theorem \ref{thm.bh-geg}. Let
$\mathbb{K}$ be any field. For each $k\in\mathbb{N}$, let $\mathcal{F}_{k}$ be
the set of all $k$-element sets in $\mathcal{F}$.

If $\mathcal{F}$ is the Gaussian elimination greedoid of a vector family over
$\mathbb{K}$, then each $\mathcal{F}_{k}$ with $k\in\left\{  0,1,\ldots
,\left\vert E\right\vert \right\}  $ is the collection of bases of a
representable matroid on the ground set $E$. (Indeed, this follows from
Proposition \ref{prop.geg.matroid-rep}, since $\mathcal{F}_{k}$ is nonempty.)
But the converse is not true: It can happen that each $\mathcal{F}_{k}$ with
$k\in\left\{  0,1,\ldots,\left\vert E\right\vert \right\}  $ is the collection
of bases of a representable matroid on the ground set $E$, yet $\mathcal{F}$
is not the Gaussian elimination greedoid of a vector family over $\mathbb{K}$.
For example, this happens if $E=\left\{  1,2,3\right\}  $ and both maps $w$
and $d$ are constant (so that $\mathcal{F}=2^{E}$), and $\mathbb{K}%
=\mathbb{F}_{2}$.
\end{remark}

\section{\label{sect.p-adic}Valadic $\mathbb{V}$-ultra triples}

As a first step towards the proof of Theorem \ref{thm.bh-geg-mcs}, we will
next introduce a special kind of $\mathbb{V}$-ultra triples which, in a way,
are similar to Bhargava's for integers (see \cite[Example 2.5 and
\S 9]{GriPet19}). We will call them \textit{valadic}\footnote{The name is a
homage to the notion of a valuation ring, which is latent in the argument that
follows (although never used explicitly). Indeed, if we define the notion of a
valuation ring as in \cite[Exercise 11.1]{Eisenb95}, then the $\mathbb{K}%
$-algebra $\mathbb{L}_{+}$ constructed below is an instance of a valuation
ring (with $\mathbb{L}$ being its fraction field, and $\operatorname*{ord}%
:\mathbb{L}\setminus\left\{  0\right\}  \rightarrow\mathbb{V}$ being its
valuation), and many of its properties that will be used below are instances
of general properties of valuation rings. If we extended our argument to the
more general setting of valuation rings, we would also recover Bhargava's
original ultra triples based on integer divisibility (see \cite[Example 2.5
and \S 9]{GriPet19}). However, we have no need for this generality (as we only
need the construction as a stepping stone towards our proof of Theorem
\ref{thm.bh-geg-mcs}), and prefer to remain elementary and self-contained.},
and we will see (in Theorem \ref{thm.poly-ut-greed}) that they satisfy Theorem
\ref{thm.bh-geg}. Afterwards (in Theorem \ref{thm.approx-p}), we will prove
that any $\mathbb{V}$-ultra triple with finite ground set is isomorphic (in an
appropriate sense) to a valadic one over a sufficiently large field. Combining
these two facts, we will then readily obtain Theorem \ref{thm.bh-geg-mcs}.

Recall that $\mathbb{V}$ is a totally ordered abelian group (see Convention
\ref{conv.V} for details). Let us introduce some further notations that will
be used throughout Section \ref{sect.p-adic}.

\begin{definition}
We fix a field $\mathbb{K}$. Let $\mathbb{K}\left[  \mathbb{V}\right]  $
denote the group algebra of the group $\mathbb{V}$ over $\mathbb{K}$. This is
a free $\mathbb{K}$-module with basis $\left(  t_{\alpha}\right)  _{\alpha
\in\mathbb{V}}$; it becomes a $\mathbb{K}$-algebra with unity $t_{0}$ and with
multiplication determined by%
\[
t_{\alpha}t_{\beta}=t_{\alpha+\beta}\ \ \ \ \ \ \ \ \ \ \text{for all }%
\alpha,\beta\in\mathbb{V}.
\]
This group algebra $\mathbb{K}\left[  \mathbb{V}\right]  $ is commutative,
since the group $\mathbb{V}$ is abelian.

Let $\mathbb{V}_{\geq0}$ be the set of all $\alpha\in\mathbb{V}$ satisfying
$\alpha\geq0$; this is a submonoid of the group $\mathbb{V}$. Let
$\mathbb{K}\left[  \mathbb{V}_{\geq0}\right]  $ be the monoid algebra of this
monoid $\mathbb{V}_{\geq0}$ over $\mathbb{K}$. This is a $\mathbb{K}$-algebra
defined in the same way as $\mathbb{K}\left[  \mathbb{V}\right]  $, but using
$\mathbb{V}_{\geq0}$ instead of $\mathbb{V}$. It is clear that $\mathbb{K}%
\left[  \mathbb{V}_{\geq0}\right]  $ is the $\mathbb{K}$-subalgebra of
$\mathbb{K}\left[  \mathbb{V}\right]  $ spanned by the basis elements
$t_{\alpha}$ with $\alpha\in\mathbb{V}_{\geq0}$.
\end{definition}

\begin{example}
\label{exa.ord.V=Z}If $\mathbb{V}=\mathbb{Z}$ (with the usual addition and
total order), then $\mathbb{V}_{\geq0}=\mathbb{N}$. In this case, the group
algebra $\mathbb{K}\left[  \mathbb{V}\right]  $ is the Laurent polynomial ring
$\mathbb{K}\left[  X,X^{-1}\right]  $ in a single indeterminate $X$ over
$\mathbb{K}$ (indeed, $t_{1}$ plays the role of $X$, and more generally, each
$t_{\alpha}$ plays the role of $X^{\alpha}$), and its subalgebra
$\mathbb{K}\left[  \mathbb{V}_{\geq0}\right]  $ is the polynomial ring
$\mathbb{K}\left[  X\right]  $.
\end{example}

This example should be regarded as a guide; even in the general case (where
$\mathbb{V}$ does not have to be $\mathbb{Z}$), the reader cannot go wrong
thinking of $\mathbb{K}\left[  \mathbb{V}\right]  $ as a generalized Laurent
polynomial ring and of $\mathbb{K}\left[  \mathbb{V}_{\geq0}\right]  $ as a
generalized polynomial ring (in a single indeterminate) and of $t_{\alpha}$ as
a generalized monomial $X^{\alpha}$. This analogy shall clarify much of what follows.

\begin{definition}
\ 

\begin{enumerate}
\item[\textbf{(a)}] Let $\mathbb{L}$ be the commutative $\mathbb{K}$-algebra
$\mathbb{K}\left[  \mathbb{V}\right]  $, and let $\mathbb{L}_{+}$ be its
$\mathbb{K}$-subalgebra $\mathbb{K}\left[  \mathbb{V}_{\geq0}\right]  $. Thus,
the $\mathbb{K}$-module $\mathbb{L}$ has basis $\left(  t_{\alpha}\right)
_{\alpha\in\mathbb{V}}$, while its $\mathbb{K}$-submodule $\mathbb{L}_{+}$ has
basis $\left(  t_{\alpha}\right)  _{\alpha\in\mathbb{V}_{\geq0}}$.

\item[\textbf{(b)}] If $a\in\mathbb{L}$ and $\beta\in\mathbb{V}$, then
$\left[  t_{\beta}\right]  a$ shall denote the coefficient of $t_{\beta}$ in
$a$ (when $a$ is expanded in the basis $\left(  t_{\alpha}\right)  _{\alpha
\in\mathbb{V}}$ of $\mathbb{L}$). This is an element of $\mathbb{K}$. For
example, $\left[  t_{3}\right]  \left(  t_{2}-t_{3}+5t_{6}\right)  =-1$ (if
$\mathbb{V}=\mathbb{Z}$).

\item[\textbf{(c)}] If $a\in\mathbb{L}$ is nonzero, then the \textit{order} of
$a$ is defined to be the smallest $\beta\in\mathbb{V}$ such that $\left[
t_{\beta}\right]  a\neq0$. This order is an element of $\mathbb{V}$, and is
denoted by $\operatorname*{ord}a$. For example, $\operatorname*{ord}\left(
t_{2}-t_{3}+5t_{6}\right)  =2$ (if $\mathbb{V}=\mathbb{Z}$). Note that
$\operatorname*{ord}\left(  t_{\alpha}\right)  =\alpha$ for each $\alpha
\in\mathbb{V}$.
\end{enumerate}
\end{definition}

The notations we just defined generalize standard features of Laurent
polynomials: If $\mathbb{V}=\mathbb{Z}$ as in Example \ref{exa.ord.V=Z}, then
the coefficient $\left[  t_{\beta}\right]  a$ of an element $a\in
\mathbb{L}=\mathbb{K}\left[  X,X^{-1}\right]  $ is simply the coefficient of
$X^{\beta}$ in the Laurent polynomial $a$, and the order $\operatorname*{ord}%
a$ of a nonzero Laurent polynomial $a\in\mathbb{L}$ is the order of $a$ in the
usual sense (i.e., the smallest exponent of a monomial appearing in $a$). If
we substitute $X^{-1}$ for $X$ (thus replacing each monomial $X^{\beta}$ by
$X^{-\beta}$), then the order of a Laurent polynomial becomes its degree (with
a negative sign). In light of this, the following properties of orders should
not be surprising:

\begin{lemma}
\label{lem.ord.triangle}\ 

\begin{enumerate}
\item[\textbf{(a)}] A nonzero element $a\in\mathbb{L}$ belongs to
$\mathbb{L}_{+}$ if and only if its order $\operatorname*{ord}a$ is
nonnegative (i.e., we have $\operatorname*{ord}a\geq0$).

\item[\textbf{(b)}] We have $\operatorname*{ord}\left(  -a\right)
=\operatorname*{ord}a$ for any nonzero $a\in\mathbb{L}$.

\item[\textbf{(c)}] Let $a$ and $b$ be two nonzero elements of $\mathbb{L}$.
Then, $ab$ is nonzero and satisfies $\operatorname*{ord}\left(  ab\right)
=\operatorname*{ord}a+\operatorname*{ord}b$.

\item[\textbf{(d)}] Let $a$ and $b$ be two nonzero elements of $\mathbb{L}$
such that $a+b$ is nonzero. Then, $\operatorname*{ord}\left(  a+b\right)
\geq\min\left\{  \operatorname*{ord}a,\operatorname*{ord}b\right\}  $.
\end{enumerate}
\end{lemma}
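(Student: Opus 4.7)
The plan is to dispatch the four parts in order. Parts (a), (b), (d) are essentially bookkeeping about supports in the finite sum expansions; the only part with real content is (c), which is the only place where translation-invariance of the order on $\mathbb{V}$ enters non-trivially.

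For (a), I would use that $\mathbb{L}_+$ is by definition the $\mathbb{K}$-linear span of the basis elements $t_\alpha$ with $\alpha \geq 0$. Hence a nonzero $a \in \mathbb{L}$ lies in $\mathbb{L}_+$ if and only if its (finite, nonempty) support $\{\beta \in \mathbb{V} : [t_\beta] a \neq 0\}$ is contained in $\mathbb{V}_{\geq 0}$, and since $\operatorname*{ord} a$ is by definition the minimum of that support, this containment is equivalent to $\operatorname*{ord} a \geq 0$. Part (b) is immediate, because $[t_\beta](-a) = -[t_\beta] a$ for every $\beta$, so $a$ and $-a$ have identical support and hence the same order.

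For (c), I would write $\alpha = \operatorname*{ord} a$ and $\beta = \operatorname*{ord} b$, expand $a = \sum_\gamma c_\gamma t_\gamma$ and $b = \sum_\delta d_\delta t_\delta$ in the basis $(t_\mu)_{\mu \in \mathbb{V}}$, and compute $ab = \sum_\epsilon \bigl( \sum_{\gamma + \delta = \epsilon} c_\gamma d_\delta \bigr) t_\epsilon$. The decisive observation is that whenever $c_\gamma d_\delta \neq 0$ we have $\gamma \geq \alpha$ and $\delta \geq \beta$, and translation-invariance of $\leq$ together with the totality of the order then yields $\gamma + \delta \geq \alpha + \beta$, with equality only when $(\gamma, \delta) = (\alpha, \beta)$. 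Consequently the coefficient of $t_{\alpha + \beta}$ in $ab$ equals $c_\alpha d_\beta$, which is nonzero, while the coefficients of all $t_\epsilon$ with $\epsilon < \alpha + \beta$ vanish. This simultaneously gives $ab \neq 0$ and $\operatorname*{ord}(ab) = \alpha + \beta$.

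For (d), I would set $\gamma = \min\{\operatorname*{ord} a, \operatorname*{ord} b\}$ and observe that for every $\epsilon \in \mathbb{V}$ with $\epsilon < \gamma$, both $\epsilon < \operatorname*{ord} a$ and $\epsilon < \operatorname*{ord} b$ hold, so $[t_\epsilon] a = [t_\epsilon] b = 0$ by the definition of order, and therefore $[t_\epsilon](a + b) = 0$. Since $a + b$ is assumed nonzero, its order exists and must satisfy $\operatorname*{ord}(a + b) \geq \gamma$. The only mild obstacle I anticipate is in (c), where one must carefully use both translation-invariance and totality of the order to rule out any cancellation at the leading exponent $\alpha + \beta$; the other three parts follow essentially from unwinding the definitions.
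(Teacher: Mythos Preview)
Your proposal is correct and follows essentially the same approach as the paper's proof: both argue via supports for (a), (b), (d), and for (c) both compute the coefficient of $t_{\alpha+\beta}$ in $ab$ (showing it equals the product of the two leading coefficients, hence nonzero) while checking that all lower coefficients vanish. The paper writes out the convolution sums more laboriously and handles (d) by contradiction rather than directly, but these are cosmetic differences.
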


See Section \ref{sect.ord-proofs} for the (straightforward) proof of this
lemma. (This proof is entirely analogous to the proof of the corresponding
properties of usual polynomials.)

\begin{corollary}
\label{cor.ord.intdom}The ring $\mathbb{L}$ is an integral domain.
\end{corollary}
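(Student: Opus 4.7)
The plan is to deduce this immediately from Lemma \ref{lem.ord.triangle}(c). By construction, $\mathbb{L}$ is the group algebra $\mathbb{K}\left[ \mathbb{V}\right]$, so it is a commutative $\mathbb{K}$-algebra with unity $t_{0}$. Since $\mathbb{K}$ is a field, we have $t_{0} = 1_{\mathbb{L}} \neq 0$ in $\mathbb{L}$. Hence to establish that $\mathbb{L}$ is an integral domain, it suffices to show that $\mathbb{L}$ has no zero divisors -- that is, that $ab \neq 0$ whenever $a,b \in \mathbb{L}$ are both nonzero.

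This nonvanishing is precisely the first clause of Lemma \ref{lem.ord.triangle}(c), which asserts that for any two nonzero $a,b \in \mathbb{L}$, the product $ab$ is nonzero (and moreover $\operatorname{ord}\left( ab\right) = \operatorname{ord} a + \operatorname{ord} b$). So the corollary is a one-line consequence of that lemma; I would simply fix two nonzero $a,b \in \mathbb{L}$, invoke Lemma \ref{lem.ord.triangle}(c) to conclude $ab \neq 0$, and note that this (together with commutativity and $1 \neq 0$) is the definition of an integral domain.

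I expect no obstacle here, because the substantive content has been absorbed into Lemma \ref{lem.ord.triangle}(c), whose proof is deferred to Section \ref{sect.ord-proofs}. The key idea underlying that lemma -- and hence this corollary -- is that the total order on $\mathbb{V}$ prevents cancellation of the leading (i.e., lowest-order) terms in a product: writing $\alpha = \operatorname{ord} a$ and $\beta = \operatorname{ord} b$, the coefficient $\left[ t_{\alpha + \beta}\right] \left( ab\right)$ equals $\left(\left[ t_{\alpha}\right] a\right) \cdot \left(\left[ t_{\beta}\right] b\right)$, since any other pair $(\gamma,\delta) \in \mathbb{V}^{2}$ with $\gamma + \delta = \alpha + \beta$ must have $\gamma < \alpha$ or $\delta < \beta$ (using the translation-invariance of $\leq$), contributing $0$. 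This leading coefficient is a nonzero element of the field $\mathbb{K}$, hence nonzero, and this forces $ab \neq 0$.
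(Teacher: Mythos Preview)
Your proposal is correct and matches the paper's own proof, which is simply the one-line ``This follows from Lemma \ref{lem.ord.triangle} \textbf{(c)}.'' Your added remarks about $1_{\mathbb{L}} \neq 0$, commutativity, and the leading-term heuristic are accurate but go beyond what the paper records here.
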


\begin{proof}
[Proof of Corollary \ref{cor.ord.intdom}.]This follows from Lemma
\ref{lem.ord.triangle} \textbf{(c)}.
\end{proof}

Applying Lemma \ref{lem.ord.triangle} \textbf{(c)} many times, we also obtain
the following:

\begin{corollary}
\label{cor.ord.prod-to-sum}The map $\operatorname*{ord}:\mathbb{L}%
\setminus\left\{  0\right\}  \rightarrow\mathbb{V}$ transforms (finite)
products into sums. In more detail: If $\left(  a_{i}\right)  _{i\in I}$ is
any finite family of nonzero elements of $\mathbb{L}$, then the product
$\prod_{i\in I}a_{i}$ is nonzero and satisfies%
\[
\operatorname*{ord}\left(  \prod_{i\in I}a_{i}\right)  =\sum_{i\in
I}\operatorname*{ord}\left(  a_{i}\right)  .
\]

\end{corollary}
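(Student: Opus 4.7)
The plan is to prove Corollary \ref{cor.ord.prod-to-sum} by straightforward induction on the size $\left\vert I\right\vert $ of the index set, with Lemma \ref{lem.ord.triangle} \textbf{(c)} doing all the real work at the inductive step.

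For the base case $\left\vert I\right\vert =0$, the empty product equals $1=t_{0}\in\mathbb{L}$, which is nonzero; and by the remark $\operatorname*{ord}\left(  t_{\alpha}\right)  =\alpha$ for each $\alpha\in\mathbb{V}$, we have $\operatorname*{ord}\left(  t_{0}\right)  =0$, which matches the empty sum $\sum_{i\in\varnothing}\operatorname*{ord}\left(  a_{i}\right)  =0$.

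For the inductive step, I assume the claim for all families indexed by a set of size $n$, and consider a family $\left(  a_{i}\right)  _{i\in I}$ of nonzero elements of $\mathbb{L}$ with $\left\vert I\right\vert =n+1$. Pick any $j\in I$, and set $J:=I\setminus\left\{  j\right\}  $, so that $\left\vert J\right\vert =n$. By the induction hypothesis applied to $\left(  a_{i}\right)  _{i\in J}$, the product $b:=\prod_{i\in J}a_{i}$ is nonzero and satisfies $\operatorname*{ord}b=\sum_{i\in J}\operatorname*{ord}\left(  a_{i}\right)  $. Now $\prod_{i\in I}a_{i}=a_{j}\cdot b$ (using commutativity of $\mathbb{L}$, guaranteed because $\mathbb{V}$ is abelian), so Lemma \ref{lem.ord.triangle} \textbf{(c)} applied to the two nonzero elements $a_{j}$ and $b$ shows that $a_{j}\cdot b$ is nonzero and
\[
\operatorname*{ord}\left(  a_{j}\cdot b\right)  =\operatorname*{ord}a_{j}+\operatorname*{ord}b=\operatorname*{ord}a_{j}+\sum_{i\in J}\operatorname*{ord}\left(  a_{i}\right)  =\sum_{i\in I}\operatorname*{ord}\left(  a_{i}\right) ,
\]
completing the induction.

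There is no real obstacle here: the corollary is a routine induction, and the only subtleties are notational (choosing an element to split off the product, and handling the empty-product base case). Commutativity of $\mathbb{L}$ ensures that the unordered product $\prod_{i\in I}a_{i}$ is well-defined and can be reorganized as $a_{j}\cdot\prod_{i\in J}a_{i}$, so no ordering of $I$ needs to be invoked.
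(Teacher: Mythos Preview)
Your proof is correct and follows exactly the approach the paper indicates: induction on $\left\vert I\right\vert$ with Lemma \ref{lem.ord.triangle} \textbf{(c)} supplying the induction step. The paper merely says ``Induction on $\left\vert I\right\vert$; the induction step uses Lemma \ref{lem.ord.triangle} \textbf{(c)}; the straightforward details are left to the reader,'' and you have filled in precisely those details.
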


\begin{proof}
Induction on $\left\vert I\right\vert $. The induction step uses Lemma
\ref{lem.ord.triangle} \textbf{(c)}; the straightforward details are left to
the reader.
\end{proof}

We can now assign a $\mathbb{V}$-ultra triple to each subset of $\mathbb{L}$:

\begin{definition}
\label{def.padic-ut}Let $E$ be a subset of $\mathbb{L}$. Define a distance
function $d:E\timesu E\rightarrow\mathbb{V}$ by setting%
\[
d\left(  a,b\right)  =-\operatorname*{ord}\left(  a-b\right)
\ \ \ \ \ \ \ \ \ \ \text{for all }\left(  a,b\right)  \in E\timesu E.
\]
(Recall that $E\timesu E$ means the set $\left\{  \left(  a,b\right)  \in
E\times E\ \mid\ a\neq b\right\}  $.)

Then, $\left(  E,w,d\right)  $ is a $\mathbb{V}$-ultra triple whenever
$w:E\rightarrow\mathbb{V}$ is a function (by Lemma \ref{lem.valadic-ut.wd}
below). Such a $\mathbb{V}$-ultra triple $\left(  E,w,d\right)  $ will be
called \textit{valadic}.
\end{definition}

\begin{lemma}
\label{lem.valadic-ut.wd}In Definition \ref{def.padic-ut}, the triple $\left(
E,w,d\right)  $ is indeed a $\mathbb{V}$-ultra triple.
\end{lemma}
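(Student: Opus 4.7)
The plan is to verify the three things the definition of a $\mathbb{V}$-ultra triple demands from $d$: that $d$ is well-defined as a map $E \timesu E \to \mathbb{V}$, that it is symmetric, and that it satisfies the ultrametric triangle inequality. All three will come essentially for free from Lemma~\ref{lem.ord.triangle}.

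First, well-definedness. The expression $d(a,b) = -\operatorname{ord}(a-b)$ only makes sense when $a - b$ is nonzero, so I would observe at the outset that any $(a,b) \in E \timesu E$ satisfies $a \neq b$ (by definition of $E \timesu E$) and hence $a - b \neq 0$ in $\mathbb{L}$; thus $\operatorname{ord}(a-b)$ is a well-defined element of $\mathbb{V}$, and $d(a,b) \in \mathbb{V}$.

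Next, symmetry. For distinct $a, b \in E$, I would write $d(b,a) = -\operatorname{ord}(b-a) = -\operatorname{ord}(-(a-b))$ and invoke Lemma~\ref{lem.ord.triangle}\textbf{(b)} to get $\operatorname{ord}(-(a-b)) = \operatorname{ord}(a-b)$, so $d(b,a) = -\operatorname{ord}(a-b) = d(a,b)$.

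Finally, the ultrametric triangle inequality, which is the only nonroutine part (and the main obstacle, if any). For three distinct $a,b,c \in E$, I would use the identity $a - b = (a-c) + (-(b-c))$. Since $a \neq b$, the sum $(a-c) + (-(b-c))$ is nonzero, and its two summands are nonzero as well (since $a \neq c$ and $b \neq c$). Lemma~\ref{lem.ord.triangle}\textbf{(d)} (applied with Lemma~\ref{lem.ord.triangle}\textbf{(b)} to rewrite $\operatorname{ord}(-(b-c)) = \operatorname{ord}(b-c)$) then gives
\[
\operatorname{ord}(a-b) \;\geq\; \min\{\operatorname{ord}(a-c),\ \operatorname{ord}(b-c)\}.
\]
Negating both sides and using the fact that negation in the totally ordered abelian group $\mathbb{V}$ reverses order (so that $-\min\{x,y\} = \max\{-x,-y\}$ for any $x,y \in \mathbb{V}$), I obtain
\[
d(a,b) \;=\; -\operatorname{ord}(a-b) \;\leq\; \max\{-\operatorname{ord}(a-c),\ -\operatorname{ord}(b-c)\} \;=\; \max\{d(a,c),\ d(b,c)\},
\]
which is exactly the ultrametric triangle inequality. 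This completes the proof.
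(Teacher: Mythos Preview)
Your proof is correct and follows essentially the same approach as the paper: both verify well-definedness from $a\neq b$, derive symmetry from Lemma~\ref{lem.ord.triangle}\textbf{(b)}, and obtain the ultrametric triangle inequality by writing $a-b$ as a sum of two nonzero differences, applying Lemma~\ref{lem.ord.triangle}\textbf{(d)}, and negating (turning $\min$ into $\max$). The only cosmetic difference is that you decompose $a-b = (a-c) + (-(b-c))$ whereas the paper writes the equivalent $(a-c)+(c-b)$.
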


Lemma \ref{lem.valadic-ut.wd} follows easily from Lemma \ref{lem.ord.triangle}%
. (See Section \ref{sect.ord-proofs} for the details of the proof.)

Now, we claim that the Bhargava greedoid of a valadic $\mathbb{V}$-ultra
triple $\left(  E,w,d\right)  $ with finite $E$ is the Gaussian elimination
greedoid of a vector family over $\mathbb{K}$:

\begin{theorem}
\label{thm.poly-ut-greed}Let $E$ be a finite subset of $\mathbb{L}$. Define
$d$ as in Definition \ref{def.padic-ut}. Let $w:E\rightarrow\mathbb{V}$ be a
function. Then, the Bhargava greedoid of the $\mathbb{V}$-ultra triple
$\left(  E,w,d\right)  $ is the Gaussian elimination greedoid of a vector
family over $\mathbb{K}$.
\end{theorem}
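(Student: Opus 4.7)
The plan is to realize $\mathcal{F}$ as a Gaussian elimination greedoid by constructing an $n \times n$ matrix $\widehat{M}$ (with $n := \left\vert E\right\vert$) whose entries lie in $\mathbb{L}_{+}$, and then applying the ``constant term'' map $[t_{0}] : \mathbb{L}_{+} \to \mathbb{K}$ entrywise. Since $\mathbb{V}_{\geq 0}$ is a submonoid of $\mathbb{V}$ whose only element with additive inverse again in $\mathbb{V}_{\geq 0}$ is $0$, the map $[t_{0}]$ is a $\mathbb{K}$-algebra homomorphism on $\mathbb{L}_{+}$ and hence commutes with determinants. The goal is for the $k \times k$ minor of $\widehat{M}$ on rows $1, \ldots, k$ and columns $F$ to have $\operatorname{ord}$ equal to $\mu_{k} - \operatorname{PER}(F) \geq 0$, where $\mu_{k} := \max\{\operatorname{PER}(F') : F' \subseteq E, \; \left\vert F' \right\vert = k\}$, with equality iff $F$ is a maximum-perimeter $k$-subset; applying $[t_{0}]$ will then match the Bhargava greedoid condition exactly.

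To build $\widehat{M}$, I first invoke the greedoid property of $\mathcal{F}$ (established in \cite[Theorem 6.1]{GriPet19} for ultra triples, with the same argument extending to $\mathbb{V}$-ultra triples) combined with $E \in \mathcal{F}$ (Remark \ref{rmk.bhargava.EinF}) to obtain a chain $\varnothing = F_{0} \subset F_{1} \subset \cdots \subset F_{n} = E$ of sets in $\mathcal{F}$. I enumerate $F_{i} = \{e_{1}, \ldots, e_{i}\}$ and set $\mu_{i} := \operatorname{PER}(F_{i})$ and $\beta_{i} := \mu_{i} - \mu_{i-1}$. I then define
\[
\widehat{M}_{i,e} := t_{\beta_{i} - w(e)} \cdot \prod_{j=1}^{i-1}(e - e_{j}) \in \mathbb{L} \qquad (i \in \{1, \ldots, n\}, \; e \in E).
\]

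The main technical step is verifying $\operatorname{ord}(\widehat{M}_{i,e}) \geq 0$. Entries with $e \in F_{i-1}$ vanish. For $e \notin F_{i-1}$, Corollary \ref{cor.ord.prod-to-sum} yields
\[
\operatorname{ord}(\widehat{M}_{i,e}) = \beta_{i} - w(e) - \sum_{j<i} d(e, e_{j}) = \beta_{i} - \bigl(\operatorname{PER}(F_{i-1} \cup \{e\}) - \mu_{i-1}\bigr),
\]
and since $F_{i-1} \cup \{e\}$ is an $i$-subset of $E$, its perimeter is at most $\mu_{i}$, giving the nonnegativity. With this in hand, for any $F \subseteq E$ of size $k$, factoring $t_{\beta_{i}}$ from row $i$ and $t_{-w(e)}$ from column $e$ of the $k \times k$ minor, and then reducing each monic polynomial $p_{i}(X) := \prod_{j<i}(X - e_{j})$ of degree $i-1$ to $X^{i-1}$ via elementary row operations, produces a standard Vandermonde determinant; applying $\operatorname{ord}(e - e') = -d(e, e')$ then gives $\operatorname{ord}(\det \widehat{M}_{F}^{(k)}) = \mu_{k} - \operatorname{PER}(F) \geq 0$, with equality iff $F$ has maximum perimeter among $k$-subsets.

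Finally, I set $v_{e,i} := [t_{0}]\widehat{M}_{i,e} \in \mathbb{K}$ and $v_{e} := (v_{e,1}, \ldots, v_{e,n})^{T} \in \mathbb{K}^{n}$. Since $[t_{0}]$ is a ring homomorphism on $\mathbb{L}_{+}$ and every entry of $\widehat{M}$ lies in $\mathbb{L}_{+}$, the $k \times k$ minor of the $\mathbb{K}$-matrix $V := ([t_{0}]\widehat{M}_{i,e})$ satisfies $\det V_{F}^{(k)} = [t_{0}]\det \widehat{M}_{F}^{(k)}$, which is nonzero iff the order is $0$ iff $F \in \mathcal{F}$. Hence $\mathcal{F}$ is the Gaussian elimination greedoid of the vector family $(v_{e})_{e \in E}$. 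The main obstacle is the positivity $\operatorname{ord}(\widehat{M}_{i,e}) \geq 0$: it relies on setting the row shifts $\beta_{i}$ using the maxima $\mu_{i}$ from a \emph{chain} of sets in $\mathcal{F}$, and on comparing $\operatorname{PER}(F_{i-1} \cup \{e\})$ against the global maximum $\mu_{i}$ over all $i$-subsets of $E$ — this is precisely where the greedoid structure of the Bhargava greedoid becomes essential.
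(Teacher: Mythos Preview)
Your proof is correct and follows essentially the same construction as the paper: your matrix entries $\widehat{M}_{i,e}$ are exactly the paper's $a(e,i)$ (with $e_i$, $\beta_i$ playing the roles of $c_i$, $\rho_i$), and the Vandermonde computation and application of the constant-term homomorphism $[t_0]$ are identical.

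The one difference worth noting is how the chain $F_0 \subset \cdots \subset F_n$ and the identification $\mu_i = \max_{|F|=i}\operatorname{PER}(F)$ are obtained. You invoke \cite[Theorem~6.1]{GriPet19} for the greedoid property to produce the chain and read off the maximality from $F_i \in \mathcal{F}$. The paper instead builds the sequence $(c_1,\ldots,c_m)$ directly by the greedy rule (each $c_i$ maximizes $\operatorname{PER}\{c_1,\ldots,c_i\}$), uses only this \emph{local} greedy inequality to prove the positivity $\operatorname{ord}(a(e,j))\geq 0$ (Claim~1), and then derives the \emph{global} maximality $\sum_{j\leq k}\rho_j = \max_{|U|=k}\operatorname{PER}(U)$ (Claim~3) as a consequence of the determinant formula (Claim~2) together with the fact that the determinant already lies in $\mathbb{L}_+$. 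This makes the paper's argument self-contained and independent of \cite{GriPet19}, as announced in the introduction; your version is logically sound but imports that dependence.
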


In order to prove this theorem, we will need a determinantal identity:

\begin{lemma}
\label{lem.vandermonde.monic}Let $R$ be a commutative ring. Consider the
polynomial ring $R\left[  X\right]  $. Let $m\in\mathbb{N}$. Let $f_{1}%
,f_{2},\ldots,f_{m}$ be $m$ polynomials in $R\left[  X\right]  $. Assume that
$f_{j}$ is a monic polynomial of degree $j-1$ for each $j\in\left\{
1,2,\ldots,m\right\}  $. Let $u_{1},u_{2},\ldots,u_{m}$ be $m$ elements of
$R$. Then,%
\[
\det\left(  \left(  f_{j}\left(  u_{i}\right)  \right)  _{1\leq i\leq
m,\ 1\leq j\leq m}\right)  =\prod_{\substack{\left(  i,j\right)  \in\left\{
1,2,\ldots,m\right\}  ^{2};\\i>j}}\left(  u_{i}-u_{j}\right)  .
\]

\end{lemma}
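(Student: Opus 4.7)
The plan is to reduce the identity to the classical Vandermonde determinant formula via column operations. The key observation is that, since each $f_j$ is monic of degree $j-1$, the family $(f_1, f_2, \ldots, f_m)$ is obtained from the monomial family $(1, X, X^2, \ldots, X^{m-1})$ by a unipotent upper-triangular change of basis inside the $R$-module of polynomials of degree $<m$.

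First, I would write each $f_j(X) = X^{j-1} + \sum_{k=0}^{j-2} c_{j,k} X^k$ with $c_{j,k} \in R$, using the monic-of-degree-$(j-1)$ assumption. Evaluating at $u_i$ yields $f_j(u_i) = u_i^{j-1} + \sum_{k=0}^{j-2} c_{j,k} u_i^k$ for every $i$. Interpreted as a matrix identity, this expresses $F := \left(f_j(u_i)\right)_{1\leq i,j\leq m}$ as a product $F = V B$, where $V = \left(u_i^{j-1}\right)_{1\leq i,j\leq m}$ is the classical Vandermonde matrix and $B$ is the $m \times m$ matrix with entries $B_{k,j} = c_{j,k-1}$ for $k<j$, $B_{j,j} = 1$, and $B_{k,j} = 0$ for $k>j$. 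Thus $B$ is upper-triangular with $1$'s on the diagonal, so $\det B = 1$, and therefore $\det F = \det V$.

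Next, I would invoke the classical Vandermonde determinant identity
\[
\det\left(\left(u_i^{j-1}\right)_{1 \leq i, j \leq m}\right) = \prod_{\substack{(i,j) \in \{1,2,\ldots,m\}^2;\\ i > j}}\left(u_i - u_j\right),
\]
which holds over any commutative ring $R$. If needed, I would prove it by induction on $m$: subtract $u_1$ times the $k$-th column from the $(k{+}1)$-th column (working from right to left) so that the first row becomes $(1, 0, 0, \ldots, 0)$, then factor $(u_i - u_1)$ out of row $i$ for each $i \geq 2$, and expand along the first row to obtain the Vandermonde determinant of $u_2, \ldots, u_m$.

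I do not expect any real obstacle; the argument is essentially bookkeeping. The only point requiring a moment's care is verifying the factorization $F = VB$ with the correct indexing so that $B$ is upper-triangular with unit diagonal (rather than its transpose), but this is routine.
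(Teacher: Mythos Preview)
Your proposal is correct; the factorization $F = VB$ with $B$ unipotent upper-triangular is exactly the standard way to reduce this to the classical Vandermonde determinant, and your indexing check is right. The paper does not give its own proof of this lemma but merely cites references (Krattenthaler, Faddeev--Sominsky, etc.), all of which use essentially the same column-operation/triangular-change-of-basis argument you outline.
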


\noindent Here, we are using the notation $\left(  b_{i,j}\right)  _{1\leq
i\leq p,\ 1\leq j\leq q}$ for the $p\times q$-matrix whose $\left(
i,j\right)  $-th entry is $b_{i,j}$ for all $i\in\left\{  1,2,\ldots
,p\right\}  $ and all $j\in\left\{  1,2,\ldots,q\right\}  $.

Lemma \ref{lem.vandermonde.monic} is a classical generalization of the famous
Vandermonde determinant. In this form, it is a particular case of
\cite[Theorem 2]{hyperfact} (applied to $P_{j}=f_{j}$ and $a_{i}=u_{i}$),
because the coefficient of $X^{j-1}$ in a monic polynomial of degree $j-1$ is
$1$. It also appears in \cite[Exercise 267]{FadSom72} (where it is stated for
the transpose of the matrix we are considering here), in \cite[Chapter
XI,\ Exercise 2 in Set XVIII]{Muir60} (where it, too, is stated for the
transpose of the matrix), in \cite[Proposition 1]{Kratte99}, and in
\cite[Exercise 6.62]{detnotes}.

We need two more simple lemmas for our proof of Theorem
\ref{thm.poly-ut-greed}:

\begin{lemma}
\label{lem.piLK}The map%
\begin{align*}
\pi:\mathbb{L}_{+}  &  \rightarrow\mathbb{K},\\
x  &  \mapsto\left[  t_{0}\right]  x
\end{align*}
is a $\mathbb{K}$-algebra homomorphism.
\end{lemma}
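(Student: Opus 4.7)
The plan is to verify the three defining properties of a $\mathbb{K}$-algebra homomorphism directly from the definition of $\mathbb{L}_{+} = \mathbb{K}\left[\mathbb{V}_{\geq 0}\right]$. Every element $x \in \mathbb{L}_{+}$ has a unique expansion $x = \sum_{\alpha \in \mathbb{V}_{\geq 0}} x_{\alpha} t_{\alpha}$ with only finitely many nonzero coefficients $x_{\alpha} \in \mathbb{K}$, and by definition $\pi(x) = [t_0]x = x_0$.

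First I would observe that $\pi$ is $\mathbb{K}$-linear: extracting the coefficient of a fixed basis vector is a $\mathbb{K}$-linear operation on any free module. Second, $\pi(1_{\mathbb{L}_{+}}) = \pi(t_0) = 1$ is immediate, since the unity of $\mathbb{L}_{+}$ is $t_0$ by the defining multiplication $t_{\alpha} t_{\beta} = t_{\alpha + \beta}$.

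The only real content is the multiplicativity $\pi(xy) = \pi(x)\pi(y)$. Writing $x = \sum_{\alpha} x_{\alpha} t_{\alpha}$ and $y = \sum_{\beta} y_{\beta} t_{\beta}$ (with $\alpha, \beta$ ranging over $\mathbb{V}_{\geq 0}$), the product expands as $xy = \sum_{\alpha, \beta \in \mathbb{V}_{\geq 0}} x_{\alpha} y_{\beta} \, t_{\alpha + \beta}$, so that
\[
\pi(xy) \;=\; [t_0](xy) \;=\; \sum_{\substack{\alpha, \beta \in \mathbb{V}_{\geq 0}; \\ \alpha + \beta = 0}} x_{\alpha} y_{\beta}.
\]
The key step is to show that the only pair $(\alpha, \beta) \in \mathbb{V}_{\geq 0}^2$ with $\alpha + \beta = 0$ is $(0,0)$. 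This uses the order-theoretic structure of $\mathbb{V}$: if $\alpha \geq 0$ and $\beta \geq 0$ satisfy $\alpha + \beta = 0$, then translation-invariance gives $\alpha = -\beta \leq 0$, forcing $\alpha = 0$ and hence $\beta = 0$. The sum therefore collapses to the single term $x_0 y_0 = \pi(x)\pi(y)$.

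There is no real obstacle here; the argument is essentially a bookkeeping check, and the only place where anything beyond linear algebra enters is the one-line positivity observation that $\mathbb{V}_{\geq 0}$ is a submonoid in which $0$ has no nontrivial decomposition as a sum of nonnegative elements. (This is exactly the hypothesis that fails if one replaces $\mathbb{V}_{\geq 0}$ by all of $\mathbb{V}$, which is why the analogous map on $\mathbb{L}$ would not be multiplicative.)
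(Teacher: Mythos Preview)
Your proof is correct and follows essentially the same approach as the paper's own proof: verify $\mathbb{K}$-linearity and $\pi(1_{\mathbb{L}_+})=1$ directly, then expand $xy$ in the basis and observe that the only pair $(\alpha,\beta)\in\mathbb{V}_{\geq 0}^2$ contributing to $[t_0](xy)$ is $(0,0)$, using the order on $\mathbb{V}$. The paper phrases the key positivity step as ``if $(\alpha,\beta)\neq(0,0)$ then $\alpha+\beta>0$'' rather than your contrapositive formulation, but the content is identical.
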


\begin{lemma}
\label{lem.pinot0}Consider the map $\pi:\mathbb{L}_{+}\rightarrow\mathbb{K}$
from Lemma \ref{lem.piLK}. Let $a\in\mathbb{L}_{+}$ be nonzero. Then,
$\pi\left(  a\right)  \neq0$ holds if and only if $\operatorname*{ord}a=0$.
\end{lemma}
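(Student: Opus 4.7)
The plan is to unpack the definitions of $\operatorname{ord}$ and $\pi$ and combine them with Lemma \ref{lem.ord.triangle}~\textbf{(a)}. Since $a \in \mathbb{L}_{+}$ is nonzero, Lemma \ref{lem.ord.triangle}~\textbf{(a)} gives $\operatorname{ord} a \geq 0$, so only two cases can occur: either $\operatorname{ord} a = 0$ or $\operatorname{ord} a > 0$. I would treat these two cases separately to establish the two implications simultaneously.

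First, suppose $\operatorname{ord} a = 0$. By the very definition of $\operatorname{ord}$, the element $\operatorname{ord} a \in \mathbb{V}$ is the \emph{smallest} $\beta \in \mathbb{V}$ with $[t_\beta] a \neq 0$; in particular, $[t_{\operatorname{ord} a}] a \neq 0$. Applied with $\operatorname{ord} a = 0$, this yields $[t_0] a \neq 0$, i.e.\ $\pi(a) \neq 0$. Conversely, suppose $\operatorname{ord} a > 0$. Then $0 < \operatorname{ord} a$, and since $\operatorname{ord} a$ is the smallest $\beta$ for which $[t_\beta] a$ is nonzero, every $\beta \in \mathbb{V}$ with $\beta < \operatorname{ord} a$ must satisfy $[t_\beta] a = 0$. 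Taking $\beta = 0$, we obtain $[t_0] a = 0$, so $\pi(a) = 0$.

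Combining the two cases proves the desired equivalence: $\pi(a) \neq 0$ happens in the first case and fails in the second, matching exactly the condition $\operatorname{ord} a = 0$. There is no genuine obstacle here; the only thing to be careful about is to invoke Lemma \ref{lem.ord.triangle}~\textbf{(a)} at the outset to ensure that $\operatorname{ord} a$ is well-defined (which needs $a \neq 0$) and nonnegative (which needs $a \in \mathbb{L}_{+}$), so that the dichotomy ``$\operatorname{ord} a = 0$ or $\operatorname{ord} a > 0$'' is exhaustive.
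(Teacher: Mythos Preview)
Your proof is correct and follows essentially the same approach as the paper: both invoke Lemma~\ref{lem.ord.triangle}~\textbf{(a)} to obtain $\operatorname{ord} a \geq 0$, then use the defining property of $\operatorname{ord} a$ (as the smallest $\beta$ with $[t_\beta] a \neq 0$) together with $\pi(a) = [t_0] a$ to establish the equivalence. The only cosmetic difference is that you organize the argument as a dichotomy on whether $\operatorname{ord} a = 0$ or $\operatorname{ord} a > 0$, whereas the paper proves the two implications separately; the content is the same.
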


See Section \ref{sect.ord-proofs} for the (easy) proofs of these two lemmas.

\begin{proof}
[Proof of Theorem \ref{thm.poly-ut-greed}.]Let $m=\left\vert E\right\vert $.
Consider the $\mathbb{V}$-ultra triple $\left(  E,w,d\right)  $; all
perimeters discussed in this proof are defined with respect to this
$\mathbb{V}$-ultra triple.

We construct a list $\left(  c_{1},c_{2},\ldots,c_{m}\right)  $ of elements of
$E$ by the following recursive procedure:

\begin{itemize}
\item For each $i\in\left\{  1,2,\ldots,m\right\}  $, we choose $c_{i}$
(assuming that all the preceding entries $c_{1},c_{2},\ldots,c_{i-1}$ of our
list are already constructed) to be an element of $E\setminus\left\{
c_{1},c_{2},\ldots,c_{i-1}\right\}  $ that maximizes the perimeter
$\operatorname*{PER}\left\{  c_{1},c_{2},\ldots,c_{i}\right\}  $.
\end{itemize}

\noindent This procedure can indeed be carried out, since at each step we can
find an element $c_{i}\in E\setminus\left\{  c_{1},c_{2},\ldots,c_{i-1}%
\right\}  $ that maximizes the perimeter $\operatorname{PER}\left\{
c_{1},c_{2},\ldots,c_{i}\right\}  $.\ \ \ \ \footnote{Indeed, the set
$E\setminus\left\{  c_{1},c_{2},\ldots,c_{i-1}\right\}  $ is nonempty (since
$\left\vert \left\{  c_{1},c_{2},\ldots,c_{i-1}\right\}  \right\vert \leq
i-1<i\leq m=\left\vert E\right\vert $ and thus $\left\{  c_{1},c_{2}%
,\ldots,c_{i-1}\right\}  \not \supseteq E$) and finite (since $E$ is finite),
and thus at least one of its elements will maximize the perimeter in
question.} Clearly, this procedure constructs an $m$-tuple $\left(
c_{1},c_{2},\ldots,c_{m}\right)  $ of elements of $E$. The $m$ entries
$c_{1},c_{2},\ldots,c_{m}$ of this $m$-tuple are distinct\footnote{since each
$c_{i}$ is chosen to be an element of $E\setminus\left\{  c_{1},c_{2}%
,\ldots,c_{i-1}\right\}  $, and thus is distinct from all the preceding
entries $c_{1},c_{2},\ldots,c_{i-1}$}, and thus are $m$ distinct elements of
$E$; but $E$ has only $m$ elements altogether (since $m=\left\vert
E\right\vert $). Hence, the $m$ entries $c_{1},c_{2},\ldots,c_{m}$ must cover
the whole set $E$. In other words, $E=\left\{  c_{1},c_{2},\ldots
,c_{m}\right\}  $.

Furthermore, for each $i\in\left\{  1,2,\ldots,m\right\}  $ and each $x\in
E\setminus\left\{  c_{1},c_{2},\ldots,c_{i-1}\right\}  $, we have
\begin{equation}
\operatorname*{PER}\left\{  c_{1},c_{2},\ldots,c_{i}\right\}  \geq
\operatorname*{PER}\left\{  c_{1},c_{2},\ldots,c_{i-1},x\right\}
\label{pf.thm.poly-ut-greed.greedy}%
\end{equation}
(due to how $c_{i}$ is chosen). Thus, in the parlance of \cite[\S 3.2]%
{GriPet19}, the $m$-tuple $\left(  c_{1},c_{2},\ldots,c_{m}\right)  $ is a
greedy $m$-permutation of $E$.

For each $j\in\left\{  1,2,\ldots,m\right\}  $, define a $\rho_{j}%
\in\mathbb{V}$ by%
\begin{equation}
\rho_{j}=w\left(  c_{j}\right)  +\sum_{i=1}^{j-1}d\left(  c_{i},c_{j}\right)
. \label{pf.thm.poly-ut-greed.rhoj=}%
\end{equation}
(This is precisely what is called $\nu_{j}^{\circ}\left(  C\right)  $ in
\cite{GriPet19}, where $C=E$.)

Consider the polynomial ring $\mathbb{L}\left[  X\right]  $. For each
$j\in\left\{  1,2,\ldots,m\right\}  $, define a polynomial $f_{j}\in
\mathbb{L}\left[  X\right]  $ by
\[
f_{j}=\left(  X-c_{1}\right)  \left(  X-c_{2}\right)  \cdots\left(
X-c_{j-1}\right)  =\prod_{i=1}^{j-1}\left(  X-c_{i}\right)  .
\]
This is a monic polynomial of degree $j-1$.

Next we claim the following:

\begin{statement}
\textit{Claim 1:} Let $e\in E$ and $j\in\left\{  1,2,\ldots,m\right\}  $.
Then, $t_{\rho_{j}-w\left(  e\right)  }f_{j}\left(  e\right)  \in
\mathbb{L}_{+}$.
\end{statement}

[\textit{Proof of Claim 1:} We have $f_{j}=\prod_{i=1}^{j-1}\left(
X-c_{i}\right)  $ and thus $f_{j}\left(  e\right)  =\prod_{i=1}^{j-1}\left(
e-c_{i}\right)  $. Hence, if $e\in\left\{  c_{1},c_{2},\ldots,c_{j-1}\right\}
$, then $f_{j}\left(  e\right)  =0$ and thus our claim $t_{\rho_{j}-w\left(
e\right)  }f_{j}\left(  e\right)  \in\mathbb{L}_{+}$ is obvious. Thus, we WLOG
assume that $e\notin\left\{  c_{1},c_{2},\ldots,c_{j-1}\right\}  $. Thus, each
$i\in\left\{  1,2,\ldots,j-1\right\}  $ satisfies $e\neq c_{i}$ and thus
$e-c_{i}\neq0$. Hence, $\prod_{i=1}^{j-1}\left(  e-c_{i}\right)  $ is a
product of nonzero elements of $\mathbb{L}$, and thus is itself nonzero (since
Corollary \ref{cor.ord.intdom} says that $\mathbb{L}$ is an integral domain).
In other words, $f_{j}\left(  e\right)  $ is nonzero (since $f_{j}\left(
e\right)  =\prod_{i=1}^{j-1}\left(  e-c_{i}\right)  $). Hence, $t_{\rho
_{j}-w\left(  e\right)  }f_{j}\left(  e\right)  $ is nonzero as well (since
$t_{\rho_{j}-w\left(  e\right)  }$ is nonzero, and since $\mathbb{L}$ is an
integral domain).

Moreover, from $f_{j}\left(  e\right)  =\prod_{i=1}^{j-1}\left(
e-c_{i}\right)  $, we obtain%
\begin{equation}
\operatorname*{ord}\left(  f_{j}\left(  e\right)  \right)
=\operatorname*{ord}\left(  \prod_{i=1}^{j-1}\left(  e-c_{i}\right)  \right)
=\sum_{i=1}^{j-1}\operatorname*{ord}\left(  e-c_{i}\right)
\label{pf.thm.poly-ut-greed.c1.pf.ordfj}%
\end{equation}
(by Corollary \ref{cor.ord.prod-to-sum}).

From $e\in E$ and $e\notin\left\{  c_{1},c_{2},\ldots,c_{j-1}\right\}  $, we
obtain $e\in E\setminus\left\{  c_{1},c_{2},\ldots,c_{j-1}\right\}  $. Hence,
(\ref{pf.thm.poly-ut-greed.greedy}) (applied to $i=j$ and $x=e$) yields%
\begin{equation}
\operatorname*{PER}\left\{  c_{1},c_{2},\ldots,c_{j}\right\}  \geq
\operatorname*{PER}\left\{  c_{1},c_{2},\ldots,c_{j-1},e\right\}  .
\label{pf.thm.poly-ut-greed.c1.pf.1}%
\end{equation}
But $c_{1},c_{2},\ldots,c_{j}$ are distinct\footnote{This is because
$c_{1},c_{2},\ldots,c_{m}$ are distinct.}. Hence, the definition of the
perimeter yields%
\begin{align*}
\operatorname*{PER}\left\{  c_{1},c_{2},\ldots,c_{j}\right\}   &
=\underbrace{\sum_{i=1}^{j}w\left(  c_{i}\right)  }_{=\sum_{i=1}^{j-1}w\left(
c_{i}\right)  +w\left(  c_{j}\right)  }+\underbrace{\sum_{1\leq i<p\leq
j}d\left(  c_{i},c_{p}\right)  }_{=\sum_{1\leq i<p\leq j-1}d\left(
c_{i},c_{p}\right)  +\sum_{i=1}^{j-1}d\left(  c_{i},c_{j}\right)  }\\
&  =\sum_{i=1}^{j-1}w\left(  c_{i}\right)  +w\left(  c_{j}\right)
+\sum_{1\leq i<p\leq j-1}d\left(  c_{i},c_{p}\right)  +\sum_{i=1}%
^{j-1}d\left(  c_{i},c_{j}\right) \\
&  =\underbrace{w\left(  c_{j}\right)  +\sum_{i=1}^{j-1}d\left(  c_{i}%
,c_{j}\right)  }_{\substack{=\rho_{j}\\\text{(by
(\ref{pf.thm.poly-ut-greed.rhoj=}))}}}+\sum_{i=1}^{j-1}w\left(  c_{i}\right)
+\sum_{1\leq i<p\leq j-1}d\left(  c_{i},c_{p}\right) \\
&  =\rho_{j}+\sum_{i=1}^{j-1}w\left(  c_{i}\right)  +\sum_{1\leq i<p\leq
j-1}d\left(  c_{i},c_{p}\right)
\end{align*}
and%
\[
\operatorname*{PER}\left\{  c_{1},c_{2},\ldots,c_{j-1},e\right\}  =\sum
_{i=1}^{j-1}w\left(  c_{i}\right)  +w\left(  e\right)  +\sum_{1\leq i<p\leq
j-1}d\left(  c_{i},c_{p}\right)  +\sum_{i=1}^{j-1}d\left(  c_{i},e\right)
\]
(since $c_{1},c_{2},\ldots,c_{j-1},e$ are distinct\footnote{This is because
$c_{1},c_{2},\ldots,c_{m}$ are distinct and $e\notin\left\{  c_{1}%
,c_{2},\ldots,c_{j-1}\right\}  $.}). Hence,
(\ref{pf.thm.poly-ut-greed.c1.pf.1}) rewrites as%
\begin{align*}
&  \rho_{j}+\sum_{i=1}^{j-1}w\left(  c_{i}\right)  +\sum_{1\leq i<p\leq
j-1}d\left(  c_{i},c_{p}\right) \\
&  \geq\sum_{i=1}^{j-1}w\left(  c_{i}\right)  +w\left(  e\right)  +\sum_{1\leq
i<p\leq j-1}d\left(  c_{i},c_{p}\right)  +\sum_{i=1}^{j-1}d\left(
c_{i},e\right)  .
\end{align*}
After cancelling equal terms, this inequality transforms into%
\[
\rho_{j}\geq w\left(  e\right)  +\sum_{i=1}^{j-1}d\left(  c_{i},e\right)  .
\]
In view of%
\[
\sum_{i=1}^{j-1}\underbrace{d\left(  c_{i},e\right)  }_{\substack{=d\left(
e,c_{i}\right)  \\\text{(by the \textquotedblleft Symmetry\textquotedblright%
}\\\text{axiom in the definition}\\\text{of a }\mathbb{V}\text{-ultra
triple)}}}=\sum_{i=1}^{j-1}\underbrace{d\left(  e,c_{i}\right)  }%
_{\substack{=-\operatorname*{ord}\left(  e-c_{i}\right)  \\\text{(by the
definition of }d\text{)}}}=-\underbrace{\sum_{i=1}^{j-1}\operatorname*{ord}%
\left(  e-c_{i}\right)  }_{\substack{=\operatorname*{ord}\left(  f_{j}\left(
e\right)  \right)  \\\text{(by (\ref{pf.thm.poly-ut-greed.c1.pf.ordfj}))}%
}}=-\operatorname*{ord}\left(  f_{j}\left(  e\right)  \right)  ,
\]
this rewrites as%
\[
\rho_{j}\geq w\left(  e\right)  -\operatorname*{ord}\left(  f_{j}\left(
e\right)  \right)  .
\]
In other words, $\operatorname*{ord}\left(  f_{j}\left(  e\right)  \right)
\geq w\left(  e\right)  -\rho_{j}$. Now, Lemma \ref{lem.ord.triangle}
\textbf{(c)} (applied to $a=t_{\rho_{j}-w\left(  e\right)  }$ and
$b=f_{j}\left(  e\right)  $) yields%
\begin{align*}
\operatorname*{ord}\left(  t_{\rho_{j}-w\left(  e\right)  }f_{j}\left(
e\right)  \right)   &  =\underbrace{\operatorname*{ord}\left(  t_{\rho
_{j}-w\left(  e\right)  }\right)  }_{=\rho_{j}-w\left(  e\right)
}+\underbrace{\operatorname*{ord}\left(  f_{j}\left(  e\right)  \right)
}_{\geq w\left(  e\right)  -\rho_{j}}\\
&  \geq\rho_{j}-w\left(  e\right)  +w\left(  e\right)  -\rho_{j}=0.
\end{align*}
Hence, Lemma \ref{lem.ord.triangle} \textbf{(a)} (applied to $a=t_{\rho
_{j}-w\left(  e\right)  }f_{j}\left(  e\right)  $) shows that $t_{\rho
_{j}-w\left(  e\right)  }f_{j}\left(  e\right)  $ belongs to $\mathbb{L}_{+}$.
Thus, $t_{\rho_{j}-w\left(  e\right)  }f_{j}\left(  e\right)  \in
\mathbb{L}_{+}$. This proves Claim 1.] \medskip

For each $e\in E$ and $j\in\left\{  1,2,\ldots,m\right\}  $, we define an
$a\left(  e,j\right)  \in\mathbb{L}_{+}$ by%
\begin{equation}
a\left(  e,j\right)  =t_{\rho_{j}-w\left(  e\right)  }f_{j}\left(  e\right)  .
\label{pf.thm.poly-ut-greed.alpha=}%
\end{equation}
(This is well-defined, due to Claim 1.)

We now claim the following:

\begin{statement}
\textit{Claim 2:} Let $k\in\mathbb{N}$. Let $u_{1},u_{2},\ldots,u_{k}$ be any
$k$ distinct elements of $E$. Let $U=\left\{  u_{1},u_{2},\ldots
,u_{k}\right\}  $. Then,
\begin{equation}
\det\left(  \left(  a\left(  u_{i},j\right)  \right)  _{1\leq j\leq k,\ 1\leq
i\leq k}\right)  \text{ is a nonzero element of }\mathbb{L}_{+}
\label{pf.thm.poly-ut-greed.c2.1}%
\end{equation}
and%
\begin{equation}
\operatorname*{ord}\left(  \det\left(  \left(  a\left(  u_{i},j\right)
\right)  _{1\leq j\leq k,\ 1\leq i\leq k}\right)  \right)  =\sum_{j=1}^{k}%
\rho_{j}-\operatorname*{PER}\left(  U\right)  .
\label{pf.thm.poly-ut-greed.c2.2}%
\end{equation}

\end{statement}

[\textit{Proof of Claim 2:} The set $E$ has at least $k$ many elements (since
$u_{1},u_{2},\ldots,u_{k}$ are $k$ distinct elements of $E$). In other words,
$\left\vert E\right\vert \geq k$. Hence, $k\leq\left\vert E\right\vert =m$.
Therefore, $\left\{  1,2,\ldots,k\right\}  \subseteq\left\{  1,2,\ldots
,m\right\}  $. In other words, for each $j\in\left\{  1,2,\ldots,k\right\}  $,
we have $j\in\left\{  1,2,\ldots,m\right\}  $.

Hence, $a\left(  u_{i},j\right)  \in\mathbb{L}_{+}$ for any $i,j\in\left\{
1,2,\ldots,k\right\}  $ (since we defined $a\left(  e,j\right)  $ to satisfy
$a\left(  e,j\right)  \in\mathbb{L}_{+}$ for any $e\in E$ and $j\in\left\{
1,2,\ldots,m\right\}  $). In other words, all entries of the matrix $\left(
a\left(  u_{i},j\right)  \right)  _{1\leq j\leq k,\ 1\leq i\leq k}$ belong to
$\mathbb{L}_{+}$. Hence, its determinant $\det\left(  \left(  a\left(
u_{i},j\right)  \right)  _{1\leq j\leq k,\ 1\leq i\leq k}\right)  $ belongs to
$\mathbb{L}_{+}$ as well (since $\mathbb{L}_{+}$ is a ring).

Lemma \ref{lem.vandermonde.monic} (applied to $\mathbb{L}$ and $k$ instead of
$R$ and $m$) yields%
\begin{equation}
\det\left(  \left(  f_{j}\left(  u_{i}\right)  \right)  _{1\leq i\leq
k,\ 1\leq j\leq k}\right)  =\prod_{\substack{\left(  i,j\right)  \in\left\{
1,2,\ldots,k\right\}  ^{2};\\i>j}}\left(  u_{i}-u_{j}\right)  .
\label{pf.thm.poly-ut-greed.vand}%
\end{equation}

It is known that the determinant of a matrix equals the determinant of its
transpose. Thus,%
\[
\det\left(  \left(  f_{j}\left(  u_{i}\right)  \right)  _{1\leq j\leq
k,\ 1\leq i\leq k}\right)  =\det\left(  \left(  f_{j}\left(  u_{i}\right)
\right)  _{1\leq i\leq k,\ 1\leq j\leq k}\right)
\]
(since the matrix $\left(  f_{j}\left(  u_{i}\right)  \right)  _{1\leq j\leq
k,\ 1\leq i\leq k}$ is the transpose of the matrix $\left(  f_{j}\left(
u_{i}\right)  \right)  _{1\leq i\leq k,\ 1\leq j\leq k}$).

But when we scale a column of a matrix by a scalar $\lambda$, then its
determinant also gets multiplied by $\lambda$. Hence,%
\begin{align*}
\det\left(  \left(  t_{-w\left(  u_{i}\right)  }f_{j}\left(  u_{i}\right)
\right)  _{1\leq j\leq k,\ 1\leq i\leq k}\right)   &  =\left(  \prod_{i=1}%
^{k}t_{-w\left(  u_{i}\right)  }\right)  \cdot\underbrace{\det\left(  \left(
f_{j}\left(  u_{i}\right)  \right)  _{1\leq j\leq k,\ 1\leq i\leq k}\right)
}_{\substack{=\det\left(  \left(  f_{j}\left(  u_{i}\right)  \right)  _{1\leq
i\leq k,\ 1\leq j\leq k}\right)  \\=\prod_{\substack{\left(  i,j\right)
\in\left\{  1,2,\ldots,k\right\}  ^{2};\\i>j}}\left(  u_{i}-u_{j}\right)
\\\text{(by (\ref{pf.thm.poly-ut-greed.vand}))}}}\\
&  =\left(  \prod_{i=1}^{k}t_{-w\left(  u_{i}\right)  }\right)  \cdot
\prod_{\substack{\left(  i,j\right)  \in\left\{  1,2,\ldots,k\right\}
^{2};\\i>j}}\left(  u_{i}-u_{j}\right)  .
\end{align*}
Furthermore, when we scale a row of a matrix by a scalar $\lambda$, then its
determinant also gets multiplied by $\lambda$. Hence,%
\begin{align*}
&  \det\left(  \left(  t_{\rho_{j}}t_{-w\left(  u_{i}\right)  }f_{j}\left(
u_{i}\right)  \right)  _{1\leq j\leq k,\ 1\leq i\leq k}\right) \\
&  =\left(  \prod_{j=1}^{k}t_{\rho_{j}}\right)  \cdot\underbrace{\det\left(
\left(  t_{-w\left(  u_{i}\right)  }f_{j}\left(  u_{i}\right)  \right)
_{1\leq j\leq k,\ 1\leq i\leq k}\right)  }_{=\left(  \prod_{i=1}%
^{k}t_{-w\left(  u_{i}\right)  }\right)  \cdot\prod_{\substack{\left(
i,j\right)  \in\left\{  1,2,\ldots,k\right\}  ^{2};\\i>j}}\left(  u_{i}%
-u_{j}\right)  }\\
&  =\left(  \prod_{j=1}^{k}t_{\rho_{j}}\right)  \left(  \prod_{i=1}%
^{k}t_{-w\left(  u_{i}\right)  }\right)  \cdot\prod_{\substack{\left(
i,j\right)  \in\left\{  1,2,\ldots,k\right\}  ^{2};\\i>j}}\left(  u_{i}%
-u_{j}\right)  .
\end{align*}

However, for every $i\in\left\{  1,2,\ldots,k\right\}  $ and $j\in\left\{
1,2,\ldots,k\right\}  $, we have%
\begin{align*}
a\left(  u_{i},j\right)   &  =\underbrace{t_{\rho_{j}-w\left(  u_{i}\right)
}}_{=t_{\rho_{j}}t_{-w\left(  u_{i}\right)  }}f_{j}\left(  u_{i}\right)
\ \ \ \ \ \ \ \ \ \ \left(  \text{by the definition of }a\left(
u_{i},j\right)  \right) \\
&  =t_{\rho_{j}}t_{-w\left(  u_{i}\right)  }f_{j}\left(  u_{i}\right)  .
\end{align*}
Hence,%
\begin{align}
&  \det\left(  \left(  \underbrace{a\left(  u_{i},j\right)  }_{=t_{\rho_{j}%
}t_{-w\left(  u_{i}\right)  }f_{j}\left(  u_{i}\right)  }\right)  _{1\leq
j\leq k,\ 1\leq i\leq k}\right) \nonumber\\
&  =\det\left(  \left(  t_{\rho_{j}}t_{-w\left(  u_{i}\right)  }f_{j}\left(
u_{i}\right)  \right)  _{1\leq j\leq k,\ 1\leq i\leq k}\right) \nonumber\\
&  =\left(  \prod_{j=1}^{k}t_{\rho_{j}}\right)  \left(  \prod_{i=1}%
^{k}t_{-w\left(  u_{i}\right)  }\right)  \cdot\prod_{\substack{\left(
i,j\right)  \in\left\{  1,2,\ldots,k\right\}  ^{2};\\i>j}}\left(  u_{i}%
-u_{j}\right)  . \label{pf.thm.poly-ut-greed.c2.pf.1}%
\end{align}
The right hand side of this equality is a product of nonzero elements of
$\mathbb{L}$ (since $u_{1},u_{2},\ldots,u_{k}$ are distinct), and thus is
nonzero (by Corollary \ref{cor.ord.intdom}). Hence, the left hand side is
nonzero. In other words, $\det\left(  \left(  a\left(  u_{i},j\right)
\right)  _{1\leq j\leq k,\ 1\leq i\leq k}\right)  $ is nonzero. This proves
(\ref{pf.thm.poly-ut-greed.c2.1}) (since we already know that $\det\left(
\left(  a\left(  u_{i},j\right)  \right)  _{1\leq j\leq k,\ 1\leq i\leq
k}\right)  $ belongs to $\mathbb{L}_{+}$).

Moreover, (\ref{pf.thm.poly-ut-greed.c2.pf.1}) yields%
\begin{align}
&  \operatorname*{ord}\left(  \det\left(  \left(  a\left(  u_{i},j\right)
\right)  _{1\leq j\leq k,\ 1\leq i\leq k}\right)  \right) \nonumber\\
&  =\operatorname*{ord}\left(  \left(  \prod_{j=1}^{k}t_{\rho_{j}}\right)
\left(  \prod_{i=1}^{k}t_{-w\left(  u_{i}\right)  }\right)  \cdot
\prod_{\substack{\left(  i,j\right)  \in\left\{  1,2,\ldots,k\right\}
^{2};\\i>j}}\left(  u_{i}-u_{j}\right)  \right) \nonumber\\
&  =\sum_{j=1}^{k}\underbrace{\operatorname*{ord}\left(  t_{\rho_{j}}\right)
}_{=\rho_{j}}+\sum_{i=1}^{k}\underbrace{\operatorname*{ord}\left(
t_{-w\left(  u_{i}\right)  }\right)  }_{=-w\left(  u_{i}\right)  }%
+\sum_{\substack{\left(  i,j\right)  \in\left\{  1,2,\ldots,k\right\}
^{2};\\i>j}}\operatorname*{ord}\left(  u_{i}-u_{j}\right) \nonumber\\
&  \ \ \ \ \ \ \ \ \ \ \left(  \text{by Lemma \ref{lem.ord.triangle}
\textbf{(c)} and Corollary \ref{cor.ord.prod-to-sum}}\right) \nonumber\\
&  =\sum_{j=1}^{k}\rho_{j}-\sum_{i=1}^{k}w\left(  u_{i}\right)  +\sum
_{\substack{\left(  i,j\right)  \in\left\{  1,2,\ldots,k\right\}  ^{2}%
;\\i>j}}\operatorname*{ord}\left(  u_{i}-u_{j}\right) \nonumber\\
&  =\sum_{j=1}^{k}\rho_{j}-\left(  \sum_{i=1}^{k}w\left(  u_{i}\right)
-\sum_{\substack{\left(  i,j\right)  \in\left\{  1,2,\ldots,k\right\}
^{2};\\i>j}}\operatorname*{ord}\left(  u_{i}-u_{j}\right)  \right)  .
\label{pf.thm.poly-ut-greed.vp1}%
\end{align}
But recall that $U=\left\{  u_{1},u_{2},\ldots,u_{k}\right\}  $ with
$u_{1},u_{2},\ldots,u_{k}$ distinct. The definition of perimeter thus yields%
\begin{align}
\operatorname*{PER}\left(  U\right)   &  =\sum_{i=1}^{k}w\left(  u_{i}\right)
+\sum_{1\leq i<j\leq k}d\left(  u_{i},u_{j}\right) \nonumber\\
&  =\sum_{i=1}^{k}w\left(  u_{i}\right)  +\underbrace{\sum_{1\leq j<i\leq k}%
}_{=\sum_{\substack{\left(  i,j\right)  \in\left\{  1,2,\ldots,k\right\}
^{2};\\i>j}}}\ \ \underbrace{d\left(  u_{j},u_{i}\right)  }%
_{\substack{=d\left(  u_{i},u_{j}\right)  \\\text{(by the \textquotedblleft
Symmetry\textquotedblright}\\\text{axiom in the definition}\\\text{of a
}\mathbb{V}\text{-ultra triple)}}}\nonumber\\
&  \ \ \ \ \ \ \ \ \ \ \left(
\begin{array}
[c]{c}%
\text{here, we have renamed}\\
\text{the index }\left(  i,j\right)  \text{ as }\left(  j,i\right)  \text{ in
the second sum}%
\end{array}
\right) \nonumber\\
&  =\sum_{i=1}^{k}w\left(  u_{i}\right)  +\sum_{\substack{\left(  i,j\right)
\in\left\{  1,2,\ldots,k\right\}  ^{2};\\i>j}}\ \ \underbrace{d\left(
u_{i},u_{j}\right)  }_{\substack{=-\operatorname*{ord}\left(  u_{i}%
-u_{j}\right)  \\\text{(by the definition of }d\text{)}}}\nonumber\\
&  =\sum_{i=1}^{k}w\left(  u_{i}\right)  -\sum_{\substack{\left(  i,j\right)
\in\left\{  1,2,\ldots,k\right\}  ^{2};\\i>j}}\operatorname*{ord}\left(
u_{i}-u_{j}\right)  . \label{pf.thm.poly-ut-greed.vp2}%
\end{align}
Hence, (\ref{pf.thm.poly-ut-greed.vp1}) becomes%
\begin{align*}
&  \operatorname*{ord}\left(  \det\left(  \left(  a\left(  u_{i},j\right)
\right)  _{1\leq j\leq k,\ 1\leq i\leq k}\right)  \right) \\
&  =\sum_{j=1}^{k}\rho_{j}-\underbrace{\left(  \sum_{i=1}^{k}w\left(
u_{i}\right)  -\sum_{\substack{\left(  i,j\right)  \in\left\{  1,2,\ldots
,k\right\}  ^{2};\\i>j}}\operatorname*{ord}\left(  u_{i}-u_{j}\right)
\right)  }_{\substack{=\operatorname*{PER}\left(  U\right)  \\\text{(by
(\ref{pf.thm.poly-ut-greed.vp2}))}}}\\
&  =\sum_{j=1}^{k}\rho_{j}-\operatorname*{PER}\left(  U\right)  .
\end{align*}
Hence, (\ref{pf.thm.poly-ut-greed.c2.2}) is proved. This proves Claim 2.]
\medskip

As a consequence of Claim 2, we obtain the following:

\begin{statement}
\textit{Claim 3:} Let $k\in\left\{  0,1,\ldots,m\right\}  $. Then, $\sum
_{j=1}^{k}\rho_{j}$ is the maximum perimeter of a $k$-subset of $E$.
\end{statement}

[\textit{Proof of Claim 3:} The elements $c_{1},c_{2},\ldots,c_{m}$ are
distinct; thus, the elements $c_{1},c_{2},\ldots,c_{k}$ are distinct. Hence,
$\left\{  c_{1},c_{2},\ldots,c_{k}\right\}  $ is a $k$-subset of $E$.

Adding up the equalities (\ref{pf.thm.poly-ut-greed.rhoj=}) for all
$j\in\left\{  1,2,\ldots,k\right\}  $, we obtain%
\begin{align*}
\sum_{j=1}^{k}\rho_{j}  &  =\sum_{j=1}^{k}\left(  w\left(  c_{j}\right)
+\sum_{i=1}^{j-1}d\left(  c_{i},c_{j}\right)  \right) \\
&  =\sum_{j=1}^{k}w\left(  c_{j}\right)  +\sum_{1\leq i<j\leq k}d\left(
c_{i},c_{j}\right)  =\operatorname*{PER}\left\{  c_{1},c_{2},\ldots
,c_{k}\right\}
\end{align*}
(since $c_{1},c_{2},\ldots,c_{k}$ are distinct). Since $\left\{  c_{1}%
,c_{2},\ldots,c_{k}\right\}  $ is a $k$-subset of $E$, we thus conclude that
$\sum_{j=1}^{k}\rho_{j}$ is the perimeter of some $k$-subset of $E$. Thus, in
order to prove Claim 3, we need only to show that $\sum_{j=1}^{k}\rho_{j}%
\geq\operatorname*{PER}\left(  U\right)  $ for every $k$-subset $U$ of $E$.

So let $U$ be a $k$-subset of $E$. We must prove $\sum_{j=1}^{k}\rho_{j}%
\geq\operatorname*{PER}\left(  U\right)  $.

Write the $k$-subset $U$ in the form $U=\left\{  u_{1},u_{2},\ldots
,u_{k}\right\}  $ for $k$ distinct elements $u_{1},u_{2},\ldots,u_{k}$ of $E$.
Claim 2 thus yields that%
\[
\det\left(  \left(  a\left(  u_{i},j\right)  \right)  _{1\leq j\leq k,\ 1\leq
i\leq k}\right)  \text{ is a nonzero element of }\mathbb{L}_{+}%
\]
and%
\begin{equation}
\operatorname*{ord}\left(  \det\left(  \left(  a\left(  u_{i},j\right)
\right)  _{1\leq j\leq k,\ 1\leq i\leq k}\right)  \right)  =\sum_{j=1}^{k}%
\rho_{j}-\operatorname*{PER}\left(  U\right)  .
\label{pf.thm.poly-ut-greed.c3.pf.2}%
\end{equation}
Thus, in particular, $\det\left(  \left(  a\left(  u_{i},j\right)  \right)
_{1\leq j\leq k,\ 1\leq i\leq k}\right)  $ belongs to $\mathbb{L}_{+}$.
Hence,
\[
\operatorname*{ord}\left(  \det\left(  \left(  a\left(  u_{i},j\right)
\right)  _{1\leq j\leq k,\ 1\leq i\leq k}\right)  \right)  \geq0
\]
(by Lemma \ref{lem.ord.triangle} \textbf{(a)}, applied to $\det\left(  \left(
a\left(  u_{i},j\right)  \right)  _{1\leq j\leq k,\ 1\leq i\leq k}\right)  $
instead of $a$). In view of (\ref{pf.thm.poly-ut-greed.c3.pf.2}), this
rewrites as $\sum_{j=1}^{k}\rho_{j}-\operatorname*{PER}\left(  U\right)
\geq0$. In other words, $\sum_{j=1}^{k}\rho_{j}\geq\operatorname*{PER}\left(
U\right)  $. This completes the proof of Claim 3.] \medskip

Now, recall the $\mathbb{K}$-algebra homomorphism $\pi:\mathbb{L}%
_{+}\rightarrow\mathbb{K}$ from Lemma \ref{lem.piLK}. For each $e\in E$,
define a column vector $v_{e}\in\mathbb{K}^{m}$ by%
\[
v_{e}=\left(
\begin{array}
[c]{c}%
\pi\left(  a\left(  e,1\right)  \right) \\
\pi\left(  a\left(  e,2\right)  \right) \\
\vdots\\
\pi\left(  a\left(  e,m\right)  \right)
\end{array}
\right)  =\left(  \pi\left(  a\left(  e,j\right)  \right)  \right)  _{1\leq
j\leq m}.
\]
We thus have a vector family $\left(  v_{e}\right)  _{e\in E}$ over
$\mathbb{K}$. Let $\mathcal{G}$ be the Gaussian elimination greedoid of this
family. Let $\mathcal{F}$ be the Bhargava greedoid of $\left(  E,w,d\right)
$. Our goal is to prove that $\mathcal{F}=\mathcal{G}$ (since this will yield
Theorem \ref{thm.poly-ut-greed}).

In order to do so, it suffices to show that if $U$ is any subset of $E$, then
we have the logical equivalence
\begin{equation}
\left(  U\in\mathcal{F}\right)  \ \Longleftrightarrow\ \left(  U\in
\mathcal{G}\right)  . \label{pf.thm.poly-ut-greed.equiv}%
\end{equation}

So let us do this. Let $U$ be a subset of $E$. We must prove the equivalence
(\ref{pf.thm.poly-ut-greed.equiv}).

Write the subset $U$ in the form $U=\left\{  u_{1},u_{2},\ldots,u_{k}\right\}
$ with $u_{1},u_{2},\ldots,u_{k}$ distinct. Thus, $\left\vert U\right\vert
=k$. In other words, $U$ is a $k$-subset of $E$. Claim 2 thus yields that%
\begin{equation}
\det\left(  \left(  a\left(  u_{i},j\right)  \right)  _{1\leq j\leq k,\ 1\leq
i\leq k}\right)  \text{ is a nonzero element of }\mathbb{L}_{+}
\label{pf.thm.poly-ut-greed.equiv.pf.1}%
\end{equation}
and%
\begin{equation}
\operatorname*{ord}\left(  \det\left(  \left(  a\left(  u_{i},j\right)
\right)  _{1\leq j\leq k,\ 1\leq i\leq k}\right)  \right)  =\sum_{j=1}^{k}%
\rho_{j}-\operatorname*{PER}\left(  U\right)  .
\label{pf.thm.poly-ut-greed.equiv.pf.2}%
\end{equation}

Also, recall that $\pi$ is a $\mathbb{K}$-algebra homomorphism (by Lemma
\ref{lem.piLK}), thus a ring homomorphism. Hence,%
\begin{equation}
\det\left(  \left(  \pi\left(  a\left(  u_{i},j\right)  \right)  \right)
_{1\leq j\leq k,\ 1\leq i\leq k}\right)  =\pi\left(  \det\left(  \left(
a\left(  u_{i},j\right)  \right)  _{1\leq j\leq k,\ 1\leq i\leq k}\right)
\right)  \label{pf.thm.poly-ut-greed.equiv.pf.3}%
\end{equation}
(since ring homomorphisms respect determinants).

Also, $U$ is a subset of $E$; hence, $\left\vert U\right\vert \leq\left\vert
E\right\vert =m$. Thus, $k=\left\vert U\right\vert \leq m$. Hence,
$k\in\left\{  0,1,\ldots,m\right\}  $. Therefore, Claim 3 shows that
$\sum_{j=1}^{k}\rho_{j}$ is the maximum perimeter of a $k$-subset of $E$. In
other words, $\sum_{j=1}^{k}\rho_{j}$ is the maximum perimeter of a
$\left\vert U\right\vert $-subset of $E$ (since $\left\vert U\right\vert =k$).

The definition of the Gaussian elimination greedoid $\mathcal{G}$ shows that
we have the following equivalence:\footnote{The words \textquotedblleft
linearly independent\textquotedblright\ should always be understood to mean
\textquotedblleft$\mathbb{K}$-linearly independent\textquotedblright\ here.}%
\begin{align*}
&  \ \left(  U\in\mathcal{G}\right) \\
&  \Longleftrightarrow\ \left(  \text{the family }\left(  \pi_{\left\vert
U\right\vert }\left(  v_{e}\right)  \right)  _{e\in U}\in\left(
\mathbb{K}^{\left\vert U\right\vert }\right)  ^{U}\text{ is linearly
independent}\right) \\
&  \Longleftrightarrow\ \left(  \text{the family }\left(  \pi_{k}\left(
v_{e}\right)  \right)  _{e\in U}\in\left(  \mathbb{K}^{k}\right)  ^{U}\text{
is linearly independent}\right) \\
&  \ \ \ \ \ \ \ \ \ \ \left(  \text{since }\left\vert U\right\vert =k\right)
\\
&  \Longleftrightarrow\ \left(  \text{the vectors }\pi_{k}\left(  v_{u_{1}%
}\right)  ,\pi_{k}\left(  v_{u_{2}}\right)  ,\ldots,\pi_{k}\left(  v_{u_{k}%
}\right)  \text{ are linearly independent}\right) \\
&  \ \ \ \ \ \ \ \ \ \ \left(  \text{since }U=\left\{  u_{1},u_{2}%
,\ldots,u_{k}\right\}  \text{ with }u_{1},u_{2},\ldots,u_{k}\text{
distinct}\right) \\
&  \Longleftrightarrow\ \left(  \text{the columns of the matrix }\left(
\pi\left(  a\left(  u_{i},j\right)  \right)  \right)  _{1\leq j\leq k,\ 1\leq
i\leq k}\text{ are linearly independent}\right) \\
&  \ \ \ \ \ \ \ \ \ \ \ \ \ \ \ \ \ \ \ \ \left(
\begin{array}
[c]{c}%
\text{since the vectors }\pi_{k}\left(  v_{u_{1}}\right)  ,\pi_{k}\left(
v_{u_{2}}\right)  ,\ldots,\pi_{k}\left(  v_{u_{k}}\right) \\
\text{are the columns of the matrix }\left(  \pi\left(  a\left(
u_{i},j\right)  \right)  \right)  _{1\leq j\leq k,\ 1\leq i\leq k}%
\end{array}
\right) \\
&  \Longleftrightarrow\ \left(  \text{the matrix }\left(  \pi\left(  a\left(
u_{i},j\right)  \right)  \right)  _{1\leq j\leq k,\ 1\leq i\leq k}\text{ is
invertible}\right) \\
&  \ \ \ \ \ \ \ \ \ \ \ \ \ \ \ \ \ \ \ \ \left(
\begin{array}
[c]{c}%
\text{since a square matrix over the field }\mathbb{K}\text{ is invertible}\\
\text{if and only if its columns are linearly independent}%
\end{array}
\right) \\
&  \Longleftrightarrow\ \left(  \det\left(  \left(  \pi\left(  a\left(
u_{i},j\right)  \right)  \right)  _{1\leq j\leq k,\ 1\leq i\leq k}\right)
\neq0\text{ in }\mathbb{K}\right) \\
&  \Longleftrightarrow\ \left(  \pi\left(  \det\left(  \left(  a\left(
u_{i},j\right)  \right)  _{1\leq j\leq k,\ 1\leq i\leq k}\right)  \right)
\neq0\text{ in }\mathbb{K}\right)  \ \ \ \ \ \ \ \ \ \ \left(  \text{by
(\ref{pf.thm.poly-ut-greed.equiv.pf.3})}\right) \\
&  \Longleftrightarrow\ \left(  \operatorname*{ord}\left(  \det\left(  \left(
a\left(  u_{i},j\right)  \right)  _{1\leq j\leq k,\ 1\leq i\leq k}\right)
\right)  =0\right) \\
&  \ \ \ \ \ \ \ \ \ \ \ \ \ \ \ \ \ \ \ \ \left(  \text{by Lemma
\ref{lem.pinot0}, applied to }\det\left(  \left(  a\left(  u_{i},j\right)
\right)  _{1\leq j\leq k,\ 1\leq i\leq k}\right)  \text{ instead of }a\right)
\\
&  \Longleftrightarrow\ \left(  \sum_{j=1}^{k}\rho_{j}-\operatorname*{PER}%
\left(  U\right)  =0\right)  \ \ \ \ \ \ \ \ \ \ \left(  \text{by
(\ref{pf.thm.poly-ut-greed.equiv.pf.2})}\right) \\
&  \Longleftrightarrow\ \left(  \operatorname*{PER}\left(  U\right)
=\sum_{j=1}^{k}\rho_{j}\right) \\
&  \Longleftrightarrow\ \left(  \operatorname*{PER}\left(  U\right)  \text{ is
the maximum perimeter of a }\left\vert U\right\vert \text{-subset of }E\right)
\\
&  \ \ \ \ \ \ \ \ \ \ \ \ \ \ \ \ \ \ \ \ \left(  \text{since }\sum_{j=1}%
^{k}\rho_{j}\text{ is the maximum perimeter of a }\left\vert U\right\vert
\text{-subset of }E\right) \\
&  \Longleftrightarrow\ \left(  U\text{ has maximum perimeter among all
}\left\vert U\right\vert \text{-subsets of }E\right) \\
&  \Longleftrightarrow\ \left(  U\in\mathcal{F}\right)
\end{align*}
(by the definition of the Bhargava greedoid $\mathcal{F}$). Thus, the
equivalence (\ref{pf.thm.poly-ut-greed.equiv}) is proven. This concludes the
proof of Theorem \ref{thm.poly-ut-greed}.
\end{proof}

\section{Isomorphism}

Next, we introduce the notion of a set system. This elementary notion will
play a purely technical role in what follows.

\begin{definition}
\label{def.setsys}Let $E$ be a set.

\begin{enumerate}
\item[\textbf{(a)}] We let $2^{E}$ denote the powerset of $E$.

\item[\textbf{(b)}] A \textit{set system} on ground set $E$ shall mean a
subset of $2^{E}$.
\end{enumerate}
\end{definition}

Thus:

\begin{itemize}
\item The Gaussian elimination greedoid of a vector family $\left(
v_{e}\right)  _{e\in E}$ (over any field $\mathbb{K}$) is a set system on
ground set $E$.

\item The Bhargava greedoid of any $\mathbb{V}$-ultra triple $\left(
E,w,d\right)  $ is a set system on ground set $E$.
\end{itemize}

(More generally, any greedoid is a set system, but we shall not need this.)

We shall use the following two simple concepts of isomorphism:

\begin{definition}
Let $\left(  E,w,d\right)  $ and $\left(  F,v,c\right)  $ be two $\mathbb{V}%
$-ultra triples.

\begin{enumerate}
\item[\textbf{(a)}] A bijective map $f:E\rightarrow F$ is said to be an
\textit{isomorphism of }$\mathbb{V}$\textit{-ultra triples from }$\left(
E,w,d\right)  $ \textit{to }$\left(  F,v,c\right)  $ if it satisfies $v\circ
f=w$ and
\[
c\left(  f\left(  a\right)  ,f\left(  b\right)  \right)  =d\left(  a,b\right)
\ \ \ \ \ \ \ \ \ \ \text{for all }\left(  a,b\right)  \in E\timesu E.
\]

\item[\textbf{(b)}] The $\mathbb{V}$-ultra triples $\left(  E,w,d\right)  $
and $\left(  F,v,c\right)  $ are said to be \textit{isomorphic} if there
exists an isomorphism $f:E\rightarrow F$ of $\mathbb{V}$-ultra triples from
$\left(  E,w,d\right)  $ to $\left(  F,v,c\right)  $. (Note that being
isomorphic is clearly a symmetric relation, because if $f:E\rightarrow F$ is
an isomorphism of $\mathbb{V}$-ultra triples from $\left(  E,w,d\right)  $ to
$\left(  F,v,c\right)  $, then $f^{-1}:F\rightarrow E$ is an isomorphism of
$\mathbb{V}$-ultra triples from $\left(  F,v,c\right)  $ to $\left(
E,w,d\right)  $.)
\end{enumerate}
\end{definition}

\begin{definition}
Let $\mathcal{E}$ and $\mathcal{F}$ be two set systems on ground sets $E$ and
$F$, respectively.

\begin{enumerate}
\item[\textbf{(a)}] A bijective map $f:E\rightarrow F$ is said to be an
\textit{isomorphism of set systems from }$\mathcal{E}$ \textit{to
}$\mathcal{F}$ if the bijection $2^{f}:2^{E}\rightarrow2^{F}$ induced by it
(i.e., the bijection that sends each $S\in2^{E}$ to $f\left(  S\right)
\in2^{F}$) satisfies $2^{f}\left(  \mathcal{E}\right)  =\mathcal{F}$.

\item[\textbf{(b)}] The set systems $\mathcal{E}$ and $\mathcal{F}$ are said
to be \textit{isomorphic} if there exists an isomorphism $f:E\rightarrow F$ of
set systems from $\mathcal{E}$ to $\mathcal{F}$. (Note that being isomorphic
is clearly a symmetric relation, because if $f:E\rightarrow F$ is an
isomorphism of set systems from $\mathcal{E}$ to $\mathcal{F}$, then
$f^{-1}:F\rightarrow E$ is an isomorphism of set systems from $\mathcal{F}$ to
$\mathcal{E}$.)
\end{enumerate}
\end{definition}

The intuitive meaning of both of these two definitions is simple: Two
$\mathbb{V}$-ultra triples are isomorphic if and only if one can be obtained
from the other by relabeling the elements of the ground set. The same holds
for two set systems.

The following two propositions are obvious:

\begin{proposition}
\label{prop.iso.bh-gr}Let $\left(  E,w,d\right)  $ and $\left(  F,v,c\right)
$ be two isomorphic $\mathbb{V}$-ultra triples such that $E$ and $F$ are
finite. Then, the Bhargava greedoids of $\left(  E,w,d\right)  $ and $\left(
F,v,c\right)  $ are isomorphic as set systems.
\end{proposition}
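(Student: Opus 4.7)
The plan is to take the isomorphism $f : E \to F$ of $\mathbb{V}$-ultra triples and verify that it is also an isomorphism of set systems between the two Bhargava greedoids. The only real content is checking that $f$ preserves the quantity on which the Bhargava greedoid is defined, namely the perimeter.

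First I would unpack the hypothesis: an isomorphism $f : E \to F$ of $\mathbb{V}$-ultra triples is a bijection satisfying $v \circ f = w$ and $c(f(a), f(b)) = d(a,b)$ for all $(a,b) \in E \timesu E$. Then, for any finite subset $A \subseteq E$, I would compute the perimeter of $f(A)$ with respect to $(F,v,c)$ by splitting it into the weight sum and the pairwise-distance sum, substituting $v(f(a)) = w(a)$ and $c(f(a), f(b)) = d(a,b)$, and re-indexing the sums via the bijection $f|_A : A \to f(A)$. This yields
\[
\operatorname{PER}\nolimits_{(F,v,c)}(f(A)) = \operatorname{PER}\nolimits_{(E,w,d)}(A).
\]

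Next, since $f$ is a bijection, the induced map $2^f : 2^E \to 2^F$ is a bijection that restricts, for each $k \in \mathbb{N}$, to a bijection between the $k$-subsets of $E$ and the $k$-subsets of $F$; in particular $|f(A)| = |A|$. Combining this with the perimeter-preservation identity above, a set $A \subseteq E$ has maximum perimeter among all $|A|$-subsets of $E$ if and only if $f(A)$ has maximum perimeter among all $|f(A)|$-subsets of $F$. By Definition \ref{def.bhar-greed}, this is exactly the statement that $2^f$ sends the Bhargava greedoid of $(E,w,d)$ bijectively onto the Bhargava greedoid of $(F,v,c)$, so $f$ is an isomorphism of set systems.

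There is no real obstacle here: the argument is a routine check that a relabeling of the ground set that preserves weights and distances preserves perimeters, and hence preserves the property of maximizing perimeter at each cardinality. The only thing to be slightly careful about is the unordered-pair sum in the definition of $\operatorname{PER}$, but the symmetry axiom for $d$ (and the bijectivity of $f$) makes the re-indexing unproblematic.
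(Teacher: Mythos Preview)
Your proposal is correct and is precisely the natural way to fill in the details; the paper itself does not give a proof of this proposition at all, simply declaring it ``obvious'' (together with Proposition~\ref{prop.iso.gauss}). What you have written is exactly the routine verification the author had in mind.
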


\begin{proposition}
\label{prop.iso.gauss}Let $\mathbb{K}$ be a field. Let $\mathcal{E}$ and
$\mathcal{F}$ be two isomorphic set systems. If $\mathcal{E}$ is the Gaussian
elimination greedoid of a vector family over $\mathbb{K}$, then so is
$\mathcal{F}$.
\end{proposition}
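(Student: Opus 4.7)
The plan is to transport the vector family defining $\mathcal{E}$ along the isomorphism $f\colon E\to F$, and then check that the resulting vector family represents $\mathcal{F}$.

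More precisely, I would first suppose that $\mathcal{E}$ is the Gaussian elimination greedoid of a vector family $\left(v_e\right)_{e\in E}$ in $\mathbb{K}^m$, where $m\geq\left\vert E\right\vert$. Since $f\colon E\to F$ is a bijection, we have $\left\vert F\right\vert=\left\vert E\right\vert$, so the same $m$ satisfies $m\geq\left\vert F\right\vert$. I would then define a vector family $\left(u_{\phi}\right)_{\phi\in F}$ in $\mathbb{K}^m$ by setting $u_{\phi}=v_{f^{-1}\left(\phi\right)}$ for each $\phi\in F$, and let $\mathcal{G}$ denote the Gaussian elimination greedoid of this vector family.

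It then remains to show $\mathcal{G}=\mathcal{F}$. Given any $G\subseteq F$, let $S=f^{-1}\left(G\right)\subseteq E$. Since $f$ is bijective, we have $\left\vert S\right\vert=\left\vert G\right\vert$. The map $f$ restricts to a bijection $S\to G$, and under this bijection the family $\left(\pi_{\left\vert G\right\vert}\left(u_{\phi}\right)\right)_{\phi\in G}$ is just a reindexing of $\left(\pi_{\left\vert S\right\vert}\left(v_e\right)\right)_{e\in S}$, because $u_{f\left(e\right)}=v_e$ and $\left\vert G\right\vert=\left\vert S\right\vert$. Linear independence is preserved under reindexing by a bijection, so the former family is linearly independent if and only if the latter is. By definition of the two greedoids, this means $G\in\mathcal{G}\Longleftrightarrow S\in\mathcal{E}$. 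Finally, the isomorphism condition $2^{f}\left(\mathcal{E}\right)=\mathcal{F}$ gives $S\in\mathcal{E}\Longleftrightarrow f\left(S\right)\in\mathcal{F}$, and $f\left(S\right)=f\left(f^{-1}\left(G\right)\right)=G$. Chaining these equivalences yields $G\in\mathcal{G}\Longleftrightarrow G\in\mathcal{F}$, hence $\mathcal{G}=\mathcal{F}$.

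There is no real obstacle here; the only mild care needed is the bookkeeping between the index sets $E$ and $F$ under $f$, and the observation that the dimension parameter $m$ carries over unchanged because $\left\vert E\right\vert=\left\vert F\right\vert$. The statement is essentially saying that a Gaussian elimination greedoid depends on the vector family only up to relabeling of the ground set, and the proof makes this precise by explicitly constructing the relabeled family.
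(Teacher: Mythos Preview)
Your proof is correct; the paper itself declares Proposition~\ref{prop.iso.gauss} ``obvious'' and offers no proof at all, so your argument is precisely the natural way to fill in the details. The only content is the relabeling of the vector family along $f^{-1}$ and the verification that linear independence is preserved under reindexing, both of which you handle correctly.
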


\section{\label{sect.cliques}Decomposing a $\mathbb{V}$-ultra triple}

For the rest of Section \ref{sect.cliques}, we fix a $\mathbb{V}$-ultra triple
$\left(  E,w,d\right)  $.

We are going to study the structure of this $\mathbb{V}$-ultra triple. We
recall the notions of $\alpha$-cliques and cliques (defined in Definition
\ref{def.alpha-clique} and Definition \ref{def.clique}, respectively). We
shall next define another kind of subsets of $E$: the \textit{open balls}.

\begin{definition}
\label{def.open-ball}Let $\alpha\in\mathbb{V}$ and $e\in E$. The \textit{open
ball }$B_{\alpha}^{\circ}\left(  e\right)  $ is defined to be the subset%
\[
\left\{  f\in E\ \mid\ f=e\text{ or else }d\left(  f,e\right)  <\alpha
\right\}
\]
of $E$.
\end{definition}

Clearly, for each $\alpha\in\mathbb{V}$ and each $e\in E$, we have $e\in
B_{\alpha}^{\circ}\left(  e\right)  $, so that the open ball $B_{\alpha
}^{\circ}\left(  e\right)  $ contains at least the element $e$.

\begin{proposition}
\label{prop.Bo.self-rad}Let $\alpha\in\mathbb{V}$ and $e,f\in E$ be such that
$e\neq f$ and $d\left(  e,f\right)  <\alpha$. Then, $B_{\alpha}^{\circ}\left(
e\right)  =B_{\alpha}^{\circ}\left(  f\right)  $.
\end{proposition}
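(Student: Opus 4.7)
The plan is to prove the two inclusions $B_\alpha^\circ(e) \subseteq B_\alpha^\circ(f)$ and $B_\alpha^\circ(f) \subseteq B_\alpha^\circ(e)$ separately. However, the hypothesis is symmetric in $e$ and $f$ (since $d(e,f) = d(f,e)$ by the Symmetry axiom), so it suffices to prove just one inclusion, say $B_\alpha^\circ(e) \subseteq B_\alpha^\circ(f)$; the other inclusion then follows by swapping the roles of $e$ and $f$.

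To prove $B_\alpha^\circ(e) \subseteq B_\alpha^\circ(f)$, I would fix an arbitrary $g \in B_\alpha^\circ(e)$ and show that $g \in B_\alpha^\circ(f)$, i.e., that $g = f$ or $d(g,f) < \alpha$. By definition of $B_\alpha^\circ(e)$, we have $g = e$ or $d(g,e) < \alpha$. I would split into cases accordingly. If $g = e$, then $g \neq f$ (since $e \neq f$) and $d(g,f) = d(e,f) < \alpha$, so $g \in B_\alpha^\circ(f)$. Otherwise, $g \neq e$ and $d(g,e) < \alpha$; if moreover $g = f$ then trivially $g \in B_\alpha^\circ(f)$, so I may assume $g \neq f$ as well, so that $g$, $e$, $f$ are three pairwise distinct elements of $E$.

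In this last case, the key step is to apply the ultrametric triangle inequality to the distinct triple $(g, f, e)$: it yields
\[
d(g,f) \leq \max\{d(g,e),\, d(f,e)\}.
\]
By Symmetry we have $d(f,e) = d(e,f) < \alpha$, and by assumption $d(g,e) < \alpha$; hence the maximum on the right is $< \alpha$, so $d(g,f) < \alpha$, which gives $g \in B_\alpha^\circ(f)$.

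There is no real obstacle here: the entire argument is a direct application of the two axioms (Symmetry and the ultrametric triangle inequality) after an appropriate case split on whether the elements involved coincide. The only subtlety is that the triangle inequality is only asserted for \emph{distinct} triples, so one must handle the degenerate cases $g = e$ and $g = f$ by hand before invoking it.
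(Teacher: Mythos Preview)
Your proof is correct and follows essentially the same approach as the paper's own proof: both prove one inclusion by fixing an arbitrary element, handling the degenerate cases $g=e$ and $g=f$ separately, and then applying the ultrametric triangle inequality to the remaining case of three distinct points, before invoking symmetry for the reverse inclusion.
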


\begin{proof}
We have $e\neq f$, so that $f\neq e$. Hence, the \textquotedblleft
symmetry\textquotedblright\ axiom in Definition \ref{def.Vultra} yields that
$d\left(  f,e\right)  =d\left(  e,f\right)  <\alpha$.

From $e\neq f$ and $d\left(  e,f\right)  <\alpha$, we obtain $e\in B_{\alpha
}^{\circ}\left(  f\right)  $ (by the definition of $B_{\alpha}^{\circ}\left(
f\right)  $). Also, $f\in B_{\alpha}^{\circ}\left(  f\right)  $ (by the
definition of $B_{\alpha}^{\circ}\left(  f\right)  $).

Let $x\in B_{\alpha}^{\circ}\left(  e\right)  $. We shall show that $x\in
B_{\alpha}^{\circ}\left(  f\right)  $.

If $x=e$, then this follows immediately from $e\in B_{\alpha}^{\circ}\left(
f\right)  $. Hence, for the rest of the proof of $x\in B_{\alpha}^{\circ
}\left(  f\right)  $, we WLOG assume that $x\neq e$. Thus, from $x\in
B_{\alpha}^{\circ}\left(  e\right)  $, we obtain $d\left(  x,e\right)
<\alpha$ (by the definition of $B_{\alpha}^{\circ}\left(  e\right)  $). If
$x=f$, then $x\in B_{\alpha}^{\circ}\left(  f\right)  $ follows immediately
from the fact that $f\in B_{\alpha}^{\circ}\left(  f\right)  $. Thus, for the
rest of the proof of $x\in B_{\alpha}^{\circ}\left(  f\right)  $, we WLOG
assume that $x\neq f$. Now, the points $e,f,x$ are distinct (since $x\neq e$,
$x\neq f$ and $e\neq f$). Hence, the ultrametric triangle inequality yields
$d\left(  x,f\right)  \leq\max\left\{  d\left(  x,e\right)  ,d\left(
f,e\right)  \right\}  <\alpha$ (since $d\left(  x,e\right)  <\alpha$ and
$d\left(  f,e\right)  <\alpha$). Thus, $x\in B_{\alpha}^{\circ}\left(
f\right)  $ (by the definition of $B_{\alpha}^{\circ}\left(  f\right)  $).

Now, forget that we fixed $x$. We thus have shown that $x\in B_{\alpha}%
^{\circ}\left(  f\right)  $ for each $x\in B_{\alpha}^{\circ}\left(  e\right)
$. In other words, $B_{\alpha}^{\circ}\left(  e\right)  \subseteq B_{\alpha
}^{\circ}\left(  f\right)  $.

But our situation is symmetric in $e$ and $f$ (since $f\neq e$ and $d\left(
f,e\right)  <\alpha$). Hence, the same argument that let us prove $B_{\alpha
}^{\circ}\left(  e\right)  \subseteq B_{\alpha}^{\circ}\left(  f\right)  $ can
be applied with the roles of $e$ and $f$ interchanged; thus we obtain
$B_{\alpha}^{\circ}\left(  f\right)  \subseteq B_{\alpha}^{\circ}\left(
e\right)  $. Combining this with $B_{\alpha}^{\circ}\left(  e\right)
\subseteq B_{\alpha}^{\circ}\left(  f\right)  $, we obtain $B_{\alpha}^{\circ
}\left(  e\right)  =B_{\alpha}^{\circ}\left(  f\right)  $. This proves
Proposition \ref{prop.Bo.self-rad}.
\end{proof}

\begin{corollary}
\label{cor.Bo.self-rad2}Let $\alpha\in\mathbb{V}$ and $e\in E$. Let $f\in
B_{\alpha}^{\circ}\left(  e\right)  $. Then, $B_{\alpha}^{\circ}\left(
e\right)  =B_{\alpha}^{\circ}\left(  f\right)  $.
\end{corollary}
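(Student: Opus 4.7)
The plan is to reduce the corollary directly to Proposition~\ref{prop.Bo.self-rad} by a simple case split on whether $f$ equals $e$ or is a genuinely distinct element of the open ball.

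First I would unpack the hypothesis $f \in B_{\alpha}^{\circ}(e)$ using Definition~\ref{def.open-ball}: by definition, this means either $f = e$, or $f \neq e$ and $d(f,e) < \alpha$. In the first case, $B_{\alpha}^{\circ}(e) = B_{\alpha}^{\circ}(f)$ is immediate, since the two sets are literally defined from the same center.

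In the second case, I have $e \neq f$ and $d(f,e) < \alpha$. The symmetry axiom for $d$ in Definition~\ref{def.Vultra} gives $d(e,f) = d(f,e) < \alpha$. Now the hypotheses of Proposition~\ref{prop.Bo.self-rad} are satisfied (with $e$ and $f$ playing their named roles), so that proposition yields $B_{\alpha}^{\circ}(e) = B_{\alpha}^{\circ}(f)$, as desired.

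There is no real obstacle here: the corollary is essentially a restatement of Proposition~\ref{prop.Bo.self-rad} in which the hypothesis \emph{$f$ lies in the open ball around $e$} replaces the hypothesis \emph{$e \neq f$ and $d(e,f) < \alpha$}. The only thing to check is that the trivial case $f = e$ is handled separately, since in that case $d(e,f)$ is not even defined, so Proposition~\ref{prop.Bo.self-rad} does not directly apply.
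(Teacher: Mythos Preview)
Your proof is correct and follows essentially the same approach as the paper: a case split on whether $f=e$, with the nontrivial case handled by invoking symmetry and then Proposition~\ref{prop.Bo.self-rad}.
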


\begin{proof}
If $e=f$, then this is obvious. Hence, WLOG assume that $e\neq f$. Thus,
$f\neq e$. Hence, $d\left(  f,e\right)  <\alpha$ (since $f\in B_{\alpha
}^{\circ}\left(  e\right)  $). Thus, the \textquotedblleft
symmetry\textquotedblright\ axiom in Definition \ref{def.Vultra} yields that
$d\left(  e,f\right)  =d\left(  f,e\right)  <\alpha$. Hence, Proposition
\ref{prop.Bo.self-rad} yields $B_{\alpha}^{\circ}\left(  e\right)  =B_{\alpha
}^{\circ}\left(  f\right)  $. Qed.
\end{proof}

\begin{proposition}
\label{prop.Bo.disjoint}Let $\alpha\in\mathbb{V}$ and $e,f\in E$ be such that
$e\neq f$ and $d\left(  e,f\right)  \geq\alpha$. Then:

\begin{enumerate}
\item[\textbf{(a)}] If $a\in B_{\alpha}^{\circ}\left(  e\right)  $ and $b\in
B_{\alpha}^{\circ}\left(  f\right)  $, then $a\neq b$ and $d\left(
a,b\right)  \geq\alpha$.

\item[\textbf{(b)}] The open balls $B_{\alpha}^{\circ}\left(  e\right)  $ and
$B_{\alpha}^{\circ}\left(  f\right)  $ are disjoint.
\end{enumerate}
\end{proposition}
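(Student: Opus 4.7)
The plan: The two parts are tightly linked, and (b) will follow from (a) with essentially no extra work (take $a=b$). So the real content is part (a). My main tool will be Corollary \ref{cor.Bo.self-rad2}, which tells us that an open ball of radius $\alpha$ equals the open ball of the same radius around any of its points.

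The strategy for (a) is to argue by contradiction in a uniform way: I will show that if either $a=b$ or $d(a,b)<\alpha$, then I can conclude $B_\alpha^\circ(e) = B_\alpha^\circ(f)$, and this in turn forces $d(e,f) < \alpha$, contradicting the hypothesis. Concretely, first apply Corollary \ref{cor.Bo.self-rad2} to $f\in B_\alpha^\circ(f) = B_\alpha^\circ(e)$ (whenever this equality has been derived) to conclude that $f = e$ or $d(f,e) < \alpha$; the former is ruled out by $e\neq f$, and the latter (via the symmetry axiom) contradicts $d(e,f) \geq \alpha$. This is the ``punchline'' I will reuse.

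Now for the body: fix $a\in B_\alpha^\circ(e)$ and $b\in B_\alpha^\circ(f)$. By Corollary \ref{cor.Bo.self-rad2}, $B_\alpha^\circ(a) = B_\alpha^\circ(e)$ and $B_\alpha^\circ(b) = B_\alpha^\circ(f)$. To prove $a\neq b$, assume otherwise; then $B_\alpha^\circ(e) = B_\alpha^\circ(a) = B_\alpha^\circ(b) = B_\alpha^\circ(f)$, and the punchline gives a contradiction. To prove $d(a,b) \geq \alpha$, assume for contradiction that $d(a,b) < \alpha$; since $a\neq b$ has just been established, Proposition \ref{prop.Bo.self-rad} gives $B_\alpha^\circ(a) = B_\alpha^\circ(b)$, and again $B_\alpha^\circ(e) = B_\alpha^\circ(f)$, contradicting the hypothesis as above.

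Part (b) is then immediate: if some $x$ lay in $B_\alpha^\circ(e) \cap B_\alpha^\circ(f)$, applying (a) with $a = b = x$ would yield $x \neq x$. I don't anticipate any real obstacle — the argument is essentially a formal consequence of Proposition \ref{prop.Bo.self-rad} and its corollary, and the only subtlety is being careful about the definition of $d$ requiring its two arguments to be distinct, which is why I split off the $a=b$ case before invoking the ultrametric machinery.
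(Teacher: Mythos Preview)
Your proof is correct and takes essentially the same approach as the paper's: both reduce the failure of the conclusion to $B_\alpha^\circ(e) = B_\alpha^\circ(f)$ via Corollary~\ref{cor.Bo.self-rad2}, and then derive $d(e,f) < \alpha$ from membership in an open ball. The only cosmetic difference is that the paper handles the two cases $a=b$ and $d(a,b)<\alpha$ in one stroke by observing that either way $a \in B_\alpha^\circ(b)$, whereas you split them; both are fine.
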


\begin{proof}
\textbf{(a)} Let $a\in B_{\alpha}^{\circ}\left(  e\right)  $ and $b\in
B_{\alpha}^{\circ}\left(  f\right)  $. We must prove that $a\neq b$ and
$d\left(  a,b\right)  \geq\alpha$.

Assume the contrary. Thus, either $a=b$ or else $d\left(  a,b\right)  <\alpha
$. Hence, $a\in B_{\alpha}^{\circ}\left(  b\right)  $ (by the definition of
$B_{\alpha}^{\circ}\left(  b\right)  $). Thus, $B_{\alpha}^{\circ}\left(
b\right)  =B_{\alpha}^{\circ}\left(  a\right)  $ (by Corollary
\ref{cor.Bo.self-rad2}, applied to $b$ and $a$ instead of $e$ and $f$). Also,
from $a\in B_{\alpha}^{\circ}\left(  e\right)  $, we obtain $B_{\alpha}%
^{\circ}\left(  e\right)  =B_{\alpha}^{\circ}\left(  a\right)  $ (by Corollary
\ref{cor.Bo.self-rad2}, applied to $a$ instead of $f$). Furthermore, from
$b\in B_{\alpha}^{\circ}\left(  f\right)  $, we obtain $B_{\alpha}^{\circ
}\left(  f\right)  =B_{\alpha}^{\circ}\left(  b\right)  $ (by Corollary
\ref{cor.Bo.self-rad2}, applied to $f$ and $b$ instead of $e$ and $f$).
Finally, the definition of $B_{\alpha}^{\circ}\left(  e\right)  $ yields $e\in
B_{\alpha}^{\circ}\left(  e\right)  =B_{\alpha}^{\circ}\left(  a\right)
=B_{\alpha}^{\circ}\left(  b\right)  =B_{\alpha}^{\circ}\left(  f\right)  $.
Since $e\neq f$, this entails $d\left(  e,f\right)  <\alpha$ (by the
definition of $B_{\alpha}^{\circ}\left(  f\right)  $). This contradicts
$d\left(  e,f\right)  \geq\alpha$. This contradiction shows that our
assumption was false. Hence, Proposition \ref{prop.Bo.disjoint} \textbf{(a)}
follows. \medskip

\textbf{(b)} This is simply the \textquotedblleft$a\neq b$\textquotedblright%
\ part of Proposition \ref{prop.Bo.disjoint} \textbf{(a)}.
\end{proof}

The next proposition is trivial:

\begin{proposition}
\label{prop.cliques.sub}Let $F$ be a subset of $E$. Let $\left(  F,w^{\prime
},d^{\prime}\right)  $ be the $\mathbb{V}$-ultra triple $\left(  F,w\mid
_{F},d\mid_{F\timesu F}\right)  $. Then:

\begin{enumerate}
\item[\textbf{(a)}] If a subset of $F$ is a clique of the $\mathbb{V}$-ultra
triple $\left(  E,w,d\right)  $, then this subset is also a clique of the
$\mathbb{V}$-ultra triple $\left(  F,w^{\prime},d^{\prime}\right)  $.

\item[\textbf{(b)}] Any clique of the $\mathbb{V}$-ultra triple $\left(
F,w^{\prime},d^{\prime}\right)  $ is a clique of the $\mathbb{V}$-ultra triple
$\left(  E,w,d\right)  $.
\end{enumerate}
\end{proposition}

\begin{proof}
\textbf{(a)} Let $C$ be a subset of $F$ that is a clique of the $\mathbb{V}%
$-ultra triple $\left(  E,w,d\right)  $. We must show that $C$ is also a
clique of the $\mathbb{V}$-ultra triple $\left(  F,w^{\prime},d^{\prime
}\right)  $.

The definition of $\left(  F,w^{\prime},d^{\prime}\right)  $ shows that
$d^{\prime}=d\mid_{F\timesu F}$. Thus, any two distinct elements $a,b\in F$
satisfy%
\begin{equation}
d^{\prime}\left(  a,b\right)  =d\left(  a,b\right)  .
\label{pf.prop.cliques.sub.1}%
\end{equation}

We know that $C$ is a clique of $\left(  E,w,d\right)  $. In other words, $C$
is an $\alpha$-clique of $\left(  E,w,d\right)  $ for some $\alpha
\in\mathbb{V}$. Consider this $\alpha$. Thus, $C$ is an $\alpha$-clique of
$\left(  E,w,d\right)  $. In other words, any two distinct elements $a,b\in C$
satisfy
\begin{equation}
d\left(  a,b\right)  =\alpha\label{pf.prop.cliques.sub.2}%
\end{equation}
(by the definition of an \textquotedblleft$\alpha$-clique\textquotedblright).
Hence, for any two distinct elements $a,b\in C$, we have%
\begin{align*}
d^{\prime}\left(  a,b\right)   &  =d\left(  a,b\right)
\ \ \ \ \ \ \ \ \ \ \left(  \text{by (\ref{pf.prop.cliques.sub.1}), since
}a,b\in C\subseteq F\right) \\
&  =\alpha\ \ \ \ \ \ \ \ \ \ \left(  \text{by (\ref{pf.prop.cliques.sub.2}%
)}\right)  .
\end{align*}

In other words, any two distinct elements $a,b\in C$ satisfy $d^{\prime
}\left(  a,b\right)  =\alpha$. Since $C$ is a subset of $F$, we thus conclude
that $C$ is an $\alpha$-clique of the $\mathbb{V}$-ultra triple $\left(
F,w^{\prime},d^{\prime}\right)  $ (by the definition of an \textquotedblleft%
$\alpha$-clique\textquotedblright). Hence, $C$ is a clique of the $\mathbb{V}%
$-ultra triple $\left(  F,w^{\prime},d^{\prime}\right)  $. This proves
Proposition \ref{prop.cliques.sub} \textbf{(a)}. \medskip

\textbf{(b)} This is proved essentially by reading the above proof of
Proposition \ref{prop.cliques.sub} \textbf{(a)} backwards (as we now have to
derive $d\left(  a,b\right)  =\alpha$ from $d^{\prime}\left(  a,b\right)
=\alpha$).
\end{proof}

The next proposition shows how a finite $\mathbb{V}$-ultra triple (of size
$>1$) can be decomposed into several smaller $\mathbb{V}$-ultra triples; this
will later be used for recursive reasoning:\footnote{See Definition
\ref{def.mcs} for the meaning of $\operatorname*{mcs}\left(  E,w,d\right)  $.}

\begin{proposition}
\label{prop.cliques.divide}Assume that $E$ is finite and $\left\vert
E\right\vert >1$. Let $\alpha=\max\left(  d\left(  E\timesu E\right)  \right)
$.

Pick any maximum-size $\alpha$-clique, and write it in the form $\left\{
e_{1},e_{2},\ldots,e_{m}\right\}  $ for some distinct elements $e_{1}%
,e_{2},\ldots,e_{m}$ of $E$.

For each $i\in\left\{  1,2,\ldots,m\right\}  $, let $E_{i}$ be the open ball
$B_{\alpha}^{\circ}\left(  e_{i}\right)  $, and let $\left(  E_{i},w_{i}%
,d_{i}\right)  $ be the $\mathbb{V}$-ultra triple $\left(  E_{i},w\mid_{E_{i}%
},d\mid_{E_{i}\timesu E_{i}}\right)  $.

Then:

\begin{enumerate}
\item[\textbf{(a)}] We have $m>1$.

\item[\textbf{(b)}] The sets $E_{1},E_{2},\ldots,E_{m}$ form a set partition
of $E$. (This means that these sets $E_{1},E_{2},\ldots,E_{m}$ are disjoint
and nonempty and their union is $E$.)

\item[\textbf{(c)}] We have $\left\vert E_{i}\right\vert <\left\vert
E\right\vert $ for each $i\in\left\{  1,2,\ldots,m\right\}  $.

\item[\textbf{(d)}] If $i\in\left\{  1,2,\ldots,m\right\}  $, and if $a,b\in
E_{i}$ are distinct, then $d\left(  a,b\right)  <\alpha$.

\item[\textbf{(e)}] If $i$ and $j$ are two distinct elements of $\left\{
1,2,\ldots,m\right\}  $, and if $a\in E_{i}$ and $b\in E_{j}$, then $a\neq b$
and $d\left(  a,b\right)  =\alpha$.

\item[\textbf{(f)}] Let $n_{i}=\operatorname*{mcs}\left(  E_{i},w_{i}%
,d_{i}\right)  $ for each $i\in\left\{  1,2,\ldots,m\right\}  $. Then,%
\[
\operatorname*{mcs}\left(  E,w,d\right)  =\max\left\{  m,n_{1},n_{2}%
,\ldots,n_{m}\right\}  .
\]

\end{enumerate}
\end{proposition}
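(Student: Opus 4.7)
The plan is to prove parts (a)--(f) in the order (a), (d), (e), (b), (c), (f), since each step feeds into the next. Part (a) is immediate: $\left\vert E\right\vert >1$ forces $E\timesu E$ nonempty, so $\alpha$ is realized by some pair $\{a,b\}$ of distinct elements of $E$, and this pair is a $2$-element $\alpha$-clique, forcing $m\geq 2$ by maximality of the chosen $\alpha$-clique. Part (d) is a short case analysis using the definition of $B_\alpha^\circ(e_i)$: if $a = e_i$ or $b=e_i$, the inequality $d(a,b)<\alpha$ is the definition of open-ball membership; otherwise $a,b,e_i$ are pairwise distinct, $d(a,e_i)<\alpha$ and $d(b,e_i)<\alpha$, and the ultrametric triangle inequality yields $d(a,b)\leq\max\{d(a,e_i),d(b,e_i)\}<\alpha$. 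Part (e) combines Proposition \ref{prop.Bo.disjoint}(a) (applied to $e_i,e_j$, which are distinct and satisfy $d(e_i,e_j)=\alpha$ since $\{e_1,\ldots,e_m\}$ is an $\alpha$-clique) with the maximality of $\alpha$, which forces $d(a,b)\leq\alpha$ and hence equality.

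For part (b), disjointness follows from Proposition \ref{prop.Bo.disjoint}(b) applied pairwise, and nonemptiness from $e_i\in E_i$. The covering assertion is the only non-automatic piece: given $x\in E$, if $x\notin\{e_1,\ldots,e_m\}$, then $\{e_1,\ldots,e_m,x\}$ is strictly larger than a maximum-size $\alpha$-clique, so it fails to be an $\alpha$-clique; since $\{e_1,\ldots,e_m\}$ itself is an $\alpha$-clique, the failure must involve $x$, giving some $i$ with $d(x,e_i)\neq\alpha$, and then $d(x,e_i)<\alpha$ by maximality of $\alpha$, so $x\in B_\alpha^\circ(e_i) = E_i$. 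Part (c) is then a one-liner: by (a), fix $j\neq i$, and by (b), $E_j$ is nonempty and disjoint from $E_i$, so $\left\vert E_i\right\vert < \left\vert E_i\right\vert + \left\vert E_j\right\vert \leq \left\vert E\right\vert$.

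The main work is in part (f), though even this is routine once the open-ball framework is in place. The inequality $\operatorname{mcs}(E,w,d)\geq\max\{m,n_1,\ldots,n_m\}$ is clear: $\{e_1,\ldots,e_m\}$ witnesses $\geq m$, and any clique of $(E_i,w_i,d_i)$ is a clique of $(E,w,d)$ since $d_i$ is the restriction of $d$. For the reverse inequality, let $C$ be a $\beta$-clique of $(E,w,d)$ of maximum size. If $\left\vert C\right\vert \leq 1$, the bound is trivial since $m\geq 2$; otherwise $\beta\leq\alpha$ (since $\alpha$ is the maximum of all distances). If $\beta=\alpha$, then $\left\vert C\right\vert\leq m$ by maximality of the chosen $\alpha$-clique. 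If $\beta<\alpha$, pick any $x\in C$ and the unique $i$ with $x\in E_i$ (from (b)); for every other $y\in C$ we have $d(x,y)=\beta<\alpha$, hence $y\in B_\alpha^\circ(x)$, and Corollary \ref{cor.Bo.self-rad2} gives $B_\alpha^\circ(x)=B_\alpha^\circ(e_i)=E_i$, so $C\subseteq E_i$ and $\left\vert C\right\vert \leq n_i$. I expect the case split in (f) to be the only place that requires care, since one must rule out $\beta>\alpha$ and handle the single-element case before invoking the ultrametric geometry.
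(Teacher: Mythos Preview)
Your proposal is correct and essentially follows the paper's argument. The only cosmetic differences are: you prove (d) by direct case analysis on whether $a$ or $b$ equals $e_i$ (the paper instead invokes Corollary~\ref{cor.Bo.self-rad2} to get $B_\alpha^\circ(e_i)=B_\alpha^\circ(b)$), and in (f) you split on whether $\beta=\alpha$ or $\beta<\alpha$ whereas the paper splits on whether $C\subseteq E_i$ or not --- but these two dichotomies are equivalent and the underlying ideas are identical.
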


\begin{proof}
Let us first check that $\alpha$ is well-defined. Indeed, the set $E\timesu E$
is nonempty (since $\left\vert E\right\vert >1$) and finite (since $E$ is
finite). Hence, the set $d\left(  E\timesu E\right)  $ is nonempty and finite
as well. Thus, it has a well-defined largest element $\max\left(  d\left(
E\timesu E\right)  \right)  $. In other words, $\alpha$ is well-defined (since
$\alpha$ was defined by $\alpha=\max\left(  d\left(  E\timesu E\right)
\right)  $).

We have
\begin{equation}
d\left(  a,b\right)  \leq\alpha\ \ \ \ \ \ \ \ \ \ \text{for each }\left(
a,b\right)  \in E\timesu E \label{pf.prop.cliques.divide.dleqa}%
\end{equation}
(since $\alpha=\max\left(  d\left(  E\timesu E\right)  \right)  $).

The set $\left\{  e_{1},e_{2},\ldots,e_{m}\right\}  $ is a maximum-size
$\alpha$-clique (by its definition), but its size is $m$ (since $e_{1}%
,e_{2},\ldots,e_{m}$ are distinct). Hence, the maximum size of an $\alpha
$-clique is $m$. Thus, every $\alpha$-clique has size $\leq m$. \medskip

\textbf{(a)} From $\alpha=\max\left(  d\left(  E\timesu E\right)  \right)  \in
d\left(  E\timesu E\right)  $, we conclude that there exist two distinct
elements $u$ and $v$ of $E$ satisfying $d\left(  u,v\right)  =\alpha$.
Consider these $u$ and $v$. Then, $\left\{  u,v\right\}  $ is an $\alpha
$-clique. This $\alpha$-clique must have size $\leq m$ (since every $\alpha
$-clique has size $\leq m$). Hence, $\left\vert \left\{  u,v\right\}
\right\vert \leq m$. Thus, $m\geq\left\vert \left\{  u,v\right\}  \right\vert
=2$ (since $u$ and $v$ are distinct), so that $m\geq2>1$. This proves
Proposition \ref{prop.cliques.divide} \textbf{(a)}. \medskip

\textbf{(b)} If $i$ and $j$ are two distinct elements of $\left\{
1,2,\ldots,m\right\}  $, then $e_{i}\neq e_{j}$ (since $e_{1},e_{2}%
,\ldots,e_{m}$ are distinct) and thus $d\left(  e_{i},e_{j}\right)  =\alpha$
(since $\left\{  e_{1},e_{2},\ldots,e_{m}\right\}  $ is an $\alpha$-clique),
and therefore the open balls $B_{\alpha}^{\circ}\left(  e_{i}\right)  $ and
$B_{\alpha}^{\circ}\left(  e_{j}\right)  $ are disjoint (by Proposition
\ref{prop.Bo.disjoint} \textbf{(b)}, applied to $e=e_{i}$ and $f=e_{j}$). In
other words, if $i$ and $j$ are two distinct elements of $\left\{
1,2,\ldots,m\right\}  $, then the open balls $E_{i}$ and $E_{j}$ are disjoint
(since $E_{i}=B_{\alpha}^{\circ}\left(  e_{i}\right)  $ and $E_{j}=B_{\alpha
}^{\circ}\left(  e_{j}\right)  $).

Hence, the open balls $E_{1},E_{2},\ldots,E_{m}$ are disjoint. Furthermore,
these balls are nonempty (since each open ball $E_{i}=B_{\alpha}^{\circ
}\left(  e_{i}\right)  $ contains at least the element $e_{i}$).

Furthermore, the union of these balls $E_{1},E_{2},\ldots,E_{m}$ is the whole
set $E$.

[\textit{Proof:} Assume the contrary. Thus, the union of the balls
$E_{1},E_{2},\ldots,E_{m}$ must be a \textbf{proper} subset of $E$ (since it
is clearly a subset of $E$). In other words, there exists some $a\in E$ that
belongs to none of these balls $E_{1},E_{2},\ldots,E_{m}$. Consider this $a$.
Then, for each $i\in\left\{  1,2,\ldots,m\right\}  $, we have $a\notin
E_{i}=B_{\alpha}^{\circ}\left(  e_{i}\right)  $. By the definition of
$B_{\alpha}^{\circ}\left(  e_{i}\right)  $, this entails that $a\neq e_{i}$
and $d\left(  a,e_{i}\right)  \geq\alpha$, hence $d\left(  a,e_{i}\right)
=\alpha$ (since (\ref{pf.prop.cliques.divide.dleqa}) yields $d\left(
a,e_{i}\right)  \leq\alpha$). Thus, we have shown that $d\left(
a,e_{i}\right)  =\alpha$ for all $i\in\left\{  1,2,\ldots,m\right\}  $.
Therefore, $\left\{  a,e_{1},e_{2},\ldots,e_{m}\right\}  $ is an $\alpha
$-clique. This $\alpha$-clique has size $m+1$ (since $e_{1},e_{2},\ldots
,e_{m}$ are distinct, and since $a\neq e_{i}$ for each $i\in\left\{
1,2,\ldots,m\right\}  $). But this contradicts the fact that every $\alpha
$-clique has size $\leq m$. This contradiction shows that our assumption was
wrong. Hence, the union of the balls $E_{1},E_{2},\ldots,E_{m}$ is the whole
set $E$.]

We have now proved that the balls $E_{1},E_{2},\ldots,E_{m}$ are disjoint and
nonempty and their union is the whole set $E$. In other words, they form a set
partition of $E$. This proves Proposition \ref{prop.cliques.divide}
\textbf{(b)}. \medskip

\textbf{(c)} Proposition \ref{prop.cliques.divide} \textbf{(b)} shows that the
$m$ sets $E_{1},E_{2},\ldots,E_{m}$ form a set partition of $E$. Thus, these
$m$ sets are $m$ disjoint nonempty subsets of $E$; hence, each of them is a
\textbf{proper} subset of $E$ (since $m>1$). In other words, $E_{i}$ is a
proper subset of $E$ for each $i\in\left\{  1,2,\ldots,m\right\}  $. Hence,
$\left\vert E_{i}\right\vert <\left\vert E\right\vert $ for each $i\in\left\{
1,2,\ldots,m\right\}  $. This proves Proposition \ref{prop.cliques.divide}
\textbf{(c)}. \medskip

\textbf{(d)} Let $i\in\left\{  1,2,\ldots,m\right\}  $. Let $a,b\in E_{i}$ be
distinct. We must prove that $d\left(  a,b\right)  <\alpha$.

We have $b\in E_{i}=B_{\alpha}^{\circ}\left(  e_{i}\right)  $ (by the
definition of $E_{i}$) and thus $B_{\alpha}^{\circ}\left(  e_{i}\right)
=B_{\alpha}^{\circ}\left(  b\right)  $ (by Corollary \ref{cor.Bo.self-rad2},
applied to $e_{i}$ and $b$ instead of $e$ and $f$). But $a\in E_{i}=B_{\alpha
}^{\circ}\left(  e_{i}\right)  =B_{\alpha}^{\circ}\left(  b\right)  $. In
other words, $a=b$ or else $d\left(  a,b\right)  <\alpha$ (by the definition
of $B_{\alpha}^{\circ}\left(  b\right)  $). Hence, $d\left(  a,b\right)
<\alpha$ (since $a\neq b$). This proves Proposition \ref{prop.cliques.divide}
\textbf{(d)}. \medskip

\textbf{(e)} Let $i$ and $j$ be two distinct elements of $\left\{
1,2,\ldots,m\right\}  $. Let $a\in E_{i}$ and $b\in E_{j}$. We must prove that
$a\neq b$ and $d\left(  a,b\right)  =\alpha$.

We have $a\in E_{i}=B_{\alpha}^{\circ}\left(  e_{i}\right)  $ (by the
definition of $E_{i}$) and similarly $b\in B_{\alpha}^{\circ}\left(
e_{j}\right)  $. Furthermore, $e_{i}\neq e_{j}$ (since $i\neq j$ and since
$e_{1},e_{2},\ldots,e_{m}$ are distinct) and thus $d\left(  e_{i}%
,e_{j}\right)  =\alpha$ (since $\left\{  e_{1},e_{2},\ldots,e_{m}\right\}  $
is an $\alpha$-clique). Hence, Proposition \ref{prop.Bo.disjoint} \textbf{(a)}
(applied to $e_{i}$ and $e_{j}$ instead of $e$ and $f$) yields $a\neq b$ and
$d\left(  a,b\right)  \geq\alpha$. Combining $d\left(  a,b\right)  \geq\alpha$
with (\ref{pf.prop.cliques.divide.dleqa}), we obtain $d\left(  a,b\right)
=\alpha$. This proves Proposition \ref{prop.cliques.divide} \textbf{(e)}.
\medskip

\textbf{(f)} Let us first notice that the map $d_{i}$ (for each $i\in\left\{
1,2,\ldots,m\right\}  $) is defined to be a restriction of the map $d$. Thus,
for any $i\in\left\{  1,2,\ldots,m\right\}  $, we have
\begin{equation}
d_{i}\left(  e,f\right)  =d\left(  e,f\right)  \ \ \ \ \ \ \ \ \ \ \text{for
any two distinct }e,f\in E_{i}. \label{pf.prop.cliques.divide.e.dres}%
\end{equation}

The $\mathbb{V}$-ultra triple $\left(  E,w,d\right)  $ has a clique of size
$m$ (namely, $\left\{  e_{1},e_{2},\ldots,e_{m}\right\}  $), and a clique of
size $n_{i}$ for each $i\in\left\{  1,2,\ldots,m\right\}  $ (indeed,
$n_{i}=\operatorname*{mcs}\left(  E_{i},w_{i},d_{i}\right)  $ shows that the
$\mathbb{V}$-ultra triple $\left(  E_{i},w_{i},d_{i}\right)  $ has such a
clique; but this clique must of course be a clique of $\left(  E,w,d\right)  $
as well\footnote{since Proposition \ref{prop.cliques.sub} \textbf{(b)}
(applied to $\left(  F,w^{\prime},d^{\prime}\right)  =\left(  E_{i}%
,w_{i},d_{i}\right)  $) shows that any clique of the $\mathbb{V}$-ultra triple
$\left(  E_{i},w_{i},d_{i}\right)  $ is a clique of the $\mathbb{V}$-ultra
triple $\left(  E,w,d\right)  $}). Thus, the $\mathbb{V}$-ultra triple
$\left(  E,w,d\right)  $ has a clique of size $\max\left\{  m,n_{1}%
,n_{2},\ldots,n_{m}\right\}  $ (since $\max\left\{  m,n_{1},n_{2},\ldots
,n_{m}\right\}  $ is one of the numbers $m,n_{1},n_{2},\ldots,n_{m}$). Thus,%
\[
\operatorname*{mcs}\left(  E,w,d\right)  \geq\max\left\{  m,n_{1},n_{2}%
,\ldots,n_{m}\right\}  .
\]
It remains to prove the reverse inequality -- i.e., to prove that
$\operatorname*{mcs}\left(  E,w,d\right)  \leq\max\left\{  m,n_{1}%
,n_{2},\ldots,n_{m}\right\}  $.

Assume the contrary. Thus, $\operatorname*{mcs}\left(  E,w,d\right)
>\max\left\{  m,n_{1},n_{2},\ldots,n_{m}\right\}  $.

The $\mathbb{V}$-ultra triple $\left(  E,w,d\right)  $ has a clique $C$ of
size $\operatorname*{mcs}\left(  E,w,d\right)  $ (by the definition of
$\operatorname*{mcs}\left(  E,w,d\right)  $). Consider this $C$. Thus,
$\left\vert C\right\vert =\operatorname*{mcs}\left(  E,w,d\right)
>\max\left\{  m,n_{1},n_{2},\ldots,n_{m}\right\}  \geq0$. Hence, the set $C$
is nonempty. In other words, there exists some $a\in C$. Consider this $a$.

But recall that $E_{1},E_{2},\ldots,E_{m}$ form a set partition of $E$. Thus,
$E_{1}\cup E_{2}\cup\cdots\cup E_{m}=E$. Now, $a\in C\subseteq E=E_{1}\cup
E_{2}\cup\cdots\cup E_{m}$. In other words, $a\in E_{i}$ for some
$i\in\left\{  1,2,\ldots,m\right\}  $. Consider this $i$.

We are in one of the following two cases:

\textit{Case 1:} We have $C\subseteq E_{i}$.

\textit{Case 2:} We have $C\not \subseteq E_{i}$.

Let us first consider Case 1. In this case, we have $C\subseteq E_{i}$. In
other words, $C$ is a subset of $E_{i}$.

Recall that $C$ is a clique of $\left(  E,w,d\right)  $. Since $C$ is a subset
of $E_{i}$, we can thus conclude that $C$ is a clique of the $\mathbb{V}%
$-ultra triple $\left(  E_{i},w_{i},d_{i}\right)  $ (since Proposition
\ref{prop.cliques.sub} \textbf{(a)} (applied to $\left(  F,w^{\prime
},d^{\prime}\right)  =\left(  E_{i},w_{i},d_{i}\right)  $) shows that if a
subset of $E_{i}$ is a clique of $\left(  E,w,d\right)  $, then this subset is
also a clique of the $\mathbb{V}$-ultra triple $\left(  E_{i},w_{i}%
,d_{i}\right)  $). But the maximum size of such a clique is
$\operatorname*{mcs}\left(  E_{i},w_{i},d_{i}\right)  $ (by the definition of
$\operatorname*{mcs}\left(  E_{i},w_{i},d_{i}\right)  $). Hence, $\left\vert
C\right\vert \leq\operatorname*{mcs}\left(  E_{i},w_{i},d_{i}\right)  =n_{i}$.
This contradicts $\left\vert C\right\vert >\max\left\{  m,n_{1},n_{2}%
,\ldots,n_{m}\right\}  \geq n_{i}$. Thus, we have obtained a contradiction in
Case 1.

Let us next consider Case 2. In this case, we have $C\not \subseteq E_{i}$. In
other words, there exists some $b\in C$ such that $b\notin E_{i}$. Consider
this $b$. From $b\in C\subseteq E=E_{1}\cup E_{2}\cup\cdots\cup E_{m}$, we
conclude that $b\in E_{j}$ for some $j\in\left\{  1,2,\ldots,m\right\}  $.
Consider this $j$. If we had $i=j$, then we would have $b\in E_{j}=E_{i}$
(since $j=i$), which would contradict $b\notin E_{i}$. Hence, we cannot have
$i=j$. Hence, $i\neq j$. Thus, Proposition \ref{prop.cliques.divide}
\textbf{(e)} yields that $a\neq b$ and $d\left(  a,b\right)  =\alpha$. Now,
$a\neq b$ shows that $a$ and $b$ are two distinct elements of $C$. But $C$ is
a clique, and thus is a $\gamma$-clique for some $\gamma\in\mathbb{V}$.
Consider this $\gamma$. Since $C$ is a $\gamma$-clique, we have $d\left(
a,b\right)  =\gamma$ (since $a$ and $b$ are two distinct elements of $C$).
Hence, $\gamma=d\left(  a,b\right)  =\alpha$. Thus, $C$ is an $\alpha$-clique
(since $C$ is a $\gamma$-clique). Thus, $\left\vert C\right\vert \leq m$
(since every $\alpha$-clique has size $\leq m$). This contradicts $\left\vert
C\right\vert >\max\left\{  m,n_{1},n_{2},\ldots,n_{m}\right\}  \geq m$. Thus,
we have obtained a contradiction in Case 2.

We have thus found a contradiction in each of the two Cases 1 and 2. Thus, we
always have a contradiction. This completes the proof of Proposition
\ref{prop.cliques.divide} \textbf{(f)}.
\end{proof}

\begin{remark}
Here are some additional observations on Proposition \ref{prop.cliques.divide}%
, which we will not need (and thus will not prove):

\begin{enumerate}
\item[\textbf{(a)}] The set partition $\left\{  E_{1},E_{2},\ldots
,E_{m}\right\}  $ constructed in Proposition \ref{prop.cliques.divide}
\textbf{(b)} does not depend on the choice of $e_{1},e_{2},\ldots,e_{m}$.
Indeed, $E_{1},E_{2},\ldots,E_{m}$ are precisely the maximal (with respect to
inclusion) subsets $F$ of $E$ satisfying $\left(  d\left(  a,b\right)
<\alpha\text{ for any distinct }a,b\in F\right)  $.

\item[\textbf{(b)}] Applying Proposition \ref{prop.cliques.divide}
iteratively, we can see that a $\mathbb{V}$-ultra triple $\left(
E,w,d\right)  $ with finite $E$ has a recursive structure governed by a tree.
This idea is not new; see \cite{Lemin03} and \cite[\S 2--\S 3]{PetDov14} for
related results.
\end{enumerate}
\end{remark}

\section{\label{sect.val-rep}Valadic representation of $\mathbb{V}$-ultra
triples}

For the whole Section \ref{sect.val-rep}, we fix a field $\mathbb{K}$, and we
let $\mathbb{V}_{\geq0}$, $\mathbb{L}$, $\mathbb{L}_{+}$ and $t_{\alpha}$ be
as in Section \ref{sect.p-adic}. We also recall Definition \ref{def.padic-ut}.

\begin{definition}
Let $\gamma\in\mathbb{V}$ and $u\in\mathbb{L}$. We say that a valadic
$\mathbb{V}$-ultra triple $\left(  E,w,d\right)  $ is $\left(  \gamma
,u\right)  $\textit{-positioned} if%
\[
E\subseteq u+t_{-\gamma}\mathbb{L}_{+}.
\]

\end{definition}

In other words, a valadic $\mathbb{V}$-ultra triple $\left(  E,w,d\right)  $
is $\left(  \gamma,u\right)  $-positioned if and only if each element of $E$
has the form $u+\sum\limits_{\substack{\beta\in\mathbb{V};\\\beta\geq-\gamma
}}p_{\beta}t_{\beta}$ for some $p_{\beta}\in\mathbb{K}$.

\begin{theorem}
\label{thm.approx-p}Let $\left(  E,w,d\right)  $ be a $\mathbb{V}$-ultra
triple such that the set $E$ is finite. Let $\gamma\in\mathbb{V}$ be such that%
\begin{equation}
d\left(  a,b\right)  \leq\gamma\ \ \ \ \ \ \ \ \ \ \text{for each }\left(
a,b\right)  \in E\timesu E. \label{eq.thm.approx-p.ass}%
\end{equation}
Let $u\in\mathbb{L}$.

Assume that $\left\vert \mathbb{K}\right\vert \geq\operatorname*{mcs}\left(
E,w,d\right)  $. Then, there exists a $\left(  \gamma,u\right)  $-positioned
valadic $\mathbb{V}$-ultra triple isomorphic to $\left(  E,w,d\right)  $.
\end{theorem}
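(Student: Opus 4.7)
The plan is to proceed by strong induction on $|E|$. The base cases $|E|\leq 1$ are essentially trivial: for $|E|=0$ take $E'=\varnothing$, and for $|E|=1$ take $E'=\{u\}$ with the unique element of $E$ sent to $u$ and its weight transferred through the bijection. Both are $(\gamma,u)$-positioned since $0 \in t_{-\gamma}\mathbb{L}_+$, and no distances need to be verified.

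For the inductive step, assume $|E|>1$. Set $\alpha = \max d(E \timesu E)$, so $\alpha \leq \gamma$ by (\ref{eq.thm.approx-p.ass}). By Proposition \ref{prop.cliques.divide}, fix a maximum-size $\alpha$-clique $\{e_1,\ldots,e_m\}$, yielding a partition $E = E_1 \cup \cdots \cup E_m$ with $E_i = B_\alpha^\circ(e_i)$ and $m \geq 2$. Since $m \leq \operatorname{mcs}(E,w,d) \leq |\mathbb{K}|$ (using Proposition \ref{prop.cliques.divide}(f)), we may pick $m$ distinct scalars $c_1,\ldots,c_m \in \mathbb{K}$, and define centers $u_i := u + c_i t_{-\alpha} \in \mathbb{L}$ for each $i$. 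The strategy is that the image of each $E_i$ will sit in a tight cluster around $u_i$, with the distinct leading terms $c_i t_{-\alpha}$ separating the different clusters at scale $-\alpha$.

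For each $i$ with $|E_i|=1$ we set $E_i' := \{u_i\}$ directly; for each $i$ with $|E_i| \geq 2$, we let $\alpha_i = \max d_i(E_i \timesu E_i)$, which satisfies $\alpha_i < \alpha$ by Proposition \ref{prop.cliques.divide}(d). The inductive hypothesis applies to $(E_i,w_i,d_i)$ with bound $\alpha_i$ and center $u_i$ (using $|E_i|<|E|$ by Proposition \ref{prop.cliques.divide}(c) and $\operatorname{mcs}(E_i,w_i,d_i) \leq \operatorname{mcs}(E,w,d) \leq |\mathbb{K}|$ by Proposition \ref{prop.cliques.divide}(f)), producing an $(\alpha_i,u_i)$-positioned valadic $\mathbb{V}$-ultra triple $(E_i',w_i',d_i')$ and an isomorphism $f_i: E_i \to E_i'$. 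We then set $E' := E_1' \cup \cdots \cup E_m'$, amalgamate the $f_i$ into a single bijection $f: E \to E'$, and transfer the weight function through $f$.

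The remaining verifications are: (i) within-part distances match, immediate from each $f_i$ being an isomorphism; (ii) between-part distances equal $\alpha$, matching Proposition \ref{prop.cliques.divide}(e); and (iii) $E' \subseteq u + t_{-\gamma}\mathbb{L}_+$. For (ii) and for disjointness of the $E_i'$, the key computation is that for $x \in E_i'$ and $y \in E_j'$ with $i \neq j$,
\[
x - y = (c_i - c_j)\, t_{-\alpha} + (x - u_i) - (y - u_j),
\]
where $(c_i - c_j)\, t_{-\alpha}$ has order exactly $-\alpha$ (as $c_i \neq c_j$) while $(x - u_i) - (y - u_j)$ either vanishes or has order strictly greater than $-\alpha$ (each summand being either zero or of order $\geq -\alpha_i > -\alpha$ or $\geq -\alpha_j > -\alpha$, by Lemma \ref{lem.ord.triangle}); hence $\operatorname{ord}(x - y) = -\alpha$, giving $x \neq y$ and $d'(x,y) = \alpha$. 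Claim (iii) is routine from $u_i - u = c_i t_{-\alpha} \in t_{-\gamma}\mathbb{L}_+$ (since $\alpha \leq \gamma$) and $x - u_i \in t_{-\alpha_i}\mathbb{L}_+ \subseteq t_{-\gamma}\mathbb{L}_+$ (or $x - u_i = 0$). The main subtlety is the $|E_i|=1$ case, which we handle by the explicit construction above rather than appealing to the inductive hypothesis (which would not pin down $E_i'$ to equal $\{u_i\}$).
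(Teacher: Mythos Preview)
Your proof is correct and follows essentially the same route as the paper's: strong induction on $|E|$, partition via Proposition~\ref{prop.cliques.divide} at the maximal distance $\alpha$, separate the pieces using distinct scalars $c_i$ to form centers $u_i = u + c_i t_{-\alpha}$, apply the induction hypothesis to each piece, and glue. The only cosmetic difference is that the paper uses a single uniform bound $\beta$ (the second-largest value of $d$) for all pieces and treats the case ``$\beta$ undefined'' separately, whereas you use the piece-specific $\alpha_i = \max d_i(E_i\timesu E_i)$ and instead treat the case $|E_i|=1$ separately; the resulting computations (Claim~1 in the paper versus your cross-distance calculation) are the same.
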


\begin{proof}
[Proof of Theorem \ref{thm.approx-p}.]We proceed by strong induction on
$\left\vert E\right\vert $.

If $\left\vert E\right\vert =1$, then this is clear (just take the obvious
valadic $\mathbb{V}$-ultra triple on the set $\left\{  u\right\}
\subseteq\mathbb{L}$, which is clearly $\left(  \gamma,u\right)
$-positioned). The case $\left\vert E\right\vert =0$ is even more obvious.
Thus, WLOG assume that $\left\vert E\right\vert >1$. Thus, the set $E\timesu
E$ is nonempty. Hence, $d\left(  E\timesu E\right)  $ is a nonempty finite
subset of $\mathbb{V}$, and therefore has a largest element. In other words,
$\max\left(  d\left(  E\timesu E\right)  \right)  $ is well-defined.

Let $\alpha=\max\left(  d\left(  E\timesu E\right)  \right)  $. Thus,
$\alpha\leq\gamma$ (by (\ref{eq.thm.approx-p.ass})).

Pick any maximum-size $\alpha$-clique, and write it in the form $\left\{
e_{1},e_{2},\ldots,e_{m}\right\}  $ for some distinct elements $e_{1}%
,e_{2},\ldots,e_{m}$ of $E$. For each $i\in\left\{  1,2,\ldots,m\right\}  $,
let $E_{i}$ be the open ball $B_{\alpha}^{\circ}\left(  e_{i}\right)  $, and
let $\left(  E_{i},w_{i},d_{i}\right)  $ be the $\mathbb{V}$-ultra triple
$\left(  E_{i},w\mid_{E_{i}},d\mid_{E_{i}\timesu E_{i}}\right)  $.

Then, Proposition \ref{prop.cliques.divide} \textbf{(a)} shows that $m>1$.
Moreover, Proposition \ref{prop.cliques.divide} \textbf{(b)} shows that the
$m$ sets $E_{1},E_{2},\ldots,E_{m}$ form a set partition of $E$. Hence,
$E=E_{1}\sqcup E_{2}\sqcup\cdots\sqcup E_{m}$ (an internal disjoint union).
Each set $E_{i}$ is the open ball $B_{\alpha}^{\circ}\left(  e_{i}\right)  $
and thus contains $e_{i}$.

Let $n_{i}=\operatorname*{mcs}\left(  E_{i},w_{i},d_{i}\right)  $ for each
$i\in\left\{  1,2,\ldots,m\right\}  $. Then,%
\[
\left\vert \mathbb{K}\right\vert \geq\operatorname*{mcs}\left(  E,w,d\right)
=\max\left\{  m,n_{1},n_{2},\ldots,n_{m}\right\}  \ \ \ \ \ \ \ \ \ \ \left(
\text{by Proposition \ref{prop.cliques.divide} \textbf{(f)}}\right)  .
\]

For any subset $Z$ of $\mathbb{L}$, we define a distance function
$\overline{d}_{Z}:Z\timesu Z\rightarrow\mathbb{V}$ by setting%
\begin{equation}
\overline{d}_{Z}\left(  a,b\right)  =-\operatorname*{ord}\left(  a-b\right)
\ \ \ \ \ \ \ \ \ \ \text{for all }\left(  a,b\right)  \in Z\timesu Z.
\label{pf.thm.approx-p.dZ}%
\end{equation}
Note that the distance function of any valadic $\mathbb{V}$-ultra triple is
precisely $\overline{d}_{Z}$, where $Z$ is the ground set of this $\mathbb{V}%
$-ultra triple.

We have $\left\vert \mathbb{K}\right\vert \geq\max\left\{  m,n_{1}%
,n_{2},\ldots,n_{m}\right\}  \geq m$. Hence, there exist $m$ distinct elements
$\lambda_{1},\lambda_{2},\ldots,\lambda_{m}$ of $\mathbb{K}$. Fix $m$ such
elements. Define $m$ elements $u_{1},u_{2},\ldots,u_{m}$ of $\mathbb{L}$ by%
\begin{equation}
u_{i}=u+\lambda_{i}t_{-\alpha}\ \ \ \ \ \ \ \ \ \ \text{for each }i\in\left\{
1,2,\ldots,m\right\}  \text{.} \label{pf.thm.approx-p.ui=}%
\end{equation}

Let $\mathbb{L}_{++}$ denote the $\mathbb{K}$-submodule of $\mathbb{L}_{+}$
generated by $t_{\delta}$ for all positive $\delta\in\mathbb{V}$. (Of course,
$\delta\in\mathbb{V}$ is said to be positive if and only if $\delta>0$.) It is
easy to see that $\mathbb{L}_{++}$ is an ideal of $\mathbb{L}_{+}$. (Actually,
$\mathbb{L}_{++}=\operatorname*{Ker}\pi$, where $\pi$ is as defined in Lemma
\ref{lem.piLK}.) Hence, $\mathbb{L}_{++}\mathbb{L}_{+}\subseteq\mathbb{L}%
_{++}$.

Let $i\in\left\{  1,2,\ldots,m\right\}  $. We shall work under the assumption
that $\left\vert E_{i}\right\vert >1$; we will later explain how to proceed
without it.

The set $E_{i}\timesu E_{i}$ is nonempty (since $\left\vert E_{i}\right\vert
>1$) and finite (since $E_{i}$ is finite). Hence, the set $d\left(
E_{i}\timesu E_{i}\right)  $ is nonempty and finite as well. Thus, it has a
well-defined largest element $\max\left(  d\left(  E_{i}\timesu E_{i}\right)
\right)  $. Let us denote this largest element by $\beta$.

From Proposition \ref{prop.cliques.divide} \textbf{(d)}, it follows easily see
that $\beta<\alpha$. [\textit{Proof:} We defined $\beta$ by $\beta=\max\left(
d\left(  E_{i}\timesu E_{i}\right)  \right)  $. Thus, $\beta\in d\left(
E_{i}\timesu E_{i}\right)  $. In other words, $\beta=d\left(  a,b\right)  $
for some two distinct elements $a$ and $b$ of $E_{i}$. However, Proposition
\ref{prop.cliques.divide} \textbf{(d)} yields that these two elements $a$ and
$b$ satisfy $d\left(  a,b\right)  <\alpha$, and thus we conclude that
$\beta=d\left(  a,b\right)  <\alpha$.]

From $\beta<\alpha$, we obtain $\alpha-\beta>0$ and therefore $t_{\alpha
-\beta}\in\mathbb{L}_{++}$. Hence,%
\begin{equation}
t_{-\beta}=t_{-\alpha}\underbrace{t_{\alpha-\beta}}_{\in\mathbb{L}_{++}}\in
t_{-\alpha}\mathbb{L}_{++}.\label{pf.thm.approx-p.t-beta}%
\end{equation}

For any $\left(  a,b\right)  \in E_{i}\timesu E_{i}$, we have $d\left(
a,b\right)  \in d\left(  E_{i}\timesu E_{i}\right)  $ and thus%
\[
d\left(  a,b\right)  \leq\left(  \text{the largest element of }d\left(
E_{i}\timesu E_{i}\right)  \right)  =\max\left(  d\left(  E_{i}\timesu
E_{i}\right)  \right)  =\beta,
\]
and therefore%
\begin{align*}
d_{i}\left(  a,b\right)    & =d\left(  a,b\right)  \ \ \ \ \ \ \ \ \ \ \left(
\text{since }d_{i}=d\mid_{E_{i}\timesu E_{i}}\right)  \\
& \leq\beta.
\end{align*}
We thus have proved that%
\[
d_{i}\left(  a,b\right)  \leq\beta\ \ \ \ \ \ \ \ \ \ \text{for each }\left(
a,b\right)  \in E_{i}\timesu E_{i}.
\]
Also, $\left\vert E_{i}\right\vert <\left\vert E\right\vert $ (by Proposition
\ref{prop.cliques.divide} \textbf{(c)}) and%
\[
\left\vert \mathbb{K}\right\vert \geq\max\left\{  m,n_{1},n_{2},\ldots
,n_{m}\right\}  \geq n_{i}=\operatorname*{mcs}\left(  E_{i},w_{i}%
,d_{i}\right)  .
\]
Hence, the induction hypothesis shows that we can apply Theorem
\ref{thm.approx-p} to $\beta$, $u_{i}$ and $\left(  E_{i},w_{i},d_{i}\right)
$ instead of $\gamma$, $u$ and $\left(  E,w,d\right)  $. We thus conclude that
there exists a $\left(  \beta,u_{i}\right)  $-positioned valadic $\mathbb{V}%
$-ultra triple $\left(  \overline{E}_{i},\overline{w}_{i},\overline{d}%
_{i}\right)  $ isomorphic to $\left(  E_{i},w_{i},d_{i}\right)  $. Consider
this $\left(  \overline{E}_{i},\overline{w}_{i},\overline{d}_{i}\right)  $.
The $\mathbb{V}$-ultra triple $\left(  E_{i},w_{i},d_{i}\right)  $ is
isomorphic to $\left(  \overline{E}_{i},\overline{w}_{i},\overline{d}%
_{i}\right)  $ (since $\left(  \overline{E}_{i},\overline{w}_{i},\overline
{d}_{i}\right)  $ is isomorphic to $\left(  E_{i},w_{i},d_{i}\right)  $, but
being isomorphic is a symmetric relation). In other words, there exists an
isomorphism $f_{i}:E_{i}\rightarrow\overline{E}_{i}$ of $\mathbb{V}$-ultra
triples from $\left(  E_{i},w_{i},d_{i}\right)  $ to $\left(  \overline{E}%
_{i},\overline{w}_{i},\overline{d}_{i}\right)  $. Consider this $f_{i}$. Since
the $\mathbb{V}$-ultra triple $\left(  \overline{E}_{i},\overline{w}%
_{i},\overline{d}_{i}\right)  $ is $\left(  \beta,u_{i}\right)  $-positioned,
we have%
\[
\overline{E}_{i}\subseteq u_{i}+\underbrace{t_{-\beta}}_{\substack{\in
t_{-\alpha}\mathbb{L}_{++}\\\text{(by (\ref{pf.thm.approx-p.t-beta}))}%
}}\mathbb{L}_{+}\subseteq u_{i}+t_{-\alpha}\underbrace{\mathbb{L}%
_{++}\mathbb{L}_{+}}_{\subseteq\mathbb{L}_{++}}\subseteq u_{i}+t_{-\alpha
}\mathbb{L}_{++}.
\]

Hence, we have found a $\mathbb{V}$-ultra triple $\left(  E_{i},w_{i}%
,d_{i}\right)  $ and a valadic $\mathbb{V}$-ultra triple $\left(  \overline
{E}_{i},\overline{w}_{i},\overline{d}_{i}\right)  $ satisfying
\[
\overline{E}_{i}\subseteq u_{i}+t_{-\alpha}\mathbb{L}_{++},
\]
and an isomorphism $f_{i}:E_{i}\rightarrow\overline{E}_{i}$ of $\mathbb{V}%
$-ultra triples from $\left(  E_{i},w_{i},d_{i}\right)  $ to $\left(
\overline{E}_{i},\overline{w}_{i},\overline{d}_{i}\right)  $. We have done so
assuming that $\left\vert E_{i}\right\vert >1$; but this is even easier when
$\left\vert E_{i}\right\vert $ is $\leq1$ instead\footnote{\textit{Proof.}
Assume that $\left\vert E_{i}\right\vert \leq1$. Since $e_{i}$ is an element
of $E_{i}$ (because $E_{i}=B_{\alpha}^{\circ}\left(  e_{i}\right)  $), we thus
have $E_{i}=\left\{  e_{i}\right\}  $. Now, set $\overline{E}_{i}=\left\{
u_{i}\right\}  $; let $\overline{w}_{i}:\overline{E}_{i}\rightarrow\mathbb{V}$
be the map that sends $u_{i}$ to $w_{i}\left(  e_{i}\right)  $; let
$\overline{d}_{i}:\overline{E}_{i}\timesu\overline{E}_{i}\rightarrow
\mathbb{V}$ be the distance function $\overline{d}_{\left\{  u_{i}\right\}  }%
$; and let $f_{i}:E_{i}\rightarrow\overline{E}_{i}$ be the map that sends
$e_{i}$ to $u_{i}$. Then, it is easy to see that $\left(  \overline{E}%
_{i},\overline{w}_{i},\overline{d}_{i}\right)  $ is a valadic $\mathbb{V}%
$-ultra triple satisfying $\overline{E}_{i}\subseteq u_{i}+t_{-\alpha
}\mathbb{L}_{++}$ (since $\overline{E}_{i}=\left\{  u_{i}\right\}
=u_{i}+\underbrace{0}_{\subseteq t_{-\alpha}\mathbb{L}_{++}}\subseteq
u_{i}+t_{-\alpha}\mathbb{L}_{++}$), and that the map $f_{i}:E_{i}%
\rightarrow\overline{E}_{i}$ is an isomorphism of $\mathbb{V}$-ultra triples
from $\left(  E_{i},w_{i},d_{i}\right)  $ to $\left(  \overline{E}%
_{i},\overline{w}_{i},\overline{d}_{i}\right)  $ (since $E_{i}=\left\{
e_{i}\right\}  $ and $\overline{E}_{i}=\left\{  u_{i}\right\}  $, so that the
maps $d_{i}$ and $\overline{d}_{i}$ both have no values, whereas the map
$\overline{w}_{i}$ is defined in such a way that $\overline{w}_{i}\left(
u_{i}\right)  =w_{i}\left(  e_{i}\right)  $ and thus $\overline{w}_{i}\circ
f_{i}=w_{i}$). Thus, everything we constructed above still exists when
$\left\vert E_{i}\right\vert \leq1$.}.

Forget that we fixed $i$. Thus, for each $i\in\left\{  1,2,\ldots,m\right\}
$, we have constructed a $\mathbb{V}$-ultra triple $\left(  E_{i},w_{i}%
,d_{i}\right)  $ and a valadic $\mathbb{V}$-ultra triple $\left(  \overline
{E}_{i},\overline{w}_{i},\overline{d}_{i}\right)  $ satisfying
\[
\overline{E}_{i}\subseteq u_{i}+t_{-\alpha}\mathbb{L}_{++}%
\]
and an isomorphism $f_{i}:E_{i}\rightarrow\overline{E}_{i}$ of $\mathbb{V}%
$-ultra triples from $\left(  E_{i},w_{i},d_{i}\right)  $ to $\left(
\overline{E}_{i},\overline{w}_{i},\overline{d}_{i}\right)  $.

For later use, let us observe the following:

\begin{statement}
\textit{Claim 1:} Let $i$ and $j$ be two distinct elements of $\left\{
1,2,\ldots,m\right\}  $. Let $a\in\overline{E}_{i}$ and $b\in\overline{E}_{j}%
$. Then, $a-b\neq0$ and $\operatorname*{ord}\left(  a-b\right)  =-\alpha$.
\end{statement}

[\textit{Proof of Claim 1:} From $i\neq j$, we conclude that $\lambda_{i}$ and
$\lambda_{j}$ are two distinct elements of $\mathbb{K}$ (since $\lambda
_{1},\lambda_{2},\ldots,\lambda_{m}$ are distinct elements of $\mathbb{K}$).
Hence, $\lambda_{i}-\lambda_{j}$ is a nonzero element of $\mathbb{K}$.

We have $a\in\overline{E}_{i}\subseteq u_{i}+t_{-\alpha}\mathbb{L}_{++}$, so
that $a-u_{i}\in t_{-\alpha}\mathbb{L}_{++}$. Similarly, $b-u_{j}\in
t_{-\alpha}\mathbb{L}_{++}$. Also, (\ref{pf.thm.approx-p.ui=}) yields
$u_{i}=u+\lambda_{i}t_{-\alpha}$. Likewise, $u_{j}=u+\lambda_{j}t_{-\alpha}$.
Subtracting the latter two equalities from one another, we obtain%
\[
u_{i}-u_{j}=\left(  u+\lambda_{i}t_{-\alpha}\right)  -\left(  u+\lambda
_{j}t_{-\alpha}\right)  =\left(  \lambda_{i}-\lambda_{j}\right)  t_{-\alpha}.
\]
Now,%
\begin{align*}
a-b  &  =\underbrace{\left(  u_{i}-u_{j}\right)  }_{=\left(  \lambda
_{i}-\lambda_{j}\right)  t_{-\alpha}}+\underbrace{\left(  a-u_{i}\right)
}_{\in t_{-\alpha}\mathbb{L}_{++}}-\underbrace{\left(  b-u_{j}\right)  }_{\in
t_{-\alpha}\mathbb{L}_{++}}\\
&  \in\left(  \lambda_{i}-\lambda_{j}\right)  t_{-\alpha}%
+\underbrace{t_{-\alpha}\mathbb{L}_{++}-t_{-\alpha}\mathbb{L}_{++}}_{\subseteq
t_{-\alpha}\mathbb{L}_{++}}\subseteq\left(  \lambda_{i}-\lambda_{j}\right)
t_{-\alpha}+t_{-\alpha}\mathbb{L}_{++}.
\end{align*}
In other words, $a-b$ can be written in the form
\[
a-b=\left(  \lambda_{i}-\lambda_{j}\right)  t_{-\alpha}+\left(  \text{a
}\mathbb{K}\text{-linear combination of }t_{\eta}\text{ with }\eta
>-\alpha\right)
\]
(since the elements of $t_{-\alpha}\mathbb{L}_{++}$ are precisely the
$\mathbb{K}$-linear combinations of $t_{\eta}$ with $\eta>-\alpha$).

From this, we obtain two things: First, we obtain that $\left[  t_{-\alpha
}\right]  \left(  a-b\right)  =\lambda_{i}-\lambda_{j}$ (since $\lambda
_{i}-\lambda_{j}\in\mathbb{K}$), hence $\left[  t_{-\alpha}\right]  \left(
a-b\right)  =\lambda_{i}-\lambda_{j}\neq0$ (since $\lambda_{i}-\lambda_{j}$ is
nonzero), and thus $a-b\neq0$. Furthermore, we obtain that
$\operatorname*{ord}\left(  a-b\right)  =-\alpha$ (again since $\lambda
_{i}-\lambda_{j}\neq0$). Thus, Claim 1 is proved.] \medskip

Let $\overline{E}$ denote the subset
\[
\overline{E}_{1}\cup\overline{E}_{2}\cup\cdots\cup\overline{E}_{m}%
\]
of $\mathbb{L}$. Note that the sets $\overline{E}_{1},\overline{E}_{2}%
,\ldots,\overline{E}_{m}$ are disjoint\footnote{\textit{Proof.} Assume the
contrary. Thus, there exist some distinct $i,j\in\left\{  1,2,\ldots
,m\right\}  $ such that $\overline{E}_{i}\cap\overline{E}_{j}\neq\varnothing$.
Consider these $i$ and $j$. There exists some $a\in\overline{E}_{i}%
\cap\overline{E}_{j}$ (since $\overline{E}_{i}\cap\overline{E}_{j}%
\neq\varnothing$). Consider this $a$. We have $a\in\overline{E}_{i}%
\cap\overline{E}_{j}\subseteq\overline{E}_{i}$ and similarly $a\in\overline
{E}_{j}$. Hence, Claim 1 (applied to $b=a$) yields $a-a\neq0$ and
$\operatorname*{ord}\left(  a-a\right)  =-\alpha$. But $a-a\neq0$ clearly
contradicts $a-a=0$. This contradiction proves that our assumption was false,
qed.}. Hence, $\overline{E}=\overline{E}_{1}\sqcup\overline{E}_{2}\sqcup
\cdots\sqcup\overline{E}_{m}$ (an internal disjoint union). Moreover, each
$i\in\left\{  1,2,\ldots,m\right\}  $ satisfies%
\begin{align}
\overline{E}_{i} &  \subseteq\underbrace{u_{i}}_{\substack{=u+\lambda
_{i}t_{-\alpha}\\\text{(by (\ref{pf.thm.approx-p.ui=}))}}}+\,t_{-\alpha
}\mathbb{L}_{++}=u+\underbrace{\lambda_{i}t_{-\alpha}+t_{-\alpha}%
\mathbb{L}_{++}}_{=t_{-\alpha}\left(  \lambda_{i}+\mathbb{L}_{++}\right)
}\nonumber\\
&  =u+\underbrace{t_{-\alpha}}_{\substack{\in t_{-\gamma}\mathbb{L}%
_{+}\\\text{(since }-\alpha\geq-\gamma\\\text{(because }\alpha\leq
\gamma\text{))}}}\ \ \underbrace{\left(  \lambda_{i}+\mathbb{L}_{++}\right)
}_{\substack{\subseteq\mathbb{L}_{+}\\\text{(since }\lambda_{i}\in
\mathbb{K}\subseteq\mathbb{L}_{+}\text{)}}}\subseteq u+t_{-\gamma
}\underbrace{\mathbb{L}_{+}\mathbb{L}_{+}}_{\subseteq\mathbb{L}_{+}%
}\nonumber\\
&  \subseteq u+t_{-\gamma}\mathbb{L}_{+}.\label{pf.thm.approx-p.Eiin3}%
\end{align}
Now,%
\[
\overline{E}=\overline{E}_{1}\cup\overline{E}_{2}\cup\cdots\cup\overline
{E}_{m}=\bigcup_{i=1}^{m}\ \ \underbrace{\overline{E}_{i}}%
_{\substack{\subseteq u+t_{-\gamma}\mathbb{L}_{+}\\\text{(by
(\ref{pf.thm.approx-p.Eiin3}))}}}\subseteq\bigcup_{i=1}^{m}\left(
u+t_{-\gamma}\mathbb{L}_{+}\right)  \subseteq u+t_{-\gamma}\mathbb{L}_{+}.
\]

Now, recall that $E=E_{1}\sqcup E_{2}\sqcup\cdots\sqcup E_{m}$ and
$\overline{E}=\overline{E}_{1}\sqcup\overline{E}_{2}\sqcup\cdots
\sqcup\overline{E}_{m}$. Thus, we can glue the bijections $f_{i}%
:E_{i}\rightarrow\overline{E}_{i}$ together to a single bijection%
\begin{align*}
f:E  &  \rightarrow\overline{E},\\
a  &  \mapsto f_{i}\left(  a\right)  ,\ \ \ \ \ \ \ \ \ \ \text{where }%
i\in\left\{  1,2,\ldots,m\right\}  \text{ is such that }a\in E_{i}.
\end{align*}
Consider this $f$. Define a weight function $\overline{w}:\overline
{E}\rightarrow\mathbb{V}$ by setting $\overline{w}=w\circ f^{-1}$. Then,
$\left(  \overline{E},\overline{w},\overline{d}_{\overline{E}}\right)  $ is a
$\left(  \gamma,u\right)  $-positioned valadic $\mathbb{V}$-ultra triple. (It
is $\left(  \gamma,u\right)  $-positioned, since $\overline{E}\subseteq
u+t_{-\gamma}\mathbb{L}_{+}$.) Moreover, it is easy to see that every
$i,j\in\left\{  1,2,\ldots,m\right\}  $ and any two distinct elements
$a\in\overline{E}_{i}$ and $b\in\overline{E}_{j}$ satisfy%
\begin{equation}
\overline{d}_{\overline{E}}\left(  a,b\right)  =%
\begin{cases}
\alpha, & \text{if }i\neq j;\\
\overline{d}_{\overline{E}_{i}}\left(  a,b\right)  , & \text{if }i=j
\end{cases}
\label{pf.thm.approx-p.dE5}%
\end{equation}
\footnote{\textit{Proof of (\ref{pf.thm.approx-p.dE5}):} Let $i,j\in\left\{
1,2,\ldots,m\right\}  $, and let $a\in\overline{E}_{i}$ and $b\in\overline
{E}_{j}$ be two distinct elements. We must prove (\ref{pf.thm.approx-p.dE5}).
It clearly suffices to verify the equality
\[
\operatorname*{ord}\left(  a-b\right)  =%
\begin{cases}
-\alpha, & \text{if }i\neq j;\\
\operatorname*{ord}\left(  a-b\right)  , & \text{if }i=j
\end{cases}
\]
(since both $\overline{d}_{\overline{E}}$ and $\overline{d}_{\overline{E}_{i}%
}$ are given by the formula (\ref{pf.thm.approx-p.dZ})). This equality is
obviously true in the case when $i=j$; on the other hand, it follows from
Claim 1 in the case when $i\neq j$. Hence, (\ref{pf.thm.approx-p.dE5}) is
proved.}. From this, it is easy to see that%
\begin{equation}
\overline{d}_{\overline{E}}\left(  f\left(  a\right)  ,f\left(  b\right)
\right)  =d\left(  a,b\right)  \ \ \ \ \ \ \ \ \ \ \text{for all }\left(
a,b\right)  \in E\timesu E \label{pf.thm.approx-p.dE6}%
\end{equation}
\footnote{\textit{Proof of (\ref{pf.thm.approx-p.dE6}):} Let $\left(
a,b\right)  \in E\timesu E$. Thus, $a$ and $b$ are two distinct elements of
$E$. Let $i,j\in\left\{  1,2,\ldots,m\right\}  $ be such that $a\in E_{i}$ and
$b\in E_{j}$. (These $i$ and $j$ clearly exist, because $a$ and $b$ both
belong to $E=E_{1}\sqcup E_{2}\sqcup\cdots\sqcup E_{m}$.) Then, the definition
of $f$ yields $f\left(  a\right)  =f_{i}\left(  a\right)  $ and $f\left(
b\right)  =f_{j}\left(  b\right)  $. Hence,%
\begin{align}
\overline{d}_{\overline{E}}\left(  f\left(  a\right)  ,f\left(  b\right)
\right)   &  =\overline{d}_{\overline{E}}\left(  f_{i}\left(  a\right)
,f_{j}\left(  b\right)  \right) \nonumber\\
&  =%
\begin{cases}
\alpha, & \text{if }i\neq j;\\
\overline{d}_{\overline{E}_{i}}\left(  f_{i}\left(  a\right)  ,f_{j}\left(
b\right)  \right)  , & \text{if }i=j
\end{cases}
\label{pf.thm.approx-p.dE6.pf.1}%
\end{align}
(by (\ref{pf.thm.approx-p.dE5}), applied to $f_{i}\left(  a\right)  $ and
$f_{j}\left(  b\right)  $ instead of $a$ and $b$). If $i\neq j$, then this
becomes%
\[
\overline{d}_{\overline{E}}\left(  f\left(  a\right)  ,f\left(  b\right)
\right)  =\alpha=d\left(  a,b\right)  \ \ \ \ \ \ \ \ \ \ \left(  \text{since
Proposition \ref{prop.cliques.divide} \textbf{(e)} yields }d\left(
a,b\right)  =\alpha\right)  ,
\]
and thus (\ref{pf.thm.approx-p.dE6}) is proven in this case. Hence, for the
rest of this proof of (\ref{pf.thm.approx-p.dE6}), we WLOG assume that $i=j$.
Hence, $b\in E_{j}=E_{i}$ (since $j=i$). Recall that $f_{i}:E_{i}%
\rightarrow\overline{E}_{i}$ is an isomorphism of $\mathbb{V}$-ultra triples
from $\left(  E_{i},w_{i},d_{i}\right)  $ to $\left(  \overline{E}%
_{i},\overline{w}_{i},\overline{d}_{i}\right)  $. Hence,
\[
\overline{d}_{i}\left(  f_{i}\left(  a\right)  ,f_{i}\left(  b\right)
\right)  =d_{i}\left(  a,b\right)  =d\left(  a,b\right)
\ \ \ \ \ \ \ \ \ \ \left(  \text{by the definition of }d_{i}\right)  .
\]
Also, $\overline{d}_{i}=\overline{d}_{\overline{E}_{i}}$ (since the
$\mathbb{V}$-ultra triple $\left(  \overline{E}_{i},\overline{w}_{i}%
,\overline{d}_{i}\right)  $ is valadic). Now, (\ref{pf.thm.approx-p.dE6.pf.1})
becomes%
\begin{align*}
\overline{d}_{\overline{E}}\left(  f\left(  a\right)  ,f\left(  b\right)
\right)   &  =%
\begin{cases}
\alpha, & \text{if }i\neq j;\\
\overline{d}_{\overline{E}_{i}}\left(  f_{i}\left(  a\right)  ,f_{j}\left(
b\right)  \right)  , & \text{if }i=j
\end{cases}
\\
&  =\overline{d}_{\overline{E}_{i}}\left(  f_{i}\left(  a\right)
,f_{j}\left(  b\right)  \right)  \ \ \ \ \ \ \ \ \ \ \left(  \text{since
}i=j\right) \\
&  =\underbrace{\overline{d}_{\overline{E}_{i}}}_{=\overline{d}_{i}}\left(
f_{i}\left(  a\right)  ,f_{i}\left(  b\right)  \right)
\ \ \ \ \ \ \ \ \ \ \left(  \text{since }j=i\right) \\
&  =\overline{d}_{i}\left(  f_{i}\left(  a\right)  ,f_{i}\left(  b\right)
\right)  =d\left(  a,b\right)  .
\end{align*}
Thus, (\ref{pf.thm.approx-p.dE6}) is proven.}. Moreover, $\overline{w}\circ
f=w$ (since $\overline{w}=w\circ f^{-1}$). Hence, the bijection
$f:E\rightarrow\overline{E}$ is an isomorphism of $\mathbb{V}$-ultra triples
from $\left(  E,w,d\right)  $ to $\left(  \overline{E},\overline{w}%
,\overline{d}_{\overline{E}}\right)  $. Hence, there exists a $\left(
\gamma,u\right)  $-positioned valadic $\mathbb{V}$-ultra triple isomorphic to
$\left(  E,w,d\right)  $ (namely, $\left(  \overline{E},\overline{w}%
,\overline{d}_{\overline{E}}\right)  $). This proves Theorem
\ref{thm.approx-p} for our $\left(  E,w,d\right)  $. Thus, the induction step
is complete, and Theorem \ref{thm.approx-p} is proven.
\end{proof}

\section{\label{sect.main-proof}Proof of the main theorem}

We can now prove Theorem \ref{thm.bh-geg-mcs} (from which we will immediately
obtain Theorem \ref{thm.bh-geg}).

\begin{proof}
[Proof of Theorem \ref{thm.bh-geg-mcs}.]Pick some $\gamma\in\mathbb{V}$ such
that%
\[
d\left(  a,b\right)  \leq\gamma\ \ \ \ \ \ \ \ \ \ \text{for each }\left(
a,b\right)  \in E\timesu E.
\]
(Such a $\gamma$ clearly exists, since the set $E\timesu E$ is finite and the
set $\mathbb{V}$ is totally ordered.)

Theorem \ref{thm.approx-p} (applied to $u=0$) thus yields that there exists a
$\left(  \gamma,0\right)  $-positioned valadic $\mathbb{V}$-ultra triple
isomorphic to $\left(  E,w,d\right)  $. Consider this valadic $\mathbb{V}%
$-ultra triple, and denote it by $\left(  F,v,c\right)  $. Let $\mathcal{E}$
denote the Bhargava greedoid of this $\mathbb{V}$-ultra triple $\left(
F,v,c\right)  $. The set $F$ is a subset of $\mathbb{L}$ (since $\left(
F,v,c\right)  $ is a valadic $\mathbb{V}$-ultra triple) and is finite (since
$\left(  F,v,c\right)  $ is isomorphic to $\left(  E,w,d\right)  $, whence
$\left\vert F\right\vert =\left\vert E\right\vert <\infty$). Moreover, the
distance function $c$ of the $\mathbb{V}$-ultra triple $\left(  F,v,c\right)
$ is the function $d$ from Definition \ref{def.padic-ut} (since $\left(
F,v,c\right)  $ is a valadic $\mathbb{V}$-ultra triple). Hence, Theorem
\ref{thm.poly-ut-greed} (applied to $\left(  F,v,c\right)  $ instead of
$\left(  E,w,d\right)  $) yields that the Bhargava greedoid of $\left(
F,v,c\right)  $ is the Gaussian elimination greedoid of a vector family over
$\mathbb{K}$. In other words, $\mathcal{E}$ is the Gaussian elimination
greedoid of a vector family over $\mathbb{K}$ (since the Bhargava greedoid of
$\left(  F,v,c\right)  $ is $\mathcal{E}$).

But Proposition \ref{prop.iso.bh-gr} yields that the Bhargava greedoids of
$\left(  E,w,d\right)  $ and $\left(  F,v,c\right)  $ are isomorphic. In other
words, the set systems $\mathcal{F}$ and $\mathcal{E}$ are isomorphic (since
$\mathcal{F}$ and $\mathcal{E}$ are the Bhargava greedoids of $\left(
E,w,d\right)  $ and $\left(  F,v,c\right)  $, respectively). In other words,
the set systems $\mathcal{E}$ and $\mathcal{F}$ are isomorphic.

Hence, Proposition \ref{prop.iso.gauss} shows that $\mathcal{F}$ is the
Gaussian elimination greedoid of a vector family over $\mathbb{K}$. This
proves Theorem \ref{thm.bh-geg-mcs}.
\end{proof}

\begin{proof}
[Proof of Theorem \ref{thm.bh-geg}.]We have $\left\vert \mathbb{K}\right\vert
\geq\left\vert E\right\vert \geq\operatorname*{mcs}\left(  E,w,d\right)  $.
Thus, Theorem \ref{thm.bh-geg-mcs} yields that $\mathcal{F}$ is the Gaussian
elimination greedoid of a vector family over $\mathbb{K}$. This proves Theorem
\ref{thm.bh-geg}.
\end{proof}

\section{\label{sect.converse}Proof of Theorem \ref{thm.converse1}}

For the rest of Section \ref{sect.converse}, we fix a $\mathbb{V}$-ultra
triple $\left(  E,w,d\right)  $.

Our next goal is to prove Theorem \ref{thm.converse1}. We have to build
several tools to this purpose.

\subsection{Closed balls}

We will use a counterpart to the concept of open balls: the notion of
\textit{closed balls}. To wit, it is defined as follows:

\begin{definition}
\label{def.closed-ball}Let $\alpha\in\mathbb{V}$ and $e\in E$. The
\textit{closed ball }$B_{\alpha}\left(  e\right)  $ is defined to be the
subset%
\[
\left\{  f\in E\ \mid\ f=e\text{ or else }d\left(  f,e\right)  \leq
\alpha\right\}
\]
of $E$.
\end{definition}

Clearly, for each $\alpha\in\mathbb{V}$ and each $e\in E$, we have $e\in
B_{\alpha}\left(  e\right)  $, so that the closed ball $B_{\alpha}\left(
e\right)  $ contains at least the element $e$.

Most properties of open balls have analogues for closed balls. In particular,
here is an analogue of Proposition \ref{prop.Bo.self-rad}:

\begin{proposition}
\label{prop.Bc.self-rad}Let $\alpha\in\mathbb{V}$ and $e,f\in E$ be such that
$e\neq f$ and $d\left(  e,f\right)  \leq\alpha$. Then, $B_{\alpha}\left(
e\right)  =B_{\alpha}\left(  f\right)  $.
\end{proposition}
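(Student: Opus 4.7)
The plan is to imitate the proof of Proposition \ref{prop.Bo.self-rad} essentially verbatim, replacing the strict inequality ``$<\alpha$'' with ``$\leq\alpha$'' throughout. The ultrametric triangle inequality is already stated with $\leq$, so nothing has to be tightened; in fact the argument is slightly cleaner since one does not need to worry about strictness being preserved under $\max$.

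First I would extract the basic consequences: by the symmetry axiom $d(f,e) = d(e,f) \leq \alpha$, so by the definition of $B_\alpha(\cdot)$ we immediately have $e \in B_\alpha(f)$ and $f \in B_\alpha(e)$. Next, to prove the inclusion $B_\alpha(e) \subseteq B_\alpha(f)$, I would pick $x \in B_\alpha(e)$ and split into trivial cases first: if $x = e$ then $x \in B_\alpha(f)$ by what we just noted, and if $x = f$ then $x \in B_\alpha(f)$ because $f \in B_\alpha(f)$. Otherwise $x, e, f$ are three distinct elements of $E$ with $d(x,e) \leq \alpha$ (from $x \in B_\alpha(e)$) and $d(f,e) \leq \alpha$; the ultrametric triangle inequality then yields
\[
d(x,f) \leq \max\{d(x,e), d(f,e)\} \leq \alpha,
\]
so $x \in B_\alpha(f)$.

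Finally, the hypotheses are symmetric in $e$ and $f$ (since $f \neq e$ and, by symmetry of $d$, $d(f,e) \leq \alpha$), so the same argument with the roles swapped gives the reverse inclusion $B_\alpha(f) \subseteq B_\alpha(e)$. Combining the two inclusions yields $B_\alpha(e) = B_\alpha(f)$. There is no real obstacle here; the only mild subtlety is making sure the two base cases $x = e$ and $x = f$ are handled separately before invoking the ultrametric triangle inequality, which strictly requires three distinct points.
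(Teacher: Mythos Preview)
Your proposal is correct and is precisely the approach the paper takes: the paper's proof of Proposition~\ref{prop.Bc.self-rad} simply states that one modifies the proof of Proposition~\ref{prop.Bo.self-rad} by replacing every ``$<$'' with ``$\leq$''. Your write-up spells this out in full and handles the distinctness hypothesis for the ultrametric triangle inequality exactly as in the open-ball case.
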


\begin{proof}
This can be proved by a straightforward modification of the above proof of
Proposition \ref{prop.Bo.self-rad} (namely, all \textquotedblleft%
$<$\textquotedblright\ signs are replaced by \textquotedblleft$\leq
$\textquotedblright\ signs).
\end{proof}

Next comes an analogue of Corollary \ref{cor.Bo.self-rad2}:

\begin{corollary}
\label{cor.Bc.self-rad2}Let $\alpha\in\mathbb{V}$ and $e\in E$. Let $f\in
B_{\alpha}\left(  e\right)  $. Then, $B_{\alpha}\left(  e\right)  =B_{\alpha
}\left(  f\right)  $.
\end{corollary}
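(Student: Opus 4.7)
The plan is to mimic, essentially verbatim, the proof of Corollary \ref{cor.Bo.self-rad2} (the open-ball analogue), substituting the weak inequality ``$\leq$'' for the strict one ``$<$'' throughout and invoking Proposition \ref{prop.Bc.self-rad} in place of Proposition \ref{prop.Bo.self-rad}. Concretely, I first dispose of the trivial case $e=f$, in which the conclusion $B_\alpha(e)=B_\alpha(f)$ is immediate.

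Having reduced to $e\neq f$, I use the hypothesis $f\in B_\alpha(e)$ together with Definition \ref{def.closed-ball} to extract the inequality $d(f,e)\leq\alpha$ (the alternative ``$f=e$'' in the definition of $B_\alpha(e)$ is ruled out by $e\neq f$). Then I apply the symmetry axiom of a $\mathbb{V}$-ultra triple to conclude that $d(e,f)=d(f,e)\leq\alpha$.

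At this point the hypotheses of Proposition \ref{prop.Bc.self-rad} are satisfied for the pair $(e,f)$, and that proposition immediately yields $B_\alpha(e)=B_\alpha(f)$, completing the proof. There is no real obstacle here; the step that does the actual work is Proposition \ref{prop.Bc.self-rad}, which itself is obtained from Proposition \ref{prop.Bo.self-rad} by the same cosmetic substitution of ``$\leq$'' for ``$<$''. So the entire argument is a few lines long and requires no new ideas beyond those already used for the open-ball case.
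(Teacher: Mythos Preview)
Your proof is correct and follows essentially the same approach as the paper, which simply notes that the argument for Corollary \ref{cor.Bo.self-rad2} goes through verbatim with ``$<$'' replaced by ``$\leq$'' and Proposition \ref{prop.Bo.self-rad} replaced by Proposition \ref{prop.Bc.self-rad}.
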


\begin{proof}
This can be proved by a straightforward modification of the above proof of
Corollary \ref{cor.Bo.self-rad2} (namely, all \textquotedblleft$<$%
\textquotedblright\ signs are replaced by \textquotedblleft$\leq
$\textquotedblright\ signs).
\end{proof}

Knowing these properties, we can easily obtain the following lemma:

\begin{lemma}
\label{lem.clique-and-clball}Let $\beta\in\mathbb{V}$. Let $C$ be a $\beta$-clique.

\begin{enumerate}
\item[\textbf{(a)}] The closed balls $B_{\beta}\left(  c\right)  $ for all
$c\in C$ are identical.
\end{enumerate}

Now, let $B=B_{\beta}\left(  c\right)  $ for some $c\in C$. Then:

\begin{enumerate}
\item[\textbf{(b)}] We have $C\subseteq B$.

\item[\textbf{(c)}] For any distinct elements $p,q\in B$, we have $d\left(
p,q\right)  \leq\beta$.

\item[\textbf{(d)}] For any $n\in E\setminus B$ and any $p,q\in B$, we have
$d\left(  n,p\right)  =d\left(  n,q\right)  $.
\end{enumerate}
\end{lemma}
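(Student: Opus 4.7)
The plan is to deduce all four parts directly from Proposition~\ref{prop.Bc.self-rad} and Corollary~\ref{cor.Bc.self-rad2} (the closed-ball analogues the authors have just proved), together with one application of the ultrametric triangle inequality for part (d). Throughout, I would just need to remember that ``$f \in B_\beta(e)$'' means ``$f = e$ or $d(f,e) \leq \beta$'', and split on equality versus distinctness wherever that disjunction arises.

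For part (a), if $|C| \leq 1$ there is nothing to prove; otherwise, for any two distinct $c, c' \in C$ the $\beta$-clique assumption gives $d(c, c') = \beta \leq \beta$, and Proposition~\ref{prop.Bc.self-rad} yields $B_\beta(c) = B_\beta(c')$. Fixing $B = B_\beta(c)$ for some $c \in C$, part (b) follows since every $c' \in C$ is either equal to $c$ (and so lies in $B_\beta(c)$) or satisfies $d(c', c) = \beta$ (and so lies in $B_\beta(c)$ by definition).

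For part (c), given distinct $p, q \in B$, apply Corollary~\ref{cor.Bc.self-rad2} with $p$ in the role of $f$ to get $B = B_\beta(c) = B_\beta(p)$; then $q \in B_\beta(p)$ with $q \neq p$ forces $d(q, p) \leq \beta$, and symmetry gives $d(p, q) \leq \beta$. For part (d), we may assume $p \neq q$ (else trivial). Two applications of Corollary~\ref{cor.Bc.self-rad2} give $B = B_\beta(p) = B_\beta(q)$, so $n \notin B$ means $n \neq p$, $n \neq q$, $d(n, p) > \beta$ and $d(n, q) > \beta$. Combined with $d(p, q) \leq \beta$ from part (c), the ultrametric triangle inequality on the three distinct points $n, p, q$ gives
\[
d(n, p) \leq \max\{d(n, q),\, d(p, q)\} = d(n, q),
\]
since $d(p, q) \leq \beta < d(n, q)$; swapping $p$ and $q$ yields the reverse inequality, hence equality.

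There is no real obstacle here: the argument is pure bookkeeping on the definitions of a $\beta$-clique and of closed balls, once one has the closed-ball version of ``a ball equals the ball around any of its points'' (Corollary~\ref{cor.Bc.self-rad2}). The only minor subtlety is remembering to split off the cases $p = q$, $n = p$, $n = q$, etc., where the ``$f = e$ or else'' clause in Definition~\ref{def.closed-ball} would otherwise make statements like ``$d(n, p) > \beta$'' meaningless.
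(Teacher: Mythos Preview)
Your proposal is correct and follows essentially the same route as the paper: parts (a)--(c) are handled via Proposition~\ref{prop.Bc.self-rad}/Corollary~\ref{cor.Bc.self-rad2} and the definition of $B_\beta$, and part (d) is obtained by combining part (c) with one application of the ultrametric triangle inequality and a symmetry swap. The only differences are cosmetic (e.g., you invoke Proposition~\ref{prop.Bc.self-rad} directly in (a) where the paper invokes Corollary~\ref{cor.Bc.self-rad2}, and you prove (b) straight from the definition rather than via part (a)).
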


(Intuitively, it helps to think of a clique $C$ as an ultrametric analogue of
a sphere, and of the set $B$ constructed in Lemma \ref{lem.clique-and-clball}
as being the whole closed ball whose boundary is this sphere. Of course, this
must not be taken literally; in particular, every point in this ball serves as
the \textquotedblleft center\textquotedblright\ of this ball, so to speak.)

\begin{proof}
[Proof of Lemma \ref{lem.clique-and-clball}.]We know that $C$ is a $\beta
$-clique. In other words, $C$ is a subset of $E$ such that%
\begin{equation}
\text{any two distinct elements }a,b\in C\text{ satisfy }d\left(  a,b\right)
=\beta\label{pf.lem.clique-and-clball.1}%
\end{equation}
(by the definition of a \textquotedblleft$\beta$-clique\textquotedblright).
\medskip

\textbf{(a)} We must prove that $B_{\beta}\left(  e\right)  =B_{\beta}\left(
f\right)  $ for any $e,f\in C$.

So let $e,f\in C$. We must prove that $B_{\beta}\left(  e\right)  =B_{\beta
}\left(  f\right)  $. If $e=f$, then this is obvious. Hence, we WLOG assume
that $e\neq f$. Thus, (\ref{pf.lem.clique-and-clball.1}) (applied to $a=e$ and
$b=f$) yields $d\left(  e,f\right)  =\beta$. Thus, the \textquotedblleft
symmetry\textquotedblright\ axiom in Definition \ref{def.Vultra} yields that
$d\left(  f,e\right)  =d\left(  e,f\right)  =\beta\leq\beta$. Hence, $f=e$ or
else $d\left(  f,e\right)  \leq\beta$. In other words, $f\in B_{\beta}\left(
e\right)  $ (by the definition of $B_{\beta}\left(  e\right)  $). Hence,
Corollary \ref{cor.Bc.self-rad2} (applied to $\alpha=\beta$) yields $B_{\beta
}\left(  e\right)  =B_{\beta}\left(  f\right)  $.

Hence, we have proved $B_{\beta}\left(  e\right)  =B_{\beta}\left(  f\right)
$; thus, our proof of Lemma \ref{lem.clique-and-clball} \textbf{(a)} is
complete. \medskip

In preparation for the proofs of parts (\textbf{b)}, \textbf{(c)} and
\textbf{(d)}, let us observe the following:

We have defined $B$ to be $B_{\beta}\left(  c\right)  $ for some $c\in C$.
Consider this $c$. Thus, $B=B_{\beta}\left(  c\right)  $.

Lemma \ref{lem.clique-and-clball} \textbf{(a)} says that
\begin{equation}
B_{\beta}\left(  a\right)  =B_{\beta}\left(  b\right)
\ \ \ \ \ \ \ \ \ \ \text{for each }a,b\in C.
\label{pf.lem.clique-and-clball.2}%
\end{equation}
Thus, for each $a\in C$, we have
\begin{align}
B_{\beta}\left(  a\right)   &  =B_{\beta}\left(  c\right)
\ \ \ \ \ \ \ \ \ \ \left(  \text{by (\ref{pf.lem.clique-and-clball.2}),
applied to }b=c\right) \nonumber\\
&  =B\ \ \ \ \ \ \ \ \ \ \left(  \text{since }B=B_{\beta}\left(  c\right)
\right)  . \label{pf.lem.clique-and-clball.3}%
\end{align}

\textbf{(b)} Let $a\in C$. Then, $a\in B_{\beta}\left(  a\right)  $ (by the
definition of $B_{\beta}\left(  a\right)  $, since $a=a$). Hence, $a\in
B_{\beta}\left(  a\right)  =B$ (by (\ref{pf.lem.clique-and-clball.3})).

Forget that we fixed $a$. We thus have proved that $a\in B$ for each $a\in C$.
In other words, $C\subseteq B$. This proves Lemma \ref{lem.clique-and-clball}
\textbf{(b)}. \medskip

\textbf{(c)} Let $p,q\in B$ be distinct. We must prove that $d\left(
p,q\right)  \leq\beta$.

We have $p\in B=B_{\beta}\left(  c\right)  $. Hence, Corollary
\ref{cor.Bc.self-rad2} (applied to $\alpha=\beta$, $e=c$ and $f=p$) yields
$B_{\beta}\left(  c\right)  =B_{\beta}\left(  p\right)  $. Now, $q\in
B=B_{\beta}\left(  c\right)  =B_{\beta}\left(  p\right)  $. In other words, we
have $q=p$ or else $d\left(  q,p\right)  \leq\beta$ (by the definition of
$B_{\beta}\left(  p\right)  $). Since $q=p$ is impossible (because $p$ and $q$
are distinct), we thus obtain $d\left(  q,p\right)  \leq\beta$. Thus, the
\textquotedblleft symmetry\textquotedblright\ axiom in Definition
\ref{def.Vultra} yields that $d\left(  p,q\right)  =d\left(  q,p\right)
\leq\beta$. This proves Lemma \ref{lem.clique-and-clball} \textbf{(c)}.
\medskip

\textbf{(d)} Let $n\in E\setminus B$ and $p,q\in B$. We must prove that
$d\left(  n,p\right)  =d\left(  n,q\right)  $. If $p=q$, then this is obvious.
Thus, we WLOG assume that $p\neq q$. Hence, Lemma \ref{lem.clique-and-clball}
\textbf{(c)} yields $d\left(  p,q\right)  \leq\beta$.

We have $n\in E\setminus B$. In other words, $n\in E$ and $n\notin B$.

But we have $q\in B=B_{\beta}\left(  c\right)  $. Hence, Corollary
\ref{cor.Bc.self-rad2} (applied to $\alpha=\beta$, $e=c$ and $f=q$) yields
$B_{\beta}\left(  c\right)  =B_{\beta}\left(  q\right)  $. Now, $n\notin
B=B_{\beta}\left(  c\right)  =B_{\beta}\left(  q\right)  $. In other words, we
don't have $\left(  n=q\text{ or else }d\left(  n,q\right)  \leq\beta\right)
$ (by the definition of $B_{\beta}\left(  q\right)  $). In other words, we
have $n\neq q$ and $d\left(  n,q\right)  >\beta$. Thus, $d\left(  n,q\right)
>\beta\geq d\left(  p,q\right)  $ (since $d\left(  p,q\right)  \leq\beta$).

We have $p\neq n$ (since $p\in B$ but $n\notin B$) and similarly $q\neq n$.
Hence, the elements $n$, $p$ and $q$ of $E$ are distinct (since $p\neq n$ and
$q\neq n$ and $p\neq q$). The ultrametric triangle inequality (applied to $n$,
$p$ and $q$ instead of $a$, $b$ and $c$) thus yields%
\[
d\left(  n,p\right)  \leq\max\left\{  d\left(  n,q\right)  ,d\left(
p,q\right)  \right\}  =d\left(  n,q\right)  \ \ \ \ \ \ \ \ \ \ \left(
\text{since }d\left(  n,q\right)  >d\left(  p,q\right)  \right)  .
\]
The same argument (with the roles of $p$ and $q$ interchanged) yields
$d\left(  n,q\right)  \leq d\left(  n,p\right)  $. Combining these two
inequalities, we obtain $d\left(  n,p\right)  =d\left(  n,q\right)  $. This
proves Lemma \ref{lem.clique-and-clball} \textbf{(d)}.
\end{proof}

\subsection{Exchange results for sets intersecting a ball}

From now on, for the rest of Section \ref{sect.converse}, we assume that $E$
is finite.

\begin{corollary}
\label{cor.clique-and-clball2}Let $\beta\in\mathbb{V}$. Let $C$ be a $\beta
$-clique. Let $B=B_{\beta}\left(  c\right)  $ for some $c\in C$.

Let $N$ be a subset of $E\setminus B$.

Let $P$ and $Q$ be two subsets of $B$ such that $\left\vert P\right\vert
=\left\vert Q\right\vert $. Then:

\begin{enumerate}
\item[\textbf{(a)}] We have $\operatorname*{PER}\left(  N\cup Q\right)
-\operatorname*{PER}\left(  N\cup P\right)  =\operatorname*{PER}\left(
Q\right)  -\operatorname*{PER}\left(  P\right)  $.

\item[\textbf{(b)}] Assume that the map $w:E\rightarrow\mathbb{V}$ is
constant. Assume further that $Q$ is a subset of $C$. Then,
$\operatorname*{PER}\left(  N\cup Q\right)  \geq\operatorname*{PER}\left(
N\cup P\right)  $.
\end{enumerate}
\end{corollary}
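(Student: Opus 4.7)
The plan is to expand the perimeters as sums over weights and pairwise distances, split these sums according to the partition of the underlying set, and then use Lemma~\ref{lem.clique-and-clball} to control the remaining terms.

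For part \textbf{(a)}, I would first note that since $P, Q \subseteq B$ and $N \subseteq E \setminus B$, the sets $N$ and $P$ are disjoint, and so are $N$ and $Q$. Expanding the definition of perimeter and grouping the weight/distance contributions according to whether the indices lie in $N$, in $P$, or cross between the two, I obtain
\[
\operatorname{PER}(N \cup P) = \operatorname{PER}(N) + \operatorname{PER}(P) + \sum_{n \in N,\ p \in P} d(n,p),
\]
and analogously for $N \cup Q$. The crux is then the cross-sum $\sum_{n \in N,\ p \in P} d(n,p)$. By Lemma~\ref{lem.clique-and-clball} \textbf{(d)}, for each fixed $n \in N \subseteq E \setminus B$, the distance $d(n,b)$ does not depend on $b \in B$; call this common value $\delta_n$. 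Hence $\sum_{n \in N,\ p \in P} d(n,p) = \sum_{n \in N} |P| \cdot \delta_n$, and the analogous identity holds for $Q$. Since $|P| = |Q|$, the two cross-sums are equal and cancel on subtraction, yielding part \textbf{(a)}.

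For part \textbf{(b)}, by part \textbf{(a)} it suffices to prove $\operatorname{PER}(Q) \geq \operatorname{PER}(P)$. Let $k = |P| = |Q|$ and let $w_0 \in \mathbb{V}$ be the common value of $w$. Then
\[
\operatorname{PER}(P) = k w_0 + \sum_{\substack{\{a,b\} \subseteq P \\ a \neq b}} d(a,b), \qquad \operatorname{PER}(Q) = k w_0 + \sum_{\substack{\{a,b\} \subseteq Q \\ a \neq b}} d(a,b).
\]
Since $Q \subseteq C$ and $C$ is a $\beta$-clique, every pairwise distance inside $Q$ equals $\beta$, giving $\binom{k}{2}$ copies of $\beta$. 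Since $P \subseteq B$, Lemma~\ref{lem.clique-and-clball} \textbf{(c)} shows every pairwise distance inside $P$ is $\leq \beta$, so the analogous sum is at most $\binom{k}{2}$ copies of $\beta$ (summed term by term using translation-invariance of the order on $\mathbb{V}$). Hence $\operatorname{PER}(Q) \geq \operatorname{PER}(P)$, and part \textbf{(b)} follows.

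No serious obstacle is expected: this is essentially bookkeeping plus the two structural facts about closed balls that Lemma~\ref{lem.clique-and-clball} provides. The only mildly delicate point is being careful that $P$ and $Q$ need not be disjoint from each other, but since we never subtract $\operatorname{PER}(P)$ and $\operatorname{PER}(Q)$ via their internal sums until after the cross-sum cancellation, this causes no trouble.
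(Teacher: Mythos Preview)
Your proposal is correct and follows essentially the same approach as the paper: expand the perimeters into weight sums, internal-distance sums, and cross-sums, then use Lemma~\ref{lem.clique-and-clball}~\textbf{(d)} to handle the cross-sums in part~\textbf{(a)} and Lemma~\ref{lem.clique-and-clball}~\textbf{(c)} together with the $\beta$-clique property for part~\textbf{(b)}. Your introduction of $\delta_n$ in part~\textbf{(a)} is a slight streamlining of the paper's argument (which instead lists $p_1,\ldots,p_m$ and $q_1,\ldots,q_m$ and matches them up pairwise), but the substance is identical.
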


\begin{proof}
[Proof of Corollary \ref{cor.clique-and-clball2}.]Let $m=\left\vert
P\right\vert =\left\vert Q\right\vert $. Let $p_{1},p_{2},\ldots,p_{m}$ be all
the $m$ elements of $P$ (listed without repetition). Let $q_{1},q_{2}%
,\ldots,q_{m}$ be all the $m$ elements of $Q$ (listed without repetition).
\medskip

\textbf{(a)} We have%
\begin{equation}
d\left(  n,p_{i}\right)  =d\left(  n,q_{i}\right)
\ \ \ \ \ \ \ \ \ \ \text{for each }n\in N\text{ and }i\in\left\{
1,2,\ldots,m\right\}  . \label{pf.cor.clique-and-clball2.1}%
\end{equation}

[\textit{Proof of (\ref{pf.cor.clique-and-clball2.1}):} Let $n\in N$ and
$i\in\left\{  1,2,\ldots,m\right\}  $. Then, $n\in N\subseteq E\setminus B$
and $p_{i}\in P\subseteq B$ and $q_{i}\in Q\subseteq B$. Thus, Lemma
\ref{lem.clique-and-clball} \textbf{(d)} (applied to $p=p_{i}$ and $q=q_{i}$)
yields $d\left(  n,p_{i}\right)  =d\left(  n,q_{i}\right)  $. This proves
(\ref{pf.cor.clique-and-clball2.1}).]

The set $N$ is disjoint from $B$ (since $N$ is a subset of $E\setminus B$),
and thus disjoint from $P$ as well (since $P\subseteq B$). Hence, the
definition of perimeter yields%
\begin{align*}
\operatorname*{PER}\left(  N\cup P\right)   &  =\operatorname*{PER}\left(
N\right)  +\operatorname*{PER}\left(  P\right)  +\sum_{n\in N}\underbrace{\sum
_{p\in P}d\left(  n,p\right)  }_{\substack{=\sum_{i=1}^{m}d\left(
n,p_{i}\right)  \\\text{(since }p_{1},p_{2},\ldots,p_{m}\text{ are
all}\\\text{the }m\text{ elements of }P\\\text{(listed without repetition))}%
}}\\
&  =\operatorname*{PER}\left(  N\right)  +\operatorname*{PER}\left(  P\right)
+\sum_{n\in N}\sum_{i=1}^{m}d\left(  n,p_{i}\right)  .
\end{align*}
Likewise,%
\[
\operatorname*{PER}\left(  N\cup Q\right)  =\operatorname*{PER}\left(
N\right)  +\operatorname*{PER}\left(  Q\right)  +\sum_{n\in N}\sum_{i=1}%
^{m}d\left(  n,q_{i}\right)  .
\]
Subtracting the first of these two equalities from the second, we obtain%
\begin{align*}
&  \operatorname*{PER}\left(  N\cup Q\right)  -\operatorname*{PER}\left(
N\cup P\right) \\
&  =\left(  \operatorname*{PER}\left(  N\right)  +\operatorname*{PER}\left(
Q\right)  +\sum_{n\in N}\sum_{i=1}^{m}d\left(  n,q_{i}\right)  \right) \\
&  \ \ \ \ \ \ \ \ \ \ -\left(  \operatorname*{PER}\left(  N\right)
+\operatorname*{PER}\left(  P\right)  +\sum_{n\in N}\sum_{i=1}^{m}d\left(
n,p_{i}\right)  \right) \\
&  =\operatorname*{PER}\left(  Q\right)  -\operatorname*{PER}\left(  P\right)
+\sum_{n\in N}\sum_{i=1}^{m}d\left(  n,q_{i}\right)  -\sum_{n\in N}\sum
_{i=1}^{m}\underbrace{d\left(  n,p_{i}\right)  }_{\substack{=d\left(
n,q_{i}\right)  \\\text{(by (\ref{pf.cor.clique-and-clball2.1}))}}}\\
&  =\operatorname*{PER}\left(  Q\right)  -\operatorname*{PER}\left(  P\right)
+\sum_{n\in N}\sum_{i=1}^{m}d\left(  n,q_{i}\right)  -\sum_{n\in N}\sum
_{i=1}^{m}d\left(  n,q_{i}\right) \\
&  =\operatorname*{PER}\left(  Q\right)  -\operatorname*{PER}\left(  P\right)
.
\end{align*}
This proves Corollary \ref{cor.clique-and-clball2} \textbf{(a)}. \medskip

\textbf{(b)} We know that the map $w:E\rightarrow\mathbb{V}$ is constant.
Hence,
\begin{equation}
w\left(  a\right)  =w\left(  b\right)  \ \ \ \ \ \ \ \ \ \ \text{for any
}a,b\in E. \label{pf.cor.clique-and-clball2.wa=wb}%
\end{equation}

Each $i\in\left\{  1,2,\ldots,m\right\}  $ satisfies%
\begin{equation}
w\left(  p_{i}\right)  =w\left(  c\right)
\label{pf.cor.clique-and-clball2.wp=wc}%
\end{equation}
(by (\ref{pf.cor.clique-and-clball2.wa=wb}), applied to $a=p_{i}$ and $b=c$)
and%
\begin{equation}
w\left(  q_{i}\right)  =w\left(  c\right)
\label{pf.cor.clique-and-clball2.wq=wc}%
\end{equation}
(by (\ref{pf.cor.clique-and-clball2.wa=wb}), applied to $a=q_{i}$ and $b=c$).

We know that $C$ is a $\beta$-clique. In other words, $C$ is a subset of $E$
such that%
\begin{equation}
\text{any two distinct elements }a,b\in C\text{ satisfy }d\left(  a,b\right)
=\beta\label{pf.cor.clique-and-clball2.clique}%
\end{equation}
(by the definition of a \textquotedblleft$\beta$-clique\textquotedblright).

But $p_{1},p_{2},\ldots,p_{m}$ are $m$ distinct elements of $P$ (by their
definition). Hence, $p_{1},p_{2},\ldots,p_{m}$ are $m$ distinct elements of
$B$ (since $P\subseteq B$). Thus, if $i$ and $j$ are two distinct elements of
$\left\{  1,2,\ldots,m\right\}  $, then $p_{i}$ and $p_{j}$ are two distinct
elements of $B$, and therefore satisfy%
\begin{equation}
d\left(  p_{i},p_{j}\right)  \leq\beta\label{pf.cor.clique-and-clball2.dp}%
\end{equation}
(by Lemma \ref{lem.clique-and-clball} \textbf{(c)}, applied to $p=p_{i}$ and
$q=p_{j}$).

On the other hand, $q_{1},q_{2},\ldots,q_{m}$ are $m$ distinct elements of $Q$
(by their definition). Hence, $q_{1},q_{2},\ldots,q_{m}$ are $m$ distinct
elements of $C$ (since $Q\subseteq C$). Thus, if $i$ and $j$ are two distinct
elements of $\left\{  1,2,\ldots,m\right\}  $, then $q_{i}$ and $q_{j}$ are
two distinct elements of $C$, and therefore satisfy%
\begin{equation}
d\left(  q_{i},q_{j}\right)  =\beta\label{pf.cor.clique-and-clball2.dq}%
\end{equation}
(by (\ref{pf.cor.clique-and-clball2.clique}), applied to $a=q_{i}$ and
$b=q_{j}$).

Recall that $p_{1},p_{2},\ldots,p_{m}$ are all the $m$ elements of $P$ (listed
without repetition). Hence, the definition of perimeter yields%
\begin{align*}
\operatorname*{PER}\left(  P\right)   &  =\sum_{i=1}^{m}\underbrace{w\left(
p_{i}\right)  }_{\substack{=w\left(  c\right)  \\\text{(by
(\ref{pf.cor.clique-and-clball2.wp=wc}))}}}+\sum_{1\leq i<j\leq m}%
\underbrace{d\left(  p_{i},p_{j}\right)  }_{\substack{\leq\beta\\\text{(by
(\ref{pf.cor.clique-and-clball2.dp}))}}}\leq\sum_{i=1}^{m}\underbrace{w\left(
c\right)  }_{\substack{=w\left(  q_{i}\right)  \\\text{(by
(\ref{pf.cor.clique-and-clball2.wq=wc}))}}}+\sum_{1\leq i<j\leq m}%
\underbrace{\beta}_{\substack{=d\left(  q_{i},q_{j}\right)  \\\text{(by
(\ref{pf.cor.clique-and-clball2.dq}))}}}\\
&  =\sum_{i=1}^{m}w\left(  q_{i}\right)  +\sum_{1\leq i<j\leq m}d\left(
q_{i},q_{j}\right)  =\operatorname*{PER}\left(  Q\right)
\end{align*}
(by the definition of $Q$, since $q_{1},q_{2},\ldots,q_{m}$ are all the $m$
elements of $Q$ (listed without repetition)).

But Corollary \ref{cor.clique-and-clball2} \textbf{(a)} yields
\[
\operatorname*{PER}\left(  N\cup Q\right)  -\operatorname*{PER}\left(  N\cup
P\right)  =\operatorname*{PER}\left(  Q\right)  -\operatorname*{PER}\left(
P\right)  \geq0
\]
(since $\operatorname*{PER}\left(  P\right)  \leq\operatorname*{PER}\left(
Q\right)  $). In other words, $\operatorname*{PER}\left(  N\cup Q\right)
\geq\operatorname*{PER}\left(  N\cup P\right)  $. This proves Corollary
\ref{cor.clique-and-clball2} \textbf{(b)}.
\end{proof}

\begin{corollary}
\label{cor.clique-and-clball3}Let $\mathcal{F}$ be the Bhargava greedoid of
$\left(  E,w,d\right)  $. Assume that the map $w:E\rightarrow\mathbb{V}$ is constant.

Let $\beta\in\mathbb{V}$. Let $C$ be a $\beta$-clique. Let $B=B_{\beta}\left(
c\right)  $ for some $c\in C$.

Let $N$ be a subset of $E\setminus B$.

Let $P$ and $Q$ be two subsets of $B$ such that $\left\vert P\right\vert
=\left\vert Q\right\vert $ and $Q\subseteq C$ and $N\cup P\in\mathcal{F}$.
Then, $N\cup Q\in\mathcal{F}$.
\end{corollary}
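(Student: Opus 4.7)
The plan is to read Corollary \ref{cor.clique-and-clball3} as a direct consequence of Corollary \ref{cor.clique-and-clball2} \textbf{(b)} combined with the definition of the Bhargava greedoid, so no genuinely new combinatorial work is needed.

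First I would observe that the two sets $N \cup P$ and $N \cup Q$ have the same cardinality. Indeed, $N \subseteq E \setminus B$ while $P, Q \subseteq B$, so $N$ is disjoint from both $P$ and $Q$; therefore $\left|N \cup P\right| = \left|N\right| + \left|P\right| = \left|N\right| + \left|Q\right| = \left|N \cup Q\right|$. Set $k := \left|N \cup P\right| = \left|N \cup Q\right|$, so that both $N \cup P$ and $N \cup Q$ are $k$-subsets of $E$.

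Next I would apply Corollary \ref{cor.clique-and-clball2} \textbf{(b)}: its hypotheses are exactly those of the present corollary (we have $\left|P\right| = \left|Q\right|$, we have $Q \subseteq C$, and the map $w$ is constant), so it yields
\[
\operatorname{PER}\left(N \cup Q\right) \geq \operatorname{PER}\left(N \cup P\right).
\]
Since $N \cup P \in \mathcal{F}$, Definition \ref{def.bhar-greed} tells us that $\operatorname{PER}\left(N \cup P\right) \geq \operatorname{PER}\left(S\right)$ for every $k$-subset $S$ of $E$. Chaining these inequalities gives $\operatorname{PER}\left(N \cup Q\right) \geq \operatorname{PER}\left(S\right)$ for every $k$-subset $S$ of $E$; since $N \cup Q$ is itself a $k$-subset of $E$, this means $N \cup Q$ has maximum perimeter among all $\left|N \cup Q\right|$-subsets of $E$, i.e.\ $N \cup Q \in \mathcal{F}$.

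There is no serious obstacle here; the only point requiring attention is the bookkeeping that ensures $N \cup P$ and $N \cup Q$ are sets of the same size (needed so that maximum-perimeter comparisons are made within the correct cardinality class). Everything else is simply citing Corollary \ref{cor.clique-and-clball2} \textbf{(b)} and unwrapping the definition of $\mathcal{F}$.
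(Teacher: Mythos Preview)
Your proof is correct and follows essentially the same approach as the paper: both arguments combine the disjointness observation $|N\cup P|=|N\cup Q|$, the perimeter inequality from Corollary~\ref{cor.clique-and-clball2}~\textbf{(b)}, and the definition of the Bhargava greedoid. The only cosmetic difference is that the paper explicitly derives $\operatorname{PER}(N\cup Q)=\operatorname{PER}(N\cup P)$ before concluding, whereas you chain the inequalities directly; your version is slightly more streamlined.
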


\begin{proof}
[Proof of Corollary \ref{cor.clique-and-clball3}.]Corollary
\ref{cor.clique-and-clball2} \textbf{(b)} yields $\operatorname*{PER}\left(
N\cup Q\right)  \geq\operatorname*{PER}\left(  N\cup P\right)  $.

Also, the set $N$ is disjoint from $B$ (since $N\subseteq E\setminus B$), and
thus is disjoint from $P$ as well (since $P$ is a subset of $B$). Hence,
$\left\vert N\cup P\right\vert =\left\vert N\right\vert +\left\vert
P\right\vert $. The same argument (applied to $Q$ instead of $P$) shows that
$\left\vert N\cup Q\right\vert =\left\vert N\right\vert +\left\vert
Q\right\vert $. Hence, $\left\vert N\cup P\right\vert =\left\vert N\right\vert
+\underbrace{\left\vert P\right\vert }_{=\left\vert Q\right\vert }=\left\vert
N\right\vert +\left\vert Q\right\vert =\left\vert N\cup Q\right\vert $.

We know that $\mathcal{F}$ is the Bhargava greedoid of $\left(  E,w,d\right)
$. In other words,
\begin{equation}
\mathcal{F}=\left\{  A\subseteq E\ \mid\ A\text{ has maximum perimeter among
all }\left\vert A\right\vert \text{-subsets of }E\right\}
\label{pf.cor.clique-and-clball3.F=}%
\end{equation}
(by Definition \ref{def.bhar-greed}). Hence, from $N\cup P\in\mathcal{F}$, we
conclude that the set $N\cup P$ has maximum perimeter among all $\left\vert
N\cup P\right\vert $-subsets of $E$. In other words, the set $N\cup P$ has
maximum perimeter among all $\left\vert N\cup Q\right\vert $-subsets of $E$
(since $\left\vert N\cup P\right\vert =\left\vert N\cup Q\right\vert $). Since
$N\cup Q$ is a further $\left\vert N\cup Q\right\vert $-subset of $E$, we thus
conclude that $\operatorname*{PER}\left(  N\cup P\right)  \geq
\operatorname*{PER}\left(  N\cup Q\right)  $. Combining this with
$\operatorname*{PER}\left(  N\cup Q\right)  \geq\operatorname*{PER}\left(
N\cup P\right)  $, we obtain $\operatorname*{PER}\left(  N\cup Q\right)
=\operatorname*{PER}\left(  N\cup P\right)  $. In other words, the subsets
$N\cup Q$ and $N\cup P$ of $E$ have the same perimeter. Therefore, the set
$N\cup Q$ has maximum perimeter among all $\left\vert N\cup Q\right\vert
$-subsets of $E$ (because the set $N\cup P$ has maximum perimeter among all
$\left\vert N\cup Q\right\vert $-subsets of $E$). In view of
(\ref{pf.cor.clique-and-clball3.F=}), this entails that $N\cup Q\in
\mathcal{F}$. This proves Corollary \ref{cor.clique-and-clball3}.
\end{proof}

\subsection{Gaussian elimination greedoids in terms of determinants}

Next, we introduce some notations for matrices.

\begin{definition}
\label{def.submatrix}Let $n\in\mathbb{N}$ and $m\in\mathbb{N}$. Let $A=\left(
a_{i,j}\right)  _{1\leq i\leq n,\ 1\leq j\leq m}$ be an $n\times m$-matrix.
Let $i_{1},i_{2},\ldots,i_{u}$ be some elements of $\left\{  1,2,\ldots
,n\right\}  $; let $j_{1},j_{2},\ldots,j_{v}$ be some elements of $\left\{
1,2,\ldots,m\right\}  $. Then, we define $\operatorname*{sub}\nolimits_{i_{1}%
,i_{2},\ldots,i_{u}}^{j_{1},j_{2},\ldots,j_{v}}A$ to be the $u\times v$-matrix
$\left(  a_{i_{x},j_{y}}\right)  _{1\leq x\leq u,\ 1\leq y\leq v}$.

When $i_{1}<i_{2}<\cdots<i_{u}$ and $j_{1}<j_{2}<\cdots<j_{v}$, the matrix
$\operatorname*{sub}\nolimits_{i_{1},i_{2},\ldots,i_{u}}^{j_{1},j_{2}%
,\ldots,j_{v}}A$ can be obtained from $A$ by crossing out all rows other than
the $i_{1}$-th, the $i_{2}$-th, etc., the $i_{u}$-th row and crossing out all
columns other than the $j_{1}$-th, the $j_{2}$-th, etc., the $j_{v}$-th
column. Thus, in this case, $\operatorname*{sub}\nolimits_{i_{1},i_{2}%
,\ldots,i_{u}}^{j_{1},j_{2},\ldots,j_{v}}A$ is called a \textit{submatrix} of
$A$.
\end{definition}

\begin{example}
If $n=3$ and $m=4$ and $A=\left(
\begin{array}
[c]{cccc}%
a & b & c & d\\
e & f & g & h\\
i & j & k & \ell
\end{array}
\right)  $, then $\operatorname*{sub}\nolimits_{1,3}^{1,3,4}A=\left(
\begin{array}
[c]{ccc}%
a & c & d\\
i & k & \ell
\end{array}
\right)  $ (this is a submatrix of $A$) and $\operatorname*{sub}%
\nolimits_{2,3}^{3,2,1}A=\left(
\begin{array}
[c]{ccc}%
g & f & e\\
k & j & i
\end{array}
\right)  $ (this is not, in general, a submatrix of $A$).
\end{example}

We can now describe Gaussian elimination greedoids in terms of determinants:

\begin{lemma}
\label{lem.geg.through-det}Let $n\in\mathbb{N}$. Let $E$ be the set $\left\{
1,2,\ldots,n\right\}  $.

Let $m\in\mathbb{N}$ be such that $m\geq\left\vert E\right\vert $. Let
$\mathbb{K}$ be a field. For each $e\in E$, let $v_{e}\in\mathbb{K}^{m}$ be a
column vector. Let $A$ be the $m\times n$-matrix whose columns (from left to
right) are $v_{1},v_{2},\ldots,v_{n}$.

Let $\mathcal{G}$ be the Gaussian elimination greedoid of the vector family
$\left(  v_{e}\right)  _{e\in E}$.

Let $p\in\mathbb{N}$. Let $i_{1},i_{2},\ldots,i_{p}\in E$ be $p$ distinct
numbers. Let $I=\left\{  i_{1},i_{2},\ldots,i_{p}\right\}  $. Then,%
\[
I\in\mathcal{G}\text{ holds if and only if }\det\left(  \operatorname*{sub}%
\nolimits_{1,2,\ldots,p}^{i_{1},i_{2},\ldots,i_{p}}A\right)  \neq0.
\]

\end{lemma}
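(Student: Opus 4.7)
The plan is to unfold the definitions and reduce the claim to the standard fact that a square matrix over a field has nonzero determinant if and only if its columns are linearly independent.

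First I would record that $|I| = p$ (since $i_1, i_2, \ldots, i_p$ are $p$ distinct elements of $E$), so that by Definition \ref{def.geg} the condition $I \in \mathcal{G}$ says that the family $(\pi_p(v_e))_{e \in I} \in (\mathbb{K}^p)^I$ is linearly independent. Since $I = \{i_1, i_2, \ldots, i_p\}$ with the $i_k$'s pairwise distinct, this family being linearly independent is the same as the $p$-tuple of vectors $\pi_p(v_{i_1}), \pi_p(v_{i_2}), \ldots, \pi_p(v_{i_p})$ being linearly independent in $\mathbb{K}^p$.

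Next I would identify this tuple of column vectors as the columns of the submatrix in question. By construction, the $k$-th column of $A$ is $v_{i_k}$'s father $v_{i_k}$, and $\pi_p(v_{i_k})$ is obtained by keeping only the first $p$ entries of $v_{i_k}$. Unwinding Definition \ref{def.submatrix}, this is exactly the $k$-th column of $\operatorname{sub}^{i_1,i_2,\ldots,i_p}_{1,2,\ldots,p} A$. Hence $\operatorname{sub}^{i_1,i_2,\ldots,i_p}_{1,2,\ldots,p} A$ is the $p \times p$-matrix whose columns are $\pi_p(v_{i_1}), \pi_p(v_{i_2}), \ldots, \pi_p(v_{i_p})$.

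Finally I would invoke the standard linear algebra fact that a square matrix over a field $\mathbb{K}$ has nonzero determinant if and only if its columns are $\mathbb{K}$-linearly independent. Chaining the three equivalences yields
\[
I \in \mathcal{G} \;\Longleftrightarrow\; \pi_p(v_{i_1}), \ldots, \pi_p(v_{i_p}) \text{ are linearly independent} \;\Longleftrightarrow\; \det\!\left(\operatorname{sub}\nolimits^{i_1,i_2,\ldots,i_p}_{1,2,\ldots,p} A\right) \neq 0,
\]
which is the claim. There is no real obstacle here; the proof is essentially a bookkeeping exercise matching the definition of the Gaussian elimination greedoid against the definition of the submatrix $\operatorname{sub}^{i_1,\ldots,i_p}_{1,\ldots,p} A$.
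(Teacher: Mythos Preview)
Your proposal is correct and follows essentially the same approach as the paper's proof: unfold Definition~\ref{def.geg}, identify the columns of $\operatorname{sub}\nolimits_{1,2,\ldots,p}^{i_{1},i_{2},\ldots,i_{p}}A$ with the vectors $\pi_{p}(v_{i_{k}})$, and invoke the standard equivalence between nonvanishing determinant and linear independence of columns. (Minor remark: your phrase ``the $k$-th column of $A$ is $v_{i_k}$'s father $v_{i_k}$'' is garbled --- you mean that the $i_k$-th column of $A$ is $v_{i_k}$; also note that $p \leq m$ is needed for $\pi_p$ to be defined, which follows from $p = |I| \leq |E| \leq m$.)
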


\begin{proof}
[Proof of Lemma \ref{lem.geg.through-det}.]We have $I=\left\{  i_{1}%
,i_{2},\ldots,i_{p}\right\}  $. Thus, $\left\vert I\right\vert =p$ (since
$i_{1},i_{2},\ldots,i_{p}$ are distinct) and $I\subseteq E$ (since
$i_{1},i_{2},\ldots,i_{p}\in E$).

Define the maps $\pi_{k}$ for all $k\in\left\{  0,1,\ldots,m\right\}  $ as in
Definition \ref{def.geg}. Since $I\subseteq E$, we have $\left\vert
I\right\vert \leq\left\vert E\right\vert $. Hence, $p=\left\vert I\right\vert
\leq\left\vert E\right\vert \leq m$ (since $m\geq\left\vert E\right\vert $),
so that $p\in\left\{  0,1,\ldots,m\right\}  $. Thus, the map $\pi
_{p}:\mathbb{K}^{m}\rightarrow\mathbb{K}^{p}$ is well-defined.

Write the $m\times n$-matrix $A$ in the form $A=\left(  a_{i,j}\right)
_{1\leq i\leq m,\ 1\leq j\leq n}$. Then, the definition of
$\operatorname*{sub}\nolimits_{1,2,\ldots,p}^{i_{1},i_{2},\ldots,i_{p}}A$
yields%
\[
\operatorname*{sub}\nolimits_{1,2,\ldots,p}^{i_{1},i_{2},\ldots,i_{p}%
}A=\left(  a_{x,i_{y}}\right)  _{1\leq x\leq p,\ 1\leq y\leq p}.
\]
Thus, for each $k\in\left\{  1,2,\ldots,p\right\}  $, we have%
\begin{equation}
\left(  \text{the }k\text{-th column of }\operatorname*{sub}%
\nolimits_{1,2,\ldots,p}^{i_{1},i_{2},\ldots,i_{p}}A\right)  =\left(
\begin{array}
[c]{c}%
a_{1,i_{k}}\\
a_{2,i_{k}}\\
\vdots\\
a_{p,i_{k}}%
\end{array}
\right)  . \label{pf.lem.geg.through-det.col1}%
\end{equation}

The columns of the matrix $A$ (from left to right) are $v_{1},v_{2}%
,\ldots,v_{n}$. In other words, the $\ell$-th column of $A$ is $v_{\ell}$ for
each $\ell\in\left\{  1,2,\ldots,n\right\}  $. In other words, for each
$\ell\in\left\{  1,2,\ldots,n\right\}  $, we have $\left(  \text{the }%
\ell\text{-th column of }A\right)  =v_{\ell}$. Thus, for each $\ell\in\left\{
1,2,\ldots,n\right\}  $, we have%
\[
v_{\ell}=\left(  \text{the }\ell\text{-th column of }A\right)  =\left(
\begin{array}
[c]{c}%
a_{1,\ell}\\
a_{2,\ell}\\
\vdots\\
a_{m,\ell}%
\end{array}
\right)
\]
(since $A=\left(  a_{i,j}\right)  _{1\leq i\leq m,\ 1\leq j\leq n}$) and
therefore%
\begin{equation}
\pi_{p}\left(  v_{\ell}\right)  =\pi_{p}\left(  \left(
\begin{array}
[c]{c}%
a_{1,\ell}\\
a_{2,\ell}\\
\vdots\\
a_{m,\ell}%
\end{array}
\right)  \right)  =\left(
\begin{array}
[c]{c}%
a_{1,\ell}\\
a_{2,\ell}\\
\vdots\\
a_{p,\ell}%
\end{array}
\right)  \label{pf.lem.geg.through-det.col3}%
\end{equation}
(by the definition of $\pi_{p}$). Hence, for each $k\in\left\{  1,2,\ldots
,p\right\}  $, we have%
\begin{align*}
\pi_{p}\left(  v_{i_{k}}\right)   &  =\left(
\begin{array}
[c]{c}%
a_{1,i_{k}}\\
a_{2,i_{k}}\\
\vdots\\
a_{p,i_{k}}%
\end{array}
\right)  \ \ \ \ \ \ \ \ \ \ \left(  \text{by
(\ref{pf.lem.geg.through-det.col3}), applied to }\ell=i_{k}\right) \\
&  =\left(  \text{the }k\text{-th column of }\operatorname*{sub}%
\nolimits_{1,2,\ldots,p}^{i_{1},i_{2},\ldots,i_{p}}A\right)
\ \ \ \ \ \ \ \ \ \ \left(  \text{by (\ref{pf.lem.geg.through-det.col1}%
)}\right)  .
\end{align*}
In other words, the vectors $\pi_{p}\left(  v_{i_{1}}\right)  ,\pi_{p}\left(
v_{i_{2}}\right)  ,\ldots,\pi_{p}\left(  v_{i_{p}}\right)  $ are the columns
of the matrix $\operatorname*{sub}\nolimits_{1,2,\ldots,p}^{i_{1},i_{2}%
,\ldots,i_{p}}A$.

We know that $\mathcal{G}$ is the Gaussian elimination greedoid of the vector
family $\left(  v_{e}\right)  _{e\in E}$. Thus, Definition \ref{def.geg} shows
that%
\[
\mathcal{G}=\left\{  F\subseteq E\ \mid\ \text{the family }\left(
\pi_{\left\vert F\right\vert }\left(  v_{e}\right)  \right)  _{e\in F}%
\in\left(  \mathbb{K}^{\left\vert F\right\vert }\right)  ^{F}\text{ is
linearly independent}\right\}  .
\]
Hence, we have the following chain of logical equivalences:%
\begin{align*}
&  \ \left(  I\in\mathcal{G}\right) \\
&  \Longleftrightarrow\ \left(  \text{the family }\left(  \pi_{\left\vert
I\right\vert }\left(  v_{e}\right)  \right)  _{e\in I}\in\left(
\mathbb{K}^{\left\vert I\right\vert }\right)  ^{I}\text{ is linearly
independent}\right) \\
&  \Longleftrightarrow\ \left(  \text{the family }\left(  \pi_{p}\left(
v_{e}\right)  \right)  _{e\in I}\in\left(  \mathbb{K}^{p}\right)  ^{I}\text{
is linearly independent}\right) \\
&  \ \ \ \ \ \ \ \ \ \ \ \ \ \ \ \ \ \ \ \ \left(  \text{since }\left\vert
I\right\vert =p\right) \\
&  \Longleftrightarrow\ \left(  \text{the vectors }\pi_{p}\left(
v_{e}\right)  \text{ for }e\in I\text{ are linearly independent}\right) \\
&  \Longleftrightarrow\ \left(  \text{the vectors }\pi_{p}\left(  v_{i_{1}%
}\right)  ,\pi_{p}\left(  v_{i_{2}}\right)  ,\ldots,\pi_{p}\left(  v_{i_{p}%
}\right)  \text{ are linearly independent}\right) \\
&  \ \ \ \ \ \ \ \ \ \ \ \ \ \ \ \ \ \ \ \ \left(
\begin{array}
[c]{c}%
\text{since the vectors }\pi_{p}\left(  v_{e}\right)  \text{ for }e\in I\text{
are precisely}\\
\text{the vectors }\pi_{p}\left(  v_{i_{1}}\right)  ,\pi_{p}\left(  v_{i_{2}%
}\right)  ,\ldots,\pi_{p}\left(  v_{i_{p}}\right)
\end{array}
\right) \\
&  \Longleftrightarrow\ \left(  \text{the columns of the matrix }%
\operatorname*{sub}\nolimits_{1,2,\ldots,p}^{i_{1},i_{2},\ldots,i_{p}}A\text{
are linearly independent}\right) \\
&  \ \ \ \ \ \ \ \ \ \ \ \ \ \ \ \ \ \ \ \ \left(
\begin{array}
[c]{c}%
\text{since the vectors }\pi_{p}\left(  v_{i_{1}}\right)  ,\pi_{p}\left(
v_{i_{2}}\right)  ,\ldots,\pi_{p}\left(  v_{i_{p}}\right)  \text{ are}\\
\text{the columns of the matrix }\operatorname*{sub}\nolimits_{1,2,\ldots
,p}^{i_{1},i_{2},\ldots,i_{p}}A
\end{array}
\right) \\
&  \Longleftrightarrow\ \left(  \text{the matrix }\operatorname*{sub}%
\nolimits_{1,2,\ldots,p}^{i_{1},i_{2},\ldots,i_{p}}A\text{ is invertible}%
\right) \\
&  \ \ \ \ \ \ \ \ \ \ \ \ \ \ \ \ \ \ \ \ \left(
\begin{array}
[c]{c}%
\text{since }\operatorname*{sub}\nolimits_{1,2,\ldots,p}^{i_{1},i_{2}%
,\ldots,i_{p}}A\text{ is a square matrix, and thus is invertible}\\
\text{if and only if its columns are linearly independent}%
\end{array}
\right) \\
&  \Longleftrightarrow\ \left(  \det\left(  \operatorname*{sub}%
\nolimits_{1,2,\ldots,p}^{i_{1},i_{2},\ldots,i_{p}}A\right)  \neq0\right)  .
\end{align*}
This proves Lemma \ref{lem.geg.through-det}.
\end{proof}

We can leverage Lemma \ref{lem.geg.through-det} to obtain a criterion that,
roughly speaking, says that if a Gaussian elimination greedoid over a field
$\mathbb{K}$ contains a certain \textquotedblleft
constellation\textquotedblright\ (in an appropriate sense), then $\left\vert
\mathbb{K}\right\vert $ must be $\geq$ to a certain value. Namely:

\begin{lemma}
\label{lem.geg.K-bound}Let $\mathbb{K}$ be a field. Let $E$ be a finite set.
Let $\mathcal{F}$ be the Gaussian elimination greedoid of a vector family
$\left(  v_{e}\right)  _{e\in E}$ over $\mathbb{K}$. Let $N$ and $C$ be two
disjoint subsets of $E$. Assume that the following three conditions hold:

\begin{enumerate}
\item[\textbf{(i)}] For any $i\in C$, we have $N\cup\left\{  i\right\}
\in\mathcal{F}$.

\item[\textbf{(ii)}] For any distinct $i,j\in C$, we have $N\cup\left\{
i,j\right\}  \in\mathcal{F}$.

\item[\textbf{(iii)}] For any $p\in N$ and any distinct $i,j\in C$, we have
$\left(  N\cup\left\{  i,j\right\}  \right)  \setminus\left\{  p\right\}
\notin\mathcal{F}$.
\end{enumerate}

Then, $\left\vert \mathbb{K}\right\vert \geq\left\vert C\right\vert $.
\end{lemma}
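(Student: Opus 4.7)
The plan is to translate the three conditions into determinantal statements via Lemma~\ref{lem.geg.through-det} and then exhibit an injection $C \hookrightarrow \mathbb{K}$ via a Laplace expansion engineered so that condition~(iii) kills all but two summands.

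Write $n = \left\vert N \right\vert$. For each $e \in N \cup C$, abbreviate $u_e := \pi_{n+1}\left(v_e\right) \in \mathbb{K}^{n+1}$, and let $c_e \in \mathbb{K}$ denote the $\left(n+2\right)$-th coordinate of $v_e$. Let $A$ be the $\left(n+1\right) \times n$-matrix whose columns are $u_e$ for $e \in N$ (in some fixed order). For each $i \in C$, define
\[
\beta_i := \det\left[\, A \mid u_i \,\right] \in \mathbb{K}.
\]
Lemma~\ref{lem.geg.through-det} combined with condition~(i) yields $\beta_i \neq 0$ for every $i \in C$.

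For distinct $i, j \in C$, consider the $\left(n+2\right) \times \left(n+2\right)$ matrix $M_{ij}$ whose columns are $\pi_{n+2}\left(v_e\right)$ for $e \in N \cup \left\{i, j\right\}$ (ordered with the $N$-columns first, then the $i$-column, then the $j$-column). By condition~(ii) and Lemma~\ref{lem.geg.through-det}, $\det M_{ij} \neq 0$. Laplace-expand $\det M_{ij}$ along its last row: for each column indexed by $p \in N$, the resulting $\left(n+1\right) \times \left(n+1\right)$ cofactor minor is, up to sign, the matrix of $\pi_{n+1}$-projections corresponding to the set $\left(N \cup \left\{i, j\right\}\right) \setminus \left\{p\right\}$, and therefore has determinant zero by condition~(iii) and Lemma~\ref{lem.geg.through-det}. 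Only the cofactors for the $i$- and $j$-columns survive; these minors are $\beta_j$ and $\beta_i$ respectively, weighted by $c_i$, $c_j$, and by the opposite signs $\left(-1\right)^{\left(n+2\right)+\left(n+1\right)} = -1$ and $\left(-1\right)^{\left(n+2\right)+\left(n+2\right)} = +1$. The upshot is
\[
\det M_{ij} \;=\; c_j \beta_i - c_i \beta_j,
\]
which is nonzero. Since each $\beta_i \neq 0$, the map $C \to \mathbb{K},\ i \mapsto c_i/\beta_i$, is therefore injective, whence $\left\vert \mathbb{K} \right\vert \geq \left\vert C \right\vert$.

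The main delicate point is ensuring that the two surviving cofactors carry \emph{opposite} signs (so that $\det M_{ij}$ appears as the difference $c_j \beta_i - c_i \beta_j$ rather than the sum), since only the difference being forced nonzero makes the ratios $c_i/\beta_i$ pairwise distinct. The edge cases are easy: if $\left\vert C \right\vert \leq 1$, the conclusion is immediate from $\left\vert \mathbb{K} \right\vert \geq 1$; and if $N = \varnothing$, condition~(iii) is vacuous but the computation above still applies with no bulk cofactors to eliminate.
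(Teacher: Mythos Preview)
Your proof is correct and follows essentially the same approach as the paper's: both define the ratios $c_i/\beta_i$ (the paper calls them $r_i$), Laplace-expand the $(n+2)\times(n+2)$ determinant along its last row, invoke condition~(iii) to kill the $N$-indexed cofactors, and conclude that the surviving two-term expression $c_j\beta_i - c_i\beta_j \neq 0$ forces the ratios to be pairwise distinct. The only cosmetic difference is that the paper first renames $E$ as $\{1,\ldots,n\}$ so that Lemma~\ref{lem.geg.through-det} applies verbatim, whereas you invoke it directly on the vector family; both are fine.
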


\begin{proof}
[Proof of Lemma \ref{lem.geg.K-bound}.]Let $m\in\mathbb{N}$ be such that the
vectors $v_{e}$ belong to $\mathbb{K}^{m}$. Then, $m\geq\left\vert
E\right\vert $ (since otherwise, the Gaussian elimination greedoid of the
vector family $\left(  v_{e}\right)  _{e\in E}$ would not be well-defined).

Let $n=\left\vert E\right\vert $ and $r=\left\vert N\right\vert $. Hence, $N$
is an $r$-element subset of the $n$-element set $E$. Thus, the set $E$
consists of the $r$ elements of $N$ and of the $n-r$ remaining elements of $E$.

Clearly, the claim we are proving will not change if we rename the elements of
$E$. Thus, we can rename the elements of $E$ arbitrarily. In particular, we
can rename them in such a way that the $r$ elements of the subset $N$ will
become $1,2,\ldots,r$ whereas all remaining $n-r$ elements of $E$ will become
$r+1,r+2,\ldots,n$. Thus, we can WLOG assume that the $r$ elements of $N$ are
$1,2,\ldots,r$ and the remaining $n-r$ elements of $E$ are $r+1,r+2,\ldots,n$.
Assume this. Hence, $N=\left\{  1,2,\ldots,r\right\}  $ and $E\setminus
N=\left\{  r+1,r+2,\ldots,n\right\}  $ and therefore $E=\left\{
1,2,\ldots,n\right\}  $.

Since the subsets $N$ and $C$ of $E$ are disjoint, we have $C\subseteq
E\setminus N=\left\{  r+1,r+2,\ldots,n\right\}  $.

We must prove that $\left\vert \mathbb{K}\right\vert \geq\left\vert
C\right\vert $. If $\left\vert C\right\vert \leq1$, then this is obvious
(since $\left\vert \mathbb{K}\right\vert \geq1$). Hence, we WLOG assume that
$\left\vert C\right\vert >1$ from now on. However, from $C\subseteq E\setminus
N$, we obtain
\begin{align*}
\left\vert C\right\vert  &  \leq\left\vert E\setminus N\right\vert
=\underbrace{\left\vert E\right\vert }_{=n}-\underbrace{\left\vert
N\right\vert }_{=r}\ \ \ \ \ \ \ \ \ \ \left(  \text{since }N\subseteq
E\right) \\
&  =n-r.
\end{align*}
Thus, $n-r\geq\left\vert C\right\vert >1$, so that $n-r\geq2$ and thus $n\geq
r+2$. Thus, $r+2\leq n=\left\vert E\right\vert \leq m$ (since $m\geq\left\vert
E\right\vert $).

Let $A$ be the $m\times n$-matrix whose columns (from left to right) are
$v_{1},v_{2},\ldots,v_{n}$. Write this matrix $A$ in the form $A=\left(
a_{i,j}\right)  _{1\leq i\leq m,\ 1\leq j\leq n}$.

We now make a few observations:

\begin{statement}
\textit{Claim 1:} We have $\det\left(  \operatorname*{sub}%
\nolimits_{1,2,\ldots,r+1}^{1,2,\ldots,r,i}A\right)  \neq0$ for each $i\in C$.
\end{statement}

[\textit{Proof of Claim 1:} Let $i\in C$. Then, $i\in C\subseteq\left\{
r+1,r+2,\ldots,n\right\}  $. Thus, the $r+1$ numbers $1,2,\ldots,r,i$ are
distinct. Also, $N\cup\left\{  i\right\}  \in\mathcal{F}$ (by condition
\textbf{(i)} in Lemma \ref{lem.geg.K-bound}). But $N\cup\left\{  i\right\}
=\left\{  1,2,\ldots,r,i\right\}  $ (since $N=\left\{  1,2,\ldots,r\right\}
$). Thus, Lemma \ref{lem.geg.through-det} (applied to $\mathcal{F}$,
$N\cup\left\{  i\right\}  $, $r+1$ and $\left(  1,2,\ldots,r,i\right)  $
instead of $\mathcal{G}$, $I$, $p$ and $\left(  i_{1},i_{2},\ldots
,i_{p}\right)  $) yields that $N\cup\left\{  i\right\}  \in\mathcal{F}$ holds
if and only if $\det\left(  \operatorname*{sub}\nolimits_{1,2,\ldots
,r+1}^{1,2,\ldots,r,i}A\right)  \neq0.$ Thus, we have $\det\left(
\operatorname*{sub}\nolimits_{1,2,\ldots,r+1}^{1,2,\ldots,r,i}A\right)  \neq0$
(since $N\cup\left\{  i\right\}  \in\mathcal{F}$). This proves Claim 1.]
\medskip

Now, for each $i\in C$, we define a scalar $r_{i}\in\mathbb{K}$ by%
\begin{equation}
r_{i}=\dfrac{a_{r+2,i}}{\det\left(  \operatorname*{sub}\nolimits_{1,2,\ldots
,r+1}^{1,2,\ldots,r,i}A\right)  }. \label{pf.lem.geg.K-bound.1}%
\end{equation}
This is well-defined, since Claim 1 yields $\det\left(  \operatorname*{sub}%
\nolimits_{1,2,\ldots,r+1}^{1,2,\ldots,r,i}A\right)  \neq0$ (and since we have
$r+2\leq m$).

\begin{statement}
\textit{Claim 2:} The scalars $r_{i}$ for all $i\in C$ are distinct.
\end{statement}

[\textit{Proof of Claim 2:} We need to prove that $r_{i}\neq r_{j}$ for any
two distinct $i,j\in C$. So let us fix two distinct $i,j\in C$. We must prove
that $r_{i}\neq r_{j}$.

We have $i,j\in C\subseteq\left\{  r+1,r+2,\ldots,n\right\}  $. Thus, the
$r+2$ numbers $1,2,\ldots,r,i,j$ are distinct (since $i$ and $j$ are
distinct). Also, $N\cup\left\{  i,j\right\}  \in\mathcal{F}$ (by condition
\textbf{(ii)} in Lemma \ref{lem.geg.K-bound}). But $N\cup\left\{  i,j\right\}
=\left\{  1,2,\ldots,r,i,j\right\}  $ (since $N=\left\{  1,2,\ldots,r\right\}
$). Thus, Lemma \ref{lem.geg.through-det} (applied to $\mathcal{F}$,
$N\cup\left\{  i,j\right\}  $, $r+2$ and $\left(  1,2,\ldots,r,i,j\right)  $
instead of $\mathcal{G}$, $I$, $p$ and $\left(  i_{1},i_{2},\ldots
,i_{p}\right)  $) yields that $N\cup\left\{  i,j\right\}  \in\mathcal{F}$
holds if and only if $\det\left(  \operatorname*{sub}\nolimits_{1,2,\ldots
,r+2}^{1,2,\ldots,r,i,j}A\right)  \neq0.$ Thus, we have
\begin{equation}
\det\left(  \operatorname*{sub}\nolimits_{1,2,\ldots,r+2}^{1,2,\ldots
,r,i,j}A\right)  \neq0 \label{pf.lem.geg.K-bound.c2.pf.neq0}%
\end{equation}
(since $N\cup\left\{  i,j\right\}  \in\mathcal{F}$).

Let us agree to use the following notation: If $p\in\left\{  1,2,\ldots
,r\right\}  $ is arbitrary, then \textquotedblleft$1,2,\ldots,\widehat{p}%
,\ldots,r,i,j$\textquotedblright\ will denote the list $1,2,\ldots,r,i,j$ with
the entry $p$ omitted (i.e., the list $1,2,\ldots,p-1,p+1,p+2,\ldots,r,i,j$).

Now, let us use Laplace expansion to expand the determinant of the $\left(
r+2\right)  \times\left(  r+2\right)  $-matrix $\operatorname*{sub}%
\nolimits_{1,2,\ldots,r+2}^{1,2,\ldots,r,i,j}A$ along its last row. We thus
obtain%
\begin{align}
&  \det\left(  \operatorname*{sub}\nolimits_{1,2,\ldots,r+2}^{1,2,\ldots
,r,i,j}A\right) \nonumber\\
&  =\sum_{p=1}^{r}\left(  -1\right)  ^{\left(  r+2\right)  +p}a_{r+2,p}%
\det\left(  \operatorname*{sub}\nolimits_{1,2,\ldots,r+1}^{1,2,\ldots
,\widehat{p},\ldots,r,i,j}A\right) \nonumber\\
&  \ \ \ \ \ \ \ \ \ \ +\left(  -1\right)  ^{\left(  r+2\right)  +\left(
r+1\right)  }a_{r+2,i}\det\left(  \operatorname*{sub}\nolimits_{1,2,\ldots
,r+1}^{1,2,\ldots,r,j}A\right) \nonumber\\
&  \ \ \ \ \ \ \ \ \ \ +\left(  -1\right)  ^{\left(  r+2\right)  +\left(
r+2\right)  }a_{r+2,j}\det\left(  \operatorname*{sub}\nolimits_{1,2,\ldots
,r+1}^{1,2,\ldots,r,i}A\right)  . \label{pf.lem.geg.K-bound.c2.pf.1}%
\end{align}
(Indeed, the entries of the last row of $\operatorname*{sub}%
\nolimits_{1,2,\ldots,r+2}^{1,2,\ldots,r,i,j}A$ are
\[
\underbrace{a_{r+2,1},a_{r+2,2},\ldots,a_{r+2,r}}_{\text{these are the
}a_{r+2,p}\text{ for all }p\in\left\{  1,2,\ldots,r\right\}  },a_{r+2,i}%
,a_{r+2,j},
\]
and the cofactors corresponding to the first $r$ of these entries are%
\[
\left(  -1\right)  ^{\left(  r+2\right)  +p}\det\left(  \operatorname*{sub}%
\nolimits_{1,2,\ldots,r+1}^{1,2,\ldots,\widehat{p},\ldots,r,i,j}A\right)
\ \ \ \ \ \ \ \ \ \ \text{for all }p\in\left\{  1,2,\ldots,r\right\}  ,
\]
whereas the cofactors corresponding to the last two entries are
\[
\left(  -1\right)  ^{\left(  r+2\right)  +\left(  r+1\right)  }\det\left(
\operatorname*{sub}\nolimits_{1,2,\ldots,r+1}^{1,2,\ldots,r,j}A\right)
\ \ \ \ \ \ \ \ \ \ \text{and}\ \ \ \ \ \ \ \ \ \ \left(  -1\right)  ^{\left(
r+2\right)  +\left(  r+2\right)  }\det\left(  \operatorname*{sub}%
\nolimits_{1,2,\ldots,r+1}^{1,2,\ldots,r,i}A\right)  .
\]
)

Now, let us simplify the entries in the $\sum_{p=1}^{r}$ sum on the right hand
side of (\ref{pf.lem.geg.K-bound.c2.pf.1}).

Let $p\in\left\{  1,2,\ldots,r\right\}  $. Then, $p\in\left\{  1,2,\ldots
,r\right\}  =N$. Hence, $\left(  N\cup\left\{  i,j\right\}  \right)
\setminus\left\{  p\right\}  \notin\mathcal{F}$ (by condition \textbf{(iii)}
in Lemma \ref{lem.geg.K-bound}). In other words, we \textbf{don't} have
$\left(  N\cup\left\{  i,j\right\}  \right)  \setminus\left\{  p\right\}
\in\mathcal{F}$. But from $N\cup\left\{  i,j\right\}  =\left\{  1,2,\ldots
,r,i,j\right\}  $, we obtain $\left(  N\cup\left\{  i,j\right\}  \right)
\setminus\left\{  p\right\}  =\left\{  1,2,\ldots,r,i,j\right\}
\setminus\left\{  p\right\}  =\left\{  1,2,\ldots,\widehat{p},\ldots
,r,i,j\right\}  $ (since the $r+2$ numbers $1,2,\ldots,r,i,j$ are distinct).
Of course, the $r+1$ numbers $1,2,\ldots,\widehat{p},\ldots,r,i,j$ are
distinct (since the $r+2$ numbers $1,2,\ldots,r,i,j$ are distinct). Thus,
Lemma \ref{lem.geg.through-det} (applied to $\mathcal{F}$, $\left(
N\cup\left\{  i,j\right\}  \right)  \setminus\left\{  p\right\}  $, $r+1$ and
$\left(  1,2,\ldots,\widehat{p},\ldots,r,i,j\right)  $ instead of
$\mathcal{G}$, $I$, $p$ and $\left(  i_{1},i_{2},\ldots,i_{p}\right)  $)
yields that $\left(  N\cup\left\{  i,j\right\}  \right)  \setminus\left\{
p\right\}  \in\mathcal{F}$ holds if and only if $\det\left(
\operatorname*{sub}\nolimits_{1,2,\ldots,r+1}^{1,2,\ldots,\widehat{p}%
,\ldots,r,i,j}A\right)  \neq0.$ Thus, we don't have $\det\left(
\operatorname*{sub}\nolimits_{1,2,\ldots,r+1}^{1,2,\ldots,\widehat{p}%
,\ldots,r,i,j}A\right)  \neq0$ (since we don't have $\left(  N\cup\left\{
i,j\right\}  \right)  \setminus\left\{  p\right\}  \in\mathcal{F}$). In other
words, we have%
\begin{equation}
\det\left(  \operatorname*{sub}\nolimits_{1,2,\ldots,r+1}^{1,2,\ldots
,\widehat{p},\ldots,r,i,j}A\right)  =0. \label{pf.lem.geg.K-bound.c2.pf.2}%
\end{equation}

Forget that we fixed $p$. We thus have proved
(\ref{pf.lem.geg.K-bound.c2.pf.2}) for each $p\in\left\{  1,2,\ldots
,r\right\}  $. Now, (\ref{pf.lem.geg.K-bound.c2.pf.1}) becomes%
\begin{align*}
&  \det\left(  \operatorname*{sub}\nolimits_{1,2,\ldots,r+2}^{1,2,\ldots
,r,i,j}A\right) \\
&  =\sum_{p=1}^{r}\left(  -1\right)  ^{\left(  r+2\right)  +p}a_{r+2,p}%
\underbrace{\det\left(  \operatorname*{sub}\nolimits_{1,2,\ldots
,r+1}^{1,2,\ldots,\widehat{p},\ldots,r,i,j}A\right)  }%
_{\substack{=0\\\text{(by (\ref{pf.lem.geg.K-bound.c2.pf.2}))}}}\\
&  \ \ \ \ \ \ \ \ \ \ +\underbrace{\left(  -1\right)  ^{\left(  r+2\right)
+\left(  r+1\right)  }}_{=-1}a_{r+2,i}\det\left(  \operatorname*{sub}%
\nolimits_{1,2,\ldots,r+1}^{1,2,\ldots,r,j}A\right) \\
&  \ \ \ \ \ \ \ \ \ \ +\underbrace{\left(  -1\right)  ^{\left(  r+2\right)
+\left(  r+2\right)  }}_{=1}a_{r+2,j}\det\left(  \operatorname*{sub}%
\nolimits_{1,2,\ldots,r+1}^{1,2,\ldots,r,i}A\right) \\
&  =\underbrace{\sum_{p=1}^{r}\left(  -1\right)  ^{\left(  r+2\right)
+p}a_{r+2,p}0}_{=0}-a_{r+2,i}\det\left(  \operatorname*{sub}%
\nolimits_{1,2,\ldots,r+1}^{1,2,\ldots,r,j}A\right)  +a_{r+2,j}\det\left(
\operatorname*{sub}\nolimits_{1,2,\ldots,r+1}^{1,2,\ldots,r,i}A\right) \\
&  =-\underbrace{a_{r+2,i}}_{\substack{=\det\left(  \operatorname*{sub}%
\nolimits_{1,2,\ldots,r+1}^{1,2,\ldots,r,i}A\right)  \cdot r_{i}\\\text{(by
(\ref{pf.lem.geg.K-bound.1}))}}}\det\left(  \operatorname*{sub}%
\nolimits_{1,2,\ldots,r+1}^{1,2,\ldots,r,j}A\right) \\
&  \ \ \ \ \ \ \ \ \ \ +\underbrace{a_{r+2,j}}_{\substack{=\det\left(
\operatorname*{sub}\nolimits_{1,2,\ldots,r+1}^{1,2,\ldots,r,j}A\right)  \cdot
r_{j}\\\text{(since (\ref{pf.lem.geg.K-bound.1}) (applied to }j\text{ instead
of }i\text{)}\\\text{yields }r_{j}=\dfrac{a_{r+2,j}}{\det\left(
\operatorname*{sub}\nolimits_{1,2,\ldots,r+1}^{1,2,\ldots,r,j}A\right)
}\text{)}}}\det\left(  \operatorname*{sub}\nolimits_{1,2,\ldots,r+1}%
^{1,2,\ldots,r,i}A\right) \\
&  =-\det\left(  \operatorname*{sub}\nolimits_{1,2,\ldots,r+1}^{1,2,\ldots
,r,i}A\right)  \cdot r_{i}\cdot\det\left(  \operatorname*{sub}%
\nolimits_{1,2,\ldots,r+1}^{1,2,\ldots,r,j}A\right) \\
&  \ \ \ \ \ \ \ \ \ \ +\det\left(  \operatorname*{sub}\nolimits_{1,2,\ldots
,r+1}^{1,2,\ldots,r,j}A\right)  \cdot r_{j}\cdot\det\left(
\operatorname*{sub}\nolimits_{1,2,\ldots,r+1}^{1,2,\ldots,r,i}A\right) \\
&  =\det\left(  \operatorname*{sub}\nolimits_{1,2,\ldots,r+1}^{1,2,\ldots
,r,i}A\right)  \det\left(  \operatorname*{sub}\nolimits_{1,2,\ldots
,r+1}^{1,2,\ldots,r,j}A\right)  \cdot\left(  r_{j}-r_{i}\right)  .
\end{align*}
Hence,%
\[
\det\left(  \operatorname*{sub}\nolimits_{1,2,\ldots,r+1}^{1,2,\ldots
,r,i}A\right)  \det\left(  \operatorname*{sub}\nolimits_{1,2,\ldots
,r+1}^{1,2,\ldots,r,j}A\right)  \cdot\left(  r_{j}-r_{i}\right)  =\det\left(
\operatorname*{sub}\nolimits_{1,2,\ldots,r+2}^{1,2,\ldots,r,i,j}A\right)
\neq0
\]
(by (\ref{pf.lem.geg.K-bound.c2.pf.neq0})). Thus, $r_{j}-r_{i}\neq0$, so that
$r_{i}\neq r_{j}$. This proves Claim 2.] \medskip

Claim 2 shows that the scalars $r_{i}$ for all $i\in C$ are distinct. Thus, we
have found $\left\vert C\right\vert $ distinct elements of $\mathbb{K}$
(namely, these scalars $r_{i}$ for all $i\in C$). Therefore, $\mathbb{K}$ must
have at least $\left\vert C\right\vert $ elements. In other words, $\left\vert
\mathbb{K}\right\vert \geq\left\vert C\right\vert $. Thus, Lemma
\ref{lem.geg.K-bound} is proved.
\end{proof}

\subsection{Proving the theorem}

We shall need one more lemma about Gaussian elimination greedoids:

\begin{lemma}
\label{lem.geg.strong-ax-ii}Let $\mathbb{K}$ be a field. Let $\mathcal{F}$ be
the Gaussian elimination greedoid of a vector family over $\mathbb{K}$. If
$B\in\mathcal{F}$ satisfies $\left\vert B\right\vert >0$, then there exists
$b\in B$ such that $B\setminus\left\{  b\right\}  \in\mathcal{F}$.
\end{lemma}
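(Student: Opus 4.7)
The plan is to translate the statement into a linear-algebra fact about removing a row from a square matrix of nonzero determinant. Let $k = \left\vert B\right\vert$; by hypothesis $k > 0$. Write $B = \left\{b_{1}, b_{2}, \ldots, b_{k}\right\}$ with the $b_{i}$ distinct. Since $B \in \mathcal{F}$, the definition of the Gaussian elimination greedoid tells us that the family $\left(\pi_{k}(v_{b_{i}})\right)_{1 \leq i \leq k}$ is linearly independent in $\mathbb{K}^{k}$. Thus the $k \times k$ matrix $M$ whose $i$-th column is $\pi_{k}(v_{b_{i}})$ satisfies $\det M \neq 0$.

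Next I would apply Laplace expansion along the last row of $M$: the equality $\det M = \sum_{i=1}^{k} (-1)^{k+i} M_{k,i} \cdot \det M^{(i)}$, where $M^{(i)}$ is the $(k-1) \times (k-1)$ submatrix obtained from $M$ by deleting the last row and the $i$-th column, forces at least one cofactor $\det M^{(i)}$ to be nonzero (otherwise the right-hand side would be $0$). Fix such an $i$, and set $b = b_{i}$. I claim $B \setminus \{b\} \in \mathcal{F}$.

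To verify this, note that the columns of $M^{(i)}$ are precisely the vectors $\pi_{k-1}(v_{b_{j}})$ for $j \in \{1,2,\ldots,k\} \setminus \{i\}$, because deleting the last row of $M$ corresponds to replacing $\pi_{k}$ by $\pi_{k-1}$, and deleting the $i$-th column corresponds to omitting $b_{i}$ from $B$. Since $\det M^{(i)} \neq 0$, these $k-1$ vectors are linearly independent in $\mathbb{K}^{k-1}$. As $\left\vert B \setminus \{b\}\right\vert = k - 1$, this is exactly the condition for $B \setminus \{b\} \in \mathcal{F}$.

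There is no real obstacle here; the only thing to get right is the bookkeeping between the projection map $\pi_{k-1}$ (which acts on all of $\mathbb{K}^{m}$) and the passage from the full $k \times k$ matrix $M$ to its $(k-1) \times (k-1)$ submatrix, namely that deleting the last row of the matrix formed from $\pi_{k}(v_{b_{i}})$ really does give the matrix formed from $\pi_{k-1}(v_{b_{i}})$. Alternatively one can rephrase step two as: the $(k-1) \times k$ matrix $M'$ obtained from $M$ by deleting its last row has rank $\geq k-1$ (since deleting a single row drops the rank by at most one) and rank $\leq k-1$ (since it has $k-1$ rows), hence rank exactly $k-1$, so some set of $k-1$ of its columns is linearly independent; this gives the same conclusion.
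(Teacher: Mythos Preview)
Your proof is correct and is essentially the same argument the paper uses: the paper derives Lemma~\ref{lem.geg.strong-ax-ii} by invoking Theorem~\ref{thm.geg.strong} (that Gaussian elimination greedoids are strong greedoids), but the proof of axiom~\textbf{(ii)} inside Theorem~\ref{thm.geg.strong} is precisely your Laplace-expansion-along-the-last-row argument, with the same identification of the minor $M^{(i)}$ with the matrix of the projections $\pi_{k-1}(v_{b_j})$. Your alternative rank-drop phrasing at the end is also fine and gives the same conclusion with slightly less notation.
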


\begin{proof}
[Proof of Lemma \ref{lem.geg.strong-ax-ii}.]We shall use the notion of a
\textquotedblleft strong greedoid\textquotedblright, as defined in Definition
\ref{def.sg}.

Theorem \ref{thm.geg.strong} shows that every Gaussian elimination greedoid is
a strong greedoid. Hence, $\mathcal{F}$ is a strong greedoid. In other words,
$\mathcal{F}$ is a set system that satisfies the four axioms \textbf{(i)},
\textbf{(ii)}, \textbf{(iii)} and \textbf{(iv)} of Definition \ref{def.sg}.
Thus, in particular, $\mathcal{F}$ satisfies axiom \textbf{(ii)}. But this
axiom says precisely that if $B\in\mathcal{F}$ satisfies $\left\vert
B\right\vert >0$, then there exists $b\in B$ such that $B\setminus\left\{
b\right\}  \in\mathcal{F}$. Thus, Lemma \ref{lem.geg.strong-ax-ii} is proved.
\end{proof}

We can now prove Theorem \ref{thm.converse1}:

\begin{proof}
[Proof of Theorem \ref{thm.converse1}.]The definition of $\operatorname*{mcs}%
\left(  E,w,d\right)  $ shows that $\operatorname*{mcs}\left(  E,w,d\right)  $
is the maximum size of a clique of $\left(  E,w,d\right)  $. Thus, there
exists a clique $C$ of size $\operatorname*{mcs}\left(  E,w,d\right)  $.
Consider this $C$. Thus, $\operatorname*{mcs}\left(  E,w,d\right)  =\left\vert
C\right\vert $ (since $C$ has size $\operatorname*{mcs}\left(  E,w,d\right)  $).

We must prove that $\left\vert \mathbb{K}\right\vert \geq\operatorname*{mcs}%
\left(  E,w,d\right)  $. In other words, we must prove that $\left\vert
\mathbb{K}\right\vert \geq\left\vert C\right\vert $ (since
$\operatorname*{mcs}\left(  E,w,d\right)  =\left\vert C\right\vert $). If
$\left\vert C\right\vert \leq1$, then this is obvious (since $\left\vert
\mathbb{K}\right\vert \geq1$). Thus, we WLOG assume that $\left\vert
C\right\vert >1$. Hence, $\left\vert C\right\vert \geq2$.

The set $C$ is a clique. In other words, there exists a $\beta\in\mathbb{V}$
such that $C$ is a $\beta$-clique. Consider this $\beta$.

Choose some $c\in C$. (We can do this, since $\left\vert C\right\vert \geq
2>0$.) Set $B=B_{\beta}\left(  c\right)  $. Lemma \ref{lem.clique-and-clball}
\textbf{(b)} yields $C\subseteq B$, so that $B\supseteq C$.

A \textit{secant set} shall mean a subset $S$ of $E$ satisfying $S\in
\mathcal{F}$ and $\left\vert S\cap B\right\vert \geq2$.

The set $E$ itself satisfies $E\in\mathcal{F}$ (by Remark
\ref{rmk.bhargava.EinF}) and $\left\vert E\cap B\right\vert \geq
2$\ \ \ \ \footnote{\textit{Proof.} Since $B$ is a subset of $E$, we have
$E\cap B=B\supseteq C$. Thus, $\left\vert E\cap B\right\vert \geq\left\vert
C\right\vert \geq2$.}. In other words, $E$ is a secant set. Hence, there
exists at least one secant set. Thus, there exists a secant set of smallest
size (since there are only finitely many secant sets). Consider such a secant
set, and call it $S$.

Thus, $S$ is a secant set of smallest size. Hence, $S$ is a secant set; in
other words, $S$ is a subset of $E$ satisfying $S\in\mathcal{F}$ and
$\left\vert S\cap B\right\vert \geq2$. Since $S\cap B$ is a subset of $S$, we
have $\left\vert S\cap B\right\vert \leq\left\vert S\right\vert $, so that
$\left\vert S\right\vert \geq\left\vert S\cap B\right\vert \geq2>0$. Thus,
Lemma \ref{lem.geg.strong-ax-ii} (applied to $B=S$) yields that there exists
$b\in S$ such that $S\setminus\left\{  b\right\}  \in\mathcal{F}$. Consider
this $b$.

We now shall show several claims:

\begin{statement}
\textit{Claim 1:} We have $\left\vert \left(  S\cap B\right)  \setminus
\left\{  b\right\}  \right\vert <2$.
\end{statement}

[\textit{Proof of Claim 1:} The set $S\setminus\left\{  b\right\}  $ has
smaller size than $S$ (since $b\in S$), and thus cannot be a secant set (since
$S$ is a secant set of smallest size). In other words, $S\setminus\left\{
b\right\}  $ cannot be a subset of $E$ satisfying $S\setminus\left\{
b\right\}  \in\mathcal{F}$ and $\left\vert \left(  S\setminus\left\{
b\right\}  \right)  \cap B\right\vert \geq2$ (by the definition of
\textquotedblleft secant set\textquotedblright). Hence, we cannot have
$\left\vert \left(  S\setminus\left\{  b\right\}  \right)  \cap B\right\vert
\geq2$ (because $S\setminus\left\{  b\right\}  $ is a subset of $E$ satisfying
$S\setminus\left\{  b\right\}  \in\mathcal{F}$). In other words, we have
$\left\vert \left(  S\setminus\left\{  b\right\}  \right)  \cap B\right\vert
<2$.

But it is known from basic set theory that $\left(  X\cap Y\right)  \setminus
Z=\left(  X\setminus Z\right)  \cap Y$ for any three sets $X$, $Y$ and $Z$.
Applying this to $X=S$, $Y=B$ and $Z=\left\{  b\right\}  $, we obtain $\left(
S\cap B\right)  \setminus\left\{  b\right\}  =\left(  S\setminus\left\{
b\right\}  \right)  \cap B$. Hence, $\left\vert \left(  S\cap B\right)
\setminus\left\{  b\right\}  \right\vert =\left\vert \left(  S\setminus
\left\{  b\right\}  \right)  \cap B\right\vert <2$. This proves Claim 1.]
\medskip

\begin{statement}
\textit{Claim 2:} We have $b\in S\cap B$.
\end{statement}

[\textit{Proof of Claim 2:} Claim 1 yields $\left\vert \left(  S\cap B\right)
\setminus\left\{  b\right\}  \right\vert <2\leq\left\vert S\cap B\right\vert $
(since $\left\vert S\cap B\right\vert \geq2$). Hence, $\left\vert \left(
S\cap B\right)  \setminus\left\{  b\right\}  \right\vert \neq\left\vert S\cap
B\right\vert $, so that $\left(  S\cap B\right)  \setminus\left\{  b\right\}
\neq S\cap B$. Therefore, $b\in S\cap B$. This proves Claim 2.] \medskip

\begin{statement}
\textit{Claim 3:} We have $\left\vert S\cap B\right\vert =2$.
\end{statement}

[\textit{Proof of Claim 3:} Claim 1 says that $\left\vert \left(  S\cap
B\right)  \setminus\left\{  b\right\}  \right\vert <2$. But Claim 2 yields
$b\in S\cap B$. Thus, $\left\vert \left(  S\cap B\right)  \setminus\left\{
b\right\}  \right\vert =\left\vert S\cap B\right\vert -1$. Hence, $\left\vert
S\cap B\right\vert =\underbrace{\left\vert \left(  S\cap B\right)
\setminus\left\{  b\right\}  \right\vert }_{<2}+1<2+1=3$. In other words,
$\left\vert S\cap B\right\vert \leq2$ (since $\left\vert S\cap B\right\vert $
is an integer). Combining this with $\left\vert S\cap B\right\vert \geq2$, we
find $\left\vert S\cap B\right\vert =2$. This proves Claim 3.] \medskip

Claim 3 shows that the set $S\cap B$ has exactly two elements. One of these
two elements is $b$ (since Claim 2 says that $b\in S\cap B$); let $a$ be the
other element. Thus, $a\neq b$ and $S\cap B=\left\{  a,b\right\}  $. Hence,
$\left\{  a,b\right\}  =S\cap B\subseteq B$. Also, $\left\vert \left\{
a,b\right\}  \right\vert =2$ (since $a\neq b$).

Let $N=S\setminus B$. Then, it is easy to see that $S=N\cup\left\{
a,b\right\}  $\ \ \ \ \footnote{\textit{Proof.} Any two sets $X$ and $Y$
satisfy $X=\left(  X\setminus Y\right)  \cup\left(  X\cap Y\right)  $.
Applying this to $X=S$ and $Y=B$, we obtain $S=\underbrace{\left(  S\setminus
B\right)  }_{=N}\cup\underbrace{\left(  S\cap B\right)  }_{=\left\{
a,b\right\}  }=N\cup\left\{  a,b\right\}  $.} and $N\cap\left\{  a,b\right\}
=\varnothing$\ \ \ \ \footnote{\textit{Proof.} We have $\left\{  a,b\right\}
\subseteq B$. Hence, $\underbrace{N}_{=S\setminus B}\cap\underbrace{\left\{
a,b\right\}  }_{\subseteq B}\subseteq\left(  S\setminus B\right)  \cap
B=\varnothing$, so that $N\cap\left\{  a,b\right\}  =\varnothing$.}. From
$S=N\cup\left\{  a,b\right\}  $, we obtain
\begin{align}
\left\vert S\right\vert  &  =\left\vert N\cup\left\{  a,b\right\}  \right\vert
=\left\vert N\right\vert +\underbrace{\left\vert \left\{  a,b\right\}
\right\vert }_{=2}\ \ \ \ \ \ \ \ \ \ \left(  \text{since }N\cap\left\{
a,b\right\}  =\varnothing\right) \nonumber\\
&  =\left\vert N\right\vert +2. \label{pf.thm.converse-strong.4}%
\end{align}

Moreover,
\[
N=\underbrace{S}_{\subseteq E}\setminus B\subseteq E\setminus\underbrace{B}%
_{\supseteq C}\subseteq E\setminus C.
\]
Hence, the two subsets $N$ and $C$ of $E$ are disjoint.

Now, we have the following:

\begin{statement}
\textit{Claim 4:} Let $i\in C$. Then, $N\cap\left\{  i\right\}  =\varnothing$
and $N\cup\left\{  i\right\}  \in\mathcal{F}$.
\end{statement}

[\textit{Proof of Claim 4:} If we had $i\in N$, then we would have $i\notin B$
(since $i\in N=S\setminus B$), which would contradict $i\in C\subseteq B$.
Hence, we cannot have $i\in N$. Thus, $N\cap\left\{  i\right\}  =\varnothing$.
It remains to prove that $N\cup\left\{  i\right\}  \in\mathcal{F}$.

If we had $b\in N$, then we would have $b\notin B$ (since $b\in N=S\setminus
B$), which would contradict $b\in\left\{  a,b\right\}  \subseteq B$. Hence, we
cannot have $b\in N$. Thus, $b\notin N$, so that $N\setminus\left\{
b\right\}  =N$.

We have $S=N\cup\left\{  a,b\right\}  $, thus%
\[
S\setminus\left\{  b\right\}  =\left(  N\cup\left\{  a,b\right\}  \right)
\setminus\left\{  b\right\}  =\underbrace{\left(  N\setminus\left\{
b\right\}  \right)  }_{=N}\cup\underbrace{\left(  \left\{  a,b\right\}
\setminus\left\{  b\right\}  \right)  }_{\substack{=\left\{  a\right\}
\\\text{(since }a\neq b\text{)}}}=N\cup\left\{  a\right\}  .
\]
Hence, $N\cup\left\{  a\right\}  =S\setminus\left\{  b\right\}  \in
\mathcal{F}$. But $\left\{  a\right\}  $ and $\left\{  i\right\}  $ are
subsets of $B$ (since $a\in\left\{  a,b\right\}  \subseteq B$ and $i\in
C\subseteq B$), and satisfy $\left\vert \left\{  a\right\}  \right\vert
=\left\vert \left\{  i\right\}  \right\vert $ (since both $\left\vert \left\{
a\right\}  \right\vert $ and $\left\vert \left\{  i\right\}  \right\vert $
equal $1$) and $\left\{  i\right\}  \subseteq C$ (since $i\in C$) and
$N\cup\left\{  a\right\}  \in\mathcal{F}$. Hence, Corollary
\ref{cor.clique-and-clball3} (applied to $P=\left\{  a\right\}  $ and
$Q=\left\{  i\right\}  $) yields $N\cup\left\{  i\right\}  \in\mathcal{F}$.
This finishes the proof of Claim 4.] \medskip

\begin{statement}
\textit{Claim 5:} Let $i,j\in C$ be distinct. Then, $N\cap\left\{
i,j\right\}  =\varnothing$ and $N\cup\left\{  i,j\right\}  \in\mathcal{F}$.
\end{statement}

[\textit{Proof of Claim 5:} If we had $i\in N$, then we would have $i\notin B$
(since $i\in N=S\setminus B$), which would contradict $i\in C\subseteq B$.
Hence, we cannot have $i\in N$. In other words, $i\notin N$. Likewise,
$j\notin N$. Combining $i\notin N$ with $j\notin N$, we obtain $N\cap\left\{
i,j\right\}  =\varnothing$. It remains to prove that $N\cup\left\{
i,j\right\}  \in\mathcal{F}$.

Note that $\left\vert \left\{  a,b\right\}  \right\vert =2$ and $\left\vert
\left\{  i,j\right\}  \right\vert =2$ (since $i$ and $j$ are distinct). Hence,
$\left\vert \left\{  a,b\right\}  \right\vert =2=\left\vert \left\{
i,j\right\}  \right\vert $. Also, $i,j\in C$, so that $\left\{  i,j\right\}
\subseteq C\subseteq B$.

We have $S=N\cup\left\{  a,b\right\}  $, thus $N\cup\left\{  a,b\right\}
=S\in\mathcal{F}$. But $\left\{  a,b\right\}  $ and $\left\{  i,j\right\}  $
are subsets of $B$ (since $\left\{  a,b\right\}  \subseteq B$ and $\left\{
i,j\right\}  \subseteq B$), and satisfy $\left\vert \left\{  a,b\right\}
\right\vert =\left\vert \left\{  i,j\right\}  \right\vert $ and $\left\{
i,j\right\}  \subseteq C$ and $N\cup\left\{  a,b\right\}  \in\mathcal{F}$.
Hence, Corollary \ref{cor.clique-and-clball3} (applied to $P=\left\{
a,b\right\}  $ and $Q=\left\{  i,j\right\}  $) yields $N\cup\left\{
i,j\right\}  \in\mathcal{F}$. This finishes the proof of Claim 5.] \medskip

\begin{statement}
\textit{Claim 6:} Let $p\in N$. Let $i,j\in C$ be distinct. Then, $\left(
N\cup\left\{  i,j\right\}  \right)  \setminus\left\{  p\right\}
\notin\mathcal{F}$.
\end{statement}

[\textit{Proof of Claim 6:} Assume the contrary. Hence, $\left(  N\cup\left\{
i,j\right\}  \right)  \setminus\left\{  p\right\}  \in\mathcal{F}$. Also,
$\left\vert \left(  \left(  N\cup\left\{  i,j\right\}  \right)  \setminus
\left\{  p\right\}  \right)  \cap B\right\vert \geq2$%
\ \ \ \ \footnote{\textit{Proof.} We have $p\in N=S\setminus B$, so that
$p\notin B$. Hence, $B\setminus\left\{  p\right\}  =B$. Also, $i,j\in C$, so
that $\left\{  i,j\right\}  \subseteq C\subseteq B$.
\par
But it is known from set theory that $\left(  X\setminus Y\right)  \cap
Z=X\cap\left(  Z\setminus Y\right)  $ for any three sets $X$, $Y$ and $Z$.
Applying this to $X=N\cup\left\{  i,j\right\}  $, $Y=\left\{  p\right\}  $ and
$Z=B$, we obtain
\[
\left(  \left(  N\cup\left\{  i,j\right\}  \right)  \setminus\left\{
p\right\}  \right)  \cap B=\underbrace{\left(  N\cup\left\{  i,j\right\}
\right)  }_{\supseteq\left\{  i,j\right\}  }\cap\underbrace{\left(
B\setminus\left\{  p\right\}  \right)  }_{=B}\supseteq\left\{  i,j\right\}
\cap B=\left\{  i,j\right\}
\]
(since $\left\{  i,j\right\}  \subseteq B$). Hence, $\left\vert \left(
\left(  N\cup\left\{  i,j\right\}  \right)  \setminus\left\{  p\right\}
\right)  \cap B\right\vert \geq\left\vert \left\{  i,j\right\}  \right\vert
=2$ (since $i$ and $j$ are distinct). Qed.}. Therefore, $\left(  N\cup\left\{
i,j\right\}  \right)  \setminus\left\{  p\right\}  $ is a secant set (by the
definition of \textquotedblleft secant set\textquotedblright). Hence,
\begin{equation}
\left\vert \left(  N\cup\left\{  i,j\right\}  \right)  \setminus\left\{
p\right\}  \right\vert \geq\left\vert S\right\vert
\label{pf.thm.converse-strong.c6.pf.1}%
\end{equation}
(since $S$ was defined to be a secant set of smallest size).

We have $\left\vert \left\{  i,j\right\}  \right\vert =2$ (since $i$ and $j$
are distinct). But Claim 5 yields $N\cap\left\{  i,j\right\}  =\varnothing$,
so that $\left\vert N\cup\left\{  i,j\right\}  \right\vert =\left\vert
N\right\vert +\underbrace{\left\vert \left\{  i,j\right\}  \right\vert }%
_{=2}=\left\vert N\right\vert +2=\left\vert S\right\vert $ (by
(\ref{pf.thm.converse-strong.4})). But $p\in N\subseteq N\cup\left\{
i,j\right\}  $ and thus $\left\vert \left(  N\cup\left\{  i,j\right\}
\right)  \setminus\left\{  p\right\}  \right\vert =\underbrace{\left\vert
N\cup\left\{  i,j\right\}  \right\vert }_{=\left\vert S\right\vert
}-1=\left\vert S\right\vert -1<\left\vert S\right\vert $. This contradicts
(\ref{pf.thm.converse-strong.c6.pf.1}). This contradiction shows that our
assumption was false. Hence, Claim 6 is proved.] \medskip

But recall that $\mathcal{F}$ is the Gaussian elimination greedoid of a vector
family over $\mathbb{K}$ (by assumption). Let $\left(  v_{e}\right)  _{e\in
E}$ be this vector family. Recall that $N$ and $C$ are two disjoint subsets of
$E$. Moreover, the following facts hold:

\begin{enumerate}
\item[\textbf{(i)}] For any $i\in C$, we have $N\cup\left\{  i\right\}
\in\mathcal{F}$ (by Claim 4).

\item[\textbf{(ii)}] For any distinct $i,j\in C$, we have $N\cup\left\{
i,j\right\}  \in\mathcal{F}$ (by Claim 5).

\item[\textbf{(iii)}] For any $p\in N$ and any distinct $i,j\in C$, we have
$\left(  N\cup\left\{  i,j\right\}  \right)  \setminus\left\{  p\right\}
\notin\mathcal{F}$ (by Claim 6).
\end{enumerate}

Hence, Lemma \ref{lem.geg.K-bound} shows that $\left\vert \mathbb{K}%
\right\vert \geq\left\vert C\right\vert $. This proves Theorem
\ref{thm.converse1}.
\end{proof}

\section{\label{sect.plucker}Appendix: Gaussian elimination greedoids are
strong}

We shall now prove Theorem \ref{thm.geg.strong}. In order to do so, we will
first need to recall the definition of a strong greedoid (see, e.g.,
\cite[\S 6.1]{GriPet19}):

\begin{definition}
\label{def.sg}Let $E$ be a finite set. A set system $\mathcal{F}$ on ground
set $E$ is said to be a \textit{greedoid} if it satisfies the following three axioms:

\begin{enumerate}
\item[\textbf{(i)}] We have $\varnothing\in\mathcal{F}$.

\item[\textbf{(ii)}] If $B\in\mathcal{F}$ satisfies $\left\vert B\right\vert
>0$, then there exists $b\in B$ such that $B\setminus\left\{  b\right\}
\in\mathcal{F}$.

\item[\textbf{(iii)}] If $A,B\in\mathcal{F}$ satisfy $\left\vert B\right\vert
=\left\vert A\right\vert +1$, then there exists $b\in B\setminus A$ such that
$A\cup\left\{  b\right\}  \in\mathcal{F}$.
\end{enumerate}

\noindent Furthermore, $\mathcal{F}$ is said to be a \textit{strong greedoid}
if it satisfies the following axiom (in addition to axioms \textbf{(i)},
\textbf{(ii)} and \textbf{(iii)}):

\begin{enumerate}
\item[\textbf{(iv)}] If $A,B\in\mathcal{F}$ satisfy $\left\vert B\right\vert
=\left\vert A\right\vert +1$, then there exists $b\in B\setminus A$ such that
$A\cup\left\{  b\right\}  \in\mathcal{F}$ and $B\setminus\left\{  b\right\}
\in\mathcal{F}$.
\end{enumerate}
\end{definition}

Note that axiom \textbf{(iii)} in Definition \ref{def.sg} clearly follows from
axiom \textbf{(iv)}; thus, only axioms \textbf{(i)}, \textbf{(ii)} and
\textbf{(iv)} need to be checked in order to convince ourselves that a set
system is a strong greedoid.

We shall furthermore use a determinantal identity due to Pl\"{u}cker (one of
several facts known as the Pl\"{u}cker identities). To state this identity, we
will use the following notations:

\begin{definition}
\label{def.addcol-uncol}Let $\mathbb{K}$ be a commutative ring. Let
$n\in\mathbb{N}$ and $m\in\mathbb{N}$. Let $A\in\mathbb{K}^{n\times m}$ be an
$n\times m$-matrix. (Here and in the following, $\mathbb{K}^{n\times m}$
denotes the set of all $n\times m$-matrices with entries in $\mathbb{K}$.)

\begin{enumerate}
\item[\textbf{(a)}] If $v\in\mathbb{K}^{n\times1}$ is a column vector with $n$
entries, then $\left(  A\mid v\right)  $ will denote the $n\times\left(
m+1\right)  $-matrix whose $m+1$ columns are $A_{\bullet,1},A_{\bullet
,2},\ldots,A_{\bullet,m},v$ (from left to right). (Informally speaking,
$\left(  A\mid v\right)  $ is the matrix obtained when the column vector $v$
is \textquotedblleft attached\textquotedblright\ to $A$ at the right edge.)

\item[\textbf{(b)}] If $j\in\left\{  1,2,\ldots,m\right\}  $, then
$A_{\bullet,j}$ shall mean the $j$-th column of the matrix $A$. This is a
column vector with $n$ entries, i.e., an $n\times1$-matrix.

\item[\textbf{(c)}] If $i\in\left\{  1,2,\ldots,n\right\}  $, then $A_{\sim
i,\bullet}$ shall mean the matrix obtained from the matrix $A$ by removing the
$i$-th row. This is an $\left(  n-1\right)  \times m$-matrix.

\item[\textbf{(d)}] If $i\in\left\{  1,2,\ldots,n\right\}  $ and $j\in\left\{
1,2,\ldots,m\right\}  $, then $A_{\sim i,\sim j}$ shall mean the matrix
obtained from $A$ by removing the $i$-th row and the $j$-th column. This is an
$\left(  n-1\right)  \times\left(  m-1\right)  $-matrix.
\end{enumerate}
\end{definition}

We can now state our Pl\"{u}cker identity:

\begin{proposition}
\label{prop.desnanot.XY}Let $\mathbb{K}$ be a commutative ring. Let $n$ be a
positive integer. Let $X\in\mathbb{K}^{n\times\left(  n-1\right)  }$ and
$Y\in\mathbb{K}^{n\times n}$. Let $i\in\left\{  1,2,\ldots,n\right\}  $. Then,%
\[
\det\left(  X_{\sim i,\bullet}\right)  \det Y=\sum_{q=1}^{n}\left(  -1\right)
^{n+q}\det\left(  X\mid Y_{\bullet,q}\right)  \det\left(  Y_{\sim i,\sim
q}\right)  .
\]

\end{proposition}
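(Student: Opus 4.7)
The plan is to reduce the identity to a double Laplace expansion. First I would apply Laplace expansion along the last column of the $n\times n$ matrix $(X\mid Y_{\bullet,q})$. Since the first $n-1$ columns of this matrix are exactly the columns of $X$, removing its last column and its $j$-th row yields $X_{\sim j,\bullet}$. The last column has entries $Y_{1,q},Y_{2,q},\ldots,Y_{n,q}$, so
\[
\det(X\mid Y_{\bullet,q})=\sum_{j=1}^{n}(-1)^{j+n}Y_{j,q}\det(X_{\sim j,\bullet}).
\]
Substituting this into the right-hand side of the claim and interchanging the order of summation rewrites the right-hand side as
\[
\sum_{j=1}^{n}(-1)^{j+n}\det(X_{\sim j,\bullet})\cdot\Bigl(\sum_{q=1}^{n}(-1)^{n+q}Y_{j,q}\det(Y_{\sim i,\sim q})\Bigr).
\]

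Next I would analyze the inner $q$-sum. Pulling out the global sign $(-1)^{n-i}$, it equals $(-1)^{n-i}\sum_{q=1}^{n}(-1)^{i+q}Y_{j,q}\det(Y_{\sim i,\sim q})$. The latter sum is, by Laplace expansion along row $i$, exactly the determinant of the matrix $Y^{(j)}$ obtained from $Y$ by overwriting its $i$-th row with the $j$-th row of $Y$ (note that removing row $i$ from $Y^{(j)}$ gives the same thing as removing row $i$ from $Y$, so the minors match). When $j=i$, we have $Y^{(j)}=Y$, so the inner sum equals $(-1)^{n-i}\det Y$. When $j\neq i$, the matrix $Y^{(j)}$ has two equal rows (row $j$ of $Y$ appears in positions $i$ and $j$), so $\det Y^{(j)}=0$ and the inner sum vanishes.

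Thus only the $j=i$ term survives in the outer sum, contributing $(-1)^{i+n}\cdot(-1)^{n-i}\det(X_{\sim i,\bullet})\det Y=\det(X_{\sim i,\bullet})\det Y$, which is the left-hand side. I do not anticipate any real obstacle: this is a routine double cofactor argument, and the only thing that needs careful bookkeeping is the sign $(-1)^{n+q}$ versus $(-1)^{i+q}$. The mild subtlety worth spelling out in the write-up is the identity $(Y^{(j)})_{\sim i,\sim q}=Y_{\sim i,\sim q}$, which is what allows the interpretation of the inner sum as a genuine Laplace expansion and makes the $j\neq i$ terms collapse.
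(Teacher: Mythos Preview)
Your argument is correct: the Laplace expansion of $\det(X\mid Y_{\bullet,q})$ along its last column, followed by recognizing the inner $q$-sum as the row-$i$ Laplace expansion of the matrix $Y^{(j)}$ (which has two equal rows unless $j=i$), cleanly collapses the double sum to the left-hand side. The sign bookkeeping and the observation $(Y^{(j)})_{\sim i,\sim q}=Y_{\sim i,\sim q}$ are handled correctly.

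The paper itself does not give a proof of this proposition; it simply cites \cite[Proposition~6.137]{detnotes}. Your self-contained double-cofactor argument is a standard way to establish such Pl\"ucker-type identities and is exactly the kind of proof one would expect to find in that reference, so there is no meaningful divergence to report.
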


\begin{example}
If we set $n=3$, $X=\left(
\begin{array}
[c]{cc}%
a & a^{\prime}\\
b & b^{\prime}\\
c & c^{\prime}%
\end{array}
\right)  $, $Y=\left(
\begin{array}
[c]{ccc}%
x & x^{\prime} & x^{\prime\prime}\\
y & y^{\prime} & y^{\prime\prime}\\
z & z^{\prime} & z^{\prime\prime}%
\end{array}
\right)  $ and $i=2$, then Proposition \ref{prop.desnanot.XY} states that%
\begin{align*}
&  \det\left(
\begin{array}
[c]{cc}%
a & a^{\prime}\\
c & c^{\prime}%
\end{array}
\right)  \det\left(
\begin{array}
[c]{ccc}%
x & x^{\prime} & x^{\prime\prime}\\
y & y^{\prime} & y^{\prime\prime}\\
z & z^{\prime} & z^{\prime\prime}%
\end{array}
\right) \\
&  =\det\left(
\begin{array}
[c]{ccc}%
a & a^{\prime} & x\\
b & b^{\prime} & y\\
c & c^{\prime} & z
\end{array}
\right)  \det\left(
\begin{array}
[c]{cc}%
x^{\prime} & x^{\prime\prime}\\
z^{\prime} & z^{\prime\prime}%
\end{array}
\right)  -\det\left(
\begin{array}
[c]{ccc}%
a & a^{\prime} & x^{\prime}\\
b & b^{\prime} & y^{\prime}\\
c & c^{\prime} & z^{\prime}%
\end{array}
\right)  \det\left(
\begin{array}
[c]{cc}%
x & x^{\prime\prime}\\
z & z^{\prime\prime}%
\end{array}
\right) \\
&  \ \ \ \ \ \ \ \ \ \ +\det\left(
\begin{array}
[c]{ccc}%
a & a^{\prime} & x^{\prime\prime}\\
b & b^{\prime} & y^{\prime\prime}\\
c & c^{\prime} & z^{\prime\prime}%
\end{array}
\right)  \det\left(
\begin{array}
[c]{cc}%
x & x^{\prime}\\
z & z^{\prime}%
\end{array}
\right)  .
\end{align*}

\end{example}

Proposition \ref{prop.desnanot.XY} is precisely \cite[Proposition
6.137]{detnotes} (with $A$, $C$ and $v$ renamed as $X$, $Y$ and $i$).

\begin{proof}
[Proof of Theorem \ref{thm.geg.strong}.]We need to prove that $\mathcal{G}$ is
a strong greedoid. In other words, we need to prove that the axioms
\textbf{(i)}, \textbf{(ii)}, \textbf{(iii)} and \textbf{(iv)} from Definition
\ref{def.sg} are satisfied for $\mathcal{F}=\mathcal{G}$. Let us do this:
\medskip

\textit{Proof of the axiom \textbf{(i)} for }$\mathcal{F}=\mathcal{G}%
$\textit{:} The vector family $\left(  \pi_{\left\vert \varnothing\right\vert
}\left(  v_{e}\right)  \right)  _{e\in\varnothing}\in\left(  \mathbb{K}%
^{\left\vert \varnothing\right\vert }\right)  ^{\varnothing}$ is empty, and
thus is linearly independent. In other words, $\varnothing\in\mathcal{G}$.
This proves that the axiom \textbf{(i)} from Definition \ref{def.sg} is
satisfied for $\mathcal{F}=\mathcal{G}$. \medskip

\textit{Proof of the axiom \textbf{(ii)} for }$\mathcal{F}=\mathcal{G}%
$\textit{:} Let $B\in\mathcal{G}$ satisfy $\left\vert B\right\vert >0$. We
shall show that there exists $b\in B$ such that $B\setminus\left\{  b\right\}
\in\mathcal{G}$.

Let $n=\left\vert B\right\vert $. Thus, $n=\left\vert B\right\vert >0$.

We have $B\in\mathcal{G}$. In other words, the family $\left(  \pi_{\left\vert
B\right\vert }\left(  v_{e}\right)  \right)  _{e\in B}\in\left(
\mathbb{K}^{\left\vert B\right\vert }\right)  ^{B}$ is linearly independent
(by the definition of $\mathcal{G}$). In other words, the family $\left(
\pi_{n}\left(  v_{e}\right)  \right)  _{e\in B}\in\left(  \mathbb{K}%
^{n}\right)  ^{B}$ is linearly independent (since $\left\vert B\right\vert =n$).

Let $Y$ be the $n\times n$-matrix whose columns are the vectors $\pi
_{n}\left(  v_{e}\right)  $ for all $e\in B$ (in some order, with no
repetition). The columns of this $n\times n$-matrix $Y$ are thus linearly
independent (since the family $\left(  \pi_{n}\left(  v_{e}\right)  \right)
_{e\in B}\in\left(  \mathbb{K}^{\left\vert B\right\vert }\right)  ^{B}$ is
linearly independent). Hence, this $n\times n$-matrix $Y$ is invertible. In
other words, $\det Y\neq0$.

Now, let $y_{i,j}$ be the $\left(  i,j\right)  $-th entry of the matrix $Y$
for each $i,j\in\left\{  1,2,\ldots,n\right\}  $. Then, Laplace expansion
along the $n$-th row yields%
\[
\det Y=\sum_{q=1}^{n}\left(  -1\right)  ^{n+q}y_{n,q}\det\left(  Y_{\sim
n,\sim q}\right)  .
\]
If every $q\in\left\{  1,2,\ldots,n\right\}  $ satisfied $\det\left(  Y_{\sim
n,\sim q}\right)  =0$, then this would become%
\[
\det Y=\sum_{q=1}^{n}\left(  -1\right)  ^{n+q}y_{n,q}\underbrace{\det\left(
Y_{\sim n,\sim q}\right)  }_{=0}=0,
\]
which would contradict $\det Y\neq0$. Hence, not every $q\in\left\{
1,2,\ldots,n\right\}  $ satisfies $\det\left(  Y_{\sim n,\sim q}\right)  =0$.
In other words, there exists at least one $q\in\left\{  1,2,\ldots,n\right\}
$ such that $\det\left(  Y_{\sim n,\sim q}\right)  \neq0$. Consider this $q$.
The $q$-th column of $Y$ is the vector $\pi_{n}\left(  v_{f}\right)  $ for
some $f\in B$ (by the definition of $Y$); consider this $f$. Note that
$\left\vert B\setminus\left\{  f\right\}  \right\vert =\left\vert B\right\vert
-1=n-1$ (since $\left\vert B\right\vert =n$).

The $\left(  n-1\right)  \times\left(  n-1\right)  $-matrix $Y_{\sim n,\sim
q}$ is invertible (since $\det\left(  Y_{\sim n,\sim q}\right)  \neq0$); thus,
its $n-1$ columns are linearly independent. But these $n-1$ columns are
precisely the $n-1$ vectors $\pi_{n-1}\left(  v_{e}\right)  $ for all $e\in
B\setminus\left\{  f\right\}  $ (by the definition of $Y$ and the choice of
$f$). Hence, the $n-1$ vectors $\pi_{n-1}\left(  v_{e}\right)  $ for all $e\in
B\setminus\left\{  f\right\}  $ are linearly independent. In other words, the
family $\left(  \pi_{n-1}\left(  v_{e}\right)  \right)  _{e\in B\setminus
\left\{  f\right\}  }$ is linearly independent. In other words, the family
$\left(  \pi_{\left\vert B\setminus\left\{  f\right\}  \right\vert }\left(
v_{e}\right)  \right)  _{e\in B\setminus\left\{  f\right\}  }$ is linearly
independent (since $\left\vert B\setminus\left\{  f\right\}  \right\vert
=n-1$). In other words, $B\setminus\left\{  f\right\}  \in\mathcal{G}$ (by the
definition of $\mathcal{G}$). Hence, there exists $b\in B$ such that
$B\setminus\left\{  b\right\}  \in\mathcal{G}$ (namely, $b=f$). This shows
that the axiom \textbf{(ii)} from Definition \ref{def.sg} is satisfied for
$\mathcal{F}=\mathcal{G}$. \medskip

\textit{Proof of the axiom \textbf{(iv)} for }$\mathcal{F}=\mathcal{G}%
$\textit{:} Let $A,B\in\mathcal{G}$ satisfy $\left\vert B\right\vert
=\left\vert A\right\vert +1$. We shall show that there exists $b\in B\setminus
A$ such that $A\cup\left\{  b\right\}  \in\mathcal{G}$ and $B\setminus\left\{
b\right\}  \in\mathcal{G}$.

Let $n=\left\vert B\right\vert $. Thus, $n=\left\vert B\right\vert =\left\vert
A\right\vert +1\geq1>0$. Also, from $n=\left\vert A\right\vert +1$, we obtain
$\left\vert A\right\vert =n-1$.

We have $B\in\mathcal{G}$. In other words, the family $\left(  \pi_{\left\vert
B\right\vert }\left(  v_{e}\right)  \right)  _{e\in B}\in\left(
\mathbb{K}^{\left\vert B\right\vert }\right)  ^{B}$ is linearly independent
(by the definition of $\mathcal{G}$). In other words, the family $\left(
\pi_{n}\left(  v_{e}\right)  \right)  _{e\in B}\in\left(  \mathbb{K}%
^{n}\right)  ^{B}$ is linearly independent (since $\left\vert B\right\vert
=n$). Similarly, the family $\left(  \pi_{n-1}\left(  v_{e}\right)  \right)
_{e\in A}\in\left(  \mathbb{K}^{n-1}\right)  ^{A}$ is linearly independent.

Let $Y$ be the $n\times n$-matrix whose columns are the vectors $\pi
_{n}\left(  v_{e}\right)  $ for all $e\in B$ (in some order, with no
repetition). The columns of this $n\times n$-matrix $Y$ are thus linearly
independent (since the family $\left(  \pi_{n}\left(  v_{e}\right)  \right)
_{e\in B}\in\left(  \mathbb{K}^{\left\vert B\right\vert }\right)  ^{B}$ is
linearly independent). Hence, this $n\times n$-matrix $Y$ is invertible. In
other words, $\det Y\neq0$.

Let $X$ be the $n\times\left(  n-1\right)  $-matrix whose columns are the
vectors $\pi_{n}\left(  v_{e}\right)  $ for all $e\in A$ (in some order, with
no repetition). Then, the columns of the $\left(  n-1\right)  \times\left(
n-1\right)  $-matrix $X_{\sim n,\bullet}$ are the vectors $\pi_{n-1}\left(
v_{e}\right)  $ for all $e\in A$; thus, they are linearly independent (since
the family $\left(  \pi_{n-1}\left(  v_{e}\right)  \right)  _{e\in A}%
\in\left(  \mathbb{K}^{n-1}\right)  ^{A}$ is linearly independent). Hence,
this $\left(  n-1\right)  \times\left(  n-1\right)  $-matrix $X_{\sim
n,\bullet}$ is invertible. In other words, $\det\left(  X_{\sim n,\bullet
}\right)  \neq0$.

But $\mathbb{K}$ is a field and thus an integral domain. Hence, from
$\det\left(  X_{\sim n,\bullet}\right)  \neq0$ and $\det Y\neq0$, we obtain%
\begin{equation}
\det\left(  X_{\sim n,\bullet}\right)  \det Y\neq0.
\label{pf.thm.gog.strong.iv.3}%
\end{equation}

If every $q\in\left\{  1,2,\ldots,n\right\}  $ satisfied $\det\left(  X\mid
Y_{\bullet,q}\right)  \det\left(  Y_{\sim n,\sim q}\right)  =0$, then we would
have%
\begin{align*}
\det\left(  X_{\sim n,\bullet}\right)  \det Y  &  =\sum_{q=1}^{n}\left(
-1\right)  ^{n+q}\underbrace{\det\left(  X\mid Y_{\bullet,q}\right)
\det\left(  Y_{\sim n,\sim q}\right)  }_{=0}\\
&  \ \ \ \ \ \ \ \ \ \ \left(  \text{by Proposition \ref{prop.desnanot.XY},
applied to }i=n\right) \\
&  =0,
\end{align*}
which would contradict (\ref{pf.thm.gog.strong.iv.3}). Hence, not every
$q\in\left\{  1,2,\ldots,n\right\}  $ satisfies \newline$\det\left(  X\mid
Y_{\bullet,q}\right)  \det\left(  Y_{\sim n,\sim q}\right)  =0$. In other
words, some $q\in\left\{  1,2,\ldots,n\right\}  $ satisfies $\det\left(  X\mid
Y_{\bullet,q}\right)  \det\left(  Y_{\sim n,\sim q}\right)  \neq0$. Consider
this $q$. The $q$-th column of $Y$ is the vector $\pi_{n}\left(  v_{f}\right)
$ for some $f\in B$ (by the definition of $Y$); consider this $f$. Thus,
$Y_{\bullet,q}=\pi_{n}\left(  v_{f}\right)  $.

Note that $\left\vert B\setminus\left\{  f\right\}  \right\vert =\left\vert
B\right\vert -1=n-1$ (since $\left\vert B\right\vert =n$).

From $\det\left(  X\mid Y_{\bullet,q}\right)  \det\left(  Y_{\sim n,\sim
q}\right)  \neq0$, we obtain $\det\left(  X\mid Y_{\bullet,q}\right)  \neq0$
and $\det\left(  Y_{\sim n,\sim q}\right)  \neq0$.

The $\left(  n-1\right)  \times\left(  n-1\right)  $-matrix $Y_{\sim n,\sim
q}$ is invertible (since $\det\left(  Y_{\sim n,\sim q}\right)  \neq0$); thus,
its $n-1$ columns are linearly independent. But these $n-1$ columns are
precisely the $n-1$ vectors $\pi_{n-1}\left(  v_{e}\right)  $ for all $e\in
B\setminus\left\{  f\right\}  $ (by the definition of $Y$ and the choice of
$f$). Hence, the $n-1$ vectors $\pi_{n-1}\left(  v_{e}\right)  $ for all $e\in
B\setminus\left\{  f\right\}  $ are linearly independent. In other words, the
family $\left(  \pi_{n-1}\left(  v_{e}\right)  \right)  _{e\in B\setminus
\left\{  f\right\}  }$ is linearly independent. In other words, the family
$\left(  \pi_{\left\vert B\setminus\left\{  f\right\}  \right\vert }\left(
v_{e}\right)  \right)  _{e\in B\setminus\left\{  f\right\}  }$ is linearly
independent (since $\left\vert B\setminus\left\{  f\right\}  \right\vert
=n-1$). In other words, $B\setminus\left\{  f\right\}  \in\mathcal{G}$ (by the
definition of $\mathcal{G}$).

The $n\times n$-matrix $\left(  X\mid Y_{\bullet,q}\right)  $ is invertible
(since $\det\left(  X\mid Y_{\bullet,q}\right)  \neq0$); thus, its $n$ columns
are linearly independent. But these $n$ columns are precisely the $n-1$
vectors $\pi_{n}\left(  v_{e}\right)  $ for all $e\in A$ as well as the extra
vector $Y_{\bullet,q}=\pi_{n}\left(  v_{f}\right)  $. Thus, if we had $f\in
A$, then these $n$ columns would contain two equal vectors (namely, the extra
vector $Y_{\bullet,q}=\pi_{n}\left(  v_{f}\right)  $ would be equal to one of
the vectors $\pi_{n}\left(  v_{e}\right)  $ with $e\in A$), and thus would be
linearly dependent. But this would contradict the fact that these $n$ columns
are linearly independent. Hence, we cannot have $f\in A$. Thus, $f\notin A$.
Combining $f\in B$ with $f\notin A$, we obtain $f\in B\setminus A$. From
$f\notin A$, we also obtain $\left\vert A\cup\left\{  f\right\}  \right\vert
=\left\vert A\right\vert +1=n$ (since $\left\vert A\right\vert =n-1$).

Recall that the $n$ columns of the $n\times n$-matrix $\left(  X\mid
Y_{\bullet,q}\right)  $ are the $n-1$ vectors $\pi_{n}\left(  v_{e}\right)  $
for all $e\in A$ as well as the extra vector $Y_{\bullet,q}=\pi_{n}\left(
v_{f}\right)  $. In other words, they are the $n$ vectors $\pi_{n}\left(
v_{e}\right)  $ for all $e\in A\cup\left\{  f\right\}  $ (since $f\notin A$).
Hence, the $n$ vectors $\pi_{n}\left(  v_{e}\right)  $ for all $e\in
A\cup\left\{  f\right\}  $ are linearly independent (since the $n$ columns of
the $n\times n$-matrix $\left(  X\mid Y_{\bullet,q}\right)  $ are linearly
independent). In other words, the family $\left(  \pi_{n}\left(  e\right)
\right)  _{e\in A\cup\left\{  f\right\}  }$ is linearly independent. In other
words, the family $\left(  \pi_{\left\vert A\cup\left\{  f\right\}
\right\vert }\left(  e\right)  \right)  _{e\in A\cup\left\{  f\right\}  }$ is
linearly independent (since $n=\left\vert A\cup\left\{  f\right\}  \right\vert
$). In other words, $A\cup\left\{  f\right\}  \in\mathcal{G}$ (by the
definition of $\mathcal{G}$).

We now know that $f\in B\setminus A$ and $A\cup\left\{  f\right\}
\in\mathcal{G}$ and $B\setminus\left\{  f\right\}  \in\mathcal{G}$. Hence,
there exists $b\in B\setminus A$ such that $A\cup\left\{  b\right\}
\in\mathcal{G}$ and $B\setminus\left\{  b\right\}  \in\mathcal{G}$ (namely,
$b=f$). This shows that the axiom \textbf{(iv)} from Definition \ref{def.sg}
is satisfied for $\mathcal{F}=\mathcal{G}$. \medskip

\textit{Proof of the axiom \textbf{(iii)} for }$\mathcal{F}=\mathcal{G}%
$\textit{:} Clearly, axiom \textbf{(iii)} from Definition \ref{def.sg} follows
from axiom \textbf{(iv)}. Thus, axiom \textbf{(iii)} holds for $\mathcal{F}%
=\mathcal{G}$ (since we already know that axiom \textbf{(iv)} holds for
$\mathcal{F}=\mathcal{G}$). \medskip

We have now showed that the system $\mathcal{G}$ satisfies the four axioms
\textbf{(i)}, \textbf{(ii)}, \textbf{(iii)} and \textbf{(iv)} from Definition
\ref{def.sg}. Thus, $\mathcal{G}$ is a strong greedoid. This proves Theorem
\ref{thm.geg.strong}.
\end{proof}

\section{\label{sect.matroid-rep}Appendix: Proof of Proposition
\ref{prop.geg.matroid-rep}}

We shall now prove Proposition \ref{prop.geg.matroid-rep}. This proof is
mostly bookkeeping.

\begin{proof}
[Proof of Proposition \ref{prop.geg.matroid-rep} (sketched).]If $\mathcal{G}%
_{k}$ is empty, then the claim is obvious. Thus, we WLOG assume that
$\mathcal{G}_{k}$ is nonempty.

We assumed that $\mathcal{G}$ is a Gaussian elimination greedoid. In other
words, there exist a field $\mathbb{K}$ and a vector family $\left(
v_{e}\right)  _{e\in E}$ over $\mathbb{K}$ such that $\mathcal{G}$ is the
Gaussian elimination greedoid of this vector family $\left(  v_{e}\right)
_{e\in E}$. Consider these $\mathbb{K}$ and $\left(  v_{e}\right)  _{e\in E}$.
Let $X$ be the $k\times\left\vert E\right\vert $-matrix whose columns are the
vectors $\pi_{k}\left(  v_{e}\right)  \in\mathbb{K}^{k}$ for all $e\in E$ (in
some order). Then, there is a bijection $\phi:\left\{  1,2,\ldots,\left\vert
E\right\vert \right\}  \rightarrow E$ such that the columns of the matrix $X$
are $\pi_{k}\left(  v_{\phi\left(  1\right)  }\right)  ,\pi_{k}\left(
v_{\phi\left(  2\right)  }\right)  ,\ldots,\pi_{k}\left(  v_{\phi\left(
\left\vert E\right\vert \right)  }\right)  $ (in this order). Consider this
$\phi$. Hence,%
\begin{equation}
\left(  f\text{-th column of }X\right)  =\pi_{k}\left(  v_{\phi\left(
f\right)  }\right)  \label{pf.prop.geg.matroid-rep.fcolX=}%
\end{equation}
for each $f\in\left\{  1,2,\ldots,\left\vert E\right\vert \right\}  $. Let
$\mathcal{V}$ be the vector matroid\footnote{See \cite[\S 1.1]{Oxley11} for
the definition of a vector matroid.} of the matrix $X$. This is a matroid on
the set $\left\{  1,2,\ldots,\left\vert E\right\vert \right\}  $; its rank is
at most $k$ (since $X$ has $k$ rows and therefore rank $\leq k$).

The bijection $\phi$ can be used to transport the matroid $\mathcal{V}$ onto
the set $E$. More precisely, we can define a matroid $\mathcal{V}^{\prime}$ on
the set $E$ by%
\[
\mathcal{V}^{\prime}=\left\{  \phi\left(  I\right)  \ \mid\ I\in
\mathcal{V}\right\}
\]
(where we regard a matroid as a collection of independent sets). The
independent sets of this matroid $\mathcal{V}^{\prime}$ are the subsets $I$ of
$E$ for which $\phi^{-1}\left(  I\right)  $ is an independent set of
$\mathcal{V}$. Moreover, the map $\phi$ is an isomorphism from the matroid
$\mathcal{V}$ to the matroid $\mathcal{V}^{\prime}$. Thus, the matroid
$\mathcal{V}^{\prime}$ is isomorphic to the vector matroid $\mathcal{V}$, and
hence is representable. Our goal is now to show that $\mathcal{G}_{k}$ is the
collection of bases of $\mathcal{V}$.

Let $F$ be a $k$-element subset of $E$. Then, we have the following chain of
logical equivalences:%
\begin{align}
&  \ \left(  F\in\mathcal{G}_{k}\right) \nonumber\\
&  \Longleftrightarrow\ \left(  F\in\mathcal{G}\right)
\ \ \ \ \ \ \ \ \ \ \left(  \text{since }\mathcal{G}_{k}\text{ is the set of
all }k\text{-element sets in }\mathcal{G}\right) \nonumber\\
&  \Longleftrightarrow\ \left(  \text{the family }\left(  \pi_{\left\vert
F\right\vert }\left(  v_{e}\right)  \right)  _{e\in F}\in\left(
\mathbb{K}^{\left\vert F\right\vert }\right)  ^{F}\text{ is linearly
independent}\right) \nonumber\\
&  \ \ \ \ \ \ \ \ \ \ \left(
\begin{array}
[c]{c}%
\text{by the definition of a Gaussian elimination greedoid,}\\
\text{since }\mathcal{G}\text{ is the Gaussian elimination greedoid of
}\left(  v_{e}\right)  _{e\in E}%
\end{array}
\right) \nonumber\\
&  \Longleftrightarrow\ \left(  \text{the family }\left(  \pi_{k}\left(
v_{e}\right)  \right)  _{e\in F}\in\left(  \mathbb{K}^{k}\right)  ^{F}\text{
is linearly independent}\right) \nonumber\\
&  \ \ \ \ \ \ \ \ \ \ \left(  \text{since }\left\vert F\right\vert =k\text{
(because }F\text{ is a }k\text{-element set)}\right) \nonumber\\
&  \Longleftrightarrow\ \left(  \text{the family }\left(  \pi_{k}\left(
v_{\phi\left(  f\right)  }\right)  \right)  _{f\in\phi^{-1}\left(  F\right)
}\in\left(  \mathbb{K}^{k}\right)  ^{\phi^{-1}\left(  F\right)  }\text{ is
linearly independent}\right) \nonumber\\
&  \ \ \ \ \ \ \ \ \ \ \left(
\begin{array}
[c]{c}%
\text{since }\phi:\left\{  1,2,\ldots,\left\vert E\right\vert \right\}
\rightarrow E\text{ is a bijection, and thus the}\\
\text{family }\left(  \pi_{k}\left(  v_{\phi\left(  f\right)  }\right)
\right)  _{f\in\phi^{-1}\left(  F\right)  }\text{ is just a reindexing of the
family }\left(  \pi_{k}\left(  v_{e}\right)  \right)  _{e\in F}%
\end{array}
\right) \nonumber\\
&  \Longleftrightarrow\ \left(  \text{the family }\left(  f\text{-th column of
}X\right)  _{f\in\phi^{-1}\left(  F\right)  }\in\left(  \mathbb{K}^{k}\right)
^{\phi^{-1}\left(  F\right)  }\text{ is linearly independent}\right)
\nonumber\\
&  \ \ \ \ \ \ \ \ \ \ \left(  \text{since
(\ref{pf.prop.geg.matroid-rep.fcolX=}) yields }\left(  f\text{-th column of
}X\right)  _{f\in\phi^{-1}\left(  F\right)  }=\left(  \pi_{k}\left(
v_{\phi\left(  f\right)  }\right)  \right)  _{f\in\phi^{-1}\left(  F\right)
}\right) \nonumber\\
&  \Longleftrightarrow\ \left(  \phi^{-1}\left(  F\right)  \text{ is an
independent set of the vector matroid of }X\right) \nonumber\\
&  \ \ \ \ \ \ \ \ \ \ \left(  \text{by the definition of a vector
matroid}\right) \nonumber\\
&  \Longleftrightarrow\ \left(  \phi^{-1}\left(  F\right)  \text{ is an
independent set of }\mathcal{V}\right)
\label{pf.prop.geg.matroid-rep.long-equiv}\\
&  \ \ \ \ \ \ \ \ \ \ \left(  \text{since }\mathcal{V}\text{ is the vector
matroid of }X\right)  .\nonumber
\end{align}

Forget that we fixed $F$. We thus have proved the equivalence
(\ref{pf.prop.geg.matroid-rep.long-equiv}) for each $k$-element subset $F$ of
$E$.

Recall that $\mathcal{G}_{k}$ is nonempty. In other words, there exists some
$B\in\mathcal{G}_{k}$. Consider this $B$. Thus, $B$ is a $k$-element set in
$\mathcal{G}$ (since $\mathcal{G}_{k}$ is the set of all $k$-element sets in
$\mathcal{G}$). Hence, the equivalence
(\ref{pf.prop.geg.matroid-rep.long-equiv}) (applied to $F=B$) yields the
equivalence%
\[
\left(  B\in\mathcal{G}_{k}\right)  \ \Longleftrightarrow\ \left(  \phi
^{-1}\left(  B\right)  \text{ is an independent set of }\mathcal{V}\right)  .
\]
Thus, $\phi^{-1}\left(  B\right)  $ is an independent set of $\mathcal{V}$
(since $B\in\mathcal{G}_{k}$). Since $\phi$ is a bijection, we have
$\left\vert \phi^{-1}\left(  B\right)  \right\vert =\left\vert B\right\vert
=k$ (since $B$ is a $k$-element set). In other words, $\phi^{-1}\left(
B\right)  $ is a $k$-element set. Thus, the matroid $\mathcal{V}$ has a
$k$-element independent set (namely, $\phi^{-1}\left(  B\right)  $).
Therefore, the rank of $\mathcal{V}$ is at least $k$. Since we also know that
the rank of $\mathcal{V}$ is at most $k$, we thus conclude that the rank of
$\mathcal{V}$ is exactly $k$. Hence, the rank of $\mathcal{V}^{\prime}$ is
exactly $k$ as well (since the matroid $\mathcal{V}^{\prime}$ is isomorphic to
the matroid $\mathcal{V}$). Therefore, the bases of $\mathcal{V}^{\prime}$ are
the $k$-element independent sets of $\mathcal{V}^{\prime}$. Thus, a
$k$-element set is an independent set of $\mathcal{V}^{\prime}$ if and only if
it is a basis of $\mathcal{V}^{\prime}$.

Now, let $F$ again be a $k$-element subset of $E$. Thus, $F$ is an independent
set of $\mathcal{V}^{\prime}$ if and only if $F$ is a basis of $\mathcal{V}%
^{\prime}$ (since a $k$-element set is an independent set of $\mathcal{V}%
^{\prime}$ if and only if it is a basis of $\mathcal{V}^{\prime}$). Also,
$\phi^{-1}\left(  F\right)  $ is an independent set of $\mathcal{V}$ if and
only if $F$ is an independent set of $\mathcal{V}^{\prime}$ (since the
independent sets of the matroid $\mathcal{V}^{\prime}$ are the subsets $I$ of
$E$ for which $\phi^{-1}\left(  I\right)  $ is an independent set of
$\mathcal{V}$).

Now, we have the following chain of logical equivalences:%
\begin{align*}
\left(  F\in\mathcal{G}_{k}\right)  \  &  \Longleftrightarrow\ \left(
\phi^{-1}\left(  F\right)  \text{ is an independent set of }\mathcal{V}%
\right)  \ \ \ \ \ \ \ \ \ \ \left(  \text{by
(\ref{pf.prop.geg.matroid-rep.long-equiv})}\right) \\
&  \Longleftrightarrow\ \left(  F\text{ is an independent set of }%
\mathcal{V}^{\prime}\right) \\
&  \ \ \ \ \ \ \ \ \ \ \left(
\begin{array}
[c]{c}%
\text{since }\phi^{-1}\left(  F\right)  \text{ is an independent set of
}\mathcal{V}\\
\text{if and only if }F\text{ is an independent set of }\mathcal{V}^{\prime}%
\end{array}
\right) \\
&  \Longleftrightarrow\ \left(  F\text{ is a basis of }\mathcal{V}^{\prime
}\right)
\end{align*}
(since $F$ is an independent set of $\mathcal{V}^{\prime}$ if and only if $F$
is a basis of $\mathcal{V}^{\prime}$).

Forget that we fixed $F$. We thus have proved that, for each $k$-element
subset $F$ of $E$, we have the equivalence%
\[
\left(  F\in\mathcal{G}_{k}\right)  \ \Longleftrightarrow\ \left(  F\text{ is
a basis of }\mathcal{V}^{\prime}\right)  .
\]
Hence, $\mathcal{G}_{k}$ is the collection of all bases of $\mathcal{V}%
^{\prime}$ (since all bases of $\mathcal{V}^{\prime}$ are $k$-element subsets
of $E$ (because $\mathcal{V}^{\prime}$ is a matroid of rank $k$)). Hence,
$\mathcal{G}_{k}$ is the collection of bases of a representable matroid on the
ground set $E$ (since $\mathcal{V}^{\prime}$ is a representable matroid on the
ground set $E$). This proves Proposition \ref{prop.geg.matroid-rep}.
\end{proof}

\section{\label{sect.ord-proofs}Appendix: Proofs of some properties of
$\mathbb{L}$}

We shall next prove four lemmas that were left unproven in Section
\ref{sect.p-adic}: namely, Lemma \ref{lem.ord.triangle}, Lemma
\ref{lem.valadic-ut.wd}, Lemma \ref{lem.piLK} and Lemma \ref{lem.pinot0}.

\begin{proof}
[Proof of Lemma \ref{lem.ord.triangle}.]\textbf{(a)} Let $a\in\mathbb{L}$ be a
nonzero element. We must prove that the element $a$ belongs to $\mathbb{L}%
_{+}$ if and only if its order $\operatorname*{ord}a$ is nonnegative (i.e., we
have $\operatorname*{ord}a\geq0$). In other words, we must prove that
$a\in\mathbb{L}_{+}$ if and only if $\operatorname*{ord}a\geq0$.

Recall that $\operatorname*{ord}a$ is defined as the smallest $\beta
\in\mathbb{V}$ such that $\left[  t_{\beta}\right]  a\neq0$. Thus,
$\operatorname*{ord}a$ is a $\beta\in\mathbb{V}$ such that $\left[  t_{\beta
}\right]  a\neq0$. In other words, $\operatorname*{ord}a\in\mathbb{V}$ and
$\left[  t_{\operatorname*{ord}a}\right]  a\neq0$.

We shall now prove the implication%
\begin{equation}
\left(  a\in\mathbb{L}_{+}\right)  \ \Longrightarrow\ \left(
\operatorname*{ord}a\geq0\right)  . \label{pf.lem.ord.triangle.a.1}%
\end{equation}

[\textit{Proof of (\ref{pf.lem.ord.triangle.a.1}):} Assume that $a\in
\mathbb{L}_{+}$. Then, $a\in\mathbb{L}_{+}=\mathbb{K}\left[  \mathbb{V}%
_{\geq0}\right]  $. In other words, $a$ is a $\mathbb{K}$-linear combination
of the family $\left(  t_{\alpha}\right)  _{\alpha\in\mathbb{V}_{\geq0}}$.
Thus, every $\alpha\in\mathbb{V}$ that satisfies $\left[  t_{\alpha}\right]
a\neq0$ is an element of $\mathbb{V}_{\geq0}$. Applying this to $\alpha
=\operatorname*{ord}a$, we conclude that $\operatorname*{ord}a$ is an element
of $\mathbb{V}_{\geq0}$ (since $\left[  t_{\operatorname*{ord}a}\right]
a\neq0$). In other words, $\operatorname*{ord}a\geq0$. Thus, the implication
(\ref{pf.lem.ord.triangle.a.1}) is proved.]

Next, let us prove the implication%
\begin{equation}
\left(  \operatorname*{ord}a\geq0\right)  \ \Longrightarrow\ \left(
a\in\mathbb{L}_{+}\right)  . \label{pf.lem.ord.triangle.a.2}%
\end{equation}

[\textit{Proof of (\ref{pf.lem.ord.triangle.a.2}):} Assume that
$\operatorname*{ord}a\geq0$. Now, let $\alpha\in\mathbb{V}$ be such that
$\alpha<0$. We shall show that $\left[  t_{\alpha}\right]  a=0$.

Indeed, assume the contrary. Thus, $\left[  t_{\alpha}\right]  a\neq0$. But
recall that $\operatorname*{ord}a$ is the smallest $\beta\in\mathbb{V}$ such
that $\left[  t_{\beta}\right]  a\neq0$. Hence, for any $\beta\in\mathbb{V}$
satisfying $\left[  t_{\beta}\right]  a\neq0$, we must have $\beta
\geq\operatorname*{ord}a$. Applying this to $\beta=\alpha$, we obtain
$\alpha\geq\operatorname*{ord}a\geq0$. This contradicts $\alpha<0$.

This contradiction shows that our assumption was wrong. Hence, we have proved
that $\left[  t_{\alpha}\right]  a=0$.

Forget that we fixed $\alpha$. We thus have proved that
\begin{equation}
\left[  t_{\alpha}\right]  a=0\ \ \ \ \ \ \ \ \ \ \text{for each }\alpha
\in\mathbb{V}\text{ satisfying }\alpha<0. \label{pf.lem.ord.triangle.a.2.pf.1}%
\end{equation}

But $a\in\mathbb{L}=\mathbb{K}\left[  \mathbb{V}\right]  $. Thus,
\begin{align*}
a  &  =\sum_{\alpha\in\mathbb{V}}\left(  \left[  t_{\alpha}\right]  a\right)
\cdot t_{\alpha}=\underbrace{\sum_{\substack{\alpha\in\mathbb{V};\\\alpha
\geq0}}}_{=\sum_{\alpha\in\mathbb{V}_{\geq0}}}\left(  \left[  t_{\alpha
}\right]  a\right)  \cdot t_{\alpha}+\sum_{\substack{\alpha\in\mathbb{V}%
;\\\alpha<0}}\underbrace{\left(  \left[  t_{\alpha}\right]  a\right)
}_{\substack{=0\\\text{(by (\ref{pf.lem.ord.triangle.a.2.pf.1}))}}}\cdot
t_{\alpha}\\
&  \ \ \ \ \ \ \ \ \ \ \left(  \text{since each }\alpha\in\mathbb{V}\text{
satisfies either }\alpha\geq0\text{ or }\alpha<0\text{ (but not both)}\right)
\\
&  =\sum_{\alpha\in\mathbb{V}_{\geq0}}\left(  \left[  t_{\alpha}\right]
a\right)  \cdot t_{\alpha}+\underbrace{\sum_{\substack{\alpha\in
\mathbb{V};\\\alpha<0}}0t_{\alpha}}_{=0}=\sum_{\alpha\in\mathbb{V}_{\geq0}%
}\left(  \left[  t_{\alpha}\right]  a\right)  \cdot t_{\alpha}.
\end{align*}
Thus, $a$ is a $\mathbb{K}$-linear combination of the family $\left(
t_{\alpha}\right)  _{\alpha\in\mathbb{V}_{\geq0}}$. In other words,
$a\in\mathbb{K}\left[  \mathbb{V}_{\geq0}\right]  $. In other words,
$a\in\mathbb{L}_{+}$ (since $\mathbb{L}_{+}=\mathbb{K}\left[  \mathbb{V}%
_{\geq0}\right]  $). This proves the implication
(\ref{pf.lem.ord.triangle.a.2}).]

Combining the two implications (\ref{pf.lem.ord.triangle.a.1}) and
(\ref{pf.lem.ord.triangle.a.2}), we obtain the equivalence
\[
\left(  a\in\mathbb{L}_{+}\right)  \ \Longleftrightarrow\ \left(
\operatorname*{ord}a\geq0\right)  .
\]
In other words, $a\in\mathbb{L}_{+}$ if and only if $\operatorname*{ord}%
a\geq0$. This proves Lemma \ref{lem.ord.triangle} \textbf{(a)}. \medskip

\textbf{(b)} Let $a\in\mathbb{L}$ be nonzero. The definition of
$\operatorname*{ord}a$ shows that
\[
\operatorname*{ord}a=\left(  \text{the smallest }\beta\in\mathbb{V}\text{ such
that }\left[  t_{\beta}\right]  a\neq0\right)  .
\]
The same argument (applied to $-a$ instead of $a$) yields%
\begin{align*}
\operatorname*{ord}\left(  -a\right)   &  =\left(  \text{the smallest }%
\beta\in\mathbb{V}\text{ such that }\underbrace{\left[  t_{\beta}\right]
\left(  -a\right)  }_{=-\left[  t_{\beta}\right]  a}\neq0\right) \\
&  =\left(  \text{the smallest }\beta\in\mathbb{V}\text{ such that }-\left[
t_{\beta}\right]  a\neq0\right) \\
&  =\left(  \text{the smallest }\beta\in\mathbb{V}\text{ such that }\left[
t_{\beta}\right]  a\neq0\right) \\
&  \ \ \ \ \ \ \ \ \ \ \left(  \text{since the condition \textquotedblleft%
}-\left[  t_{\beta}\right]  a\neq0\text{\textquotedblright\ is equivalent to
\textquotedblleft}\left[  t_{\beta}\right]  a\neq0\text{\textquotedblright%
}\right)  .
\end{align*}
Comparing these two equalities, we obtain $\operatorname*{ord}\left(
-a\right)  =\operatorname*{ord}a$. This proves Lemma \ref{lem.ord.triangle}
\textbf{(b)}. \medskip

\textbf{(c)} This is analogous to the standard fact that any two nonzero
polynomials $f,g\in\mathbb{K}\left[  X\right]  $ satisfy $fg\neq0$ and
$\deg\left(  fg\right)  =\deg f+\deg g$. For the sake of completeness, let us
nevertheless present the proof.

Recall that $\operatorname*{ord}a$ is defined as the smallest $\beta
\in\mathbb{V}$ such that $\left[  t_{\beta}\right]  a\neq0$. Thus,
$\operatorname*{ord}a$ is a $\beta\in\mathbb{V}$ such that $\left[  t_{\beta
}\right]  a\neq0$. In other words, $\operatorname*{ord}a\in\mathbb{V}$ and
$\left[  t_{\operatorname*{ord}a}\right]  a\neq0$. The same argument (applied
to $b$ instead of $a$) yields $\operatorname*{ord}b\in\mathbb{V}$ and $\left[
t_{\operatorname*{ord}b}\right]  b\neq0$. From $\left[  t_{\operatorname*{ord}%
a}\right]  a\neq0$ and $\left[  t_{\operatorname*{ord}b}\right]  b\neq0$, we
obtain $\left(  \left[  t_{\operatorname*{ord}a}\right]  a\right)
\cdot\left(  \left[  t_{\operatorname*{ord}b}\right]  b\right)  \neq0$ (since
$\left[  t_{\operatorname*{ord}a}\right]  a$ and $\left[
t_{\operatorname*{ord}b}\right]  b$ belong to the integral domain $\mathbb{K}$).

Moreover,
\begin{equation}
\left[  t_{\beta}\right]  a=0\ \ \ \ \ \ \ \ \ \ \text{for each }\beta
\in\mathbb{V}\text{ satisfying }\beta<\operatorname*{ord}a
\label{pf.lem.ord.triangle.c.1}%
\end{equation}
(since $\operatorname*{ord}a$ is the \textbf{smallest} $\beta\in\mathbb{V}$
such that $\left[  t_{\beta}\right]  a\neq0$). Similarly,%
\begin{equation}
\left[  t_{\beta}\right]  b=0\ \ \ \ \ \ \ \ \ \ \text{for each }\beta
\in\mathbb{V}\text{ satisfying }\beta<\operatorname*{ord}b.
\label{pf.lem.ord.triangle.c.2}%
\end{equation}

For any $\alpha\in\mathbb{V}$, we have%
\begin{align}
\left[  t_{\alpha}\right]  \left(  ab\right)   &  =\sum_{\beta\in\mathbb{V}%
}\left(  \left[  t_{\beta}\right]  a\right)  \cdot\left(  \left[
t_{\alpha-\beta}\right]  b\right)  \ \ \ \ \ \ \ \ \ \ \left(
\begin{array}
[c]{c}%
\text{by the definition of the multiplication}\\
\text{in the group algebra }\mathbb{L}=\mathbb{K}\left[  \mathbb{V}\right]
\end{array}
\right) \nonumber\\
&  =\sum_{\substack{\beta\in\mathbb{V};\\\beta<\operatorname*{ord}%
a}}\underbrace{\left(  \left[  t_{\beta}\right]  a\right)  }%
_{\substack{=0\\\text{(by (\ref{pf.lem.ord.triangle.c.1}))}}}\cdot\left(
\left[  t_{\alpha-\beta}\right]  b\right)  +\sum_{\substack{\beta\in
\mathbb{V};\\\beta\geq\operatorname*{ord}a}}\left(  \left[  t_{\beta}\right]
a\right)  \cdot\left(  \left[  t_{\alpha-\beta}\right]  b\right) \nonumber\\
&  \ \ \ \ \ \ \ \ \ \ \left(  \text{since each }\beta\in\mathbb{V}\text{
satisfies either }\beta<\operatorname*{ord}a\text{ or }\beta\geq
\operatorname*{ord}a\text{ (but not both)}\right) \nonumber\\
&  =\underbrace{\sum_{\substack{\beta\in\mathbb{V};\\\beta<\operatorname*{ord}%
a}}0\cdot\left(  \left[  t_{\alpha-\beta}\right]  b\right)  }_{=0}%
+\sum_{\substack{\beta\in\mathbb{V};\\\beta\geq\operatorname*{ord}a}}\left(
\left[  t_{\beta}\right]  a\right)  \cdot\left(  \left[  t_{\alpha-\beta
}\right]  b\right) \nonumber\\
&  =\sum_{\substack{\beta\in\mathbb{V};\\\beta\geq\operatorname*{ord}%
a}}\left(  \left[  t_{\beta}\right]  a\right)  \cdot\left(  \left[
t_{\alpha-\beta}\right]  b\right)  . \label{pf.lem.ord.triangle.c.3}%
\end{align}

Now, let $\alpha\in\mathbb{V}$ be such that $\alpha<\operatorname*{ord}%
a+\operatorname*{ord}b$. Then, for every $\beta\in\mathbb{V}$ satisfying
$\beta\geq\operatorname*{ord}a$, we have
\[
\underbrace{\alpha}_{<\operatorname*{ord}a+\operatorname*{ord}b}%
-\underbrace{\beta}_{\geq\operatorname*{ord}a}<\left(  \operatorname*{ord}%
a+\operatorname*{ord}b\right)  -\operatorname*{ord}a=\operatorname*{ord}b
\]
and thus%
\begin{equation}
\left[  t_{\alpha-\beta}\right]  b=0 \label{pf.lem.ord.triangle.c.4a}%
\end{equation}
(by (\ref{pf.lem.ord.triangle.c.2}), applied to $\alpha-\beta$ instead of
$\beta$). Hence, (\ref{pf.lem.ord.triangle.c.3}) becomes%
\[
\left[  t_{\alpha}\right]  \left(  ab\right)  =\sum_{\substack{\beta
\in\mathbb{V};\\\beta\geq\operatorname*{ord}a}}\left(  \left[  t_{\beta
}\right]  a\right)  \cdot\underbrace{\left(  \left[  t_{\alpha-\beta}\right]
b\right)  }_{\substack{=0\\\text{(by (\ref{pf.lem.ord.triangle.c.4a}))}}%
}=\sum_{\substack{\beta\in\mathbb{V};\\\beta\geq\operatorname*{ord}a}}\left(
\left[  t_{\beta}\right]  a\right)  \cdot0=0.
\]

Forget that we fixed $\alpha$. We thus have proved that%
\begin{equation}
\left[  t_{\alpha}\right]  \left(  ab\right)  =0\ \ \ \ \ \ \ \ \ \ \text{for
any }\alpha\in\mathbb{V}\text{ satisfying }\alpha<\operatorname*{ord}%
a+\operatorname*{ord}b. \label{pf.lem.ord.triangle.c.5}%
\end{equation}

On the other hand, for every $\beta\in\mathbb{V}$ satisfying $\beta
>\operatorname*{ord}a$, we have%
\[
\left(  \operatorname*{ord}a+\operatorname*{ord}b\right)  -\underbrace{\beta
}_{>\operatorname*{ord}a}<\left(  \operatorname*{ord}a+\operatorname*{ord}%
b\right)  -\operatorname*{ord}a=\operatorname*{ord}b
\]
and thus%
\begin{equation}
\left[  t_{\left(  \operatorname*{ord}a+\operatorname*{ord}b\right)  -\beta
}\right]  b=0 \label{pf.lem.ord.triangle.c.6a}%
\end{equation}
(by (\ref{pf.lem.ord.triangle.c.2}), applied to $\left(  \operatorname*{ord}%
a+\operatorname*{ord}b\right)  -\beta$ instead of $\beta$). Hence,
(\ref{pf.lem.ord.triangle.c.3}) (applied to $\alpha=\operatorname*{ord}%
a+\operatorname*{ord}b$) yields%
\begin{align*}
&  \left[  t_{\operatorname*{ord}a+\operatorname*{ord}b}\right]  \left(
ab\right) \\
&  =\sum_{\substack{\beta\in\mathbb{V};\\\beta\geq\operatorname*{ord}%
a}}\left(  \left[  t_{\beta}\right]  a\right)  \cdot\left(  \left[  t_{\left(
\operatorname*{ord}a+\operatorname*{ord}b\right)  -\beta}\right]  b\right) \\
&  =\left(  \left[  t_{\operatorname*{ord}a}\right]  a\right)  \cdot
\underbrace{\left(  \left[  t_{\left(  \operatorname*{ord}%
a+\operatorname*{ord}b\right)  -\operatorname*{ord}a}\right]  b\right)
}_{\substack{=\left[  t_{\operatorname*{ord}b}\right]  b\\\text{(since
}\left(  \operatorname*{ord}a+\operatorname*{ord}b\right)
-\operatorname*{ord}a=\operatorname*{ord}b\text{)}}}+\sum_{\substack{\beta
\in\mathbb{V};\\\beta>\operatorname*{ord}a}}\left(  \left[  t_{\beta}\right]
a\right)  \cdot\underbrace{\left(  \left[  t_{\left(  \operatorname*{ord}%
a+\operatorname*{ord}b\right)  -\beta}\right]  b\right)  }%
_{\substack{=0\\\text{(by (\ref{pf.lem.ord.triangle.c.6a}))}}}\\
&  \ \ \ \ \ \ \ \ \ \ \left(  \text{here, we have split off the addend for
}\beta=\operatorname*{ord}a\text{ from the sum}\right) \\
&  =\left(  \left[  t_{\operatorname*{ord}a}\right]  a\right)  \cdot\left(
\left[  t_{\operatorname*{ord}b}\right]  b\right)  +\underbrace{\sum
_{\substack{\beta\in\mathbb{V};\\\beta>\operatorname*{ord}a}}\left(  \left[
t_{\beta}\right]  a\right)  \cdot0}_{=0}=\left(  \left[
t_{\operatorname*{ord}a}\right]  a\right)  \cdot\left(  \left[
t_{\operatorname*{ord}b}\right]  b\right)  \neq0.
\end{align*}
Thus, $\operatorname*{ord}a+\operatorname*{ord}b$ is a $\beta\in\mathbb{V}$
such that $\left[  t_{\beta}\right]  \left(  ab\right)  \neq0$. In view of
(\ref{pf.lem.ord.triangle.c.5}), we conclude that $\operatorname*{ord}%
a+\operatorname*{ord}b$ is the \textbf{smallest} such $\beta$.

From $\left[  t_{\operatorname*{ord}a+\operatorname*{ord}b}\right]  \left(
ab\right)  \neq0$, we obtain $ab\neq0$. Thus, $ab$ is a nonzero element of
$\mathbb{L}$. Hence, $\operatorname*{ord}\left(  ab\right)  $ is defined as
the smallest $\beta\in\mathbb{V}$ such that $\left[  t_{\beta}\right]  \left(
ab\right)  \neq0$. But we already know that $\operatorname*{ord}%
a+\operatorname*{ord}b$ is the smallest such $\beta$. Comparing these two
results, we conclude that $\operatorname*{ord}\left(  ab\right)
=\operatorname*{ord}a+\operatorname*{ord}b$. This proves Lemma
\ref{lem.ord.triangle} \textbf{(c)}. \medskip

\textbf{(d)} Assume the contrary. Thus, $\operatorname*{ord}\left(
a+b\right)  <\min\left\{  \operatorname*{ord}a,\operatorname*{ord}b\right\}
\leq\operatorname*{ord}a$. Similarly, $\operatorname*{ord}\left(  a+b\right)
<\operatorname*{ord}b$.

Recall that $\operatorname*{ord}a$ is defined as the smallest $\beta
\in\mathbb{V}$ such that $\left[  t_{\beta}\right]  a\neq0$. Hence, in
particular, no such $\beta$ is smaller than $\operatorname*{ord}a$. In other
words,
\[
\left[  t_{\beta}\right]  a=0\ \ \ \ \ \ \ \ \ \ \text{for each }\beta
\in\mathbb{V}\text{ satisfying }\beta<\operatorname*{ord}a.
\]
Applying this to $\beta=\operatorname*{ord}\left(  a+b\right)  $, we find
$\left[  t_{\operatorname*{ord}\left(  a+b\right)  }\right]  a=0$. Similarly,
$\left[  t_{\operatorname*{ord}\left(  a+b\right)  }\right]  b=0$.

But $\operatorname*{ord}\left(  a+b\right)  $ is defined as the smallest
$\beta\in\mathbb{V}$ such that $\left[  t_{\beta}\right]  \left(  a+b\right)
\neq0$. Thus, $\operatorname*{ord}\left(  a+b\right)  $ is a $\beta
\in\mathbb{V}$ such that $\left[  t_{\beta}\right]  \left(  a+b\right)  \neq
0$. In other words, $\operatorname*{ord}\left(  a+b\right)  \in\mathbb{V}$ and
$\left[  t_{\operatorname*{ord}\left(  a+b\right)  }\right]  \left(
a+b\right)  \neq0$. But $\left[  t_{\operatorname*{ord}\left(  a+b\right)
}\right]  \left(  a+b\right)  \neq0$ contradicts%
\[
\left[  t_{\operatorname*{ord}\left(  a+b\right)  }\right]  \left(
a+b\right)  =\underbrace{\left[  t_{\operatorname*{ord}\left(  a+b\right)
}\right]  a}_{=0}+\underbrace{\left[  t_{\operatorname*{ord}\left(
a+b\right)  }\right]  b}_{=0}=0+0=0.
\]
This contradiction shows that our assumption was wrong. This proves Lemma
\ref{lem.ord.triangle} \textbf{(d)}.
\end{proof}

\begin{proof}
[Proof of Lemma \ref{lem.valadic-ut.wd}.]First, let us observe that the
distance function $d:E\timesu E\rightarrow\mathbb{V}$ in Definition
\ref{def.padic-ut} is well-defined.

[\textit{Proof:} We must show that $-\operatorname*{ord}\left(  a-b\right)
\in\mathbb{V}$ is well-defined for each $\left(  a,b\right)  \in E\timesu E$.

So let us fix $\left(  a,b\right)  \in E\timesu E$. Thus, $a$ and $b$ are two
distinct elements of $E$ (by the definition of $E\timesu E$). Since $a$ and
$b$ are distinct, we have $a-b\neq0$. Hence, the element $a-b$ of $\mathbb{L}$
is nonzero, and therefore $\operatorname*{ord}\left(  a-b\right)
\in\mathbb{V}$ is well-defined. Thus, $-\operatorname*{ord}\left(  a-b\right)
\in\mathbb{V}$ is well-defined.

Forget that we fixed $\left(  a,b\right)  $. We thus have showed that
$-\operatorname*{ord}\left(  a-b\right)  \in\mathbb{V}$ is well-defined for
each $\left(  a,b\right)  \in E\timesu E$. This completes our proof.]

Now, consider the distance function $d:E\timesu E\rightarrow\mathbb{V}$ in
Definition \ref{def.padic-ut}. Our goal is to prove that $\left(
E,w,d\right)  $ is a $\mathbb{V}$-ultra triple whenever $w:E\rightarrow
\mathbb{V}$ is a function. According to the definition of a \textquotedblleft%
$\mathbb{V}$-ultra triple\textquotedblright, this boils down to proving the
following two statements:

\begin{statement}
\textit{Statement 1:} We have $d\left(  a,b\right)  =d\left(  b,a\right)  $
for any two distinct elements $a$ and $b$ of $E$.
\end{statement}

\begin{statement}
\textit{Statement 2:} We have $d\left(  a,b\right)  \leq\max\left\{  d\left(
a,c\right)  ,d\left(  b,c\right)  \right\}  $ for any three distinct elements
$a$, $b$ and $c$ of $E$.
\end{statement}

Let us prove these two statements.

[\textit{Proof of Statement 1:} Let $a$ and $b$ be two distinct elements of
$E$. Thus, $a-b\neq0$ (since $a$ and $b$ are distinct), so that $a-b$ is a
nonzero element of $\mathbb{L}$. Hence, Lemma \ref{lem.ord.triangle}
\textbf{(b)} (applied to $a-b$ instead of $a$) yields $\operatorname*{ord}%
\left(  -\left(  a-b\right)  \right)  =\operatorname*{ord}\left(  a-b\right)
$. But the definition of $d$ yields $d\left(  a,b\right)  =\operatorname*{ord}%
\left(  a-b\right)  $ and $d\left(  b,a\right)  =\operatorname*{ord}\left(
\underbrace{b-a}_{=-\left(  a-b\right)  }\right)  =\operatorname*{ord}\left(
-\left(  a-b\right)  \right)  =\operatorname*{ord}\left(  a-b\right)  $.
Comparing these two equalities, we find $d\left(  a,b\right)  =d\left(
b,a\right)  $. This proves Statement 1.] \medskip

[\textit{Proof of Statement 2:} Let $a$, $b$ and $c$ be three distinct
elements of $E$. Then, Statement 1 (applied to $c$ instead of $a$) yields
$d\left(  c,b\right)  =d\left(  b,c\right)  $.

We have $a-b\neq0$ (since $a$ and $b$ are distinct), so that $a-b$ is a
nonzero element of $\mathbb{L}$. Likewise, $a-c$ and $c-b$ are nonzero
elements of $\mathbb{L}$. Now, the nonzero elements $a-c$ and $c-b$ of
$\mathbb{L}$ have the property that their sum $\left(  a-c\right)  +\left(
c-b\right)  $ is nonzero (since $\left(  a-c\right)  +\left(  c-b\right)
=a-b$ is nonzero). Thus, Lemma \ref{lem.ord.triangle} \textbf{(d)} (applied to
$a-c$ and $c-b$ instead of $a$ and $b$) yields $\operatorname*{ord}\left(
\left(  a-c\right)  +\left(  c-b\right)  \right)  \geq\min\left\{
\operatorname*{ord}\left(  a-c\right)  ,\operatorname*{ord}\left(  c-b\right)
\right\}  $. In view of $\left(  a-c\right)  +\left(  c-b\right)  =a-b$, this
rewrites as%
\[
\operatorname*{ord}\left(  a-b\right)  \geq\min\left\{  \operatorname*{ord}%
\left(  a-c\right)  ,\operatorname*{ord}\left(  c-b\right)  \right\}  .
\]
Hence,%
\begin{align}
-\underbrace{\operatorname*{ord}\left(  a-b\right)  }_{\geq\min\left\{
\operatorname*{ord}\left(  a-c\right)  ,\operatorname*{ord}\left(  c-b\right)
\right\}  }  &  \leq-\min\left\{  \operatorname*{ord}\left(  a-c\right)
,\operatorname*{ord}\left(  c-b\right)  \right\} \nonumber\\
&  =\max\left\{  -\operatorname*{ord}\left(  a-c\right)  ,-\operatorname*{ord}%
\left(  c-b\right)  \right\}  . \label{pf.lem.valadic-ut.wd.s2.pf.1}%
\end{align}
But the definition of $d$ yields%
\begin{align*}
d\left(  a,b\right)   &  =-\operatorname*{ord}\left(  a-b\right)
\ \ \ \ \ \ \ \ \ \ \text{and}\\
d\left(  a,c\right)   &  =-\operatorname*{ord}\left(  a-c\right)
\ \ \ \ \ \ \ \ \ \ \text{and}\ \\
d\left(  c,b\right)   &  =-\operatorname*{ord}\left(  c-b\right)  .
\end{align*}
In view of these equalities, we can rewrite
(\ref{pf.lem.valadic-ut.wd.s2.pf.1}) as $d\left(  a,b\right)  \leq\max\left\{
d\left(  a,c\right)  ,d\left(  c,b\right)  \right\}  $. In view of $d\left(
c,b\right)  =d\left(  b,c\right)  $, this further rewrites as $d\left(
a,b\right)  \leq\max\left\{  d\left(  a,c\right)  ,d\left(  b,c\right)
\right\}  $. This proves Statement 2.] \medskip

We thus have proved both Statements 1 and 2. Therefore, $\left(  E,w,d\right)
$ is a $\mathbb{V}$-ultra triple whenever $w:E\rightarrow\mathbb{V}$ is a
function (by the definition of a \textquotedblleft$\mathbb{V}$-ultra
triple\textquotedblright). This proves Lemma \ref{lem.valadic-ut.wd}.
\end{proof}

\begin{proof}
[Proof of Lemma \ref{lem.piLK}.]The unity of the $\mathbb{K}$-algebra
$\mathbb{L}_{+}=\mathbb{K}\left[  \mathbb{V}_{\geq0}\right]  $ is
$1_{\mathbb{L}_{+}}=t_{0}$. Hence, the map $\pi$ sends this unity to
$\pi\left(  1_{\mathbb{L}_{+}}\right)  =\pi\left(  t_{0}\right)  =\left[
t_{0}\right]  t_{0}=1$. Also, the map $\pi$ is clearly $\mathbb{K}$-linear
(since any $x,y\in\mathbb{L}_{+}$ satisfy $\left[  t_{0}\right]  \left(
x+y\right)  =\left(  \left[  t_{0}\right]  x\right)  +\left(  \left[
t_{0}\right]  y\right)  $, and since any $x\in\mathbb{L}_{+}$ and $\lambda
\in\mathbb{K}$ satisfy $\left[  t_{0}\right]  \left(  \lambda x\right)
=\lambda\left[  t_{0}\right]  x$).

Let $x,y\in\mathbb{L}_{+}$. We shall prove that $\pi\left(  xy\right)
=\pi\left(  x\right)  \cdot\pi\left(  y\right)  $.

We have $x\in\mathbb{L}_{+}=\mathbb{K}\left[  \mathbb{V}_{\geq0}\right]  $. In
other words, $x$ is a $\mathbb{K}$-linear combination of the family $\left(
t_{\alpha}\right)  _{\alpha\in\mathbb{V}_{\geq0}}$. In other words, we can
write $x$ in the form $x=\sum_{\alpha\in\mathbb{V}_{\geq0}}\lambda_{\alpha
}t_{\alpha}$ for some family $\left(  \lambda_{\alpha}\right)  _{\alpha
\in\mathbb{V}_{\geq0}}\in\mathbb{K}^{\mathbb{V}_{\geq0}}$ of coefficients
$\lambda_{\alpha}$ (such that all but finitely many $\alpha\in\mathbb{V}%
_{\geq0}$ satisfy $\lambda_{\alpha}=0$). Consider this family $\left(
\lambda_{\alpha}\right)  _{\alpha\in\mathbb{V}_{\geq0}}$. From $x=\sum
_{\alpha\in\mathbb{V}_{\geq0}}\lambda_{\alpha}t_{\alpha}$, we obtain $\left[
t_{0}\right]  x=\lambda_{0}$. Thus, the definition of $\pi$ yields%
\begin{equation}
\pi\left(  x\right)  =\left[  t_{0}\right]  x=\lambda_{0}.
\label{pf.lem.piLK.pix=}%
\end{equation}

We have $y\in\mathbb{L}_{+}=\mathbb{K}\left[  \mathbb{V}_{\geq0}\right]  $. In
other words, $y$ is a $\mathbb{K}$-linear combination of the family $\left(
t_{\alpha}\right)  _{\alpha\in\mathbb{V}_{\geq0}}=\left(  t_{\beta}\right)
_{\beta\in\mathbb{V}_{\geq0}}$. In other words, we can write $y$ in the form
$y=\sum_{\beta\in\mathbb{V}_{\geq0}}\mu_{\beta}t_{\beta}$ for some family
$\left(  \mu_{\beta}\right)  _{\beta\in\mathbb{V}_{\geq0}}\in\mathbb{K}%
^{\mathbb{V}_{\geq0}}$ of coefficients $\mu_{\beta}$ (such that all but
finitely many $\beta\in\mathbb{V}_{\geq0}$ satisfy $\mu_{\beta}=0$). Consider
this family $\left(  \mu_{\beta}\right)  _{\beta\in\mathbb{V}_{\geq0}}$. From
$y=\sum_{\beta\in\mathbb{V}_{\geq0}}\mu_{\beta}t_{\beta}$, we obtain $\left[
t_{0}\right]  y=\mu_{0}$. Thus, the definition of $\pi$ yields%
\begin{equation}
\pi\left(  y\right)  =\left[  t_{0}\right]  y=\mu_{0}.
\label{pf.lem.piLK.piy=}%
\end{equation}

For any pair $\left(  \alpha,\beta\right)  \in\mathbb{V}_{\geq0}%
\times\mathbb{V}_{\geq0}$ satisfying $\left(  \alpha,\beta\right)  \neq\left(
0,0\right)  $, we have%
\begin{equation}
\left[  t_{0}\right]  \left(  t_{\alpha+\beta}\right)  =0
\label{pf.lem.piLK.tt=0}%
\end{equation}
\footnote{\textit{Proof of (\ref{pf.lem.piLK.tt=0}):} Let $\left(
\alpha,\beta\right)  \in\mathbb{V}_{\geq0}\times\mathbb{V}_{\geq0}$ be a pair
satisfying $\left(  \alpha,\beta\right)  \neq\left(  0,0\right)  $. From
$\left(  \alpha,\beta\right)  \in\mathbb{V}_{\geq0}\times\mathbb{V}_{\geq0}$,
we obtain $\alpha\in\mathbb{V}_{\geq0}$. In other words, $\alpha\geq0$ (by the
definition of $\mathbb{V}_{\geq0}$). Similarly, $\beta\geq0$. At least one of
the two elements $\alpha$ and $\beta$ is $\neq0$ (since $\left(  \alpha
,\beta\right)  \neq\left(  0,0\right)  $). In other words, we have $\alpha
\neq0$ or $\beta\neq0$. Since $\alpha$ and $\beta$ play symmetric roles in our
setting, we can WLOG assume that $\alpha\neq0$ (since otherwise, we can
achieve this by swapping $\alpha$ with $\beta$). Combining $\alpha\neq0$ with
$\alpha\geq0$, we obtain $\alpha>0$. Adding this inequality to $\beta\geq0$,
we obtain $\alpha+\beta>0+0=0$. Hence, $\alpha+\beta\neq0$, so that $\left[
t_{0}\right]  \left(  t_{\alpha+\beta}\right)  =0$. This proves
(\ref{pf.lem.piLK.tt=0}).}.

Now, multiplying the equalities $x=\sum_{\alpha\in\mathbb{V}_{\geq0}}%
\lambda_{\alpha}t_{\alpha}$ and $y=\sum_{\beta\in\mathbb{V}_{\geq0}}\mu
_{\beta}t_{\beta}$, we obtain%
\begin{align*}
xy  &  =\left(  \sum_{\alpha\in\mathbb{V}_{\geq0}}\lambda_{\alpha}t_{\alpha
}\right)  \left(  \sum_{\beta\in\mathbb{V}_{\geq0}}\mu_{\beta}t_{\beta
}\right)  =\underbrace{\sum_{\alpha\in\mathbb{V}_{\geq0}}\ \ \sum_{\beta
\in\mathbb{V}_{\geq0}}}_{=\sum_{\left(  \alpha,\beta\right)  \in
\mathbb{V}_{\geq0}\times\mathbb{V}_{\geq0}}}\lambda_{\alpha}\mu_{\beta
}\underbrace{t_{\alpha}t_{\beta}}_{=t_{\alpha+\beta}}\\
&  =\sum_{\left(  \alpha,\beta\right)  \in\mathbb{V}_{\geq0}\times
\mathbb{V}_{\geq0}}\lambda_{\alpha}\mu_{\beta}t_{\alpha+\beta}.
\end{align*}
Applying the map $\pi$ to both sides of this equality, we obtain%
\begin{align*}
\pi\left(  xy\right)   &  =\pi\left(  \sum_{\left(  \alpha,\beta\right)
\in\mathbb{V}_{\geq0}\times\mathbb{V}_{\geq0}}\lambda_{\alpha}\mu_{\beta
}t_{\alpha+\beta}\right)  =\left[  t_{0}\right]  \left(  \sum_{\left(
\alpha,\beta\right)  \in\mathbb{V}_{\geq0}\times\mathbb{V}_{\geq0}}%
\lambda_{\alpha}\mu_{\beta}t_{\alpha+\beta}\right) \\
&  \ \ \ \ \ \ \ \ \ \ \left(  \text{by the definition of }\pi\right) \\
&  =\sum_{\left(  \alpha,\beta\right)  \in\mathbb{V}_{\geq0}\times
\mathbb{V}_{\geq0}}\lambda_{\alpha}\mu_{\beta}\left[  t_{0}\right]  \left(
t_{\alpha+\beta}\right) \\
&  =\underbrace{\lambda_{0}}_{\substack{=\pi\left(  x\right)  \\\text{(by
(\ref{pf.lem.piLK.pix=}))}}}\underbrace{\mu_{0}}_{\substack{=\pi\left(
y\right)  \\\text{(by (\ref{pf.lem.piLK.piy=}))}}}\underbrace{\left[
t_{0}\right]  \left(  t_{0+0}\right)  }_{\substack{=\left[  t_{0}\right]
\left(  t_{0}\right)  \\=1}}+\sum_{\substack{\left(  \alpha,\beta\right)
\in\mathbb{V}_{\geq0}\times\mathbb{V}_{\geq0};\\\left(  \alpha,\beta\right)
\neq\left(  0,0\right)  }}\lambda_{\alpha}\mu_{\beta}\underbrace{\left[
t_{0}\right]  \left(  t_{\alpha+\beta}\right)  }_{\substack{=0\\\text{(by
(\ref{pf.lem.piLK.tt=0}))}}}\\
&  \ \ \ \ \ \ \ \ \ \ \left(  \text{here, we have split off the addend for
}\left(  \alpha,\beta\right)  =\left(  0,0\right)  \text{ from the sum}\right)
\\
&  =\pi\left(  x\right)  \pi\left(  y\right)  +\underbrace{\sum
_{\substack{\left(  \alpha,\beta\right)  \in\mathbb{V}_{\geq0}\times
\mathbb{V}_{\geq0};\\\left(  \alpha,\beta\right)  \neq\left(  0,0\right)
}}\lambda_{\alpha}\mu_{\beta}0}_{=0}=\pi\left(  x\right)  \pi\left(  y\right)
.
\end{align*}

Forget that we fixed $x$ and $y$. We thus have showed that $\pi\left(
xy\right)  =\pi\left(  x\right)  \cdot\pi\left(  y\right)  $ for all
$x,y\in\mathbb{L}_{+}$. Hence, $\pi$ is a $\mathbb{K}$-algebra homomorphism
(since $\pi$ is $\mathbb{K}$-linear and $\pi\left(  1_{\mathbb{L}_{+}}\right)
=1$). This proves Lemma \ref{lem.piLK}.
\end{proof}

\begin{proof}
[Proof of Lemma \ref{lem.pinot0}.]Lemma \ref{lem.ord.triangle} \textbf{(a)}
shows that $a$ belongs to $\mathbb{L}_{+}$ if and only if its order
$\operatorname*{ord}a$ is nonnegative. Hence, $\operatorname*{ord}a$ is
nonnegative (since $a$ belongs to $\mathbb{L}_{+}$). In other words,
$\operatorname*{ord}a\geq0$.

Recall that $\operatorname*{ord}a$ is defined as the smallest $\beta
\in\mathbb{V}$ such that $\left[  t_{\beta}\right]  a\neq0$. Thus,
$\operatorname*{ord}a$ is a $\beta\in\mathbb{V}$ such that $\left[  t_{\beta
}\right]  a\neq0$. In other words, $\operatorname*{ord}a\in\mathbb{V}$ and
$\left[  t_{\operatorname*{ord}a}\right]  a\neq0$.

We shall first prove the implication%
\begin{equation}
\left(  \operatorname*{ord}a=0\right)  \ \Longrightarrow\ \left(  \pi\left(
a\right)  \neq0\right)  . \label{pf.lem.pinot0.1}%
\end{equation}

[\textit{Proof of (\ref{pf.lem.pinot0.1}):} Assume that $\operatorname*{ord}%
a=0$. Then, $0=\operatorname*{ord}a$, so that $\left[  t_{0}\right]  a=\left[
t_{\operatorname*{ord}a}\right]  a\neq0$. But the definition of $\pi$ yields
$\pi\left(  a\right)  =\left[  t_{0}\right]  a\neq0$. Thus, the implication
(\ref{pf.lem.pinot0.1}) is proved.]

Next, let us prove the implication%
\begin{equation}
\left(  \pi\left(  a\right)  \neq0\right)  \ \Longrightarrow\ \left(
\operatorname*{ord}a=0\right)  . \label{pf.lem.pinot0.2}%
\end{equation}

[\textit{Proof of (\ref{pf.lem.pinot0.2}):} Assume that $\pi\left(  a\right)
\neq0$. But the definition of $\pi$ yields $\pi\left(  a\right)  =\left[
t_{0}\right]  a$. Hence, $\left[  t_{0}\right]  a=\pi\left(  a\right)  \neq0$.

Now, recall that $\operatorname*{ord}a$ is the \textbf{smallest} $\beta
\in\mathbb{V}$ such that $\left[  t_{\beta}\right]  a\neq0$. Hence, if
$\beta\in\mathbb{V}$ satisfies $\left[  t_{\beta}\right]  a\neq0$, then
$\beta\geq\operatorname*{ord}a$. Applying this to $\beta=0$, we obtain
$0\geq\operatorname*{ord}a$ (since $\left[  t_{0}\right]  a\neq0$). Combining
this with $\operatorname*{ord}a\geq0$, we find $\operatorname*{ord}a=0$. This
proves the implication (\ref{pf.lem.pinot0.2}).]

Combining the two implications (\ref{pf.lem.pinot0.2}) and
(\ref{pf.lem.pinot0.1}), we obtain the logical equivalence $\left(  \pi\left(
a\right)  \neq0\right)  \ \Longleftrightarrow\ \left(  \operatorname*{ord}%
a=0\right)  $. In other words, $\pi\left(  a\right)  \neq0$ holds if and only
if $\operatorname*{ord}a=0$. This proves Lemma \ref{lem.pinot0}.
\end{proof}


\begin{thebibliography}{99999999}                                                                                         %


\bibitem[Bharga97]{Bharga97}Manjul Bhargava, \textit{$P$-orderings and
polynomial functions on arbitrary subsets of Dedekind rings}, J. reine. angew.
Math. \textbf{490} (1997), 101--127. \url{http://resolver.sub.uni-goettingen.de/purl?PPN243919689_0490}

\bibitem[BjoZie92]{BjoZie92}Anders Bj\"{o}rner, G\"{u}nter M. Ziegler,
\textit{Introduction to Greedoids}, Chapter 8 in: Neil White (ed.),
\textit{Matroid Applications}, Cambridge University Press 1992.\newline\url{https://doi.org/10.1017/CBO9780511662041}

\bibitem[BrySha99]{BrySha99}Victor Bryant, Ian Sharpe, \textit{Gaussian,
Strong and Transversal Greedoids}, European Journal of Combinatorics
\textbf{20}, Issue 4, May 1999, pp. 259--262. \url{https://doi.org/10.1006/eujc.1998.0287}

\bibitem[Eisenb95]{Eisenb95}David Eisenbud, \textit{Commutative algebra with a
view toward algebraic geometry}, Graduate texts in mathematics \#\textbf{150},
Springer, 1995.\newline\url{https://doi.org/10.1007/978-1-4612-5350-1}

\bibitem[FadSom72]{FadSom72}D. Faddeev, I. Sominsky, \textit{Problems in
Higher Algebra}, Mir 1972.

\bibitem[Grinbe11]{hyperfact}Darij Grinberg, \textit{A hyperfactorial
divisibility}, brief version.\newline\url{http://www.cip.ifi.lmu.de/~grinberg/hyperfactorialBRIEF.pdf}

\bibitem[Grinbe22]{detnotes}Darij Grinberg, \textit{Notes on the combinatorial
fundamentals of algebra}, 15 September 2022,
\texttt{\href{https://arxiv.org/abs/2008.09862v3}{arXiv:2008.09862v3}}. See
also \url{http://www.cip.ifi.lmu.de/~grinberg/primes2015/sols.pdf} for a
version that gets updated.


\bibitem[GriPet19]{GriPet19}Darij Grinberg, Fedor Petrov, \textit{A greedoid
and a matroid inspired by Bhargava's }$p$\textit{-orderings},
\texttt{\href{https://arxiv.org/abs/1909.01965v4}{arXiv:1909.01965v4}}.
Published (in a shortened form) in: \href{https://doi.org/10.37236/90460}{The
Electronic Journal of Combinatorics \textbf{28}(3) (2021), \#P3.6}.

\bibitem[GriPet20]{GriPet20}%
\href{https://www.mat.univie.ac.at/~slc/wpapers/FPSAC2020/47.html}{Darij
Grinberg, Fedor Petrov, \textit{The Bhargava greedoid (extended abstract)},
S\'{e}minaire Lotharingien de Combinatoire, \textbf{84B}.47 (2020), 12
pp}.\newline\url{http://www.cip.ifi.lmu.de/~grinberg/algebra/fps20gfv.pdf}

\bibitem[Knapp18]{Knapp18}Christopher N. Knapp, \textit{The complexity of
greedoid Tutte polynomials}, PhD thesis at Brunel University London,
2018.\newline\url{https://bura.brunel.ac.uk/bitstream/2438/15891/1/FulltextThesis.pdf}

\bibitem[KoLoSc91]{KoLoSc91}Bernhard Korte, L\'{a}szl\'{o} Lov\'{a}sz, Rainer
Schrader, \textit{Greedoids}, Algorithms and Combinatorics \#\textbf{4},
Springer 1991.\newline\url{https://doi.org/10.1007/978-3-642-58191-5}

\bibitem[Kratte99]{Kratte99}%
\href{https://www.mat.univie.ac.at/~kratt/artikel/detsurv.html}{Christian
Krattenthaler, \textit{Advanced Determinant Calculus}, S\'{e}minaire
Lotharingien Combin. \textbf{42} (1999) (The Andrews Festschrift), paper B42q,
67 pp.}, \href{http://arxiv.org/abs/math/9902004v3}{arXiv:math/9902004v3}.

\bibitem[Lemin03]{Lemin03}Alex J. Lemin, \textit{The category of ultrametric
spaces is isomorphic to the category of complete, atomic, tree-like, and real
graduated lattices LAT*}, Algebra univers. \textbf{50} (2003), pp.
35--49.\newline\url{https://doi.org/10.1007/s00012-003-1806-4}

\bibitem[Lovasz80]{Lovasz80}L. Lov\'{a}sz, \textit{Matroid matching and some
applications}, Journal of Combinatorial Theory, Series B \textbf{28}, Issue 2,
April 1980, pp. 208--236.\newline\url{https://doi.org/10.1016/0095-8956(80)90066-0}

\bibitem[MoSeSt06]{MoSeSt06}Vincent Moulton, Charles Semple, Mike Steel,
\textit{Optimizing phylogenetic diversity under constraints}, Journal of
Theoretical Biology \textbf{246} (2007), pp. 186--194. \url{https://doi.org/10.1016/j.jtbi.2006.12.021}

\bibitem[Muir60]{Muir60}Thomas Muir, William H. Metzler, \textit{A Treatise on
the Theory of Determinants}, Dover 1960.

\bibitem[Oxley11]{Oxley11}James Oxley, \textit{Matroid theory}, Oxford
Graduate Texts in Mathematics \#21, 2nd edition, Oxford University Press
2011.\newline%
\url{https://doi.org/10.1093/acprof:oso/9780198566946.001.0001}\newline See
\url{https://www.math.lsu.edu/~oxley/} for errata.

\bibitem[PetDov14]{PetDov14}Evgenii A. Petrov, Aleksei A. Dovgoshey,
\textit{On the Gomory--Hu Inequality}, Journal of Mathematical Sciences
\textbf{198} (April 2014), Issue 4, pp. 392--411.\newline\url{https://doi.org/10.1007/s10958-014-1798-y}
\end{thebibliography}
\end{document}